\pgfplotsset{compat=1.8}
\xpatchcmd{\proof}{\itshape}{\bf}{}{}
\xpatchcmd{\example}{\itshape}{\bf}{}{}
\numberwithin{equation}{section}
\newtheorem{theorem}{Theorem}[section]
\newtheorem{lemma}[theorem]{Lemma}
\newtheorem{proposition}[theorem]{Proposition}
\newtheorem{corollary}[theorem]{Corollary}
\theoremstyle{definition}
\newtheorem{remark}[theorem]{Remark}
\newtheorem{definition}[theorem]{Definition}
\newtheorem{notation}[theorem]{Notation}
\newtheorem{assumption}[theorem]{Assumption}
\theoremstyle{remark}
\newtheorem{example}[theorem]{Example}
\def\R{{\mathbb R}}
\def\Z{{\mathbb Z}}
\def\N{{\mathbb N}}
\newcommand{\bbS}{{\mathbb S}}
\newcommand{\bscal}[1]{\Big\langle #1 \Big\rangle}
\newcommand{\cD}{{\mathscr{D}}}
\newcommand{\id}{\mathrm{Id}}
\newcommand{\T}{\mathrm{T}}
\newcommand{\Span}{\operatorname{span}}
\newcommand{\cl}{\operatorname{cl}}
\newcommand{\bd}{\operatorname{bd}}
\newcommand{\roof}[1]{\lceil #1 \rceil}
\newcommand{\floor}[1]{\lfloor #1 \rfloor}
\newcommand{\bone}{\mathbf{1}}
\newcommand{\mbX}{\mathbf{X}}
 \newcommand{\disc}[1]{{\talloblong #1 \talloblong}}
\newcommand{\bphi}{\boldsymbol{\phi}}
\newcommand{\bpsi}{\boldsymbol{\psi}}
\newcommand{\bk}{\mathbf{k}}
\newcommand{\bt}{\mathbf{t}}
\newcommand{\bm}{\mathbf{m}}
\newcommand{\mfo}{\mathfrak{o}}
\newcommand{\mfr}{\mathfrak{r}}
\newcommand{\mfK}{\mathfrak{K}}
\newcommand{\mfR}{\mathfrak{R}}
\newcommand{\mfB}{\mathfrak{B}}
\newcommand{\s}{\mathfrak{s}}
\newcommand{\mfc}{\mathfrak{c}}
\newcommand{\mfn}{\mathfrak{n}}
\newcommand{\mfe}{\mathfrak{e}}
\newcommand{\mfC}{\mathfrak{C}}
\newcommand{\mfT}{\mathfrak{T}}
\newcommand{\mfW}{\mathfrak{W}}
\newcommand{\mfV}{\mathfrak{V}}
\newcommand{\mfU}{\mathfrak{U}}
\newcommand{\mrd}{\mathop{}\!\mathrm{d}}
\newcommand{\dist}[1]{{|\!|\!| #1 |\!|\!|}}
\newcommand{\drivers}{\mathbb{M}}
\renewcommand{\d}{\partial}
\newcommand{\sC}{\mathscr{C}}
\newcommand{\trinorm}[1]{{|\!|\!| #1 |\!|\!|}}
\title{Large field problem in coercive singular PDEs}
\author{Ilya Chevyrev\thanks{SISSA, Trieste, 34136, Italy.
\href{mailto:ichevyrev@gmail.com}{\tt ichevyrev@gmail.com}}
\and Massimiliano Gubinelli\thanks{Mathematical Institute, University of Oxford, Oxford, OX2 6GG, UK. \href{mailto:gubinelli@maths.ox.ac.uk }{\tt gubinelli@maths.ox.ac.uk}}}
\date{\today}
\begin{document}

\maketitle

\begin{abstract}
We derive a priori estimates for singular differential equations of the form
\[
\mathcal{L} \phi = P(\phi,\nabla\phi) + f(\phi,\nabla\phi)\xi
\]
where $P$ is a polynomial, $f$ is a sufficiently well-behaved function, and $\xi$ is an irregular distribution such that the equation is subcritical.
The differential operator $\mathcal L$ is either a derivative in time, in which case we interpret the equation using rough path theory,
or a heat operator, in which case we interpret the equation using regularity structures.
Our only assumption on $P$ is that solutions with $\xi=0$ exhibit coercivity.
Our estimates are local in space and time, and independent of boundary conditions.

One of our main results is an abstract estimate that allows one to pass from a local coercivity property to a global one using scaling, for a large class of equations.
This allows us to reduce the problem of deriving a priori estimates to the case when $\xi$ is small.
\end{abstract}

\tableofcontents

\section{Introduction}

Global aspects of solution theory to stochastic partial differential equations (SPDEs) remain an open problem. In particular, the large field regime---where solutions may become arbitrarily large---poses significant challenges for a priori estimates and existence. 

We develop a strategy for a priori estimates based on the fact that the obstruction to such estimates is given by the presence of solutions which blow up at specific space-time points. In the neighbourhood of such points, the solution is large and the equation can be approximated by its scale-invariant version. In the subcritical regime, this corresponds to a semi-classical situation where the noise become small and the equation is dominated by its deterministic part.
If the deterministic equation has a coercive non-linearity then blow-up cannot occur. This intuition can be made rigorous and we show that it leads to a priori estimates for a large class of subcritical singular SPDEs.

We note an intriguing convergence of ideas, where one begins with an example that violates a desired estimate (or property) and, through a scaling argument, arrives at a contradiction by reducing the problem to a much simpler case—typically in a ‘semi-classical’ regime where the noise is negligible.

In particular, we are aware of two other situations where this happens:
\begin{itemize}
  \item \textbf{Symmetry:} In \cite{CCHS_2D,CCHS_3D,BGHZ22} the authors use this line of reasoning to prove structural properties of renormalisation, in particular the equivariance under symmetries of the renormalized equation. It were these works that inspired us to tackle the large field problem via a similar strategy.

  \item \textbf{Regularity:} This line of reasoning is used to prove Schauder estimates in a kernel-free way, see e.g. \cite{Simon_97,Sauer_Smith_25_Schauder,Esquivel_Weber_24_fractional}.
  The idea is to zoom into a space-time point where the desired estimate fails and then use scaling to reduce the problem to a situation where the contributions by the noise are small. In this regime, the desired estimate can be proved by a Liouville property of the linear part of the equation.
\end{itemize}

In all these cases, we see that a particular property of the SPDE is related to some limit where the noise disappears, either in small scales or for large values of the field, i.e. near an eventual blow-up, or in a regime where a symmetry is broken at the semi-classical level. 

It would be interesting to explore this heuristic further especially in connection with geometric aspects of singular SPDEs.

\textbf{Setting and main results.}
We consider a partial differential equation (PDE) of the form
\begin{equ}\label{eq:equation}
\mathcal{L} \phi = P(\phi,\nabla\phi) + f(\phi,\nabla\phi)\xi\;.
\end{equ}
Here, $\CL$ is the heat operator $\d_t - \Delta = \d_{1} - \sum_{i=2}^d \d_i^2$ on $\R^d$,
$P$ is a polynomial of $\phi$ and its spatial derivative $\nabla \phi = (\d_i \phi)_{i=2}^d$,
and $f$ is a function of $(\phi,\nabla\phi)$.
We allow the case $d=1$, in which case $\CL=\d_t$, so the equation is an ordinary differential equation (ODE) and $P$ and $f$ depend only on $\phi$.
The equation is posed for a (vector-valued) function
$
\phi\colon \Omega\to \R^m
$,
where $m\geq 1$ and $\Omega = (-1,0)\times(-1,1)^{d-1}$ is the backwards parabolic ball.

Here $\xi$ is a distribution in a H\"older--Besov space $\CC^{\beta}(\Omega,\R^n)$ with $\beta\leq 0$
(like $\phi$, we allow $\xi$ to be vector-valued).
We are especially interested in the case where \eqref{eq:equation} is \emph{singular}, meaning that $\beta$ is sufficiently negative to render the equation classically ill-posed.
In this case, we interpret \eqref{eq:equation} either using rough paths \cite{Lyons98} (if $d=1$) or using regularity structures \cite{Hairer14} (if $d\geq 2$).
In both cases, to give meaning to the solution, we require a rough path\,/\,model $Z$ above $\xi$.

Our assumptions regarding the equation can be stated roughly as follows:
\begin{itemize}
  \item \eqref{eq:equation} is scaling subcritical in the sense of \cite{Hairer14}.
  \item The equation $\CL \phi = P(\phi,\nabla\phi)$ is scale-invariant with exponent $\alpha>0$. Moreover, there exists $\delta>0$ such that, if $\CL \phi = P(\phi,\nabla\phi)$ on $\Omega$ with $\|\phi\|_\infty \leq 1$, then $|\phi(0)| < 1-\delta$.
  \item $f\colon \R^m \to L(\R^n,\R^m)$ has sufficient regularity and growth at infinity.
\end{itemize}
The first condition is a bound on the regularity of $\xi$ and is necessary to apply the theories of rough paths or regularity structures.
For example, for $d\geq 2$, if $P = -|\phi|^2\phi$ and $f$ is constant, then we require $\beta>-3$, and if $f$ is not constant but does not depend on $\nabla\phi$, then we require $\beta>-2$.
The condition roughly states that the solution $\phi$ is, on small scales, a perturbation of the solution to the linear equation $\CL \psi = \xi$
(in the language of QFT, it corresponds to super-renormalisability).

In the second condition, `scale-invariance' means that, if $\phi$ solves $\CL \phi = P(\phi,\nabla\phi)$, then 
\begin{equ}[eq:rescale]
\psi(t,x) = \lambda^{\alpha}\phi(z+(\lambda^2 t, \lambda x))
\end{equ}
solves the same equation for all $\lambda>0$ and $z\in\R^d$.
For example, we can take $P$ of the form $P(\phi,\nabla\phi) = \phi^p + \phi^{\frac{p-1}{2}} \nabla \phi$
and the corresponding exponent is $\alpha= \frac{2}{p-1}$.
(The condition implies that $P$ depends on $\nabla\phi$ in an affine way.)
By a scaling argument, the second part of the condition is actually equivalent to the much stronger `coming down from infinity' property which states that $|\phi(0)|\lesssim 1$ whenever $\phi$ solves $\CL \phi = P(\phi,\nabla\phi)$ on $\Omega$ (uniformly in the boundary conditions).
The fact that $P$ is a polynomial is not crucial for our argument
and we make this assumption only to simplify the proof.

We leave the conditions on $f$ intentionally vague for now, as their formulation is lengthy and depends on whether we are in the ODE case ($d=1$) or the PDE case ($d\geq2$).
We highlight, however, that our regularity assumptions do not impose uniqueness of solutions to \eqref{eq:equation}.

Under these conditions, we derive a priori estimates for any solution $\phi$ of the remainder equation of \eqref{eq:equation}.
By `solution of the remainder equation' we mean that $\phi$ is the function-like remainder of the `full' solution.
In the case of rough paths or when $f$ is not constant, $\phi$ is already the full solution.
But if $f$ is constant and $d\geq 2$, then, like in e.g. \cite{BCCH21}, we write the `full' solution as $\bar \phi = X + \phi$, where $X$ is an explicit polynomial function of $\xi$ that, in general, is a distribution not in $L^\infty$.
Since we derive bounds in $L^\infty$, we focus henceforth on $\phi$.

One heuristic formulation of our main result is

\begin{theorem}\label{thm:heuristic}
Suppose $\phi\colon \Omega \to \R^m$ solves the remainder equation of \eqref{eq:equation}
with input rough path\,/\,model $Z$ above $\xi$.
Then
\begin{equ}[eq:heuristic_bound]
|\phi(z)| \lesssim \max\{\disc{f,Z}_{z,\lambda}, \dist{z}^{-\alpha}\}
\end{equ}
where $\dist{z}$ is the distance of $z$ to the parabolic boundary of $\Omega$ and $\disc{f,Z}_{z,\lambda}$ is a `norm' of the pair $(f,Z)$ that depends on the growth of the derivatives of $f$ at infinity and on the size of $Z$ in a ball centred at $z$ of radius $\lambda \leq \frac12\dist{z}$.
\end{theorem}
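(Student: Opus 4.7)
My plan is to argue by contradiction via a scaling/blow-up argument, in the spirit of the $\varepsilon$-regularity arguments mentioned in the introduction. Assume the bound fails at some point $z \in \Omega$, meaning that $|\phi(z)|$ is much larger than both $\dist{z}^{-\alpha}$ and $\disc{f,Z}_{z,\lambda}$ for an appropriately chosen $\lambda>0$. The natural choice is $\lambda \asymp |\phi(z)|^{-1/\alpha}$: the inequality $\dist{z}^{-\alpha} \ll |\phi(z)|$ then forces $\lambda \ll \dist{z}$, so the parabolic ball of radius $\lambda$ around $z$ sits comfortably inside $\Omega$. By the scale invariance of $P$ with exponent $\alpha$, the rescaled function
\[
\psi(w) := \lambda^{\alpha}\,\phi\bigl(z+(\lambda^2 w_1, \lambda w_2,\ldots,\lambda w_d)\bigr)
\]
solves an equation of the same form as \eqref{eq:equation} on a unit parabolic ball, driven by a rescaled model/rough path $Z_\lambda$ and a rescaled nonlinearity $f_\lambda$. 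By subcriticality, the joint norm of $(f_\lambda, Z_\lambda)$ is controlled by $\disc{f,Z}_{z,\lambda}$ times a positive power of $\lambda$, which by hypothesis is small.

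The second step is a compactness argument. One takes a sequence of purported counterexamples with $|\phi(z)|\to\infty$, hence $\lambda\to 0$, and extracts a limit $\psi_0$ satisfying the purely deterministic equation $\mathcal{L}\psi_0 = P(\psi_0,\nabla\psi_0)$ on $\Omega$. To make this legal, one needs uniform control of $\psi$ in a topology strong enough to (i) pass the polynomial nonlinearity $P$ to the limit, and (ii) make the singular product $f_\lambda(\psi,\nabla\psi)\,\xi_\lambda$ vanish in the sense of reconstruction. The standard device is to combine the $L^\infty$ information on $\psi$ with a Schauder-type estimate on the smooth remainder in a Da Prato--Debussche-style decomposition $\psi = X_\lambda + \psi^{\mathrm{smooth}}$, where $X_\lambda$ collects the rescaled stochastic objects coming from $Z_\lambda$ and $\psi^{\mathrm{smooth}}$ is controlled in a H\"older--Besov space polynomially in the driver norm.

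To close the contradiction one needs $\|\psi_0\|_\infty \leq 1$ together with $|\psi_0(0)|\geq 1-\varepsilon$, which directly violates the coercivity hypothesis $|\psi_0(0)|<1-\delta$. Securing the uniform bound $\|\psi\|_\infty\leq 1$ requires refining the choice of $\lambda$: rather than taking the naive $\lambda = |\phi(z)|^{-1/\alpha}$, one defines $\lambda$ as the largest scale for which $\sup_{B(z,\lambda)}|\phi| \leq \lambda^{-\alpha}$, and replaces $z$ by a near-maximising point inside $B(z,\lambda)$. The resulting rescaled $\psi$ is bounded by one on the unit parabolic ball while $|\psi(0)|$ is of order one, and the previous step produces a limit $\psi_0$ that violates coercivity.

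The main obstacle will be the compactness step: one needs uniform regularity estimates on the rescaled solutions, in topologies compatible with the solution theory for \eqref{eq:equation}, that survive the limit $\lambda\to 0$. In the rough-path case $d=1$ this should be accessible by a Gronwall-type analysis of the rough integral together with uniform $p$-variation bounds. In the regularity-structures case $d\geq 2$ it requires controlling the modelled-distribution lift of $\psi$ uniformly in the rescaled model $Z_\lambda$, which cannot be done by soft compactness alone and will rest on the precise regularity and growth assumptions on $f$, together with the quantitative form of the coercivity hypothesis. A secondary subtlety is the case of constant $f$ with $d\geq 2$, where $\phi$ is only the remainder in $\bar\phi = X+\phi$: one must check that the rescaling respects this splitting and that the subtraction of the explicit stochastic polynomial $X$ does not destroy the coercivity bound for the limiting deterministic problem.
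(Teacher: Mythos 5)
Your scaling heuristic, the smallness of the rescaled driver by subcriticality, and the decisive role of the deterministic coercivity are all correct, and the point-picking device you propose for forcing $\|\psi\|_\infty\lesssim 1$ is in the same spirit as the paper's maximization of the ratio $|\phi(z)|/\max\{\disc{f,Z}_{z,\lambda_z},\dist{z}^{-\alpha}\}$ over $z$. Where the proposal genuinely departs from the paper — and where it has a gap — is the plan to send $\lambda_n\to 0$, extract a limit $\psi_0$, and apply coercivity to the limiting deterministic equation. You flag the compactness step as the main obstacle, but it is worse than a technical difficulty: it is circular. To make the singular term $f_{\lambda}(\psi,\nabla\psi)\xi_{\lambda}$ vanish in the limit you need the lifts of $\psi_n$ to converge as modelled distributions (resp.\ controlled rough paths), which requires uniform-in-$n$ a priori bounds on those lifts — and such bounds are essentially the statement being proved. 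Compactness thus does not reduce the problem. Also, the Da Prato--Debussche splitting $\psi = X_\lambda + \psi^{\mathrm{smooth}}$ you sketch is not the right comparison object: subtracting the linear stochastic objects does not leave a remainder governed by the coercive deterministic equation, so the coercivity of $P$ cannot be fed back into the limiting problem that way.

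The paper avoids the limit entirely. It isolates a quantitative \emph{local coercivity} assumption (Lemma~\ref{lem:coercive_classical} classically; \eqref{eq:local_coercivity_abstract} abstractly) about a \emph{single, fixed} driver $M$ of small norm: if $\|M\|_{z,\lambda}<1$ and $\|\phi\|_\infty<1+\delta$ then $|\phi(o_{z,\lambda})|<1$. This is verified perturbatively by comparing $\phi$ not to a stochastic object but to the zero-noise solution $\mathring\phi$ with \emph{the same boundary data}, so the deterministic coercivity of $\mathring\phi$ is inherited directly once $\|\phi-\mathring\phi\|_\infty$ is shown small (Sections~\ref{sec:remainder_SPDE}--\ref{sec:local_coer_SPDE} in the SPDE case). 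With that in hand, Theorem~\ref{thm:a_priori_abstract} closes the contradiction by the maximization alone, at one fixed rescaling, with no limit of models and no continuity of the solution map. If you wish to pursue the compactness route you would need to prove exactly the small-driver estimates of Sections~\ref{sec:remainder_SPDE}--\ref{sec:local_coer_SPDE}, uniformly along the sequence — at which point the extraction of a limit is superfluous.
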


We give a precise and sharper formulation for the rough path case in Theorem \ref{thm:RP_bound} and for the regularity structure case in Theorem \ref{thm:SPDE}.
The conditions on $f$ and $P$ are made precise in the respective sections.

\textbf{Method of proof.}
Our proof of Theorem \ref{thm:heuristic} is based on a scaling argument and proceeds in two steps.

First, we suppose the pair $(f,Z)$ is suitably `small' and that $\|\phi\|_{\infty;\Omega}\leq 1$.
Recalling $\delta>0$ from our assumption on $P$, we prove in this case that $|\phi(0)|<1-\delta/2$.
This step is significantly easier than handling \eqref{eq:equation} right away because smallness of $\|\phi\|_\infty$ and $(f,Z)$ can be used to show $\|\phi-\mathring\phi\|_\infty<\delta/2$, where $\mathring\phi$ solves $\CL\mathring\phi = P(\mathring\phi,\nabla\mathring\phi)$ with the same boundary conditions as $\phi$.
Then $|\mathring\phi| < 1 - \delta$ by assumption, which leads to the claimed bound $|\phi(0)|<1-\delta/2$.
At this point, we have not used scale-invariance of the equation.

Second, no longer assuming any smallness, we rescale $\phi\mapsto \psi$ according to the transformation \eqref{eq:rescale} around a point $z$ that maximises the ratio of the left- to right-hand side of \eqref{eq:heuristic_bound}.
We choose $\lambda < 1$ in \eqref{eq:rescale} such that $|\psi(0)|_{\infty;\Omega}= 1-\delta/2$ and remark that $\psi$ solves the same equation except $(f,Z)$ becomes rescaled.
Due to subcriticality and our growth assumptions on $f$,
this rescaled version of $(f,Z)$ becomes `small'.
This implies $\|\psi\|_{\infty;\Omega} > 1$ by the first step,
which readily leads to a contradiction on the maximality of $z$.

To illustrate the idea in a simple setting, we implement this strategy in Section \ref{sec:classical} for the PDE $\CL \phi =  - \phi^p + \xi$ in the classically well-posed regime $\beta>-2$.

The argument in the second step is rather general and does not rely on the fact that we work with differential equations.
We therefore find it useful to state it as an abstract local-to-global coercivity bound, a simplified and heuristic version of which is as follows.

Suppose we are given
\begin{itemize}
  \item a metric space $(\Sigma,\rho)$ with a `boundary' $\d\Sigma\subset\Sigma$ and a distinguished point $o\in\Sigma$,
  \item an embedding $T_{z,\lambda}\colon\Sigma\to\Sigma$ for every $z\in\Sigma$ and $0<\lambda\leq \dist{z}$, where $\dist{z} = \inf_{y\in\d\Sigma}\rho(y,z)$,
  such that $T_{z,\lambda}(o)=z$,
  \item a set of `drivers' $\drivers$ with a `norm' $\disc{\cdot}\colon\drivers \to [0,\infty)$
  and `scaling map' $R_{z,\lambda} \colon \drivers\to \drivers$
  such that
$\disc{R_{z,\lambda} M} \leq \lambda^\alpha \disc{M}$ for some $\alpha>0$, and
  \item a set of `solutions' $\bbS_M$ for every $M\in \drivers$, elements of which are functions $\phi\colon \Sigma\to \R^m$,
  such that $\lambda^\alpha \phi \circ T_{z,\lambda}\in \bbS_{R_{z,\lambda} M}$ for all $\phi \in \bbS_M$.
\end{itemize}

Finally, suppose there exists $\delta > 0$ such that, if $\disc{M} \leq
1$ and $\| \phi \|_{\infty} \leq 1 + \delta$, then
\begin{equ}
  | \phi (o) | < 1\;.
\end{equ}
We interpret this final assumption as a \emph{local coercivity} condition.

\begin{theorem}\label{thm:abstract_heuristic}
In the above setting, every $\phi\in\bbS_M$ satisfies
\begin{equ}
| \phi(z) | \lesssim  \max\{ \disc{M} ,
    \dist{z}^{-\alpha} \}\;.
\end{equ}
\end{theorem}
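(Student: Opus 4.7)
\textbf{Proof proposal for Theorem \ref{thm:abstract_heuristic}.}

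The plan is a contradiction plus scaling argument, very much in the spirit sketched in the ``Method of proof'' paragraph. Define the weighted ratio
\[
F(z) := \frac{|\phi(z)|}{\max\{\disc{M},\,\dist{z}^{-\alpha}\}}, \qquad A := \sup_{z \in \Sigma} F(z),
\]
so the conclusion is equivalent to $A \lesssim 1$. I will show that any sufficiently large value of $A$ leads to a contradiction with the local coercivity hypothesis.

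Suppose for contradiction that $A > C$ for a large constant $C = C(\delta, \alpha)$ to be chosen. Pick a near-maximiser $z_0 \in \Sigma$ with $F(z_0) \geq A/2$, so that $|\phi(z_0)| \geq (A/2)\dist{z_0}^{-\alpha}$ and $|\phi(z_0)| \geq (A/2)\disc{M}$. Set $\lambda := |\phi(z_0)|^{-1/\alpha}$. Then
\[
\lambda \leq (2/A)^{1/\alpha}\dist{z_0} < \dist{z_0}, \qquad \lambda^{\alpha}\disc{M} \leq 2/A < 1,
\]
provided $A > 2$. Hence $T_{z_0,\lambda}$ is defined and, by the scaling hypothesis, $\psi := \lambda^{\alpha}\phi \circ T_{z_0,\lambda}$ lies in $\bbS_{R_{z_0,\lambda}M}$ with driver norm $\disc{R_{z_0,\lambda}M} \leq 1$ and $|\psi(o)| = 1$.

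I now set up a dichotomy. If $\|\psi\|_{\infty} \leq 1+\delta$, the local coercivity hypothesis immediately gives $|\psi(o)| < 1$, contradicting $|\psi(o)| = 1$. Otherwise there exists $y^* \in \Sigma$ with $|\psi(y^*)| > 1+\delta$, i.e.\
\[
|\phi(z^*)| > (1+\delta)|\phi(z_0)|, \qquad \text{where } z^* := T_{z_0,\lambda}(y^*).
\]
The goal is to deduce $F(z^*) > F(z_0)$, contradicting the near-maximality of $z_0$. This reduces to showing that the amplitude gain $(1+\delta)$ beats the ratio of weights
\[
\frac{\max\{\disc{M}, \dist{z_0}^{-\alpha}\}}{\max\{\disc{M}, \dist{z^*}^{-\alpha}\}}.
\]
Using the property of the embedding that $T_{z_0,\lambda}(\Sigma)$ is contained in a $\lambda$-neighbourhood of $z_0$ (so that $\dist{z^*} \geq \dist{z_0} - \lambda \geq (1 - (2/A)^{1/\alpha})\dist{z_0}$), this weight ratio tends to $1$ as $A \to \infty$. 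Hence for $C$ chosen large enough (depending only on $\delta$ and $\alpha$) the ratio is less than $1+\delta$, producing $F(z^*) > F(z_0) \geq A/2$ and thereby contradicting near-maximality.

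The main obstacle is the last step, which depends on the embedding $T_{z,\lambda}$ behaving well with respect to the distance-to-boundary function, i.e.\ that $\dist{T_{z_0,\lambda}(y)}$ is bounded below by $\dist{z_0}$ minus a $\lambda$-correction. This is implicit in the ``scaling map'' structure and is precisely the point at which one must choose $C$ (and hence the range of admissible $\lambda/\dist{z_0}$) large enough to absorb the weight-degradation into the amplitude-gain factor $(1+\delta)$ coming from coercivity. Once this geometric input is in place, the rest of the argument is bookkeeping of constants.
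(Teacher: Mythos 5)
Your proposal follows the same blow-up/rescaling strategy as the paper's proof of the precise version (Theorem~\ref{thm:a_priori_abstract}): find a near-maximiser of a weighted ratio, rescale to put the solution and driver in the small regime, and play the local coercivity bound against the near-maximality. The setup, the definition of $\lambda$, the verification that $\disc{R_{z_0,\lambda}M}\leq 1$ and $|\psi(o)|=1$, and the geometric bound $\dist{z^*}\geq\dist{z_0}-\lambda$ are all the same as in the paper.

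However, the final step has a genuine gap. You pick a near-maximiser with the loose tolerance $F(z_0)\geq A/2$, and then from a point $y^*$ with $|\psi(y^*)|>1+\delta$ you derive $F(z^*) > F(z_0)$, claiming this ``contradicts near-maximality''. It does not: $F(z^*) > F(z_0) \geq A/2$ is entirely consistent with $A=\sup F$. For a contradiction you would need $F(z^*) > A$, which requires $F(z^*) > 2F(z_0)$, i.e.\ the gain factor $(1+\delta)(1-(2/A)^{1/\alpha})^{\alpha}$ to exceed $2$; this fails for small $\delta$. The fix is exactly the point the paper is careful about: choose the near-maximiser with tolerance depending on $\delta$ and $\alpha$ (not a fixed $1/2$). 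In the paper's notation, $\bar C$ is chosen so that $C_*/\bar C < (1-g^{-1})^{\alpha}(1+\delta)$ (equation~\eqref{eq:barC}), i.e.\ the gap $A/F(z_0)$ is strictly smaller than the guaranteed gain from coercivity after discounting the boundary-weight degradation; then $F(z^*) > A$ and the contradiction is with the definition of $A$ (equivalently, with the inequality $|\phi(z^*)|\geq(1+\delta)|\phi(z_0)|$). An alternative way to close the gap, which you do not take either, is to iterate the construction: since the multiplicative gain per step is $\geq \kappa > 1$ uniformly while $F$ is bounded by $A$, iteration terminates in a contradiction after finitely many steps. Either way, the single-step argument with tolerance $A/2$ is not conclusive as written.
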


We refer to Section \ref{sec:abstract}, especially Theorem \ref{thm:a_priori_abstract}, for a precise formulation,
and to Section \ref{sec:first_applications} for simple applications of this result to Young ODEs.

We believe this scaling argument is quite general and can be adapted to other settings, such as to discrete systems.
One can also adapt it to incorporate boundary data in the estimates, but we refrain from doing so in this work as it complicates the abstract setup.

Several further remarks are in order.

\begin{itemize}
\item \textbf{Relation to SPDEs.} Our results are inspired by recent developments in \emph{stochastic} PDEs.
Indeed, many interesting examples of \eqref{eq:equation} arise from taking $\xi$ as a \emph{random} distribution (e.g. white noise).
In applications of rough paths\,/\,regularity structures to such equations, one would construct a random rough path\,/\,model above $\xi$.
See \cite{CH16,LOTT24,HS23_BPHZ} for systematic ways to build renormalised random models above a class of distributions via smooth approximations, and \cite{FV10,FrizHairer20} for the case of rough paths.
In the case of regularity structures, for these smooth, renormalised models, the corresponding solution to \eqref{eq:equation} solves a \emph{renormalised} PDE,
see \cite{BCCH21}.
The results of the present article are entirely deterministic and we only require a rough path\,/\,model as input.

  \item \textbf{Notion of solution.}
  In the case of regularity structures, our notion of solution to \eqref{eq:equation} is somewhat new (see Definition \ref{def:solution_SPDE})
  and is inspired by Davie's notion of solution to rough differential equations \cite{Davie_08}, which we recall in Section \ref{sec:RPs}.
  Our notion of solution is consistent with that of \cite{Hairer14}, but it avoids making sense of calculus operations on the space of modelled distributions.
  We also do not require $\phi$ to solve a (possibly renormalised) PDE.
  (We note that a similar notion was independently proposed by \cite{BOS_25_Phi4} for the $\Phi^4$ equation in the multi-index approach to regularity structures.)

  \item \textbf{Local theory.}
  Our abstract formulation of Theorem \ref{thm:abstract_heuristic} is indifferent to the underlying solution theory of the PDE.
  The only part where we use regularity structures is in the verification of the local coercivity condition,
  and we believe that, for certain equations,
  one could verify the condition with other methods, such as paracontrolled analysis \cite{GIP15}, renormalisation group and flow approach \cite{Kupiainen2016,Duch25_flow},
  or the multi-index approach to regularity structures \cite{OW19,LOT23,OSSW25_a_priori}.
  We believe it would be of interest if our scaling argument can be combined with other local solution theories to broaden the scope of equations covered (e.g. to quasi-linear equations).

  \item \textbf{Refinements of regularity structures.}
  As part of our argument, we make small refinements of standard estimates in regularity structures.
  This includes the reconstruction theorem (Lemma \ref{lem:loc_reconst}) and multi-scale Schauder estimates (Lemma \ref{lem:integration}),
  for which we prove bounds that are uniform in the size of the truncation of the integration kernel.
  We also point out that, to bypass issues related to non-uniqueness of reconstructions of singular modelled distributions,
  we show that a distribution $\xi\in\CD'(B)$ which is in $\CC^\eta$ in an open ball $B\subset \R^d$
  admits an extension to $\CC^\eta(\R^d)$ with support contained in the closure of $B$, see Appendix \ref{sec:extension} --
  this result is similar to several other extension theorems in the literature, but our proof appears different and rather elementary.
\end{itemize}

\textbf{Comparison to other results.}
Space-time local bounds of the form \eqref{eq:heuristic_bound} have previously been considered for special cases of \eqref{eq:equation} in \cite{MW20_reaction,MoinatWeber20,CMW23,BCMW22,Jin_Perkowski_25},
and our formulation of Theorem \ref{thm:heuristic} is very much inspired by these works.

Our estimates are more general in the sense that they contain those in the aforementioned works as special cases.
Specifically, \cite{MW20_reaction} considers the PDE $\CL\phi = -\phi^p + g(\phi) + \xi$, where $g$ is bounded and $\xi\in\CC^\beta$ with $\beta>-2$, i.e. the classically well-posed regime,
and we essentially recover this case in Section \ref{sec:classical}.
In \cite{MoinatWeber20,CMW23}, the authors study the PDE $\CL\phi = -\phi^p +\xi$ in the full subcritical regime $\beta>-3$ and \cite{Jin_Perkowski_25} analyses $\CL\phi = -\phi^2 + \phi\xi$ in the first singular regime $\xi \in \CC^{-1-\kappa}$ for $\kappa>0$ small.
Section \ref{sec:SPDE} generalises these results.
Finally, \cite{BCMW22} considers the rough path case ($d=1$) where $f$ either behaves like a polynomial or has bounded derivatives of all orders. Our estimates in Section \ref{sec:RPs} agree in these two cases but we treat more general forms of $f$ (e.g. we prove stronger bounds for $f$ of the form $f(\phi)=\phi^q\sin(\phi^p)$).

We believe that the major advantage of our approach is its simplicity and its applicability.
For example, we avoid delicate applications of the maximum principle, so are able to treat vector-valued and scalar-valued equations on an equal footing (\cite{MW20_reaction,MoinatWeber20,CMW23} work only in the scalar-valued case, though one does strongly expect that their method can be extended to the vector-valued case with extra efforts).
Additionally, we are able to work with the original formulation of models and modelled distributions due to Hairer \cite{Hairer14}, and only require minor refinements of the reconstruction and integration bounds.
Relatedly, we require no qualitative smoothness assumption on $\xi$ (or its lift to a model)
and do not need to consider renormalised PDEs.
Furthermore, our method applies to stochastic quantisation equations of tilted measures as considered in \cite{Hairer_Steele_22} and we believe can be used to show optimal integrability of the invariant measure of $\CL \phi = -\phi^p +\xi$ in the full subcritical regime.

\bigskip

\textbf{Open problems.}
\begin{itemize}
  \item \textbf{Optimality.} We suspect that for singular ODEs (rough paths), our results are sharp in terms of the scaling exponents of $f$ and $\xi$.
  Furthermore, for singular PDEs (regularity structures), we suspect our results are also sharp for \emph{polynomial} $f$.
  However, there is a gap between our assumptions for ODEs and PDEs in the case of \emph{non-polynomial} $f$. E.g. we can leverage decay of $f$ at infinity to get improved bounds for ODEs, but do not know if this is possible for PDEs.
  Additionally, if $f$ and its derivatives are bounded, then, in the case of PDEs, we require $P$ to have sufficiently large degree depending on the regularity of $\xi$, while there is no restriction on the degree of $P$ in the case of ODEs;
  compare Assumption \ref{as:f_poly} with Assumptions \ref{as:eta}-\ref{as:Delta} and Section \ref{sec:linear_condition_RP},
  see also Remark \ref{rem:RP_assump}.
  We know of cases where the exponents in our bounds for PDEs
  are not sharp but did not find a satisfactory way to improve the bounds in general (see Remark \ref{rem:optimality}).
  We find it an interesting problem to determine if the gaps between the two cases can be closed.
  
  \item \textbf{Role of locality.}
  In the way we apply our general scaling principle, we rely on the locality of the heat operator $\CL$.
  It would be interesting to find extensions of the method that are able to handle non-local equations, such as the stochastic quantisation equation of the fractional $\Phi^4$ model analysed recently in \cite{Esquivel_Weber_24_fractional,DGR_23_fractional}.
  
  \item \textbf{Role of coercivity.}
  Our argument works for equations with super-linear damping terms (such as $\Phi^p$ models), for which we show the space-time localisation bound \eqref{eq:heuristic_bound}.
  There are singular stochastic PDEs of interest for which global well-posedness has been shown but no space-time localisation bound is expected, see for example \cite{BringmannCaoHiggs,ChevyrevShen23} for the stochastic 2D Abelian Higgs and Yang--Mills equations, \cite{BC24SG,CLFW24, SZZ25_gPAM} for the sine-Gordon and gPAM models, and \cite{Hairer_Rosati_NS,Hairer_Zhao_25_NS} for the stochastic Navier--Stokes equations.
  There are also equations of interest for which only local but no global solution theory is established,
  such as the 3D stochastic Yang-Mills equations \cite{CCHS_3D} and the geometric KPZ equation \cite{BGHZ22}.
  We find it an important problem to understand if scaling methods can be applied to such equations that lack super-linear damping terms. 
\end{itemize}

\subsection{Notation}
\label{sec:notation}

We collect some commonly used notation.
Denote $\N= \{0,1,\ldots\}$.
For $k\in\N$, denote $[k] = \{1,\ldots,k\}$, understood as the empty set if $k=0$.

For a set $S$, we interchangeably treat elements $\bk \in \N^S$ both as a function $\bk\colon S\to\N$ and as a multi-set $\bk = (k_1,\ldots,k_{|\bk|})$ with elements $k_i\in S$, where we denote by $|\bk|$ the total number of elements in $\bk$.
We denote $\bk! = \prod_{k\in S} \bk(k)!$.

We write $\bone_A$ for the indicator function of a set $A$.
For a function $\xi\colon A\to E$, where $E$ is a normed space, we denote
\begin{equ}
\|\xi\|_{\infty;A} = \sup_{a\in A}|\xi(a)|\;.
\end{equ}
We will sometimes use the shorthand $\xi_a=\xi(a)$ for $a\in A$.

Let $\floor{\alpha}\in\Z$ and $\roof{\alpha}\in\Z$ denote the floor and roof of $\alpha\in\R$ respectively.
Unless otherwise specified, for $\alpha >0$, we let $\CC^\alpha$ denote the space of $N$-times differentiable functions, where $N=\roof{\alpha}-1\in\N$, whose $N$-th derivative is $\gamma$-H\"older continuous, where $\gamma=\alpha-N \in (0,1]$.
We let $\CC^\infty$, $\CC^\infty_c$, and $\CC$ denote the space of smooth, smooth and compactly supported, and continuous functions respectively.
Unless otherwise specified or clear from the context, the target space of functions is $\R$.

We write $f*g$ for the convolution $f*g(x) = \int f(x-y)g(y)\mrd y$ of two functions $f,g$ defined on suitable subsets of $\R^d$.

We write $X\lesssim Y$ and $X\ll Y$ to indicate $X\leq CY$ and $CX \leq Y$ respectively for some $C\geq 1$ sufficiently large whose dependence on different parameters is made explicit or is clear from the context.

For vector spaces $X,Y$, we write $L(X,Y)$ for the space of linear maps $X\to Y$.

\subsubsection{Scaling}
\label{sec:scaling}

Let $d\geq 1$ and consider scaling $\s = (\s_1,\ldots,\s_d)\in\N^d$ with $\s_i \in [1,\infty)$ for all $i\in[d]$.
Denote $
|\s| = \s_1+\cdots+\s_d
$ and, for $x=(x_1,\ldots,x_d)\in\R^d$, define
\begin{equ}
|x|_\s = \max_{i\in[d]}|x_i|^{1/\s_i}\;.
\end{equ}
We equip $\R^d$ with the corresponding metric $(x,y)\mapsto |x-y|_\s$.
We also equip $\R^d$ with multiplication by $\lambda>0$ defined by \begin{equ}
\lambda \cdot_\s x = (\lambda^{\s_1}x_1,\ldots,\lambda^{\s_d}x_d)
\end{equ}
for which $|\lambda \cdot_\s x|_\s = \lambda|x|_\s$.

For $z \in \R^d$ and $\lambda > 0$, define the affine bijection
$T_{z, \lambda} \colon \R^d \to \R^d$ by
\begin{equ}[eq:T_lambda_def]
  T_{z, \lambda} (y) = z + \lambda \cdot_\s y
  \;,
\end{equ}
and for $\psi \colon \R^d\to \R$ denote
\begin{equ}
\psi^{\lambda}_{z}(y) = \lambda^{-|\s|} \psi (\lambda^{-1}\cdot_\s(y-z)) = \lambda^{-|\s|}\psi(T_{z,\lambda}^{-1}(y))\;.
\end{equ}
Define the open unit ball
\begin{equ}[eq:parabolic_ball]
\mfB(x,h) = \{y\in\R^d\,:\, |y-x|_\s < h\}
\;.
\end{equ}
For $r\in\N$, denote
\begin{equ}[eq:Br_def]
\CB^r = \{\psi\in\CC^\infty_c(\mfB(0,1))\, :\, \|D^k\psi\|_\infty \leq 1 \text{ for all } |k|_\s\leq r\}
\;.
\end{equ}
We sometimes call $\psi^\lambda_x$, where $\psi\in\CB^r$ and $x\in\R^d$, a \emph{test function at scale $\lambda>0$}.


\subsubsection{Parabolic case}
\label{sec:PDE_notation}

In Sections \ref{sec:classical} and \ref{sec:SPDE}, where we work with parabolic PDEs, we use the following notation.
Fix $d\geq 2$ an integer and let $\CL = \d_t-\Delta = \d_1 - \sum_{i=2}^d \d_i^2$ be the heat operator on $\R^d$.
We set
\begin{equ}[eq:parab_scale]
\s = (\s_1,\ldots,\s_d)=(2,1,\ldots,1)\;.
\end{equ}
Define the backwards parabolic open ball and its parabolic closure
\begin{equ}
\Omega = (- 1, 0) \times (- 1, 1)^{d-1} \subset\R^d
 \;,
 \qquad
 \bar\Omega = [- 1, 0) \times [- 1, 1]^{d-1}
 \;.
\end{equ}
We denote the corresponding parabolic boundary by
\begin{equ}
\d\Omega = \{z\in\bar\Omega\,:\,|z|_\s=1\}
\;.
\end{equ}
We also write $\cl \mfK$ for the topological closure of a subset $\mfK\subset \R^d$,
so that
\begin{equ}
\cl \Omega = [-1,0]\times[-1,1]^{d-1}
\;.
\end{equ}
(Note that $\bar\Omega\neq \cl \Omega$ and similarly $\d\Omega$ differs from the topological boundary of $\Omega\subset\R^d$.)

Define also the rescaled and shifted open parabolic ball
\begin{equ}
B_z(\lambda) = T_{z,\lambda}(\Omega)\;.
\end{equ}

\textbf{Distributions.}
For an open set $\mfK\subset\R^d$, let $\CD'(\mfK)$ denote the space of distributions on $\mfK$.
For $\xi \in \CD' (\mfK)$, $\beta \in \R$, $h>0$, $r\in\N$, and $x \in \mfK$ such that $B_x(h)\subset \mfK$,
define
\begin{equation}
  \| \xi \|_{\beta ; x ;  h; r} = \sup_{\psi\in\CP^r} \sup_{\lambda\in (0,h)} | \lambda^{- \beta} \scal{  \xi, \psi^{\lambda}_x}  | \;, \label{eq:Cbeta-def}
\end{equation}
where 
\begin{equ}
\CP^r = \{\psi\in\CC^\infty_c(\Omega)\, :\, \|D^k\psi\|_\infty \leq 1 \text{ for all } |k|_\s\leq r \} \subset \CB^r
\;.
\end{equ}
For $\psi\in\CP^r$, we sometimes call $\psi^\lambda_x$ a \emph{non-anticipative} test function at scale $\lambda>0$.
If $r=\max\{0,\floor{1-\beta}\}$ and $h=1$, we drop $r,h$ from the notation and write
\begin{equ}
\| \xi \|_{\beta ; x ; h} = \| \xi \|_{\beta ; x ; h; r}\;,
\qquad
\| \xi \|_{\beta ; x } = \| \xi \|_{\beta ; x ; 1} 
\;.
\end{equ}
For $\beta \leq 0$, let $\CC^{\beta} (\mfK)$ denote the space of distributions $\xi\in\CD'(\mfK^1)$, where
\begin{equ}[eq:fattening]
\mfK^1 = \cup_{x\in \mfK} B_x(1)
\end{equ}
(the $1$-fattening of $\mfK$ using backwards parabolic balls),
equipped with the semi-norm
\begin{equ}
\| \xi \|_{\CC^{\beta} (\mfK)} \eqdef \sup_{x \in \mfK} \| \xi \|_{\beta ; x} \; .
\end{equ}
For $\beta \leq 0$, $\xi\in\CD'(\mfK)$, $x\in\R^d$, and $h>0$ such that $T_{x,h}(\Omega^1) \subset \mfK$, define further
\begin{equ}
\| \xi \|_{\CC^\beta ; x ;  h} = \sup_{y \in B_x(h)} \|\xi\|_{\beta;y; h}
\;,
\qquad
\|\xi \|_{\CC^\beta ; x} = \| \xi \|_{\CC^\beta ; x ;  1}
\;.
\end{equ}
Note that $B_{y}(h)\subset T_{x,h}(\Omega^1)\subset \mfK$ for all $y\in B_x(h)$, so that $\|\xi\|_{\beta;y; h}$ indeed makes sense.
 
Observe that, for all for $\lambda,\delta>0$,
\begin{equ}
\scal{\xi\circ T_{z,\lambda}, \psi^{\delta}_x}
=
\scal{\xi, \psi^{\lambda\delta}_{T_{z,\lambda}(x)}}
\end{equ}
and therefore, for $\beta\in\R$,
\begin{equ}[eq:scale_pointwise]
\|\xi\circ T_{z,\lambda}\|_{\beta;x;h;r}
=
\lambda^\beta \|\xi\|_{\beta;T_{z,\lambda}(x);\lambda h;r}
\end{equ}
and, for $\beta \leq 0$,
\begin{equ}[eq:scaling_local]
\| \xi \circ T_{z, \lambda} \|_{\CC^{\beta} (\Omega)}
= \lambda^{\beta} \| \xi
   \|_{\CC^{\beta} ; z ; \lambda}\;.
\end{equ}

\section{Main idea in classical setting}\label{sec:classical}

In this section, we demonstrate the main idea of our approach in the simple
setting of classically well-posed non-linear heat equations.
We use notation as in Section \ref{sec:PDE_notation}.

Let $p \in (1,\infty)$ and, for $x\in\R^m$, write $x^p = |x|^{p-1}x\in\R^m$.
Suppose $\phi \colon \Omega \rightarrow
\mathbb{R}^m$ satisfies
\begin{equ}[eq:classical_PDE]
  \CL \phi =  - \phi^{p} + \xi
\end{equ}
where $\xi \in \CC^{\beta} (\Omega)$ with $\beta \in (- 2, 0]$.

\begin{remark}
By parabolic regularity, since $\beta>-2$, \eqref{eq:classical_PDE} admits a unique solution $\phi\in\CC(\Omega)$ for any boundary data in $\CC(\d\Omega)$.
Below we use existence (but not uniqueness) of solutions.
\end{remark}

We derive an a priori estimate on $\phi$ that is independent of boundary data.
To give some intuition for the statement and proof of the estimate, consider the \emph{scaling critical} dimension
\begin{equ}[eq:alpha_def]
\alpha = 2/(p-1)
\end{equ}
for the `leading terms' $\CL \phi  + \phi^p$ in \eqref{eq:classical_PDE}.
Here $\alpha$ is derived from the equation
\begin{equ}
\alpha + 2 = \alpha p \;,
\end{equ}
which represents that $\CL \phi$ and $\phi^p$ scale in the same way under the transformation $\phi\mapsto \lambda^{\alpha} \phi \circ T_{z,\lambda}$.
Namely, if $\phi$ solves \eqref{eq:classical_PDE}, then $\psi = \lambda^{\alpha} \phi \circ T_{z,\lambda}$ solves
\begin{equs}[eq:intertwine-classical]
    \CL \psi = ( -\lambda^{\alpha + 2} \phi^p +
    \lambda^{\alpha + 2} \xi  ) \circ T_{z, \lambda}
    &= -\psi^p + \lambda^{ \alpha + 2
    } \xi \circ T_{z, \lambda} \;,
  \end{equs}
  where we used that $\lambda^{\alpha + 2} \phi^p\circ T_{z,\lambda} = \lambda^{\alpha+2 - \alpha p} \psi^p$ and $\alpha + 2 - \alpha p=0$.
Then, denoting
\begin{equ}[eq:eta-def-classical]
\eta = \lambda^{\alpha + 2} {\xi}  \circ T_{z, \lambda}
    \in \CC^{\beta} (\Omega)
\end{equ}
and using \eqref{eq:scaling_local}, we have
\begin{equ}
    \| \eta \|_{\CC^{\beta} (\Omega)} = \lambda^{\alpha + 2 + \beta } \| \xi \|_{\CC^{\beta };z; \lambda}\; . \label{eq:eta-rho-simple}
  \end{equ}
  Hence, for $\lambda\ll 1$, $\psi$ solves the same equation but with a small driver.
Remark now that, if $\| \xi \|_{\CC^{\beta };z;\lambda} \gg 1$ and we choose $\lambda \ll 1$ such that $\|\eta\|_{\CC^{\beta}(\Omega)}=r$ for some $r>0$ small but strictly positive,
then, from local stability of the equation (and ignoring for now the effects of boundary conditions),
we expect $\|\psi\|_{\infty;\Omega} = \lambda^{\alpha}\|\phi\|_{\infty;B_{z}(\lambda)} \lesssim 1$.
Using \eqref{eq:eta-rho-simple}, we thus expect the estimate
\begin{equ}
\|\phi\|_{\infty;B_{z}(\lambda)} \lesssim \lambda^{-\alpha} = \lambda^{-\rho (\alpha+2 + \beta)} \sim \| \xi \|_{\CC^{\beta };z; \lambda}^{\rho}
\end{equ}
where
\begin{equ}[eq:rho_def_classical]
\rho = \frac{\alpha}{\alpha + 2 + \beta} \in (0,\infty)\;.
\end{equ}

The following proposition makes this heuristic precise and further shows space-time bounds independent of the boundary conditions (i.e. space-time localisation).
Define the distance-to-the-boundary function $\dist{\cdot}  \colon\cl\Omega\to [0,1]$ by
\begin{equ}[eq:parabolic_dist]
\dist{z} = \inf_{y \in \d \Omega} |z -y|_\s
\;.
\end{equ}

\begin{proposition}
  \label{prop:classical} There exists $C>0$
  such that, if $\xi,\phi \in \CC(\cl\Omega)$ satisfy \eqref{eq:classical_PDE} in $\Omega$,
  then for all $z \in \Omega$
\begin{equation}\label{eq:less-sharp}
  | \phi (z) | 
    \leq 
    C \max\{  \|
    \xi \|^{\rho}_{\CC^{\beta}(\Omega)} , \dist{z}^{- \alpha} \}\;,
  \end{equation}
  where $\rho$ is given by \eqref{eq:rho_def_classical}
  and where we extend $\xi$ by $0$ outside $\cl\Omega$
  to make sense of $ \|\xi \|_{\CC^{\beta}(\Omega)}$.

More precisely, there exists $C>0$
such that
  \begin{equation}
    | \phi (z) | 
    \leq C \max\{  \|
    \xi \|^{\rho}_{\CC^{\beta} ; z ; \mu_z} , \dist{z}^{- \alpha} \} \;, \label{eq:a-priori}
  \end{equation}
  where  $\mu_z = (C| \phi (z) |^{-1/\alpha})\wedge (\frac{1}{4}\dist{z})$.
\end{proposition}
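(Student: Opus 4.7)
The proof will follow the two-step scaling strategy sketched in the introduction and later formalised in Theorem~\ref{thm:abstract_heuristic}. First, I establish a local coercivity statement at the canonical unit scale; then, I reduce the general estimate to this statement by a scale-invariance argument.

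\textbf{Step 1 (Local coercivity).} The aim is to show there exist $\epsilon_0,\delta_0>0$ such that whenever $(\phi,\xi)\in\CC(\cl\Omega)^2$ satisfies $\CL\phi=-\phi^p+\xi$ in $\Omega$, $\|\phi\|_{\infty;\Omega}\leq 1$ and $\|\xi\|_{\CC^\beta(\Omega)}\leq\epsilon_0$, one has $|\phi(0)|\leq 1-\delta_0$. I would compare $\phi$ to the solution $\mathring\phi$ of the deterministic equation $\CL\mathring\phi=-\mathring\phi^p$ on $\Omega$ with the same parabolic boundary data. Since $\CL|\mathring\phi|^2=-2|\mathring\phi|^{p+1}-2|\nabla\mathring\phi|^2\leq 0$, the parabolic maximum principle gives $\|\mathring\phi\|_\infty\leq\|\phi\|_\infty\leq 1$, and---because $0\in\cl\Omega$ lies in the parabolic interior---a compactness argument using interior parabolic regularity (the right-hand side is uniformly bounded) combined with the strong maximum principle gives a universal strict gap $|\mathring\phi(0)|\leq 1-2\delta_0$: indeed, $|\mathring\phi(0)|=1$ would force $|\mathring\phi|^2\equiv 1$ on the backwards parabolic cone issued from $0$, contradicting the equation. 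The difference $w=\phi-\mathring\phi$ vanishes on the parabolic boundary and solves a linear equation $\CL w+Aw=\xi$ with $\|A\|_\infty\lesssim 1$ (using $|\phi^p-\mathring\phi^p|\lesssim|w|$ under the unit bounds on $\phi,\mathring\phi$), so parabolic Schauder theory---applicable because $\beta>-2$---yields $\|w\|_\infty\lesssim\|\xi\|_{\CC^\beta(\Omega)}\leq\epsilon_0$, and the bound $|\phi(0)|\leq 1-\delta_0$ follows for $\epsilon_0$ small.

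\textbf{Step 2 (Scaling).} Fix $C$ large (to be chosen) and argue by contradiction: suppose the violator set $\mfS=\{z\in\Omega:|\phi(z)|>C\max\{\|\xi\|^\rho_{\CC^\beta;z;\mu_z},\dist{z}^{-\alpha}\}\}$ is non-empty. Using continuity of $\phi$ on the compact $\cl\Omega$, pick $z_0\in\mfS$ such that $|\phi(z_0)|\geq(1-\tfrac12\delta_0)\sup_{\mfS}|\phi|$. Set $\lambda=(1-\tfrac12\delta_0)^{1/\alpha}|\phi(z_0)|^{-1/\alpha}$ and $\psi(\cdot)=\lambda^\alpha\phi(T_{z_0,\lambda}(\cdot))$; by the scale-invariance identity \eqref{eq:intertwine-classical}, $\psi$ solves $\CL\psi=-\psi^p+\tilde\xi$ on $\Omega$ with $\tilde\xi=\lambda^{\alpha+2}\xi\circ T_{z_0,\lambda}$ and $|\psi(0)|=1-\tfrac12\delta_0$. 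The condition $|\phi(z_0)|>C\dist{z_0}^{-\alpha}$ forces $\lambda\leq\mu_{z_0}\leq\tfrac14\dist{z_0}$, so by \eqref{eq:scaling_local},
\[
\|\tilde\xi\|_{\CC^\beta(\Omega)}=\lambda^{\alpha+2+\beta}\|\xi\|_{\CC^\beta;z_0;\lambda}\lesssim\frac{\|\xi\|_{\CC^\beta;z_0;\mu_{z_0}}}{|\phi(z_0)|^{1/\rho}}<C^{-1/\rho}\;,
\]
which is $\leq\epsilon_0$ for $C$ large. For any $y\in\Omega$, $y^\sharp:=T_{z_0,\lambda}(y)\in B_{z_0}(\lambda)$ has $\dist{y^\sharp}\geq\tfrac34\dist{z_0}$; if $y^\sharp\in\mfS$, near-maximality of $z_0$ gives $|\phi(y^\sharp)|\leq(1-\tfrac12\delta_0)^{-1}|\phi(z_0)|$, and if $y^\sharp\notin\mfS$, the definition of $\mfS$ together with $\dist{y^\sharp}^{-\alpha}\leq(4/3)^\alpha|\phi(z_0)|/C$ and monotonicity of the local $\CC^\beta$-seminorm yields the same control provided $C$ is large. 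Hence $\|\psi\|_{\infty;\Omega}\leq 1$, Step 1 applies to give $|\psi(0)|\leq 1-\delta_0$, contradicting $|\psi(0)|=1-\tfrac12\delta_0$. Thus $\mfS=\emptyset$ and \eqref{eq:a-priori} holds; the coarser \eqref{eq:less-sharp} follows by $\|\xi\|_{\CC^\beta;z;\mu_z}\leq\|\xi\|_{\CC^\beta(\Omega)}$.

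\textbf{Main obstacle.} The main delicate point is ensuring $\|\psi\|_{\infty;\Omega}\leq 1$ without losing more than the margin $(1-\tfrac12\delta_0)^{-1}$ allowed by the strict gap in local coercivity: this requires choosing $C$ large enough to absorb geometric factors of the form $(4/3)^\alpha$ arising from parabolic balls and to compare $\|\xi\|_{\CC^\beta;y^\sharp;\mu_{y^\sharp}}$ with $\|\xi\|_{\CC^\beta;z_0;\mu_{z_0}}$ for nearby points $y^\sharp\in B_{z_0}(\lambda)$. Once this book-keeping is carried out, the contradiction with Step 1 is immediate.
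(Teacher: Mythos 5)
Your two-step strategy (local coercivity followed by a scaling/contradiction argument) is exactly the one the paper uses and later abstracts in Theorem~\ref{thm:a_priori_abstract}, and Step~1 is a valid, mildly different implementation of Lemma~\ref{lem:coercive_classical} (you work at unit amplitude and use linear parabolic Schauder for $\CL w + Aw = \xi$; the paper works at a small amplitude $\eps$ and iterates with a truncated heat kernel). The maximum-principle argument for the deterministic gap via $\CL|\mathring\phi|^2\leq -2|\mathring\phi|^{p+1}$ and a compactness argument is fine.

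However, Step~2 as written has a genuine gap. You only use $\lambda\leq\tfrac14\dist{z_0}$ to conclude $\dist{y^\sharp}\geq\tfrac34\dist{z_0}$, and then for $y^\sharp\notin\mfS$ you obtain $|\psi(y)|\leq(4/3)^\alpha(1-\tfrac12\delta_0)$ from the distance term. But taking $C$ large does \emph{not} absorb the fixed factor $(4/3)^\alpha$: the $C$'s cancel in $C\cdot\dist{y^\sharp}^{-\alpha}\leq (4/3)^\alpha|\phi(z_0)|$, and $(4/3)^\alpha(1-\tfrac12\delta_0)>1$ whenever $\alpha$ is even moderately large (i.e.\ $p$ close to $1$). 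The correct repair is that $z_0\in\mfS$ forces $\lambda\leq (1-\tfrac12\delta_0)^{1/\alpha}C^{-1/\alpha}\dist{z_0}$, so $\lambda/\dist{z_0}\to 0$ as $C\to\infty$ and the factor is actually $(1-\lambda/\dist{z_0})^{-\alpha}=1+o(1)$; you need this sharper bound, not $\tfrac34$. The paper handles precisely this by first showing (for $C_*>C_0$) that $\lambda_z\ll\dist{z}$ and then deriving \eqref{eq:dist_bound}. Likewise, the ``monotonicity of the local $\CC^\beta$-seminorm'' you invoke to compare $\|\xi\|_{\CC^\beta;y^\sharp;\mu_{y^\sharp}}$ with $\|\xi\|_{\CC^\beta;z_0;\mu_{z_0}}$ is not automatic: it requires the ball containment $|y^\sharp-z_0|_\s+\mu_{y^\sharp}\leq\mu_{z_0}$, which (when $|\phi(y^\sharp)|>(1-\tfrac12\delta_0)^{-1}|\phi(z_0)|$) holds only after a quantitative comparison of $\mu_{y^\sharp}$ and $\mu_{z_0}$ and a choice of $C$ large enough; this is exactly why the paper introduces the auxiliary parameter $h$ satisfying \eqref{eq:h_bound} and proves \eqref{eq:xi_balls}. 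Your ``Main obstacle'' paragraph correctly flags both of these as the delicate points, but the body of Step~2 does not actually resolve them.
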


  \begin{remark}
  With simple modifications, we can handle the equation $\CL \phi = - \phi^p + \sum_{k=0}^{p-1}\xi_k\phi^k$ where $\xi_k \in \CC^{\beta_k}$ for $\beta_k\leq 0$ satisfying scaling subcriticality for all $0\leq k\leq p-1$
  \begin{equ}
  \beta_k - \alpha k > - 2 - \alpha
  \end{equ}
  and complementary Young regularity for all $1\leq k \leq p-1$
\begin{equ}
\beta_k + 2 + \min_{0 \leq i \leq p - 1} \beta_i > 0
\;.
\end{equ}
This would cover the `remainder' in the stochastic quantisation
    equation of $\Phi^4_2$ considered by Da Prato--Debussche \cite{DPD02}, a priori estimates for which were studied in \cite{MW17Phi42} with different methods  (and without space-time localisation bounds).

  We can also consider source terms with mixed homogeneity, such as $\CL \phi = - \phi^p + \xi +\zeta$ with $\zeta\in\CC^\gamma$ where $\gamma \in (-2,0]$,
  and the term $\|
    \xi \|^{\rho}_{\CC^{\beta} ; z ; \mu_z}$ in \eqref{eq:a-priori} would become $\|
    \xi \|^{\rho}_{\CC^{\beta} ; z ; \mu_z}
    \vee \|
    \zeta \|^{\frac{\alpha}{\alpha+2+\gamma}}_{\CC^{\gamma} ; z ; \mu_z}$.
  Taking $\gamma=0$ recovers the main result of \cite{MW20_reaction}.
  \end{remark}

\begin{remark}
The qualitative assumption that $\phi$ and $\xi$ are continuous is used only to simplify the proof.
We will later prove a more general result without this assumption.
\end{remark}

\begin{remark}
There are two reasons that the `local' quantity $\|
    \xi \|^{\rho}_{\CC^{\beta} ; z ; \mu_z}$ in \eqref{eq:a-priori} is more natural than the `global' quantity $\|
    \xi \|^{\rho}_{\CC^{\beta}(\Omega)} $ in the less sharp bound \eqref{eq:less-sharp}.

First, since $\mu_z \leq \frac{1}{4}\dist{z}$,
$\| \xi \|_{\CC^{\beta} ; z ; \mu_z}$ depends only on the quantities $\scal{\xi,\psi}$ where $\supp\psi \subset B_z(\frac12\dist{z})$ for $z\in\Omega$.
In particular, $\| \xi \|_{\CC^{\beta} ; z ; \mu_z}$ depends only on $\xi$ as an element of $\CD'(\Omega)$.
In contrast, $\| \xi \|_{\CC^{\beta}(\Omega)}$ artificially depends on $\xi$ outside $\Omega$.

Second, for $\beta \leq \gamma \leq 0$, we have the interpolation bound
\[ \| \xi \|_{\CC^{\beta} ; z ;  \eta} \leq \eta^{\gamma - \beta} \| \xi
   \|_{\CC^{\gamma} ; z ;  \eta} . \]
Suppose now we know \eqref{eq:a-priori} for some $\beta \leq 0$.
Then, for $\gamma \in [\beta,0]$,
either $|\phi(z)| \lesssim \dist{z}^{-\alpha}$, or
\[
|\phi(z)| \lesssim \| \xi \|^{\rho }_{\CC^{\beta} ; z ; \mu_z} 
\leq \mu_z^{\rho (\gamma - \beta)} \| \xi \|_{\CC^{\gamma} ; z ;  \mu_z}^{\rho}
\lesssim | \phi (z) |^{- \rho (\gamma - \beta) / \alpha} \| \xi
   \|_{\CC^{\gamma} ; z ; \mu_z}^{\rho}
   \]
and hence $| \phi (z) |^{1 + \rho (\gamma - \beta) / \alpha} \lesssim \| \xi \|_{\CC^{\gamma} ; z ; \mu_z}^{\rho}$ and
\begin{equ}
| \phi (z) | \lesssim \| \xi \|_{\CC^{\gamma} ; z ; \mu_z}^{\rho / (1 + \rho (\gamma - \beta) / \alpha)
}
   \;.
\end{equ}
Observe now that $1 + \rho (\gamma - \beta) / \alpha = 1 + \frac{\gamma -
\beta}{\alpha+2 - \alpha i + \beta} = \frac{\alpha + 2  + \gamma}{\alpha+2  + \beta}$
and thus $\rho / (1 - \rho (\gamma - \beta) / \alpha) = \frac{\alpha}{\alpha + 2 + \gamma}$,
which is the correct exponent for the bound \eqref{eq:a-priori} if we consider $\gamma$ in place of $\beta$.
Therefore \eqref{eq:a-priori} for a \textit{single} choice of $\beta$ implies
the same bound for all $\gamma \in [\beta,0]$ with the correct exponents.
No such implication holds for the bound \eqref{eq:less-sharp}.
\end{remark}

The proof of Proposition \ref{prop:classical} relies on a scaling argument
plus the following simple coercivity property of the equation with small
drivers.

\begin{lemma}
  \label{lem:coercive_classical}
  There exist $\eps,\delta, r > 0$
  such that, if $\xi\in\CD'(\Omega^1)$ and $\phi \in \CC (\cl \Omega)$ satisfy \eqref{eq:classical_PDE} on $\Omega$ and $
  \| \xi \|_{\CC^{\beta}(\Omega)} \leq r$ and $\| \phi \|_{\infty;\Omega}
  \leq \eps+\delta$, then $| \phi (0) | < \eps$.
\end{lemma}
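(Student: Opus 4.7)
The plan is to follow the strategy sketched at the start of the section: compare $\phi$ with the solution $\mathring\phi$ of the deterministic equation with matching boundary data and exploit the coercivity of $-\phi^p$. Let $\delta_* > 0$ denote the gap from the coming-down-from-infinity property of $\CL\mathring\phi = -\mathring\phi^p$, so that any $\mathring\phi$ solving this equation on $\Omega$ with $\|\mathring\phi\|_\infty \leq 1$ satisfies $|\mathring\phi(0)| < 1 - \delta_*$ (a classical consequence of Bernstein-type interior estimates for $p > 1$). I set $\eps = 1 - \delta_*/2$ and $\delta = \delta_*/2$, so that $\eps + \delta = 1$, and reduce the problem to choosing a sufficiently small $r > 0$.

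Given $\phi \in \CC(\cl\Omega)$ with $\|\phi\|_\infty \leq 1$, I would let $\mathring\phi \in \CC(\cl\Omega)$ be the classical solution of $\CL\mathring\phi = -\mathring\phi^p$ on $\Omega$ with boundary data $\phi|_{\d\Omega}$. The parabolic maximum principle applied to $|\mathring\phi|^2$, using $\CL |\mathring\phi|^2 = -2|\mathring\phi|^{p+1} - 2|\nabla \mathring\phi|^2 \leq 0$, gives $\|\mathring\phi\|_{\infty;\Omega} \leq \|\phi\|_{\infty;\Omega} \leq 1$, and hence $|\mathring\phi(0)| < 1 - \delta_*$ by the deterministic assumption.

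The remainder $u = \phi - \mathring\phi \in \CC(\cl\Omega)$ vanishes on $\d\Omega$ and satisfies the linear equation
\[ \CL u + V u = \xi \quad \text{on } \Omega, \]
where $V(z)$ is the positive-semidefinite matrix-valued coefficient obtained from the mean-value theorem applied to $x \mapsto |x|^{p-1}x$ on the segment from $\mathring\phi(z)$ to $\phi(z)$ (positivity follows from convexity of the potential $|x|^{p+1}/(p+1)$), and $\|V\|_{\infty;\Omega} \leq C(p)$ since $\|\phi\|_\infty, \|\mathring\phi\|_\infty \leq 1$. Because $\beta > -2$, Schauder estimates for the heat equation with zero Dirichlet data on $\Omega$, combined with the bounded nonnegative $V$, yield
\[ \|u\|_{\infty;\Omega} \leq C\|\xi\|_{\CC^\beta(\Omega)} \leq Cr \]
for some $C = C(p,\beta,d,m)$. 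Choosing $r$ so that $Cr < \delta_*/2$ gives $\|u\|_\infty < \delta/2$, whence
\[ |\phi(0)| \leq |\mathring\phi(0)| + \|u\|_\infty < (1 - \delta_*) + \delta_*/2 = \eps. \]

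The main obstacle is the Schauder-type bound $\|u\|_\infty \lesssim \|\xi\|_{\CC^\beta}$. Although classical, it uses essentially both that $\beta + 2 > 0$---so that $(\CL)_{\mathrm{Dir}}^{-1} \colon \CC^\beta(\Omega) \to L^\infty(\Omega)$ is bounded---and the positive semi-definiteness of $V$, so that the zeroth-order perturbation does not worsen the estimate from the pure heat operator. A clean implementation is to invert $\CL$ with zero Dirichlet data via the Dirichlet heat kernel and solve $u = (\CL)_{\mathrm{Dir}}^{-1}(\xi - V u)$ by a Neumann series, which converges since $V$ is uniformly bounded on the bounded parabolic domain $\Omega$.
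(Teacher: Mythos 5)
Your overall structure — compare $\phi$ with $\mathring\phi$ solving the deterministic equation with matching boundary data, then bound $u = \phi - \mathring\phi$ by Schauder and absorb the nonlinear perturbation — is the same as the paper's. The observation that $\CL|\mathring\phi|^2 \leq 0$, and that the mean-value coefficient $V$ is symmetric positive semi-definite (as an integral average of $D(x^p)=|x|^{p-1}I + (p-1)|x|^{p-3}x\otimes x$), is a nice structural remark the paper does not use. However, there is a genuine gap in the closing step.

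You fix $\eps + \delta = 1$, so both $\phi$ and $\mathring\phi$ are only controlled by $1$, and consequently $\|V\|_{\infty;\Omega}$ is of order $p$ (the Lipschitz constant of $x\mapsto x^p$ on the unit ball). Your ``clean implementation'' via the Neumann series $u = G(\xi) - G(Vu)$, with $G=(\CL)_{\mathrm{Dir}}^{-1}$, requires $\|GV\|_{L^\infty\to L^\infty} < 1$; but $\|G\|_{L^\infty\to L^\infty}$ on the unit parabolic cylinder is of order $1$, so $\|GV\|\lesssim p$ and the series has no reason to converge for $p>1$. Boundedness of $V$ on a bounded domain is not smallness. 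The alternative route you gesture at — using the sign of $V$ via a maximum principle for $|u|^2$ — is where positive semi-definiteness would actually matter, but it is not compatible with $\xi$ being merely a distribution in $\CC^\beta$ with $\beta<0$: the pointwise pairing $\langle \xi, u\rangle$ in $\CL|u|^2 \leq 2\langle\xi,u\rangle$ is not defined, and after splitting off $K*\xi$ one picks up cross terms $\langle V(K*\xi), v\rangle$ that do not have a sign. So neither version of the claimed $\|u\|_\infty\lesssim\|\xi\|_{\CC^\beta(\Omega)}$ is established.

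The paper avoids all of this by taking $\eps$ \emph{small}, not close to $1$: then $\|\phi\|_\infty,\|\mathring\phi\|_\infty\leq 2\eps$ and the Lipschitz constant of the nonlinearity on the relevant range is $O(\eps^{p-1})\ll 1$, so the term $C\eps^{p-1}\|\phi-\mathring\phi\|_\infty$ can be absorbed directly after a single $K*g$ decomposition, with no appeal to monotonicity. This small-$\eps$ reduction is the key device you are missing; your sign observation, while correct and interesting, does not substitute for it in the distributional setting. To repair the argument you should either work with $\eps$ small as in the paper, or genuinely establish the weighted Schauder estimate for $\CL + V$ with distributional forcing using the psd structure — but the latter is nontrivial and is not done by the Neumann series.
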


\begin{proof}
The proof is easy and relies on the maximum principle and a comparison to the case $\xi=0$. We give the details as we follow the same strategy later in a more complicated setting.

Consider $\mathring\phi\in\CC(\Omega)$ solving $\CL
  \mathring\phi = - \mathring\phi^p$ in $\Omega$, i.e. \eqref{eq:classical_PDE} with $\xi=0$.
  Then, for every $\eps>0$, there exists $\delta = \delta(\eps)\in (0,\frac{\eps}{2})$ such that, if $\|\mathring \phi \|_{\infty;\Omega}
  \leq \eps+\delta$, then $| \mathring \phi (0) | < \eps-\delta$.
  
  Consider now $\phi\in\CC(\cl\Omega)$ solving \eqref{eq:classical_PDE} on $\Omega$ with $\| \phi \|_{\infty;\Omega}  \leq \eps+\delta$.
  Let $\mathring\phi $ be as above with boundary data $\mathring\phi \restriction_{\d\Omega} = \phi \restriction_{\d\Omega}$
  (such $\mathring\phi$ exists by parabolic regularity).
Then $\|\mathring\phi\|_{\infty;\Omega} \leq \eps+\delta$ by the maximum principle, and thus $|\mathring\phi(0)|<\eps-\delta$.

Furthermore, on $\Omega$,
\begin{equ}
\CL(\phi-\mathring\phi) = \mathring\phi^p - \phi^p + \xi
\;.
\end{equ}
Consider now a truncation $K$ of the heat kernel
such that $\supp(K)\subset B_0(1/2)$ and $\CL K = \delta_0 + U$ where $U$ is smooth and $\supp(U) \subset B_0(1/2)$.
Define $Y\colon\bar\Omega\to\R$ by
\begin{equ}[eq:diff_classical]
\phi - \mathring\phi = K*g + Y
\end{equ}
where $g = \xi + \bone_{\Omega}(\mathring\phi^p - \phi^p) \in \CD'(\Omega^1)$.
Then, on $\Omega$,
\begin{equ}[eq:Y_equ_classical]
\CL Y + U* g = 0\;. 
\end{equ}
Since $\beta>-2$ and $U$ is smooth with small support, we have $\|U*\xi\|_{\infty;\Omega} \lesssim \|\xi\|_{\CC^\beta(\Omega)} \ll \delta$ whenever $\|\xi\|_{\CC^\beta(\Omega)} \leq r$ and $r\ll \delta$.
Since $\|\mathring \phi\|_{\infty;\Omega}\vee \|\phi\|_{\infty;\Omega}\leq 2\eps$, we also have
\begin{equ}
\|U*\{\bone_{\Omega}(\mathring\phi^p - \phi^p)\}\|_{\infty;\Omega}\lesssim  \eps^{p-1}\|\mathring\phi-\phi\|_{\infty;\Omega}
\;.
\end{equ}
It follows that
\begin{equ}[eq:Ug]
\|U*g\|_{\infty;\Omega}\leq c \delta + C  \eps^{p-1} \|\mathring\phi-\phi\|_{\infty;\Omega}
\end{equ}
where $c\downarrow 0$ as $r\downarrow0$.
Here and below, $C>0$ is a constant whose value may change from line to line, but does not depend on $\eps>0$.

In a similar way, using a scale decomposition of $K$ (see, e.g. \cite[Lem.~5.5]{Hairer14}) and the fact that $(\mathring\phi-\phi)\restriction_{\d\Omega}=0$, we obtain for $r$ sufficiently small
\begin{equ}[eq:Kg]
\|Y\|_{\infty;\d\Omega} \leq
\|K*g\|_{\infty;\bar\Omega}\leq \delta/4 + C\eps^{p-1} \|\mathring\phi-\phi\|_{\infty;\Omega}
\;.
\end{equ}
It follows from \eqref{eq:Y_equ_classical}, \eqref{eq:Ug}, \eqref{eq:Kg}, and the maximum principle that, for $r$ sufficiently small,
\begin{equ}
\|Y\|_{\infty;\bar\Omega} \leq \delta/2 + C \eps^{p-1} \|\mathring\phi-\phi\|_{\infty;\Omega}
\;.
\end{equ}
Inserting this back into \eqref{eq:diff_classical} and combining with (the second bound in) \eqref{eq:Kg} we obtain
\begin{equ}
\|\mathring\phi-\phi\|_{\infty;\Omega} \leq 3\delta/4 + C \eps^{p-1} \|\mathring\phi-\phi\|_{\infty;\Omega}
\;.
\end{equ}
It follows that, for all $\eps>0$ sufficiently small, $\|\mathring\phi-\phi\|_{\infty;\Omega}\leq \delta$
and therefore, since $|\mathring\phi(0)|<\eps-\delta$, one has $|\phi(0)|<\eps$
as desired.
\end{proof}


\begin{proof}[Proof of Proposition \ref{prop:classical}]
Let $\eps,\delta$ be as in Lemma \ref{lem:coercive_classical} and $h>1$ such that
\begin{equ}[eq:h_bound]
1+\eps^{1/\alpha}(\eps+\delta)^{-1/\alpha}h \leq h\;.
\end{equ}
 Since we assumed that $\phi,\xi\in\CC(\cl\Omega)$, there exists $z \in \cl\Omega$ which maximises
  \begin{equation}
    \frac{| \phi (z) | }{\max\{ \| \xi
    \|^{\rho}_{\CC^{\beta} ; z ; h \lambda_z} , \dist{z}^{- \alpha} \}} \eqdef \frac{|\phi(z)|}{Q} \eqdef C_* \label{eq:z_def}
  \end{equation}
  where we set $\lambda_z^{\alpha} = (\eps | \phi (z) |^{-1} )\wedge (\frac{1}{4h}\dist{z})^\alpha$.
  Without loss of generality, we assume $\dist{z}>0$.
  We will derive a bound on $C_*$, from which the proof will follow with $\mu_z = h\lambda_z$.
  
  We first claim that there exists $C_0 >0$ such that, if $C_*>C_0$, then
  \begin{enumerate}[label=(\alph*)]
  \item\label{pt:first_bound} $\eps | \phi (z) |^{-1} \leq (\frac1{4h}\dist{z})^\alpha$, so in particular $\lambda_z^{\alpha} = \eps|\phi (z) |^{-1}$, and
  \item for all $x \in B_z(\lambda_z)$,
  \begin{equation}
    \dist{x}^\alpha > \eps(\eps+\delta)^{-1} \dist{z}^\alpha \;. \label{eq:dist_bound}
  \end{equation}
  \end{enumerate}
  Indeed, both
  statements hold if $\lambda_z \ll \dist{z}$. Since
  \[ \lambda_z^{\alpha} \leq \eps|\phi(z)|^{-1} =  \eps C_*^{- 1} Q^{- 1} \leq \eps C_*^{- 1} \dist{z}^{\alpha} \;, \]
  the existence of $C_0$ follows.
  We suppose henceforth that $C_*>C_0$ as otherwise \eqref{eq:a-priori} holds with $C_*=C_0$ and we are done.
  
  Next, we define $\psi = \lambda^{\alpha}_z \phi \circ T_{z, \lambda_z} \in
  \CC (\cl\Omega)$ and observe that
  \begin{equation}
    | \psi  (0) | = \lambda^{ \alpha}_z | \phi (z) | = \eps
    \;. \label{eq:psi-zero}
  \end{equation}
  Furthermore, by \eqref{eq:intertwine-classical}, in $\Omega$,
  \begin{equation}
    \CL \psi = - \psi^p +  \eta \label{eq:psi-equ}
  \end{equation}
  with $\eta\in\CC^\beta(\Omega)$ as in \eqref{eq:eta-def-classical}.
  Then, by \eqref{eq:eta-rho-simple} and the definition of $\rho$ in \eqref{eq:rho_def_classical},
  \begin{equation}
    \| \eta \|_{\CC^{\beta} (\Omega)}^{\rho} = \lambda_z^{(\alpha + 2  + \beta) \rho } \| \xi \|_{\CC^{\beta } ; z ; \lambda_z}^{\rho} \leq \lambda_z^{\alpha} Q  = \eps C_*^{- 1}\;. \label{eq:eta-rho}
  \end{equation}
  
  We next claim that $\| \psi \|_{\infty;\Omega} \leq \eps+\delta$,
  which is equivalent to
  \begin{equ}
  \sup_{x \in B_{z}(\lambda_z)} \eps | \phi (x) | \leq (\eps+\delta)|\phi(z)|
  \;.
  \end{equ}
  Arguing by contradiction, suppose there exists
  $x\in B_{z}(\lambda_z)$ such that
  \begin{equ}[eq:x_too_large]
  \eps | \phi (x) | > (\eps+\delta)|\phi(z)|\;.
  \end{equ}
  Then
  \begin{equ}
  \eps|\phi(x)|^{-1} < \eps^2(\eps+\delta)^{-1}|\phi(z)|^{-1}
  \leq
  \eps(\eps+\delta)^{-1} \Big(\frac{\dist{z}}{4h}\Big)^\alpha
  < \Big(\frac{\dist{x}}{4h} \Big)^\alpha
  \;,
  \end{equ}
  where we used \eqref{eq:x_too_large} in the first bound, \ref{pt:first_bound} in the second bound,
  and \eqref{eq:dist_bound} in the third bound.
  In particular, recalling $\lambda_x^\alpha = (\eps|\phi(x)|^{-1}) \wedge (\frac1{4h}\dist{x})^\alpha$,
  \begin{equ}
  \lambda_x^\alpha = \eps|\phi(x)|^{-1} < \eps^2(\eps+\delta)^{-1}|\phi(z)|^{-1} = \eps(\eps+\delta)^{-1} \lambda_z^\alpha\;.
  \end{equ}
Therefore, by \eqref{eq:h_bound}, we have $\lambda_z + h\lambda_x \leq h\lambda_z$,
and thus $B_x(h\lambda_x) \subset B_{z}(h\lambda_z)$
and
\begin{equ}[eq:xi_balls]
\| \xi \|^{\rho}_{\CC^{\beta}; x ; h\lambda_x}
  \leq 
   \| \xi \|^{\rho}_{\CC^{\beta} ; z ; h\lambda_z}\;.
\end{equ}
It follows that
  \begin{equs}
  | \phi (x) |
  &\leq C_* \max\{
     \| \xi \|^{\rho}_{\CC^{\beta} ; x ;  h\lambda_x}
     , \dist{x}^{- \alpha} \}
     \\
     &\leq
     C_* \max\{ \| \xi \|^{\rho}_{\CC^{\beta} ; z ; h \lambda_z}
     , \eps^{-1}(\eps+\delta) \dist{z}^{- \alpha} \} \leq \eps^{-1} (\eps+\delta)
     |\phi(z)|\;,
    \end{equs}
    where we used the definition of $C_*$ in \eqref{eq:z_def} for the first and third bounds,
    and \eqref{eq:dist_bound} and \eqref{eq:xi_balls} for the second bound.
    This contradicts \eqref{eq:x_too_large} and concludes the proof of $\| \psi \|_{\infty;\Omega} \leq \eps+\delta$.

  To conclude the proof, by \eqref{eq:eta-rho}, there exists $C_1>0$ such that, if $C_*>C_1$, then $\| \eta
  \|_{\CC^{\beta}(\Omega)} \leq r$ where $r$ is from Lemma
  \ref{lem:coercive_classical}.
  Since $\psi$ solves \eqref{eq:psi-equ} and $\|
  \psi \|_{\infty} \leq \eps + \delta$, Lemma
  \ref{lem:coercive_classical} implies that $| \psi (0) | < \eps$ if $C_*>C_1$.
  But this is a
  contradiction to \eqref{eq:psi-zero}, so we conclude that $C_*\leq C_1$.
\end{proof}

\section{Abstract bounds}\label{sec:abstract}

We now flesh out the abstract conditions that make the above argument work. An
important remark is that it is hardly important that $\phi$ solves a
PDE - the important ingredients are Lemma \ref{lem:coercive_classical}, which
controls the behaviour of the solution for small drivers,
and the scaling relation \eqref{eq:intertwine-classical}.
Our abstract assumptions are as follows.

\textbf{Domains.} Let $(\Sigma, \rho)$ be a metric space, called the `domain', with a distinguished set
$\d \Sigma \subset \Sigma$ called the `boundary'.
We assume that, for all $z\in \Sigma$,
\[
\dist{ z} \eqdef \inf_{x \in \d \Sigma } \rho (x, z) \in [0,\infty) \; .
\]

We assume that, for every $z \in \Sigma$ and $0 < \lambda \leq
\dist{z}$, there are sets $\Sigma_{z, \lambda}$ with a
`boundary' $\d \Sigma_{z, \lambda} \subset \Sigma_{z, \lambda}$ and an injection
$T_{z, \lambda} \colon \Sigma_{z, \lambda} \hookrightarrow \Sigma$ such that $T_{z, \lambda}
(o_{z, \lambda}) = z$ for some $o_{z, \lambda} \in \Sigma_{z, \lambda}$.

\begin{remark}
In this section, and this section only, we deviate from the notation of Section \ref{sec:notation}, so the map $T_{z, \lambda}$ is \emph{not} assumed to be of the form \eqref{eq:T_lambda_def}.
\end{remark}

\begin{notation}
  Below we introduce several objects indexed by $z \in \Sigma$ and $0 < \lambda
  \leq \dist{z}$. Unless otherwise stated, we let
  $z, \lambda$ run over all such pairs.
\end{notation}

We suppose that
\begin{equ}[eq:diameter]
  {\sup_{x \in \Sigma_{z, \lambda}}}  \rho(z, T_{z,
  \lambda}(x)) \leq \lambda  \;. 
\end{equ}
See Figure \ref{fig:domains} for an illustration.

\begin{figure}[ht]
\centering
\begin{tikzpicture}[line width=1pt,>=Stealth,
                    every node/.style={font=\small},
                    scale=0.7]

\draw plot[smooth cycle, tension=0.8] coordinates{
  (-4, 0) (-3, 1) (-0.8, 0.8) (1.8, 3.0)
  (3.8, 1.9) (4.3, 0) (3.6,-2.1) (0.8,-3.3) (-2.5,-2.4)};
\node[font=\large] at (0.4, 2.6) {$\d\Sigma$};
\node[font=\large] at (-2.5,-0.5) {$\Sigma$};

\coordinate (z) at (2.5,0);
\def\lam{0.9}
\draw[red,thick,fill=red!15] (z) circle (\lam);
\fill (z) circle (2pt);
\node[xshift=3pt,yshift=-5pt] at (z) {$z$};

\draw[thin,<->]
      (z) -- ($(z)+(-\lam*0.69,-\lam*0.69)$) node[midway,xshift=-5pt,yshift=5pt] {\scriptsize $\lambda$};

\node at ($(z)+(0,\lam)+(0,0.3)$) {$T_{z,\lambda}(\Sigma_{z,\lambda})$};
\coordinate (C) at (7,-1.5);
\def\R{2.3}
\draw[red,thick,fill=red!15] (C) circle (\R);
\fill (C) circle (2pt);
\node at ($(C)+(0,-0.3)$) {$o_{z,\lambda}$};
\node[above,yshift=-3pt] at ($(C)+(0,\R)$) {$\d\Sigma_{z,\lambda}$};
\node at ($(C)+(\R/2,-\R/2)$) {$\Sigma_{z,\lambda}$};

\draw[thick,->]
      ($(C)+(-0.8,0.25)$) -- ($(z)+(0.5,-0.15)$)
      node[midway,xshift=20pt,yshift=1pt] {$T_{z,\lambda}$};

\coordinate (SigBound) at (4.3,0);   
\draw[thin,<->]
      (z) -- (SigBound) node[midway,xshift=6pt,yshift=6pt] {{\scriptsize $\dist{z}$}};

\end{tikzpicture}
\caption{Example of domain configuration.}
\label{fig:domains}
\end{figure}
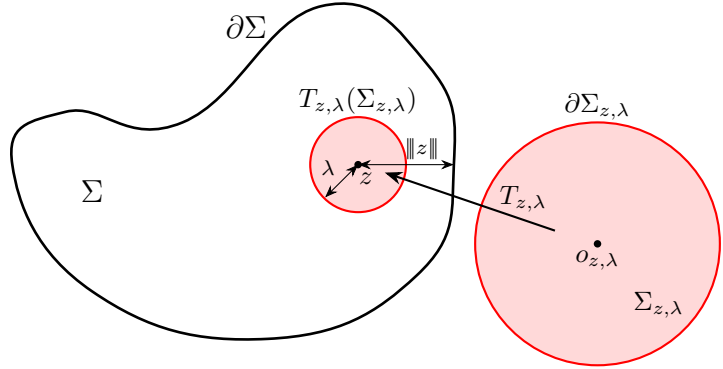

\textbf{Drivers.} There are sets of `drivers' $\drivers$ and $\mathbb{M}_{z, \lambda}$, a `scaling' map $R_{z, \lambda} \colon \mathbb{M} \rightarrow
\mathbb{M}_{z, \lambda}$,
and a `norm' $\| \cdot \|_{z, \lambda} \colon \drivers_{z,\lambda} \to [0,\infty)$.
We also suppose there are `local norms' $\disc{\cdot}_{z,\lambda} \colon \drivers \to [0,\infty)$ satisfying the following properties.
Consider $\lambda \leq \dist{z}$, $x\in T_{z,\lambda}(\Sigma_{z,\lambda})$, and $\lambda'\leq\dist{x}$
such that $\rho(x,z) + \lambda' \leq \lambda$
(this condition in particular implies that the ball of radius $\lambda'$ centred at $x$ is contained in the ball of radius $\lambda$ centred at $z$, see Figure \ref{fig:balls}).
Then
\begin{equ}[eq:balls]
\disc{M}_{x,\lambda'} \leq \disc{M}_{z,\lambda}\;.
\end{equ}
Furthermore, we assume that there exists $\alpha>0$ such that
\begin{equ}[eq:M_norms]
\|R_{z,\lambda} M\|_{z,\lambda} \leq \lambda^\alpha \disc{M}_{z,\lambda}\;.
\end{equ}

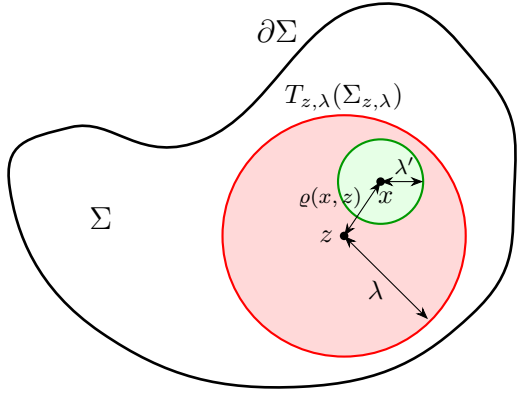
\begin{figure}[ht]
\centering

\begin{tikzpicture}[>=Stealth, line width=1pt,
                    every node/.style={font=\small},
                    scale=0.8]

\draw plot[smooth cycle, tension=.8] coordinates{
  (-4,0) (-3,1) (-.8,.8) (1.8,3) (3.8,1.9) (4.3,0)
  (3.6,-2.1) (.8,-3.3) (-2.5,-2.4)};
\node[font=\large] at (.4,2.6) {$\d\Sigma$};
\node[font=\large] at (-2.5,-.5) {$\Sigma$};

\coordinate (z) at (1.5,-0.8);   
\def\lam{2}                  

\draw[red,thick,fill=red!15] (z) circle (\lam);
\fill (z) circle (2pt);
\node[left] at (z) {$z$};

\node at ($(z)+(0,\lam)+(0,0.3)$) {$T_{z,\lambda}(\Sigma_{z,\lambda})$};

\draw[<->,thin]
      ($(z)+(0.69*\lam,-.69*\lam)$) -- (z)
      node[midway,below left=-4pt] {$\lambda$};

\def\alpha{.35}
\coordinate (x) at ($(z)+(.6,.9)$);   
\def\rx{\alpha*\lam}

\draw[green!60!black,thick,fill=green!10] (x) circle (\rx);
\fill (x) circle (2pt);
\node[xshift=2pt,yshift=-6pt] at (x) {$x$};

\coordinate (xp) at ($(x)+(\rx,0)$);
\draw[<->,thin] (x) -- (xp) node[midway,yshift=6pt,xshift=1pt] {\scriptsize $\lambda'$};

\draw[<->,thin]
      (z) -- (x) node[midway,yshift=4pt,xshift=-12pt] {\scriptsize $\rho(x,z)$};

\end{tikzpicture}

\caption{Example configuration of $\lambda,\lambda',x,z$.}
\label{fig:balls}
\end{figure}

\begin{remark}
  The maps $R_{z, \lambda}$ are the abstract analogues of $\xi \mapsto
  \lambda^{\alpha + 2 } \xi \circ T_{z, \lambda}$ from
  \eqref{eq:eta-def-classical}.
\end{remark}

\textbf{Solutions.}
Let $(E,|\cdot|)$ be a normed vector space.
We equip $E^{\Sigma_{z,\lambda}}$
with the uniform norm $\| \phi \|_{\infty}
= \sup_{x \in \Sigma_{z, \lambda}} | \phi (x) |$.
For $\alpha>0$ as in \eqref{eq:M_norms}, define $R_{z, \lambda} \colon E^\Sigma \rightarrow E^{\Sigma_{z,\lambda}}$ by
\[ R_{z, \lambda} (\phi) = \lambda^{\alpha} \phi \circ T_{z,
   \lambda} \;. \]
   For every $M\in\drivers$ (respectively $M\in \drivers_{z,\lambda}$)
   consider a subset $\bbS_M \subset E^{\Sigma}$ (respectively $\bbS_M \subset E^{\Sigma_{z,\lambda}}$),
   which we call the set of `solutions' with respect to $M$.
   We assume that $\bbS_M$ is closed under rescaling in the sense that,
   for all $M\in\drivers$,
   \begin{equ}[eq:sol_rescale]
   R_{z,\lambda} \bbS_{M} \subset \bbS_{R_{z,\lambda} M}\;.
   \end{equ}


\textbf{Local coercivity.} Our final assumption is that there exists $\delta > 0$ such that, for all $z\in \Sigma$, $0< \lambda \leq \dist{z}$,
$M \in \mathbb{M}_{z, \lambda}$ with $\| M \|_{z, \lambda} < 1$, and $\phi \in \bbS_{M}$ with $\| \phi \|_{\infty} < 1 + \delta$, we have
\begin{equation}
  | \phi (o_{z, \lambda}) | < 1\;.
  \label{eq:local_coercivity_abstract}
\end{equation}
(This is the analogue of Lemma \ref{lem:coercive_classical}.)

\begin{theorem}
  \label{thm:a_priori_abstract}
  With the above assumptions, consider $M\in \drivers$ and $\phi \in \bbS_M$.
 Let
  \[
  \theta = (1+\delta)^{-1/\alpha}\;,
  \qquad
  \nu \geq (1-\theta)^{-1}
  \;. \]
  Then for all $z \in \Sigma$
  \begin{equ}[eq:a-priori-interior]
    | \phi(z) | \leq  \max\{ \disc{M}_{z, \lambda_z/(1-\theta) } , \nu^\alpha \dist{z}^{-\alpha} \}
  \end{equ}
  where $\lambda_z = |\phi(z)|^{-1/\alpha} \wedge (\frac1{\nu}\dist{z})$.
\end{theorem}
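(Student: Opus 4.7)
The plan is to adapt the scaling-and-contradiction argument of Proposition \ref{prop:classical} to the abstract setting. Writing $Q(z) := \max\{\disc{M}_{z,\lambda_z/(1-\theta)},\nu^\alpha\dist{z}^{-\alpha}\}$ and $C_\ast := \sup_{z\in\Sigma}|\phi(z)|/Q(z)$, the desired bound is equivalent to $C_\ast \leq 1$. I would suppose for contradiction that $C_\ast > 1$ and pick $z_\ast$ (nearly) attaining this supremum. The inequality $C_\ast > 1$ forces $|\phi(z_\ast)| > \nu^\alpha\dist{z_\ast}^{-\alpha}$, so $\lambda_\ast := \lambda_{z_\ast} = |\phi(z_\ast)|^{-1/\alpha}$ with $\lambda_\ast < \dist{z_\ast}/\nu$ strictly. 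Rescaling, $\psi := R_{z_\ast,\lambda_\ast}\phi$ lies in $\bbS_{M'}$ by \eqref{eq:sol_rescale} where $M' := R_{z_\ast,\lambda_\ast}M$, and by construction $|\psi(o_{z_\ast,\lambda_\ast})| = \lambda_\ast^\alpha |\phi(z_\ast)| = 1$.

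It remains to check the two hypotheses of local coercivity \eqref{eq:local_coercivity_abstract}. For the driver, by \eqref{eq:M_norms} and \eqref{eq:balls} applied with $(x,\lambda',\lambda) = (z_\ast,\lambda_\ast,\lambda_\ast/(1-\theta))$,
\[
\|M'\|_{z_\ast,\lambda_\ast} \leq \lambda_\ast^\alpha \disc{M}_{z_\ast,\lambda_\ast/(1-\theta)} \leq \lambda_\ast^\alpha Q(z_\ast) = 1/C_\ast < 1.
\]
For the solution, I take $y \in \Sigma_{z_\ast,\lambda_\ast}$, set $x := T_{z_\ast,\lambda_\ast}(y)$ (so $\rho(x,z_\ast)\leq\lambda_\ast$ by \eqref{eq:diameter}), and split on whether $\lambda_x \leq \theta\lambda_\ast$. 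If yes, then $(1-\theta)\rho(x,z_\ast) + \lambda_x \leq \lambda_\ast$ triggers \eqref{eq:balls} to give $\disc{M}_{x,\lambda_x/(1-\theta)} \leq Q(z_\ast)$; moreover $\nu \geq (1-\theta)^{-1}$ together with $\lambda_\ast < \dist{z_\ast}/\nu$ yield $\dist{x} \geq \dist{z_\ast}-\lambda_\ast > \theta\dist{z_\ast}$, hence $\nu^\alpha\dist{x}^{-\alpha} < (1+\delta)Q(z_\ast)$, and combining $|\phi(x)| \leq C_\ast Q(x) < (1+\delta)|\phi(z_\ast)|$. If no, the definition of $\lambda_x$ directly gives $|\phi(x)|^{-1/\alpha} \geq \lambda_x > \theta\lambda_\ast$, so $|\phi(x)| < \theta^{-\alpha}|\phi(z_\ast)| = (1+\delta)|\phi(z_\ast)|$. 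In both cases $|\psi(y)| < 1+\delta$.

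Local coercivity then forces $|\psi(o_{z_\ast,\lambda_\ast})| < 1$, contradicting $|\psi(o_{z_\ast,\lambda_\ast})| = 1$, so $C_\ast \leq 1$ after all. The main technical point I expect to require care is upgrading the pointwise strict inequalities $|\psi(y)| < 1+\delta$ to a uniform strict bound $\|\psi\|_\infty < 1+\delta$: the second case saturates in the limit $\lambda_x \downarrow \theta\lambda_\ast$, whereas the first gives a genuine uniform strict bound. Combined with the need to produce a near-maximiser $z_\ast$ without any compactness of $\Sigma$, I would handle both issues by working with a quantitative slack from $C_\ast > 1$ (choosing $z_\ast$ with $|\phi(z_\ast)|/Q(z_\ast) \geq C_\ast(1-\eta)$ for $\eta$ tied to $\delta$), exploiting the extra strict inequalities (notably $\lambda_\ast < \dist{z_\ast}/\nu$) that accompany $C_\ast > 1$, and if necessary invoking local coercivity with some $\delta' < \delta$.
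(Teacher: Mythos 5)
Your proposal follows the same scaling-and-contradiction strategy as the paper and correctly flags the two real subtleties: producing a near-maximiser without any compactness, and promoting pointwise strict bounds on $|\psi|$ to a uniform strict bound $\|\psi\|_\infty < 1+\delta$. The concrete gap is in Case 1, where you assert $|\phi(x)|\leq C_*Q(x) < (1+\delta)|\phi(z_*)|$ and call this a ``genuine uniform strict bound''. The intermediate step $Q(x)<(1+\delta)Q(z_*)$ is fine, but passing from $C_*Q(x)<(1+\delta)C_*Q(z_*)$ to $<(1+\delta)|\phi(z_*)|$ needs $C_*Q(z_*)\leq|\phi(z_*)|$, i.e.\ $z_*$ must realise the supremum exactly. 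With a near-maximiser $|\phi(z_*)|\geq(1-\eta)C_*Q(z_*)$ you only reach $|\phi(x)|<(1+\delta)|\phi(z_*)|/(1-\eta)$, strictly worse than what local coercivity requires; and invoking coercivity with some $\delta'<\delta$ goes the wrong way, since that demands a \emph{tighter} bound on $\|\psi\|_\infty$, not a looser one. So the "quantitative slack tied to $\delta$" you plan to exploit has no identified source, and the argument as written does not close.

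What actually makes the paper's proof close is an explicit margin: pick an auxiliary $g>\nu$ close to $\nu$, used only in the $\dist{z}^{-\alpha}$ term of the competitor defining $C_*$ (so that strict failure of \eqref{eq:a-priori-interior} still gives $C_*>1$), and then choose $z$ with $|\phi(z)|=\bar C Q$ for $\bar C\in(1,C_*]$ satisfying $C_*/\bar C<(1-g^{-1})^\alpha(1+\delta)$. The inequality $g>\nu\geq(1-\theta)^{-1}$ forces $1-g^{-1}>\theta$, so $(1-g^{-1})^\alpha(1+\delta)>1$; this positive margin simultaneously absorbs the near-maximiser gap $C_*/\bar C>1$ and supplies the strict slack $(1-g^{-1})^{-\alpha}C_*\bar C^{-1}<1+\delta$ in the final chain $|\phi(x)|\leq(1-g^{-1})^{-\alpha}C_*Q<(1+\delta)|\phi(z)|$. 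Alternatively one can keep $\nu$ and instead carry the sharper estimate $\lambda_*\leq\bar C^{-1/\alpha}\nu^{-1}\dist{z_*}$ (rather than merely $\lambda_*<\nu^{-1}\dist{z_*}$) through the bound on $\dist{x}$, recovering the margin from $\bar C>1$. One of these devices, or an equivalent, is the missing ingredient.
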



Before the proof, we give an immediate corollary in which we slightly generalise the form of the local coercivity assumption.

\begin{corollary}\label{cor:parameter_choice}
Suppose that, instead of local coercivity,
there exist $\eps,\kappa,r>0$ such that, for all $z\in \Sigma$, $0< \lambda \leq \dist{z}$,
$M \in \mathbb{M}_{z, \lambda}$ with $\| M \|_{z, \lambda} < r$, and $\phi \in \bbS_{M}$ with $\| \phi \|_{\infty} < \eps$, we have $| \phi (o_{z, \lambda}) | < \eps-\kappa$.
Denote $1+\delta = \frac{\eps}{\eps-\kappa}$ and $\theta = (1+\delta)^{-1/\alpha}$,
and let $\nu\geq (1-\theta)^{-1}$.
Then
\begin{equ}
    | \phi(z) | \leq \frac{\eps}{1+\delta} \max\{ r^{-1}\disc{M}_{z, \lambda_z/(1-\theta)} ,
    \nu^\alpha \dist{z}^{-\alpha} \}
  \end{equ}
where $\lambda_z = (\frac{1+\delta}{\eps}|\phi(z)|)^{-1/\alpha} \wedge (\frac1{\nu}\dist{z})$.
\end{corollary}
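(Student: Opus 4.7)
The plan is to reduce the corollary to Theorem \ref{thm:a_priori_abstract} by an affine rescaling. Set $c := (1+\delta)/\eps = 1/(\eps-\kappa)$ and consider the rescaled function $\tilde\phi := c\phi$, together with the rescaled driver norm $r^{-1}\|\cdot\|_{z,\lambda}$ and local norm $r^{-1}\disc{\cdot}_{z,\lambda}$. Equivalently, I replace the solution sets by $\tilde\bbS_M := c \cdot \bbS_M$ so that $\tilde\phi \in \tilde\bbS_M$. All other structural objects (the domain $\Sigma$, the maps $T_{z,\lambda}$ and $R_{z,\lambda}$, the scaling exponent $\alpha$, and the point $o_{z,\lambda}$) remain untouched.

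First I will verify that this rescaled data satisfies the hypotheses of Theorem \ref{thm:a_priori_abstract}. The scaling bound \eqref{eq:M_norms} and the monotonicity \eqref{eq:balls} both persist because each side is multiplied by the same constant $r^{-1}$. Linearity of $R_{z,\lambda}$ in its function argument gives $R_{z,\lambda}(c\phi) = c R_{z,\lambda}\phi$, so the closure property \eqref{eq:sol_rescale} for $\tilde\bbS_M$ follows at once from the corresponding property for $\bbS_M$. The crucial translation is that of the local coercivity assumption: $r^{-1}\|M\|_{z,\lambda} < 1$ and $\|\tilde\phi\|_\infty < 1+\delta$ are equivalent, respectively, to $\|M\|_{z,\lambda} < r$ and $\|\phi\|_\infty < \eps$, while $|\tilde\phi(o_{z,\lambda})| < 1$ is equivalent to $|\phi(o_{z,\lambda})| < 1/c = \eps-\kappa$. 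Hence the hypothesis of the corollary is exactly the condition \eqref{eq:local_coercivity_abstract} expressed in the rescaled variables, with the same $\delta$.

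Applying Theorem \ref{thm:a_priori_abstract} to $\tilde\phi$ then yields
\begin{equ}
|\tilde\phi(z)| \leq \max\bigl\{ r^{-1}\disc{M}_{z,\tilde\lambda_z/(1-\theta)},\ \nu^\alpha \dist{z}^{-\alpha}\bigr\}\;,
\end{equ}
with $\tilde\lambda_z = |\tilde\phi(z)|^{-1/\alpha} \wedge (\nu^{-1}\dist{z})$. Since $|\tilde\phi(z)| = c|\phi(z)|$, the quantity $\tilde\lambda_z$ coincides with the $\lambda_z$ defined in the corollary, and dividing through by $c = (1+\delta)/\eps$ produces exactly the stated bound.

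I do not expect any real obstacle in carrying out this plan, since it is essentially a reparametrisation. The one point warranting care is that we rescale $\phi$ (and hence the ambient norm on $E^\Sigma$) without modifying the map $R_{z,\lambda}$ on $\drivers$; this is consistent precisely because $R_{z,\lambda}$ acts linearly on the function argument, which is what allows the scalar $c$ to commute with the scaling.
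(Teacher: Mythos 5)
Your proposal is correct and is essentially the same argument as the paper's: the paper rescales the norm on $E$ by $|\cdot|^\star = \frac{1+\delta}{\eps}|\cdot|$ and the driver norms by $r^{-1}$, then invokes Theorem \ref{thm:a_priori_abstract}, whereas you equivalently rescale the function $\phi\mapsto c\phi$ while keeping the norm fixed. The verification that \eqref{eq:M_norms}, \eqref{eq:balls}, \eqref{eq:sol_rescale}, and local coercivity all carry over, and the bookkeeping showing $\tilde\lambda_z = \lambda_z$, are all as you say.
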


\begin{proof}
The norms $|\phi|^\star = \frac{1+\delta}{\eps}|\phi|$ and $\|M\|^\star_{z,\lambda} = r^{-1}\|M\|_{z,\lambda}$ verify local coercivity,
so the conclusion follows from
Theorem \ref{thm:a_priori_abstract}.
\end{proof}

In the rest of the section, we prove Theorem \ref{thm:a_priori_abstract}. Our assumptions are designed so that the proof is similar to that of Proposition \ref{prop:classical}. Nonetheless, because this is one of our main results, we give the details for completeness.

\begin{proof}[Proof of Theorem \ref{thm:a_priori_abstract}]
Arguing by contradiction, suppose \eqref{eq:a-priori-interior} does not hold for some $z\in\Sigma$.
Then there exists $g>\nu$ sufficiently close to $\nu$ such that
  \begin{equation}
    C_* \eqdef \sup_{z \in \Sigma} \frac{| \phi (z) |} {\max\{ \disc{M}_{z, \lambda_z / (1-\theta)},
    g^\alpha \dist{z}^{-\alpha} \}
    } > 1
    \label{eq:C-def} \;.
  \end{equation}
  Let $z \in \Sigma$ such that
  \begin{equation}
    | \phi (z) | = \bar C Q\;,\quad Q =  \max\{ \disc{M}_{z,\lambda_z/(1-\theta)}, g^\alpha \dist{z}^{-\alpha}  \}
    \label{eq:z_def_abstract}
  \end{equation}
  where $\bar{C} \in (1,C_*]$ is sufficiently close to $C_*$ such that
  \begin{equation}
  C_*/\bar{C}  < (1-g^{-1})^{\alpha} (1+\delta)  \label{eq:barC} \;.
  \end{equation}
  (Note that the right-hand side of \eqref{eq:barC} is strictly larger than $1$ since $g>\nu\geq (1-\theta)^{-1}$, thus such $z$ and $\bar C$ exist.)
  Then, using \eqref{eq:z_def_abstract}, $\bar C>1$, and  $Q^{- 1 / \alpha} \leq
  g^{-1} \dist{z}$, we have
  \begin{equ}[eq:lambda-bound]
    \lambda_z \leq |\phi(z)|^{-1/\alpha} = (\bar C Q)^{-1/\alpha} < g^{-1} \dist{z}
    \;.
  \end{equ}
  In particular, $|\phi(z)|^{-1/\alpha}< \nu^{-1}\dist{z}$ and therefore $\lambda_z^\alpha = |\phi(z)|^{-1}$.

  Next, denote $\psi = R_{z, \lambda_z} \phi = \lambda_z^{\alpha} \phi
  \circ T_{z, \lambda_z}$.
  Remark that  $\psi\in \bbS_{R_{z, \lambda_z}M}$ by \eqref{eq:sol_rescale}.
  Furthermore
  \begin{equation}
    | \psi  (o_{z, \lambda_z}) | = \lambda_z^{\alpha} | \phi (z) | = 1\;. \label{eq:psi-zero-bound}
  \end{equation}
  Note that, by \eqref{eq:M_norms},
  \begin{equation}
    \| R_{z, \lambda_z} M \|_{z, \lambda_z}
    \leq \lambda_z^\alpha Q
    = \bar C^{-1} < 1
    \;. \label{eq:RM-bound}
  \end{equation}
  We now claim that $\|\psi\|_{\infty} < 1+\delta$,
  which is equivalent to
  \begin{equ}
  \sup_{x \in T_{z, \lambda_z}( \Sigma_{z, \lambda_z})} |\phi(x)| < (1+\delta)|\phi(z)|
  \;.
  \end{equ}
  Suppose for sake of contradiction that there exists $x \in T_{z, \lambda_z}( \Sigma_{z, \lambda_z})$ such that
  \begin{equ}[eq:x_exist]
  |\phi(x)| \geq (1+\delta)|\phi(z)|\;.
  \end{equ}
  By \eqref{eq:lambda-bound},
$
  \lambda_z < g^{-1} \dist{z} < \dist{z}
$.
  Furthermore, $\rho (z, x) \leq \lambda_z$ by \eqref{eq:diameter}.
  Therefore
\begin{equ}[eq:boundary-dist-xy]
  \dist{x} \geq \dist{z}  - \rho (x,
  z) \geq \dist{z} - \lambda_z
  > (1-g^{-1}) \dist{z}
  \;.
\end{equ}
  Then, recalling $\theta = (1+\delta)^{-1/\alpha}$,
  \begin{equ}
  |\phi(x)|^{-1/\alpha} \leq \theta |\phi(z)|^{-1/\alpha} < \theta g^{-1}
  \dist{z} < g^{-1} \dist{x} < \nu^{-1}\dist{x}
  \end{equ}
  where we used \eqref{eq:x_exist} in the first bound, \eqref{eq:lambda-bound} in the second bound, and \eqref{eq:boundary-dist-xy} and the fact that $\theta < 1-g^{-1}$ due to $g>\nu\geq (1-\theta)^{-1}$ in the third bound.
  Hence
  \begin{equ}[eq:lambda_x]
  \lambda_x = |\phi(x)|^{-1/\alpha} \leq (1+\delta)^{-1/\alpha}|\phi(z)|^{-1/\alpha} = \theta \lambda_z
  \end{equ}
  and thus
  \begin{equ}
  \rho(x,z) + \lambda_x/(1-\theta) \leq \lambda_z + \lambda_x/(1-\theta)
  \leq \lambda_z/(1-\theta)
  \end{equ}
  where we used $\rho(x,z)\leq\lambda_z$ in the first bound
  and
  \eqref{eq:lambda_x} in the second bound.
  Therefore, applying \eqref{eq:balls} with $\lambda' =\lambda_x/(1-\theta)$ and $\lambda = \lambda_z/(1-\theta)$, we obtain
  \begin{equ}[eq:M_xy]
  \disc{M}_{x,\lambda_x/(1-\theta)} \leq \disc{M}_{z,\lambda_z/(1-\theta)}\;.
  \end{equ}
  It follows that
  \begin{equs}[eq:phi_x_bound]
  | \phi (x) |
  &\leq
  C_* \max\{ \disc{M}_{x,\lambda_x/(1-\theta)} , g^\alpha \dist{x}^{-\alpha} \}
  \\
  &\leq C_* \max\{ \disc{M}_{z,\lambda_z/(1-\theta)} , g^{\alpha}\dist{z}^{-\alpha}(1-g^{-1})^{-\alpha} \} \leq
     (1-g^{-1})^{-\alpha} C_* Q\;,
    \end{equs}
  where in the first inequality we used the definition of $C_*$ in \eqref{eq:C-def}, and in the second inequality we used \eqref{eq:boundary-dist-xy} and \eqref{eq:M_xy}.
  However, by \eqref{eq:z_def_abstract}-\eqref{eq:barC},
  \begin{equ}
  (1-g^{-1})^{-\alpha} C_* Q = 
  (1-g^{-1})^{-\alpha}C_* |\phi(z)| \bar C^{-1}
  < |\phi(z)|(1 + \delta) \;,
  \end{equ}
  which, together with \eqref{eq:phi_x_bound}, contradicts \eqref{eq:x_exist}. We have thus proved the claim $\|\psi \|_\infty < 1+\delta$.

  Since $\psi \in \bbS_{R_{z,\lambda_z} M}$ with $\|
  \psi \|_{\infty} < 1 + \delta$ and $\| R_{z,\lambda_z} M \|_{z,
  \lambda_z}
  < 1$ by \eqref{eq:RM-bound}, it follows from the
  coercivity assumption \eqref{eq:local_coercivity_abstract} that $| \psi (o_{z,
  \lambda}) | < 1$, which contradicts \eqref{eq:psi-zero-bound}.
\end{proof}

\section{Applications to well-posed equations}
\label{sec:first_applications}

The purpose of this section is to give examples of how Theorem \ref{thm:a_priori_abstract} implies a priori estimates for classically well-posed equations.

\subsection{Classical case revisited}\label{sec:classical-revist}

As a first example, we show how Proposition
\ref{prop:classical} follows from Theorem \ref{thm:a_priori_abstract}.

\textbf{Domains.}
We take our domain as $\Sigma=\cl\Omega$ with boundary $\d \Omega\subset\cl\Omega$ as in Section \ref{sec:PDE_notation}.
We take the metric on $\cl\Omega$ as $\rho(x,y) = |x-y|_\s$, i.e. the parabolic distance, so that
the `distance to the boundary' $\dist{\cdot}\colon \cl\Omega\to [0,1]$ is $\dist{z} = \inf_{y\in\d \Omega} \rho(z,y)$ as in \eqref{eq:parabolic_dist}.

For every $z \in \Omega$ and $0<\lambda \leq \dist{z}$, we take $\Sigma_{z, \lambda} = \Omega$ and $T_{z,\lambda} \colon \Omega \to \cl\Omega$ as defined by \eqref{eq:T_lambda_def}.
(In particular, $o_{z, \lambda} = 0$.)
Note that these choices clearly satisfy \eqref{eq:diameter}.

\textbf{Drivers.}
Our spaces of drivers are $\drivers = \drivers_{z, \lambda} = \CC^{\beta} (\Omega)$ with norm
\begin{equ}
\| \xi \|_{z,\lambda} = \| \xi
   \|^{\rho}_{\CC^{\beta} (\Omega)}
   \end{equ}
where $\rho =
\frac{\alpha}{2+\alpha  + \beta}$ as in \eqref{eq:rho_def_classical}
with $\alpha = 2 / (p - 1)$ as in \eqref{eq:alpha_def}.
The rescaling operators are
\begin{equ}
R_{z, \lambda} \xi = \lambda^{ 2 + \alpha } \xi \circ T_{z, \lambda} 
\;.
\end{equ}
The local norms are
\begin{equ}
\disc{\xi}_{z,\lambda} = \| \xi \|^{\rho}_{\CC^{\beta} ; z ; \lambda}
\end{equ}
for which \eqref{eq:balls} clearly holds and \eqref{eq:M_norms} follows from
\begin{equ}
\|\lambda^{\alpha + 2} \xi  \circ T_{z, \lambda}\|^{\rho}_{\CC^{\beta}(\Omega)}  = \lambda^{(\alpha+2 + \beta)\rho} \|\xi\|_{\CC^{\beta };z;\lambda}^{\rho}
= \lambda^\alpha \|\xi\|_{\CC^{\beta};z;\lambda}^{\rho}\;,
\end{equ}
where we used \eqref{eq:eta-rho-simple} (or \eqref{eq:eta-rho}).

\textbf{Solutions.}
We set $E =\R^m$ as the target space and let $\bbS_\xi$ be the set of functions $\phi\in\CC(\cl\Omega)$ that satisfy \eqref{eq:classical_PDE} in $\Omega$.
The closure under rescaling assumption \eqref{eq:sol_rescale} is verified in \eqref{eq:intertwine-classical}.

\textbf{Local coercivity}, in the form of Corollary \ref{cor:parameter_choice}, follows from
Lemma \ref{lem:coercive_classical}.

We therefore satisfy all the conditions of Corollary \ref{cor:parameter_choice}
and conclude that there exist $C>0$ and $\theta\in(0,1)$ such that, for all $\nu \geq (1-\theta)^{-1}$,
\begin{equ}
| \phi (z) | \leq C \max\{ \disc{\xi}_{z, \lambda_z/(1-\theta)} , \nu^\alpha \dist{z}^{- \alpha} \}\;,
\end{equ}
where $\lambda_z = |\phi(z)|^{-1/\alpha} \wedge (\frac1{\nu}\dist{z})$.
Taking $\nu = 4(1-\theta)^{-1}$ recovers Proposition \ref{prop:classical}.

\subsection{Young ODEs}
\label{subsec:Young}

We consider a Young ODE
\begin{equ}[eq:Young_ODE]
\mrd \phi_t = - \phi_t^p \mrd t + f (\phi_t) \mrd
X_t \;,
\end{equ}
where $p\in (1,\infty)$ and we denote $\phi^p=|\phi|^{p-1}\phi$, $X\in\CC^\gamma([-1,0],\R^n)$, $\phi\in \CC^\gamma([-1,0],\R^m)$, $f\in\CC^\theta(\R^m,L(\R^n,\R^m))$, and $\gamma\in (\frac12,1]$, $\theta\in (\frac{1}{\gamma}-1,1]$.

Note that the condition $f\in\CC^\theta$ implies that $f(\phi)\in\CC^{\theta\gamma}$ where $\gamma+\theta\gamma>1$, thus $\int_s^t f(\phi_u)\mrd X_u$ is well-defined as a Young integral in the sense of \cite[Sec.~4]{FrizHairer20}.
The equation \eqref{eq:Young_ODE} is then understood in the integral sense
\begin{equ}
\phi_t = \phi_s - \int_{s}^t \phi_u^p\mrd u + \int_s^t f(\phi_u)\mrd X_u\;.
\end{equ}
Such equations with $\gamma \in (\frac12,1]$ were studied in \cite{Lyons94}.
In this subsection, we set
\begin{equ}
\alpha = 1/(p-1)
\;.
\end{equ}
For $\beta\in [0,1]$, we define the $\beta$-H\"older semi-norm 
\begin{equ}{}
[\phi]_\beta=\sup_{s\neq t} |t-s|^{-\beta}|\phi_t-\phi_s|
\end{equ}
and likewise for $[f]_\theta$.
We also define
\begin{equ}
\|f\|_{\CC^\theta} = [f]_\theta + \|f\|_\infty
\end{equ} 
and the restricted $\beta$-H\"older semi-norm 
\begin{equ}{}
[\phi]_{\beta;\leq h} = \sup_{0<|t-s|\leq h}|t-s|^{-\beta}|\phi_t-\phi_s|\;.
\end{equ}
We also write $\|\phi\|_{\infty;I}$, $[\phi]_{\beta;I}$, etc. when we restrict the variables $s,t$ to a domain $I$.
For future reference, we record Young's estimate for $\beta,\beta'\in (0,1]$ with $\beta+\beta' > 1$, see e.g. \cite[Sec.~4]{FrizHairer20},
\begin{equ}[eq:Young]
\Big|\int_{s}^t Y_u\mrd X_u - Y_s (X_t-X_s)\Big| \lesssim |t-s|^{\beta+\beta'}[X]_{\beta; [s,t]}[Y]_{\beta'; [s,t]}\;.
\end{equ} 

In what follows, we will derive several a priori estimates on the solution $\phi$ to \eqref{eq:Young_ODE} using Theorem \ref{thm:a_priori_abstract}.
We first place ourselves in the setting of Section \ref{sec:abstract}.

Our \textbf{domains} are just $\Sigma=\Sigma_{z,\lambda}=[-1,0]$ with metric $\rho(x,y) = |x-y|$ and rescaling maps
\begin{equ}
T_{z,\lambda}\colon [-1,0]\to [-1,0]\;,\quad T_{z,\lambda}(x) = z+\lambda x\;.
\end{equ}
Note that $o_{z,\lambda}=0$.

Our spaces of \textbf{drivers} $\drivers=\drivers_{z,\lambda}$ are now all pairs $M=(f,X)$ with $f\in\CC^\theta$ and $X\in\CC^\gamma$ defined on the appropriate spaces.
Define $\psi = \lambda^\alpha \phi\circ T_{z,\lambda}$.
Then
\begin{equ}[eq:intertwine_Young]
\mrd \psi_t = \lambda^{1 + \alpha} (\mrd \phi)_{T_{z,
   \lambda}(t)} = - \lambda^{1 + \alpha - \alpha p} \psi_t^p \mrd t
   + f (\lambda^{-\alpha}\psi) \mrd Y_t
   = -\psi^p_t\mrd t + f_\lambda(\psi) \mrd Y_t\;,
\end{equ}   
where $f_\lambda = f(\lambda^{-\alpha}\cdot)$ and $\mrd Y_t = \lambda^{1 + \alpha} (\mrd X)_{T_{z,\lambda}(t)}$, i.e. $Y_{t} = \lambda^{\alpha} X_{T_{z,\lambda}(t)}$.
This motivates the rescaling map
\begin{equ}
R_{z,\lambda}(f,X) = (f_\lambda, \lambda^\alpha X\circ T_{z,\lambda})\;.
\end{equ}
Note that $[f_\lambda]_{\theta} = \lambda^{-\alpha\theta}[f]_{\theta}$ and thus
\begin{equ}[eq:f_Young]
\|f_\lambda\|_{\CC^\theta} \leq \lambda^{-\alpha\theta}\|f\|_{\CC^\theta}
\end{equ}
and furthermore
\begin{equ}[eq:X_Young]
[X\circ T_{z,\lambda}]_{\gamma;[-1,0]} = \lambda^\gamma[X]_{\gamma;[z-\lambda,z]}\;,
\end{equ}

The \textbf{solution} space $\bbS_M$ for $M=(f,X)$ is the set of all solutions to \eqref{eq:Young_ODE}.
The closure under rescaling assumption \eqref{eq:sol_rescale} is verified in \eqref{eq:intertwine_Young}.

To apply Theorem \ref{thm:a_priori_abstract} (or rather Corollary \ref{cor:parameter_choice}),
it remains to verify the \textbf{local coercivity} condition
for suitable norms $\disc{M}_{z,\lambda}$ and $\|M\|_{z,\lambda}$ satisfying \eqref{eq:balls}-\eqref{eq:M_norms}.
In the rest of the section, we progressively derive sharper and more general bounds by making different choices of norms.

\subsubsection{Simple bound}
\label{sec:Young_nonsharp}

We first derive a simple but non-sharp bound.
Fix $\eps>0$ and suppose $\phi\in\bbS_M$ with
$\|\phi\|_\infty \leq \eps$.
Consider also $\mathring\phi$ solving \eqref{eq:Young_ODE} with $X=0$ and with the same initial condition $\mathring\phi_{-1}=\phi_{-1}$.
Then there exists $\delta>0$ such that $|\mathring\phi_0| < \eps-2\delta$.
Moreover $\phi-\mathring\phi$ solves
\begin{equ}
\mrd (\phi-\mathring\phi)_t = -(\phi^p-\mathring\phi^p)_t\mrd t + f(\phi_t)\mrd X_t\;.
\end{equ}
Therefore, provided that $\eps>0$ is sufficiently small,
$\|\phi-\mathring\phi\|_\infty\lesssim \|\int_{-1}^\cdot f(\phi_t)\mrd X_t\|_\infty$.
In turn,
\begin{equ}
\Big\|
\int_{-1}^\cdot f(\phi_t)\mrd X_t
\Big\|_\infty
\lesssim \|f\|_\infty [X]_\gamma + [f(\phi)]_{\theta\gamma} [X]_\gamma\;.
\end{equ}
Furthermore, $[f(\phi)]_{\theta\gamma}\leq [f]_\theta[\phi]_{\gamma}^\theta$ and, by Young's estimate \eqref{eq:Young},
\begin{equ}
|\phi_t-\phi_s| \lesssim \eps^p |t-s| + |t-s|^\gamma\|f\|_\infty [X]_\gamma + |t-s|^{\gamma+\gamma\theta}[\phi]_{\gamma}^\theta [f]_\theta [X]_\gamma\;.
\end{equ}
Provided $\|f\|_{\CC^\theta}[X]_\gamma$ is small, it follows that
$
[\phi]_\gamma \lesssim 1
$
and thus
\begin{equ}
\|\phi-\mathring\phi\|_\infty \lesssim \|f\|_{\CC^\theta}[X]_\gamma
\;.
\end{equ}
This motivates the norms
\begin{equ}
\disc{(f,X)}_{z,\lambda}
=
\big(
\|f\|_{\CC^\theta}
[X]_{\gamma;[z-\lambda,z]}
\big)^{\rho}
\;,
\qquad
\|(f,X)\|_{z,\lambda}
=
\big(
\|f\|_{\CC^\theta}
[X]_\gamma
\big)^{\rho}
\;,
\end{equ}
where
\begin{equ}
\rho = \frac{\alpha}{\alpha+\gamma - \alpha\theta}\;.
\end{equ}
Then, by \eqref{eq:f_Young}-\eqref{eq:X_Young},
\begin{equ}
\|R_{z,\lambda}(f,X)\|_{z,\lambda}
=
\lambda^{(\alpha+\gamma-\alpha\theta)\rho}
\big(
\|f\|_{\CC^\theta}
[X]_{\gamma;[z-\lambda,z]}
\big)^\rho
\leq \lambda^\alpha\disc{(f,X)}_{z,\lambda}\;,
\end{equ}
which verifies \eqref{eq:M_norms}. Moreover \eqref{eq:balls} clearly holds.

In conclusion,
there exists $r>0$ such that, if $\|(f,X)\|_{z,\lambda}\leq r$, then $\|\phi-\mathring\phi\|_\infty \leq \delta$ and thus $|\phi_0|<\eps-\delta$.
This verifies local coercivity in the form of Corollary \ref{cor:parameter_choice}.
It follows that there exists $C>0$ such that, for all $z\in (-1,0]$,
\begin{equ}[eq:Young_simple]
|\phi_{z}| \leq C \max\{\disc{(f,X)}_{z,\lambda_z}, |z+1|^{-\alpha}\}\;,
\end{equ}
where $\lambda_z= (C|\phi_z|^{-1/\alpha})\wedge \frac{1}{2}|z+1|$.

\subsubsection{Sharper bounds}
\label{sec:sharp_Young}

We next show how, with localised H\"older norms, we can sharpen the bound \eqref{eq:Young_simple}.
Fix again $\eps>0$ and let $\phi\in\bbS_M$ with
$\|\phi\|_\infty \leq \eps$.
Consider $\mathring\phi$ as at the start of Section \ref{sec:Young_nonsharp}.
Since
\begin{equ}[eq:phi_unif]
\|\phi-\mathring\phi\|_\infty\lesssim \Big\|\int_{-1}^\cdot f(\phi_t)\mrd X_t\Big\|_\infty
\;,
\end{equ}
it suffices to bound the right-hand side.

By Young's estimate \eqref{eq:Young}, for all $h\in (0,1]$ and $|t-s|\leq h$ we have
\begin{equ}[eq:Young_est]
\Big|\int_s^t f(\phi_u) \mrd X_u - f(\phi_s)(X_t-X_s)\Big| \lesssim |t-s|^{\gamma+\gamma\theta} [f(\phi)]_{\theta\gamma;\leq h}[X]_{\gamma}
\;.
\end{equ}
By the triangle inequality, for any $t\in [-1,0]$ and $h\in (0,1]$,
\begin{equ}[eq:triangle]
\Big|\int_{-1}^t f(\phi_t)\mrd X_t \Big| \leq
\sum_{0\leq i < (t+1)h^{-1}} \Big|\int_{ih-1}^{t\wedge ((i+1)h-1)} f(\phi_s)\mrd X_s \Big|\;.
\end{equ}
Therefore, applying \eqref{eq:Young_est} to every term on the right-hand side, we obtain
\begin{equ}[eq:Young_int_bound]
\Big\|\int_{-1}^\cdot f(\phi_t)\mrd X_t \Big\|_\infty \lesssim h^{-1}(W_h+V_h)
\;,
\end{equ}
where
\[
W_h = h^\gamma\|f\|_\infty[X]_\gamma
\;,\qquad
V_h = h^{\gamma+\gamma\theta}[f(\phi)]_{\theta\gamma;\leq h}[X]_\gamma
\;.
\]
To bound $V_h$ and $[f(\phi)]_{\theta\gamma;\leq h}$, note that, for $|s-t|\leq h$,
\begin{equ}
|\phi_t-\phi_s| \lesssim \eps^p |t-s| + |t-s|^\gamma h^{-\gamma}W_h + |t-s|^{\gamma+\gamma\theta}h^{-\gamma-\gamma\theta}V_h \;,
\end{equ}
from which it follows that
\begin{equ}{}
[f(\phi)]_{\theta\gamma;\leq h}
\leq [f]_\theta [\phi]^\theta_{\gamma;\leq h}
\lesssim [f]_\theta
(h^{1-\gamma} + h^{-\gamma} W_h + h^{-\gamma}V_h)^\theta
=
[f]_\theta
(h + W_h + V_h)^\theta h^{-\gamma\theta}
\;.
\end{equ}
Therefore
\[
V_h \lesssim h^{\gamma}[X]_\gamma [f]_\theta (h+W_h+V_h)^\theta\;.
\]
Provided $h^\gamma[X]_\gamma[f]_\theta\ll \{h^\gamma[X]_\gamma[f]_\theta(h+W_h)^\theta\}^{1-\theta}$,
i.e.
\[
\Gamma \eqdef (h+W_h)^{\theta-1} h^\gamma[X]_\gamma[f]_\theta
\ll 1
\;,
\]
we obtain by a continuity argument in $h$ that
\begin{equ}
V_h \lesssim h^\gamma[X]_\gamma[f]_\theta(h+W_h)^\theta
=  (h +W_h)\Gamma\;.
\end{equ}
(If $\theta=1$, then we do not need a continuity argument.)

We now choose $h$ so as to make the contributions of $h$ and $W_h$ comparable. Specifically,
suppose that $\|f\|_\infty[X]_\gamma\leq \kappa$ for some small $\kappa>0$.
Choose the smallest $h\in [0,1]$ such that $h^{-1}W_h \leq \kappa$.
For $h=0$, we understand $h^{-1}W_h$ as the limit $h\downarrow0$, which exists by monotonicity. Remark that $h=0$ if and only if $\|f\|_\infty[X]_\gamma=0$ or if $\gamma=1$.

If $\gamma=1$, then $|\int_{-1}^\cdot f(\phi_t)\mrd X_t \|_\infty\lesssim \kappa$ due to  \eqref{eq:Young_int_bound} and the fact that $\lim_{h\downarrow0} h^{-1}V_h=0$.

If $\gamma\in (\frac12,1)$, then
\begin{equ}[eq:Gamma_Young]
h^{-1} V_h \lesssim \Gamma \lesssim h^{\gamma+\theta-1} [f]_\theta[X]_\gamma = (\|f\|_\infty[X]_\gamma)^{\frac{\gamma+\theta-1}{1-\gamma}} [f]_\theta[X]_\gamma 
\leq (\|f\|_\infty^{1-\frac{1-\gamma}{\theta}}\|f\|_{\CC^\theta}^{\frac{1-\gamma}{\theta}}[X]_\gamma)^{\frac{\theta}{1-\gamma}}
\;.
\end{equ}
For all $\gamma\in (\frac12,1]$, this motivates us to (re)define the norms
\begin{equ}
\disc{(f,X)}_{z,\lambda}
=
\big(
\|f\|_\infty^{1-\frac{1-\gamma}{\theta}} \|f\|_{\CC^\theta}^{\frac{1-\gamma}{\theta}}
[X]_{\gamma;[z-\lambda,z]}
\big)^{\rho}
\;,
\qquad
\|(f,X)\|_{z,\lambda}
=
\big(
\|f\|_\infty^{1-\frac{1-\gamma}{\theta}} \|f\|_{\CC^\theta}^{\frac{1-\gamma}{\theta}}
[X]_\gamma
\big)^{\rho}
\;,
\end{equ}
where
\begin{equ}[eq:rho_Young]
\rho = \frac{\alpha}{\gamma(1+\alpha)}\;.
\end{equ}
(Note that we use the \emph{norm} $\|f\|_{\CC^\theta}$ and not the \emph{semi-norm} $[f]_\theta$, so smallness of $\|(f,X)\|_{z,\lambda}$ ensures smallness of $\|f\|_\infty[X]_\gamma$ that we required just above \eqref{eq:Gamma_Young}.)
Then, by \eqref{eq:f_Young}-\eqref{eq:X_Young},
\begin{equs}
\|R_{z,\lambda}(f,X)\|_{z,\lambda}
&=
\lambda^{(\alpha+\gamma)\rho}
\Big(
\|f_\lambda\|_\infty^{1-\frac{1-\gamma}{\theta}}
\|f_\lambda\|_{\CC^\theta}^{\frac{1-\gamma}{\theta}}
[X]_{\gamma;[z-\lambda,z]}
\Big)^\rho
\\
&\leq
\lambda^{(\alpha+\gamma)\rho - \frac{1-\gamma}{\theta}\alpha\theta\rho }
\Big(
\|f\|_\infty^{1-\frac{1-\gamma}{\theta}}
\|f\|_{\CC^\theta}^{\frac{1-\gamma}{\theta}}
[X]_{\gamma; [z-\lambda,z]}
\Big)^\rho
\\
&=\lambda^\alpha\disc{(f,X)}_{z,\lambda}\;,
\end{equs}
which verifies \eqref{eq:M_norms}. Moreover \eqref{eq:balls} clearly holds.

In conclusion, taking $\kappa>0$ above sufficiently small, we can choose $r>0$ such that, if $\|(f,X)\|_{z,\lambda}\leq r$, then, by \eqref{eq:phi_unif} and \eqref{eq:Young_int_bound}, $\|\phi-\mathring\phi\|_\infty \leq \delta$ and thus $|\phi_0|<\eps-\delta$.
This implies the local coercivity condition in the sense of Corollary \ref{cor:parameter_choice}.
It follows that, there exists $C>0$ such that,
for all $z\in (-1,0]$,
\begin{equ}[eq:Young_sharp]
|\phi_{z}| \leq C \max\{\disc{(f,X)}_{z,\lambda_z}, |z+1|^{-\alpha}\}\;,
\end{equ}
where $\lambda_z= (C|\phi_z|^{-1/\alpha})\wedge \frac{1}{2}|z+1|$.
This bound improves \eqref{eq:Young_simple}.

\begin{remark}
\cite[Thm.~1.4, (1.4)]{BCMW22} in the Young regime is a special case of \eqref{eq:Young_sharp}.
\end{remark}


\subsubsection{Example of weighted norm}
\label{sec:Young_weighted}

We now consider a simple example of a weighted norm on $f$.
In Section~\ref{sec:RPs} we will generalise this example (at least for $\theta=1$) by treating more general weights and rough differential equations.

We fix $\eta\in\R$ and, as earlier, $\theta\in (\frac{1}{\gamma}-1,1]$.
For $x\in\R^m$, define
\begin{equ}[eq:x_eta]
|x|_\eta
=
\begin{cases}
|x|^\eta+1 \quad &\text{if } \eta\geq0\;,\\
|x|^\eta \quad &\text{if } \eta<0\;.
\end{cases}
\end{equ}
Note that, for $r\geq1$, $|rx|_\eta \leq r^\eta|x|_\eta$.
We then define the weighted norms
\begin{equs}[eq:weight_norms]
\|f\|_{\infty;\eta} &= \sup_{x\in\R^m} \frac{|f(x)|}{|x|_\eta}\;,
\qquad
[f]_{\theta;\eta} = \sup_{x\neq y,\,\frac12|x|\leq|y|\leq|x|} \frac{|f(x)-f(y)|}{|x-y|^\theta  |x|_{\eta-\theta}}\;,
\\
\|f\|_{\CC^\theta;\eta} &= \|f\|_{\infty;\eta} + [f]_{\theta;\eta}\;.
\end{equs}
Note that, for $\eta<0$ and $\|f\|_{\infty;\eta}<\infty$, one has $|f(x)|\to0$ as $|x|\to\infty$ while $|f(x)|$ may blow up as $x\to0$.
A similar remark applies to $[f]_{\theta;\eta}$ for $\eta<\theta$.

\begin{lemma}\label{lem:f_weighted_scaling}
For all $\eta\in\R$ and $\lambda \in (0,1]$, $\|f_\lambda\|_{\infty;\eta}\leq \lambda^{-\alpha\eta}\|f\|_{\infty;\eta}$
and $[f_\lambda]_{\theta;\eta} \leq \lambda^{-\alpha\eta}[f]_{\theta;\eta}$.
\end{lemma}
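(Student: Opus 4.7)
The plan is to prove both inequalities by a direct change of variables $u = \lambda^{-\alpha}x$ (together with $v = \lambda^{-\alpha}y$ for the H\"older part), thereby reducing each weighted norm of $f_\lambda$ to the corresponding weighted norm of $f$ multiplied by a scaling factor determined entirely by how the weight $|\cdot|_\eta$ transforms under multiplication by $\lambda^\alpha$.

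The first step is the auxiliary inequality that carries all the content: for every $\eta\in\R$ and $\lambda\in(0,1]$,
\[
|\lambda^\alpha u|_\eta \;\geq\; \lambda^{\alpha\eta}\,|u|_\eta \qquad \text{for all } u\in\R^m.
\]
When $\eta<0$ this is an equality, since $|\cdot|_\eta$ is then a pure power. When $\eta\geq 0$, I would expand $|\lambda^\alpha u|_\eta = \lambda^{\alpha\eta}|u|^\eta + 1$ and use $\lambda^{\alpha\eta}\leq 1$ (which follows from $\lambda\leq 1$ and $\alpha\eta\geq 0$) to bound $1 \geq \lambda^{\alpha\eta}$, whence $\lambda^{\alpha\eta}|u|^\eta + 1 \geq \lambda^{\alpha\eta}(|u|^\eta + 1) = \lambda^{\alpha\eta}|u|_\eta$.

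Next I would apply this inequality to the two bounds in turn. For $\|f_\lambda\|_{\infty;\eta}$, the substitution $u=\lambda^{-\alpha}x$ in the definition \eqref{eq:weight_norms} gives
\[
\|f_\lambda\|_{\infty;\eta} \;=\; \sup_{u\in\R^m} \frac{|f(u)|}{|\lambda^\alpha u|_\eta},
\]
and the auxiliary inequality immediately yields the factor $\lambda^{-\alpha\eta}\|f\|_{\infty;\eta}$. For $[f_\lambda]_{\theta;\eta}$, substituting $(u,v)=\lambda^{-\alpha}(x,y)$ preserves the constraint $\tfrac12|x|\leq|y|\leq|x|$ since it is scale-invariant, and it extracts a factor $\lambda^{-\alpha\theta}$ from $|x-y|^\theta$. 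Applying the auxiliary inequality with $\eta$ replaced by $\eta-\theta$ to the remaining weight $|\lambda^\alpha u|_{\eta-\theta}$ supplies an additional $\lambda^{-\alpha(\eta-\theta)}$, and the two factors combine to the desired $\lambda^{-\alpha\theta}\cdot\lambda^{-\alpha(\eta-\theta)} = \lambda^{-\alpha\eta}$.

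There is no serious obstacle; the only point worth flagging is that the additive $+1$ appearing in the definition \eqref{eq:x_eta} for $\eta\geq 0$ does not spoil the scaling homogeneity. This is precisely what the auxiliary inequality above absorbs, and the argument relies essentially on the hypothesis $\lambda\leq 1$: if $\lambda>1$, the same computation for $\eta\geq 0$ would yield a bound in the wrong direction, which is why the conclusion is stated only for $\lambda\in(0,1]$.
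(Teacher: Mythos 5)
Your proof is correct and follows essentially the same route as the paper: the auxiliary inequality $|\lambda^\alpha u|_\eta \geq \lambda^{\alpha\eta}|u|_\eta$ for $\lambda\leq 1$ is the same fact the paper records just above the lemma (namely $|rx|_\eta \leq r^\eta|x|_\eta$ for $r\geq 1$, applied with $r=\lambda^{-\alpha}$), and your change-of-variables phrasing is just a cosmetic repackaging of the paper's direct pointwise bound on $|f_\lambda(x)|$ and $|f_\lambda(x)-f_\lambda(y)|$.
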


\begin{proof}
Since $|\lambda^{-\alpha}x|_\eta \leq \lambda^{-\alpha\eta}|x|_\eta$, one has
\begin{equ}
|f_\lambda(x)| = |f(\lambda^{-\alpha}x)| \leq \lambda^{-\alpha\eta}|x|_\eta\|f\|_{\infty;\eta}\;,
\end{equ}
hence $\|f_\lambda\|_{\infty;\eta}\leq \lambda^{-\alpha\eta}\|f\|_{\infty;\eta}$.
Furthermore, for $\frac12|x| \leq |y| \leq |x|$,
\begin{equ}
|f_\lambda (x) - f_\lambda(y)|
= |f(\lambda^{-\alpha} x) - f(\lambda^{-\alpha} y)|
\leq \lambda^{-\alpha\theta}|x-y|^\theta \lambda^{-\alpha(\eta-\theta)} |x|_{\eta-\theta} [f]_{\theta;\eta}
\;,
\end{equ}
hence $[f_\lambda]_{\theta;\eta} \leq \lambda^{-\alpha\eta}[f]_{\theta;\eta}$.
\end{proof}

Suppose now that $\alpha+\gamma-\alpha\eta>0$.
We then (re)define the norms
\begin{equ}
\disc{(f,X)}_{z,\lambda}
=
\big(
\|f\|_{\infty;\eta}^{1-\frac{1-\gamma}{\theta}} \|f\|_{\CC^\theta;\eta}^{\frac{1-\gamma}{\theta}}
[X]_{\gamma;[z-\lambda,z]}
\big)^{\rho}
\;,
\qquad\|(f,X)\|_{z,\lambda}
=
\big(
\|f\|_{\infty;\eta}^{1-\frac{1-\gamma}{\theta}} \|f\|_{\CC^\theta;\eta}^{\frac{1-\gamma}{\theta}}
[X]_\gamma
\big)^{\rho}\;,
\end{equ}
where
\begin{equ}
\rho = \frac{\alpha}{\alpha+\gamma - \alpha\eta}\;.
\end{equ}
Then, by Lemma \ref{lem:f_weighted_scaling},
\begin{equs}
\|R_{z,\lambda}(f,X)\|_{z,\lambda}
&=
\lambda^{(\alpha+\gamma)\rho}
\big(
\|f_\lambda\|_{\infty;\eta}^{1-\frac{1-\gamma}{\theta}}
\|f_\lambda\|_{\CC^\theta;\eta}^{\frac{1-\gamma}{\theta}}
[X]_{\gamma;[z-\lambda,z]}
\big)^\rho
\\
&\leq
\lambda^{(\alpha+\gamma)\rho - \alpha\eta\rho
}
\big(
\|f\|_{\infty;\eta}^{1-\frac{1-\gamma}{\theta}}
\|f\|_{\CC^\theta;\eta}^{\frac{1-\gamma}{\theta}}
[X]_{\gamma; [z-\lambda,z]}
\big)^\rho
\\
&=\lambda^\alpha\disc{(f,X)}_{z,\lambda}\;,
\end{equs}
which verifies \eqref{eq:M_norms}.

It remains to verify the local coercivity condition in the form of Corollary \ref{cor:parameter_choice}.
As in Section \ref{sec:sharp_Young}, let $\phi$ be a solution with $\|\phi\|_\infty\leq\eps$.
If $|\phi_0|\leq \eps/2$, then we are done.

If $|\phi_t|\in [\frac{\eps}{2},\eps]$ for all $t\in [-1,0]$,
then because the weighted norms of $f$ on the subsets $\{y\,:\,|y|\in [\frac{\eps}{2},\eps]\}$ are, up to a multiple depending on $\eps$, equivalent to the unweighted norms, the argument in Section \ref{sec:sharp_Young} proves that $|\phi_0|<\eps-\delta$ for some $\delta>0$ whenever $\|(f,X)\|_{z,\lambda}\leq r$ for some $r\ll1$, which verifies local coercivity in this case.

Finally, if $|\phi_0|>\eps/2$ and $|\phi_t|<\eps/2$ for some $t\in[-1,0]$, then consider $I=[b,0]$ where $b = \sup\{t\in[-1,0]\,:\,|\phi_t|<\eps/2\}$, so that $I\subset[-1,0]$ is the longest interval containing $0$ for which $|\phi_t|\geq\eps/2$ for all $t\in I$.
Similarly to Section \ref{sec:sharp_Young}, consider the solution $\mathring\phi\colon I\to \R^m$ to \eqref{eq:Young_ODE} with initial condition $\mathring\phi_b = \phi_b$ and $X=0$.
Note that $|\mathring\phi_b|=\eps/2$ thus $|\mathring\phi_0|\leq \frac\eps2$.
Again by equivalence of weighted and unweighted norms of $f$ on $\{y\,:\,|y|\in[\frac\eps2,\eps]\}$,
if $\|(f,X)\|_{z,\lambda}\leq r$ for $r\ll1$,
then $\|\phi-\mathring\phi\|_{\infty;I} < \delta$ for some $\delta\ll\eps$, which finishes the proof of local coercivity.

It follows from Corollary \ref{cor:parameter_choice} that there exists $C>0$ such that,
for all $z\in (-1,0]$,
\begin{equ}[eq:Young_weighted]
|\phi_{z}| \leq C\max\{\disc{(f,X)}_{z,\lambda_z}, |z+1|^{-\alpha}\}\;,
\end{equ}
where $\lambda_z= (C|\phi_z|^{-1/\alpha})\wedge \frac{1}{2}|z+1|$.

\begin{remark}
Recalling that $\alpha=\frac{1}{p-1}$,
we have $\rho = \frac{1}{1+(p-1)\gamma - \eta}$
which matches the exponent in \cite[Thm~1.4, (1.6)]{BCMW22}.
Hence, in the Young regime, \eqref{eq:Young_weighted} generalises the latter by giving precise dependence on $f$, lowering the regularity of $f$ from $\CC^1$ to $\CC^\theta$, and allowing $\eta<1$.
\end{remark}

\begin{remark}\label{rem:sharper}
The norms \eqref{eq:weight_norms} are modelled by the behaviour of polynomials and we chose to consider them for simplicity.
However, the method applies to other norms.
For example, for $\eta\in [0,\theta)$, it is arguably more natural to use the smaller norm $\|f\|_{\CC^\theta;\theta}$ in place of $\|f\|_{\CC^\theta;\eta}$.
Then the same argument as earlier shows
\begin{equ}
|\phi_{z}| \lesssim \max\{\disc{(f,X)}'_{z,\lambda_z}, |z+1|^{-\alpha}\}\;,
\end{equ}
where
\begin{equ}
\disc{(f,X)}'_{z,\lambda} =
\Big(
\|f\|_{\infty;\eta}^{1-\frac{1-\gamma}{\theta}} \|f\|_{\CC^\theta;\theta}^{\frac{1-\gamma}{\theta}}
[X]_{\gamma;[z-\lambda,z]}
\Big)^{\rho}
\end{equ}
and
\begin{equ}
\rho = \frac{\alpha}{\gamma + \alpha(\gamma - (1-\frac{1-\gamma}{\theta})\eta)}\;,
\end{equ}
which sharpens \eqref{eq:rho_Young} in the case that $\eta=0$.
\end{remark}

\section{Rough paths}
\label{sec:RPs}

We now generalise Section \ref{subsec:Young} by considering a \emph{rough differential equation} (RDE)
\begin{equ}[eq:RDE]
\mrd \phi_t = - \phi_t^{p} \mrd t + f (\phi_t) \mrd
   \mbX_t \;,
  \end{equ}
  where $\phi\in\CC^\gamma([-1,0],\R^m)$, $p\in (1,\infty)$ and we write $\phi^p = |\phi|^{p-1}\phi$, $\mbX$ is an $n$-dimensional $\gamma$-H\"older branched rough path for $\gamma\in (0,1]$ in the sense of \cite{Gubinelli_10_BRPs} (see also \cite{Hairer_Kelly_15_BRPs,BC19, BCFP19,FrizHairer20,BCMW22}),
and $f\in \CC^N(\R^m,L(\R^n,\R^m))$ where $N=\floor{\frac{1}{\gamma}}$.
(In this regime, one in general has existence but not uniqueness of solutions to \eqref{eq:RDE}.)

Our main result in this section is Theorem \ref{thm:RP_bound}, which establishes a priori bounds on $\phi$ under very general (multi-linear) conditions on $f$.
In Section \ref{sec:linear_condition_RP}, we show a consequence of Theorem \ref{thm:RP_bound} under a simple (linear) condition on $f$, which strengthens the main result of \cite{BCMW22}.

\begin{remark}\label{rem:drift}
It is not crucial for our argument that the drift is polynomial: by including the drift suitably into the space of `drivers', we can handle any sufficiently regular function that behaves like $-\phi^p$ for large $|\phi|$.
\end{remark}

\begin{remark}
We restrict to $f\in\CC^N$ for simplicity,
but with more careful arguments, similar to those in Section \ref{subsec:Young}, one can handle the general case $f\in\CC^\theta$ where $\theta \in (\frac{1}{\gamma}-1,N]$.
\end{remark}

\subsection{General a priori estimates}
\label{subsec:general_RPs}

Recall that $\mbX$ is indexed by (labelled rooted) forests $\tau$ with number of vertices $|\tau|\leq N$ and with every vertex labelled by an element of the set $[n]$,
and $\mbX^\tau\colon [-1,0]^2\to\R$ is continuous with
\begin{equ}
\|\mbX^\tau\| \eqdef \|\mbX^\tau\|_{\gamma|\tau|}
\eqdef \sup_{s\neq t} \frac{|\mbX^\tau_{s,t}|}{|t-s|^{\gamma|\tau|}}
< \infty\;.
\end{equ}
We denote by $\bone$ the empty forest for which $\mbX^\bone\equiv 1$.
For an interval $I\subset[-1,0]$, we write $\|\mbX^\tau\|_I$ for the corresponding semi-norm obtained by restricting to $s,t\in I$.

Recall that every forest can be written uniquely as $\tau = [\sigma_1]_{i_1}\cdots[\sigma_\ell]_{i_\ell}$, where $i_j\in [n]$, $\sigma_j$ is a forest (possibly empty), $[\sigma]_i$ denotes the tree obtained by attaching a root with label $i$ to $\sigma$, and where the product $[\sigma_1]_{i_1}\cdots[\sigma_\ell]_{i_\ell}$ is unordered (i.e. we consider combinatorial trees).
Remark that, $|[\tau]_i| = |\tau|+1$.

Recall the Butcher elementary differentials $f^\tau\in \CC(\R^m,L(\R^n,\R^m))$ of $f$, indexed by labelled rooted forests $\tau$ with  $0\leq |\tau|\leq N-1$,
which are given inductively by $f^\bone = f$ and then for $\tau = [\sigma_1]_{i_1}\cdots[\sigma_\ell]_{i_\ell}$ by
\begin{equ}[eq:Butcher_def]
f^\tau = (D^\ell f) (f^{\sigma_1}_{i_1},\ldots, f^{\sigma_\ell}_{i_\ell})\;,
\end{equ}
where $f^{\sigma}_i\colon \R^m\to \R^m$ is the $i$-th component of $f^\sigma$ for $i\in[n]$, and
\begin{equ}
D^\ell f \colon \R^m \to L((\R^m)^{\otimes\ell},L(\R^n,\R^m))
\end{equ}
is the $\ell$-th Fr\'echet derivative of $f$.
Remark that $f^\tau$ is a $(|\tau|+1)$-linear function of $f$ involving, in total, $|\tau|$ derivatives of $f$.

We use the notion of solution to \eqref{eq:RDE} from \cite[Prop.~3.8]{Hairer_Kelly_15_BRPs}
(which is the branched analogue of Davie's solutions to geometric RDEs \cite{Davie_08,Friz_Victoir_08_Euler,FV10}).
We say that $\phi$ solves \eqref{eq:RDE} if
\begin{equ}[eq:RDE_Euler]
\phi_t-\phi_s = - \phi_s^{p}(t-s) + \sum_{0\leq |\tau|\leq N-1} f^{\tau} (\phi_s) \mbX^{[\tau]}_{s,t} + o(|t-s|)\;,
\end{equ}
where the sum is over all labelled rooted forests $\tau$ (possibly empty) with number of vertices $0\leq |\tau|\leq N-1$,
and where we treat $\mbX^{[\tau]}$ as $\R^n$-valued with $i$-th component given by $\mbX^{[\tau]_i}$.

We make the following assumption for the rest of this section.

\begin{assumption}\label{as:eta}
For all $0\leq |\tau| < N$ and $0\leq \ell \leq N-|\tau|$,
there exists $\eta(\tau,\ell)\in\R$ such that
\begin{equ}
\|D^\ell f^\tau \|_{\infty;\eta(\tau,\ell)} <\infty
\;,
\end{equ}
where we recall $\|\cdot\|_{\infty;\eta}$ from \eqref{eq:weight_norms}.
\end{assumption}

As in Section \ref{subsec:Young}, we denote in this subsection
\begin{equ}
\alpha = 1/(p-1)\;.
\end{equ}
Define the local norms, for $z\in(-1,0]$ and $0<\lambda\leq |z+1|$,
\begin{equ}[eq:weight_1]
\disc{(f,\mbX)}^{(1)}_{z,\lambda}
=
\max_{0\leq |\tau|\leq N-1} H_\tau^{\rho_{\tau}}\;,\quad
\end{equ}
where
\begin{equ}[eq:H_def]
H_\tau = \|f^\tau\|_{\infty;\eta(\tau,0)}\|\mbX^{[\tau]}\|_{[z-\lambda,z]}
\;,\qquad
\rho_\tau = \alpha/\Delta(\tau,0)
\end{equ}
and, for $0\leq \ell\leq N-|\tau|$,
\begin{equ}[eq:Delta_def]
\Delta(\tau,\ell)
= \gamma(|\tau|+1) + \alpha(1-\ell-\eta(\tau,\ell))\;.
\end{equ}
Define moreover
\begin{equ}[eq:weight_2]
\disc{(f,\mbX)}^{(2)}_{z,\lambda}
=
\max_{0\leq|\tau|\leq N-1}
\max_{1\leq \ell\leq N-|\tau|}
\max_{0\leq |\sigma|\leq N-1}
\Big(
\|D^\ell f^{\tau}\|_{\infty;\eta(\tau,\ell)}
\|\mbX^{[\tau]}\|_{[z-\lambda,z]}
H_\sigma^{\zeta_{\tau,\ell,\sigma}}
\Big)^{\rho_{\tau,\ell,\sigma}}
\end{equ}
where $H_\sigma$ is defined in \eqref{eq:H_def} and
\begin{equ}
\rho_{\tau,\ell,\sigma} = \frac{\alpha}{\Delta(\tau,\ell) + \zeta_{\tau,\ell,\sigma}\Delta(\sigma,0)}
\;,
\qquad
\zeta_{\tau,\ell,\sigma} = \frac{(|\tau|+1)\gamma + \ell-1}{1-(|\sigma|+1)\gamma}\;,
\end{equ}
and where, if $\gamma=1/N$, then the case $|\sigma|=N-1$ is excluded from the $\max$ in \eqref{eq:weight_2}
(so, if $\gamma=N=1$, then simply $\disc{(f,\mbX)}^{(2)}_{z,\lambda}=0$).
Remark that $\zeta_{\tau,\ell,\sigma}>0$.

We now make the following assumption for the rest of the section.
\begin{assumption}\label{as:Delta}
$\rho_\tau$ and $\rho_{\tau,\ell,\sigma}$ are well-defined and positive. Namely,
$\Delta(\tau,0)>0$ and $\Delta(\tau,\ell) + \zeta_{\tau,\ell,\sigma}\Delta(\sigma,0)>0$ for all $0\leq|\tau|\leq N-1$, $0\leq \ell \leq N-|\tau|$, and $0\leq|\sigma|\leq N-1$
(if $\gamma = 1/N$, then we again consider only $0\leq|\sigma|\leq N-2$).
\end{assumption}

\begin{theorem}\label{thm:RP_bound}
There exist $C>0$, depending only on $m,n,\gamma,p$ and the collection of exponents $\eta(\tau,\ell)$, such that, if
$\phi\colon [-1,0]\to\R^m$ solves \eqref{eq:RDE}, then, for all $z\in (-1,0]$,
\begin{equ}[eq:RP_bound]
|\phi_z| \leq C\left( \disc{(f,\mbX)}^{(1)}_{z,\lambda_z}+\disc{(f,\mbX)}^{(2)}_{z,\lambda_z} + |z+1|^{-\alpha} \right)
\end{equ}
where $\lambda_z = (C|\phi_z|^{-1/\alpha}) \wedge (\frac{1}{2}|z+1|)$.
\end{theorem}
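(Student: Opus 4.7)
The plan is to deduce the theorem from Corollary~\ref{cor:parameter_choice} by casting \eqref{eq:RDE} into the abstract framework of Section~\ref{sec:abstract}, following the template of Sections~\ref{sec:classical-revist}--\ref{subsec:Young}. I take $\Sigma=\Sigma_{z,\lambda}=[-1,0]$ with $T_{z,\lambda}(x)=z+\lambda x$ and $o_{z,\lambda}=0$; drivers are pairs $M=(f,\mbX)$ with rescaling $R_{z,\lambda}(f,\mbX)=(f_\lambda,\mbX^{(\lambda)})$, where $f_\lambda=f(\lambda^{-\alpha}\cdot)$ and $(\mbX^{(\lambda)})^\tau_{s,t}=\lambda^{\alpha|\tau|}\mbX^\tau_{T_{z,\lambda}(s),T_{z,\lambda}(t)}$; solutions $\bbS_{(f,\mbX)}$ are all $\phi$ satisfying \eqref{eq:RDE_Euler}. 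A quick induction on $|\tau|$ using \eqref{eq:Butcher_def} yields the fundamental identity $(f_\lambda)^\tau(\psi)=\lambda^{-\alpha|\tau|}f^\tau(\lambda^{-\alpha}\psi)$, from which the intertwining relation $\psi=\lambda^\alpha\phi\circ T_{z,\lambda}\in\bbS_{R_{z,\lambda}(f,\mbX)}$ is read off directly from \eqref{eq:RDE_Euler}, verifying \eqref{eq:sol_rescale}.

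I then define $\disc{(f,\mbX)}_{z,\lambda}=\disc{(f,\mbX)}^{(1)}_{z,\lambda}+\disc{(f,\mbX)}^{(2)}_{z,\lambda}$, with $\|(f,\mbX)\|_{z,\lambda}$ being the same expressions with the rough path semi-norm taken on all of $[-1,0]$. The verification of \eqref{eq:M_norms} is then a bookkeeping exercise: the inductive identity for $(f_\lambda)^\tau$ combined with Lemma~\ref{lem:f_weighted_scaling} gives $\|D^\ell(f_\lambda)^\tau\|_{\infty;\eta(\tau,\ell)}\leq\lambda^{-\alpha(|\tau|+\ell+\eta(\tau,\ell))}\|D^\ell f^\tau\|_{\infty;\eta(\tau,\ell)}$, while unfolding the definition yields $\|(\mbX^{(\lambda)})^{[\tau]}\|_{[-1,0]}=\lambda^{(\alpha+\gamma)(|\tau|+1)}\|\mbX^{[\tau]}\|_{[z-\lambda,z]}$. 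Multiplying these contributions, the net exponent of $\lambda$ in an $H_\tau$-type factor is exactly $\Delta(\tau,0)$, and in a $\|D^\ell f^\tau\|\cdot\|\mbX^{[\tau]}\|\cdot H_\sigma^{\zeta_{\tau,\ell,\sigma}}$-type factor is $\Delta(\tau,\ell)+\zeta_{\tau,\ell,\sigma}\Delta(\sigma,0)$; raising to $\rho_\tau$ or $\rho_{\tau,\ell,\sigma}$ produces exactly $\lambda^\alpha$, so \eqref{eq:M_norms} holds. Positivity of these exponents is Assumption~\ref{as:Delta}, and \eqref{eq:balls} is immediate from monotonicity of the rough path semi-norm in $\lambda$.

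The main step is local coercivity. Fix $\eps>0$ small and $\phi$ solving \eqref{eq:RDE} with $\|\phi\|_\infty\leq\eps$; comparing with the deterministic solution $\mathring\phi$ ($\mbX=0$, $\mathring\phi_{-1}=\phi_{-1}$) gives $|\mathring\phi_0|<\eps-2\kappa$ for some $\kappa=\kappa(\eps)>0$. It suffices to prove $\|\phi-\mathring\phi\|_\infty<\kappa$ whenever $\|(f,\mbX)\|_{[-1,0]}\leq r$ with $r$ small. The argument is the branched analogue of \eqref{eq:Young_int_bound}--\eqref{eq:Gamma_Young}: partition $[-1,t]$ at scale $h$, apply a sewing-type bound in the spirit of \cite{Davie_08,Hairer_Kelly_15_BRPs,FV10,Gubinelli_10_BRPs} to the Butcher expansion \eqref{eq:RDE_Euler} on each subinterval, and telescope. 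Summing the resulting $|t-s|^{\gamma(|\tau|+1)}$ increments produces two families of terms: direct contributions involving $H_\tau$ (controlling the bulk after a Young-type optimisation in $h$, and yielding $\disc{\cdot}^{(1)}$), and Taylor-expansion contributions of the form $\|D^\ell f^\tau\|_{\infty;\eta(\tau,\ell)}\,\|\mbX^{[\tau]}\|\,[\phi]_{\gamma;\leq h}^\ell$, where a bootstrap on $[\phi]_{\gamma;\leq h}$ in terms of the most singular $H_\sigma$-term supplies the exponent $\zeta_{\tau,\ell,\sigma}$ and produces $\disc{\cdot}^{(2)}$. Under Assumption~\ref{as:Delta} every $h$-exponent that arises is strictly positive, so choosing $r$ small closes the continuity argument in $h$ and yields $\|\phi-\mathring\phi\|_\infty<\kappa$, establishing local coercivity in the form of Corollary~\ref{cor:parameter_choice}.

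The principal obstacle is the branched bookkeeping: for each triple $(\tau,\ell,\sigma)$ one must identify the correct dominant term in the bootstrap of $[\phi]_{\gamma;\leq h}$ and verify that the resulting exponents match the definitions \eqref{eq:H_def}--\eqref{eq:Delta_def} precisely, so that the two-stage bootstrap (first for $[\phi]_{\gamma;\leq h}$, then for $\phi-\mathring\phi$) closes with $\alpha$ as the exact scaling exponent in \eqref{eq:M_norms}. Once this is carried out, Corollary~\ref{cor:parameter_choice} applied with $\alpha=1/(p-1)$ and $\nu=2(1-\theta)^{-1}$ (where $\theta=(1+\delta)^{-1/\alpha}$ is extracted from local coercivity) yields \eqref{eq:RP_bound}, with $\lambda_z$ as stated after absorbing the constants into $C$.
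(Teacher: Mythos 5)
Your proposal follows essentially the same route as the paper: the same abstract setup from Section \ref{sec:abstract}, the same drivers and rescaling maps, the same definitions of $\disc{\cdot}_{z,\lambda}$ and $\|\cdot\|_{z,\lambda}$, the same scaling verification of \eqref{eq:M_norms} via the identity $(f_\lambda)^\tau=\lambda^{-\alpha|\tau|}f^\tau(\lambda^{-\alpha}\cdot)$ and Lemma \ref{lem:f_weighted_scaling}, and the same outline of local coercivity via comparison to $\mathring\phi$, a sewing estimate (the paper's Lemma \ref{lem:sewing}), and a two-stage bootstrap in $h$. One omission to flag in your sketch: before entering the bootstrap, the paper first reduces (as at the start of Section \ref{sec:Young_weighted}) to the case $|\phi_t|\in[\eps/2,\eps]$ and hence to $\eta(\tau,\ell)=0$, which is what lets it work with unweighted norms of $D^\ell f^\tau$ in the quantities $W_h,V_h,Z_h$; without that reduction the weighted bounds do not immediately give uniform control over $D^\ell f^\tau$ along the trajectory.
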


\begin{remark}
If $f$ is constant, then $D^\ell f^\tau = 0$ unless $\ell=0$ and $\tau=\bone$.
Therefore, the right-hand side of \eqref{eq:RP_bound} depends only on the level-$1$ path $\mbX^{[\bone]}$, as expected from the fact that the higher level components of $\mbX$ do not affect the solution.
\end{remark}

\subsection{Simple linear condition on forcing}
\label{sec:linear_condition_RP}

Before the proof, we derive a consequence of Theorem \ref{thm:RP_bound} under a \emph{linear} condition on $f$.
Suppose there exist $\eta\in \R$ and $q\geq -1$ such that
\begin{equ}
\|f\|_{\eta,q} \eqdef \max_{0\leq \ell\leq N} \sup_{x\in\R^m} \frac{|D^\ell f(x)|}{(|x|+1)^{\eta+\ell q}}
<\infty
\;.
\end{equ}
A basic example is $f(x) = P(x)\sin(Q(x))$ where $P$ and $Q$ are polynomials of degree $\eta$ and $q+1$ respectively.

\begin{remark}
The restriction to $q\geq -1$ is natural because, when $q<-1$, we have the equivalence of norms $\|f\|_{\eta,q}\asymp \|f\|_{\eta+Nq+N,-1}$, and therefore we can always reduce to the case $q\geq -1$ modulo changing $\eta$.
\end{remark}

For $I\subset[-1,0]$, define the homogenous rough path H\"older norm,
\begin{equ}
\trinorm{\mbX}_I = \max_{1\leq |\tau|\leq N}\|\mbX^\tau\|^{1/|\tau|}_I\;.
\end{equ}

\begin{corollary}
Suppose that
\begin{equ}[eq:RP_linear_assump]
\gamma/\alpha + \gamma - q(1-\gamma) - \eta > 0
\;.
\end{equ}
Then, with notation as in Theorem \ref{thm:RP_bound}, uniformly over all solutions $\phi\colon [-1,0]\to\R^m$ to \eqref{eq:RDE},
\begin{equ}[eq:final_RP_bound]
|\phi_z| \lesssim
\big(\|f\|_{\eta,q}\trinorm{\mbX}_{[z-\lambda_z,z]}\big)^{\frac{1}{\gamma +\gamma/\alpha - q(1-\gamma)- \eta}} + |z+1|^{-\alpha}\;.
\end{equ}
\end{corollary}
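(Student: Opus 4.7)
The plan is to reduce the corollary to Theorem \ref{thm:RP_bound} by (i) verifying Assumptions \ref{as:eta} and \ref{as:Delta} and (ii) showing each summand in the bound of Theorem \ref{thm:RP_bound} is controlled by $A^{\tilde\rho} + |z+1|^{-\alpha}$, where I write $A = \|f\|_{\eta,q}\trinorm{\mbX}_{[z-\lambda_z,z]}$ and $\tilde\rho = (\gamma + \gamma/\alpha - q(1-\gamma) - \eta)^{-1}$.

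For (i), I take $\eta(\tau,\ell) = \eta(|\tau|+1) + q(|\tau|+\ell)$. By induction on $|\tau|$ using the recursion \eqref{eq:Butcher_def}, I verify
\begin{equ}
|D^\ell f^\tau(x)| \lesssim \|f\|_{\eta,q}^{|\tau|+1}(|x|+1)^{\eta(|\tau|+1)+q(|\tau|+\ell)}\,,
\end{equ}
the exponent being dictated by the fact that $D^\ell f^\tau$ is $(|\tau|+1)$-linear in $f$ and involves $|\tau|+\ell$ derivatives of $f$ in total. Comparing the polynomial weight $(|x|+1)^{\eta(\tau,\ell)}$ with the definition \eqref{eq:x_eta} of $|x|_{\eta(\tau,\ell)}$ then yields $\|D^\ell f^\tau\|_{\infty;\eta(\tau,\ell)} \lesssim \|f\|_{\eta,q}^{|\tau|+1}$, verifying Assumption \ref{as:eta}.

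Writing $u = |\tau|+1$ and $v = |\sigma|+1$, a direct rearrangement of \eqref{eq:Delta_def} gives $\Delta(\tau,\ell) = u(\gamma - \alpha(\eta+q)) + \alpha(1+q)(1-\ell)$. Using the identity $\tilde\rho[\gamma - \alpha(\eta+q) + \alpha(1+q)\gamma] = \alpha$, which is equivalent to the definition of $\tilde\rho$, one then verifies
\begin{equ}[eq:key_linear_id]
\tilde\rho\,\Delta(\tau,\ell) = u\alpha + \tilde\rho\alpha(1+q)(1 - u\gamma - \ell)\,.
\end{equ}
Taking $\ell = 0$ and using $u \leq N \leq 1/\gamma$ together with $1+q \geq 0$, I conclude $\tilde\rho\Delta(\tau,0) \geq u\alpha > 0$, which (since $\tilde\rho > 0$ by \eqref{eq:RP_linear_assump}) gives both $\Delta(\tau,0) > 0$ and, equivalently,
\begin{equ}[eq:upper_rho]
u\,\rho_\tau \leq \tilde\rho\,.
\end{equ}
Moreover, by the definition of $\zeta_{\tau,\ell,\sigma}$ one has $\zeta_{\tau,\ell,\sigma}(1-v\gamma) = u\gamma + \ell - 1$, so combining \eqref{eq:key_linear_id} for $(\tau,\ell)$ and $(\sigma,0)$ makes the $(1-u\gamma-\ell)$ and $\zeta_{\tau,\ell,\sigma}(1-v\gamma)$ pieces cancel, yielding
\begin{equ}[eq:second_id]
\tilde\rho\bigl[\Delta(\tau,\ell) + \zeta_{\tau,\ell,\sigma}\Delta(\sigma,0)\bigr] = (u + v\zeta_{\tau,\ell,\sigma})\alpha\,.
\end{equ}
The right-hand side is strictly positive (note $\zeta_{\tau,\ell,\sigma} > 0$ because $(|\sigma|+1)\gamma < 1$ under the exclusion when $\gamma = 1/N$, while $(|\tau|+1)\gamma + \ell \geq 1 + \gamma > 1$ for $\ell \geq 1$), verifying the second part of Assumption \ref{as:Delta}.

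For (ii), the elementary bound $\|\mbX^{[\tau]}\|_{[z-\lambda_z,z]} \leq \trinorm{\mbX}_{[z-\lambda_z,z]}^{u}$ combined with step (i) gives $H_\tau \lesssim A^u$ and hence $H_\tau^{\rho_\tau} \lesssim A^{u\rho_\tau}$. By \eqref{eq:upper_rho}, if $A \geq 1$ this is $\leq A^{\tilde\rho}$; if $A \leq 1$ it is $\leq 1 \leq |z+1|^{-\alpha}$, using $z \in (-1,0]$. Each summand of $\disc{(f,\mbX)}^{(2)}_{z,\lambda_z}$ is similarly bounded by $A^{(u + v\zeta_{\tau,\ell,\sigma})\rho_{\tau,\ell,\sigma}}$, which equals $A^{\tilde\rho}$ exactly by \eqref{eq:second_id}. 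Summing over the finitely many forest configurations and adding the boundary term from Theorem \ref{thm:RP_bound} yields \eqref{eq:final_RP_bound}.

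The main obstacle is the pair of identities \eqref{eq:key_linear_id}--\eqref{eq:second_id}: together they express that the parameter $\tilde\rho$ defined in the statement is exactly the exponent forced by the recursion defining $\rho_\tau$ and $\rho_{\tau,\ell,\sigma}$. Once one sees the cancellation in \eqref{eq:second_id} (which exploits the precise form of $\zeta_{\tau,\ell,\sigma}$), the remainder is routine algebra, plus the observation $|z+1|^{-\alpha} \geq 1$ on $(-1,0]$ to handle the small-$A$ regime.
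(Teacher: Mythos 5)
Your proof is correct and follows essentially the same route as the paper's: the same choice of $\eta(\tau,\ell)$, the same inductive bound $\|D^\ell f^\tau\|_{\infty;\eta(\tau,\ell)}\lesssim\|f\|_{\eta,q}^{|\tau|+1}$, the inequality $u\rho_\tau\leq\tilde\rho$ for the first norm (using $q\geq -1$ and $\gamma\leq 1/N$), and the exact identity $(u+v\zeta)\rho_{\tau,\ell,\sigma}=\tilde\rho$ for the second. Your rearrangement into the two identities \eqref{eq:key_linear_id}--\eqref{eq:second_id} is a tidy way to present the algebra, but it is the same computation.
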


\begin{remark}
The two cases considered in \cite{BCMW22} (see Assumptions~1.1-1.2 therein) are (i) $q=\eta=0$, i.e. $f,Df,\ldots, D^Nf$ are bounded, and (ii) $q=-1$ and $\eta \geq 1$,
i.e. $f$ behaves like a polynomial of degree $\eta$
(see \cite[Rem.~1.9]{BCMW22} on the limitation of $\eta\geq 1$).
Our condition \eqref{eq:RP_linear_assump} and bound \eqref{eq:final_RP_bound} agree with \cite[Thm.~1.4]{BCMW22} in these two cases, but we make the improvement that
we allow any $q, \eta\in\R$ and localise the rough path H\"older norm $\trinorm{\mbX}$.
\end{remark}

\begin{remark}
If $\gamma=1$, then the above bound does not depend on $q$, which is consistent with the Young case \eqref{eq:Young_weighted} where we have no dependence on the H\"older semi-norm $[f]_\theta$.
\end{remark}

\begin{proof}
A simple induction shows that
\begin{equ}[eq:D_ell_f_tau]
\|D^\ell f^\tau\|_{\infty;\eta(\tau,\ell)} \lesssim \|f\|_{\eta,q}^{|\tau|+1}
\end{equ}
where $\eta(\tau,\ell) = (|\tau|+\ell) q + (|\tau|+1)\eta$.
Hence Assumption \ref{as:eta} is satisfied with this choice of $\eta(\tau,\ell)$
and we have
\begin{equ}
\Delta(\tau,\ell) = \gamma(|\tau|+1) + \alpha(1-\ell-(|\tau|+\ell) q - (|\tau|+1)\eta)\;.
\end{equ}
Then, by \eqref{eq:D_ell_f_tau},
\begin{equ}[eq:1_simple]
\disc{(f,\mbX)}^{(1)}_{z,\lambda} \lesssim \max_{0\leq |\tau| \leq N-1}
\big(
\|f\|_{\eta,q}\trinorm{\mbX}_{[z-\lambda,z]}
\big)^{\frac{\alpha(|\tau|+1)}{\Delta(\tau,0)}}\;.
\end{equ}
Remark that, denoting $k=|\tau|$ with $0\leq k\leq N-1$,
\begin{equ}[eq:exp_1_bound]
\frac{\Delta(\tau,0)}{\alpha(k+1)} = \frac{\gamma}{\alpha} + \frac{1- k q}{k+1}
- \eta
\geq \frac{\gamma}{\alpha} +  \frac{1}{N} - q + \frac{q}{N} - \eta
\geq  \frac{\gamma}{\alpha} + \gamma - q(1-\gamma) - \eta\;,
\end{equ}
where we used, in the first equality, that $q\geq -1$ and thus $\frac{1- k q}{k+1}$ is increasing in $k$ and, in the second inequality, that $\gamma\leq 1/N$.

Next, denoting $k=|\tau|,j=|\sigma|$ and recalling $\rho_{\tau,\ell,\sigma}$ from \eqref{eq:weight_2}, by a quick calculation,
\begin{equ}[eq:2_exp_simple]
\rho_{\tau,\ell,\sigma} = \frac{1}{\gamma(1+1/\alpha)(k+1+\zeta(j+1)) - \eta(\tau,\ell)-\zeta \eta(\sigma,0)}
\;,
\end{equ}
where we simply denote $\zeta \equiv \zeta_{\tau,\ell,\sigma}$,
and
\begin{equ}
\eta(\tau,\ell)+\zeta \eta(\sigma,0) = q(k+\ell+\zeta j) + \eta(k+1+\zeta(j+1)) = ( q(1-\gamma)+ \eta)(k+1+\zeta(j+1))\;.
\end{equ}
Moreover, due to \eqref{eq:D_ell_f_tau},
\begin{equ}[eq:2_simple]
\disc{(f,\mbX)}^{(2)}_{z,\lambda}
\lesssim \max_{\tau,\ell,\sigma} 
\big(\|f\|_{\eta,q}\trinorm{\mbX}_{[z-\lambda,z]}
\big)^{(k+1+\zeta(j+1))\rho_{\tau,\ell,\sigma}}
\;,
\end{equ}
where the exponent, by \eqref{eq:2_exp_simple}, is given by
\begin{equ}[eq:exp_2]
(k+1+\zeta(j+1))\rho_{\tau,\ell,\sigma} = \frac{1}{\gamma + \gamma/\alpha - q (1-\gamma)- \eta}\;.
\end{equ}
In conclusion, Assumption \ref{as:Delta} is equivalent to \eqref{eq:RP_linear_assump},
and under this assumption, by \eqref{eq:1_simple}-\eqref{eq:exp_1_bound} and \eqref{eq:2_simple}-\eqref{eq:exp_2},
\begin{equ}
\disc{(f,\mbX)}^{(1)}_{z,\lambda}
+ \disc{(f,\mbX)}^{(2)}_{z,\lambda}
\lesssim
1+\big(\|f\|_{\eta,q}\trinorm{\mbX}_{[z-\lambda,z]}
\big)^{\frac{1}{\gamma +\gamma/\alpha - q(1-\gamma)- \eta}}\;.
\end{equ}
Then, by Theorem \ref{thm:RP_bound}, we obtain \eqref{eq:final_RP_bound}.
\end{proof}

\subsection{Proof of Theorem \ref{thm:RP_bound}}
\label{sec:RP_proof}

We now turn to the proof of Theorem \ref{thm:RP_bound}.
The setting is similar to Section \ref{subsec:Young} except that the driver space $\drivers=\drivers_{z,\lambda}$ is now all pairs $M=(f,\mbX)$ where $f\in\CC^N$ and $\mbX\in \sC^\gamma$ for $\gamma\in (0,1]$,
where $\sC^\gamma$ is the space of all $\gamma$-H\"older branched rough paths.

The rescaling map is defined by
\begin{equ}
R_{z,\lambda} (f,\mbX) = (f_\lambda, \delta_{\lambda^\alpha} \mbX \circ T_{z,\lambda})
\end{equ}
where now ${\delta_r\mbX}^\tau = r^{|\tau|} \mbX^\tau$ and, as before, $f_\lambda = f(\lambda^{-\alpha}\cdot)$.

The solution space $\bbS_M$ is similarly the set of all solutions to \eqref{eq:RDE} in the sense of \eqref{eq:RDE_Euler}.
The closure under rescaling $\psi = \lambda^\alpha \phi\circ T_{z,\lambda}$ assumption \eqref{eq:sol_rescale} follows from \eqref{eq:RDE_Euler} once we note that
\begin{equ}
(f_\lambda)^{\tau} = \lambda^{-\alpha |\tau|}f^{\tau}(\lambda^{-\alpha}\cdot)
\end{equ}
since $|\tau|$ is the total number of derivatives in $f^\tau$,
and thus
\begin{equs}
\psi_t-\psi_s
&= -\lambda^{\alpha - p\alpha} \psi_{s}^{p}\lambda(t-s)+
\lambda^{\alpha}\sum_{|\tau|\leq N-1} f^{\tau}(\lambda^{-\alpha}\psi_s) \mbX^{[\tau]}_{T_{z,\lambda} (s), T_{z,\lambda} (t)} + o(|t-s|)
\\
&=
-\psi_{s}^{p} (t-s)
+
\sum_{|\tau|\leq N-1} (f_\lambda)^{\tau}(\psi_s) (\delta_{\lambda^\alpha}\mbX)^{[\tau]}_{T_{z,\lambda} (s), T_{z,\lambda} (t)}
+ o(|t-s|)\;,
\end{equs}
where we used the definition $\alpha=1/(p-1)$ in the second equality.

It remains to define suitable norms on $\drivers,\drivers_{z,\lambda}$ verifying \eqref{eq:balls}-\eqref{eq:M_norms} and the local coercivity condition \eqref{eq:local_coercivity_abstract}.
To this end, we have already introduced the local norms $\disc{(f,\mbX)}^{(1)}_{z,\lambda}$ and $\disc{(f,\mbX)}^{(2)}_{z,\lambda}$ in \eqref{eq:weight_1} and \eqref{eq:weight_2}.
We then define $\disc{(f,\mbX)}_{z,\lambda} = \disc{(f,\mbX)}^{(1)}_{z,\lambda}+\disc{(f,\mbX)}^{(2)}_{z,\lambda}$
and remark that \eqref{eq:balls} clearly holds for this norm.

Furthermore, we define the norm
\begin{equ}
\|(f,\mbX)\| =\|(f,\mbX)\|_{z,\lambda} = \|(f,\mbX)\|^{(1)}+\|(f,\mbX)\|^{(2)}\;,
\end{equ}
where $\|(f,\mbX)\|^{(i)}$ is defined exactly as $\disc{(f,\mbX)}^{(i)}_{z,\lambda}$ but with the H\"older norm $\|\mbX^\tau\|$ taken over the whole interval $[-1,0]$
(in particular, $\|(f,\mbX)\|^{(i)}$ does not depend on $z,\lambda$).

We next verify \eqref{eq:M_norms}.
Observe that
\begin{equ}
D^\ell (f_\lambda)^\tau = \lambda^{-\alpha(|\tau|+\ell)} (D^\ell f^\tau)_\lambda
\end{equ}
and therefore, by Lemma \ref{lem:f_weighted_scaling},
\begin{equ}[eq:D_ell_infty]
\|D^\ell (f_\lambda)^\tau \|_{\infty;\eta(\tau,\ell)} \leq \lambda^{-\alpha(|\tau|+\ell + \eta(\tau,\ell))}\|D^\ell f^\tau\|_{\infty;\eta(\tau,\ell)}\;.
\end{equ}
Furthermore,
\begin{equ}
\|\delta_{\lambda^\alpha}\mbX^{\tau}\circ T_{z,\lambda}\|_{|\tau|\gamma} = \lambda^{|\tau|(\alpha+\gamma)}\|\mbX^{\tau}\|_{[z-\lambda,z]}\;.
\end{equ}
As a consequence
\begin{equ}
\|D^\ell (f_\lambda)^\tau \|_{\infty;\eta(\tau,\ell)} \|\delta_{\lambda^\alpha}\mbX^{[\tau]}\circ T_{z,\lambda}\|_{(|\tau|+1)\gamma}
\leq
\lambda^{-\alpha (|\tau|+\ell + \eta(\tau,\ell)) + (\alpha+\gamma)(|\tau|+1)} \|D^\ell f^\tau\|_{\infty;\eta(\tau,\ell)} \|\mbX^{[\tau]}\|_{[z-\lambda,z]}\;.
\end{equ}
Remark that the exponent of $\lambda$ is $\Delta(\tau,\ell) = \gamma(|\tau|+1) + \alpha(1-\ell-\eta(\tau,\ell))$
from \eqref{eq:Delta_def}.
We thus obtain, for $i=1,2$,
\begin{equ}
\|R_{z,\lambda}(f,\mbX)\|^{(i)}_{z,\lambda} \leq \lambda^{\alpha} \disc{(f,\mbX)}^{(i)}_{z,\lambda}
\end{equ}
from which \eqref{eq:M_norms} follows.

To conclude the proof of Theorem \ref{thm:RP_bound}, it remains to verify the form of local coercivity stated in Corollary \ref{cor:parameter_choice}.
Our argument is self-contained with the exception of the following algebraic and sewing-type lemma.
Fix for the rest of the section a solution $\phi$ to \eqref{eq:RDE}.

\begin{lemma}\label{lem:sewing}
For every $0\leq|\tau|\leq N-1$, define the `remainders' $R^\tau_{s,t} \in L(\R^n,\R^m)$ by
\begin{equ}[eq:R_def]
f^\tau(\phi_t) = 
\sum_{\ell=0}^{N-|\tau|-1} \sum_{(\sigma_1,\ldots,\sigma_\ell)} \frac{(D^\ell f^\tau)}{\ell!} (\phi_s) (f^{\sigma_1}(\phi_s) \mbX_{s,t}^{[\sigma_1]},\ldots, f^{\sigma_\ell}(\phi_s) \mbX_{s,t}^{[\sigma_\ell]})
+ R_{s,t}^\tau
\end{equ}
where the inner sum is over all ordered collections of (possibly empty) forests $(\sigma_1,\ldots,\sigma_\ell)$ such that $\sum_{i=1}^\ell(|\sigma_i|+1)<N-|\tau|$,
and where we recall the notation in \eqref{eq:Butcher_def}-\eqref{eq:RDE_Euler}.
Then, for all $h\in (0,1]$,
\begin{equ}[eq:R_reg]
\|R^\tau\|_{\gamma (N-|\tau|);\leq h} = \sup_{0<|t-s|\leq h} \frac{|R_{s,t}^\tau|}{|t-s|^{\gamma (N-|\tau|)}} < \infty\;.
\end{equ}
Moreover, for $-1\leq s\leq t\leq 0$, define the rough integral
\begin{equ}[eq:phi_integral]
\int_{s}^t f(\phi_u)\mrd \mbX_u =
\phi_t-\phi_s + \int_s^t \phi_u^{p}\mrd u
\;.
\end{equ}
Then we have the `sewing' estimate for $|t-s|\leq h$
\begin{equ}[eq:sewing_est]
\Big|\int_s^t f(\phi_u) \mrd \mbX_u - \sum_{0\leq|\tau|\leq N-1} f^{\tau} (\phi_s) \mbX^{[\tau]}_{s,t} \Big| \lesssim
|t-s|^{(N+1)\gamma}
\max_{0\leq|\tau|\leq N-1} \|R^\tau\|_{\gamma(N-|\tau|);\leq h} \|\mbX^{[\tau]}\|\;.
\end{equ}
\end{lemma}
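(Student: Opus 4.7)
The plan is to prove both assertions by combining Taylor expansion of the elementary differentials $f^\tau$ with Chen's relation for the branched rough path $\mbX$, and then invoking the sewing lemma.

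\textbf{Part 1 (remainder regularity).} Since $f \in \CC^N$, the inductive definition \eqref{eq:Butcher_def} shows that $f^\tau \in \CC^{N-|\tau|}$ for every $0 \leq |\tau| \leq N-1$. I apply Taylor's theorem to $f^\tau$ at $\phi_s$ to order $N-|\tau|-1$:
\begin{equ}
f^\tau(\phi_t) = \sum_{\ell=0}^{N-|\tau|-1} \frac{1}{\ell!} D^\ell f^\tau(\phi_s)(\phi_t-\phi_s)^{\otimes \ell} + E^\tau_{s,t},
\end{equ}
with $|E^\tau_{s,t}| \lesssim |\phi_t - \phi_s|^{N-|\tau|} \lesssim |t-s|^{\gamma(N-|\tau|)}$ using $\phi\in\CC^\gamma$. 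I then substitute the Davie expansion \eqref{eq:RDE_Euler} for $\phi_t-\phi_s$ into each tensor power and multiply out. Any resulting term that is not already present on the right-hand side of \eqref{eq:R_def}---either because it contains a drift factor $\phi_s^p(t-s)$, the $o(|t-s|)$ error from \eqref{eq:RDE_Euler}, or a product $\prod_i \mbX^{[\sigma_i]}_{s,t}$ with $\sum_i(|\sigma_i|+1) \geq N-|\tau|$---has regularity at least $\gamma(N-|\tau|)$, thanks to $|\mbX^{[\sigma]}_{s,t}| \leq \|\mbX^{[\sigma]}\| |t-s|^{\gamma(|\sigma|+1)}$ and the fact that $(N+1)\gamma > 1$ for $N=\floor{1/\gamma}$. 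All such terms are collected into $R^\tau$, giving \eqref{eq:R_reg}. A downward induction on $|\tau|$, with base case $|\tau|=N-1$ for which $R^\tau_{s,t} = f^\tau(\phi_t)-f^\tau(\phi_s)$ is immediately $O(|t-s|^\gamma)$, upgrades the $o(|t-s|)$ in \eqref{eq:RDE_Euler} to the quantitative bound needed.

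\textbf{Part 2 (sewing).} I set $A_{s,t} = \sum_{0\leq|\tau|\leq N-1} f^\tau(\phi_s)\mbX^{[\tau]}_{s,t}$ and compute the coboundary $\delta A_{s,u,t} := A_{s,t} - A_{s,u} - A_{u,t}$. Chen's relation for branched rough paths, written in terms of the Connes--Kreimer coproduct, expresses $\mbX^{[\tau]}_{s,t} - \mbX^{[\tau]}_{s,u} - \mbX^{[\tau]}_{u,t}$ as a sum over admissible cuts of the form $\mbX^{\sigma_1}_{s,u}\cdots\mbX^{\sigma_\ell}_{s,u}\mbX^{[\tau']}_{u,t}$. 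Combining this with the Taylor expansion of $f^\tau(\phi_u)-f^\tau(\phi_s)$ provided by \eqref{eq:R_def}, the Butcher--Connes--Kreimer algebraic identity makes the polynomial parts cancel and leaves
\begin{equ}
\delta A_{s,u,t} = -\sum_{0\leq|\tau|\leq N-1} R^\tau_{s,u}\,\mbX^{[\tau]}_{u,t}
\end{equ}
modulo a drift contribution of order $|t-s|^{1+\gamma}$. Using \eqref{eq:R_reg} and $|\mbX^{[\tau]}_{u,t}|\leq \|\mbX^{[\tau]}\||t-s|^{\gamma(|\tau|+1)}$ bounds $|\delta A_{s,u,t}|$ by the right-hand side of \eqref{eq:sewing_est}. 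Since $(N+1)\gamma > 1$, the sewing lemma produces a unique additive two-parameter map $J_{s,t}$ with $|J_{s,t}-A_{s,t}|$ of the same order. The Davie property \eqref{eq:RDE_Euler} forces $(s,t) \mapsto \phi_t - \phi_s + \int_s^t \phi_u^p\mrd u$ to satisfy the same near-additivity, so by uniqueness in the sewing lemma it coincides with $J_{s,t}$; this identifies the rough integral \eqref{eq:phi_integral} and proves \eqref{eq:sewing_est}.

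\textbf{Main obstacle.} The delicate point is the combinatorial identity in Part 2: one must verify that the Chen expansion of $\delta A_{s,u,t}$, combined with the Taylor expansion from Part 1, reassembles exactly into $-\sum_\tau R^\tau_{s,u}\mbX^{[\tau]}_{u,t}$. This is the branched analogue of the identity underlying Gubinelli's controlled-path construction, and requires careful bookkeeping with the Connes--Kreimer coproduct on labelled rooted forests. Once this identity is established, the rest reduces to routine Taylor and sewing estimates.
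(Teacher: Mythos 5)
The paper does not prove this lemma from first principles: it cites \cite[Prop.~3.8]{Hairer_Kelly_15_BRPs} for the equivalence between Davie's formulation \eqref{eq:RDE_Euler} and the controlled-rough-path formulation, and \cite[Cor.~3.9]{BCMW22} (or, equivalently, the regularity-structures analogue in Section~\ref{sec:elem_diff}, specifically Lemma~\ref{lem:tree_graft}) for the fact that $f(\phi)$ is controlled with Gubinelli derivatives $\frac{1}{\ell!}D^\ell f^\tau(\phi)(f^{\sigma_1}(\phi),\dots)$ and remainders $R^\tau$. Your proposal, by contrast, tries to reprove these facts from scratch, and your outline (Taylor-expand $f^\tau$, substitute the Davie expansion, use Chen's relation to compute the coboundary $\delta A_{s,u,t}$, apply the sewing lemma) is exactly the route those references take.

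That said, the key step in both parts of your argument is the Butcher--Connes--Kreimer identity you flag in your ``Main obstacle'' paragraph: that after expanding Chen's relation and collecting coefficients, the coboundary reassembles as $\delta A_{s,u,t} = -\sum_\tau R^\tau_{s,u}\mbX^{[\tau]}_{u,t}$ modulo a drift term. You have not verified this identity; you have only asserted that it holds. This is not a minor bookkeeping detail --- it is precisely the content of \cite[Cor.~3.9]{BCMW22} and, in the more general parabolic setting, of the paper's Lemma~\ref{lem:tree_graft}, whose proof occupies all of Section~\ref{sec:elem_diff} and rests on the (multi-)pre-Lie morphism property of $\Upsilon$. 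So your proposal has the right shape but an acknowledged gap at its combinatorial heart; it would be more honest to cite the identity (as the paper does) than to describe it as routine. A second, smaller, point: you claim the $o(|t-s|)$ in \eqref{eq:RDE_Euler} can be ``upgraded'' by a downward induction on $|\tau|$. What actually makes this work is that, since $\phi$ is assumed to lie in $\CC^\gamma$ on a compact interval and $\gamma(N-|\tau|)\leq\gamma N\leq 1$, a uniform $o(|t-s|)$ estimate already gives an $O(|t-s|^{\gamma(N-|\tau|)})$ bound by compactness; the induction on $|\tau|$ is needed only to control the tensor-power substitution, not to upgrade the order. If the $o(|t-s|)$ were merely pointwise in $s$ rather than uniform, your sketch would break, and the bootstrap in \cite[Prop.~3.8]{Hairer_Kelly_15_BRPs} would then be genuinely necessary and nontrivial. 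You should clarify which regime you are in.
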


\begin{proof}
The statement is equivalent to saying that $\frac{1}{\ell!}(D^\ell f^\tau) (\phi_s) (f^{\sigma_1}_{i_1}(\phi_s),\ldots, f^{\sigma_\ell}_{i_\ell}(\phi_s))$ are the derivatives with respect to $\mbX$ of the controlled rough path $f(\phi)$ and $R^\tau_{s,t}$ are the corresponding remainders.
For a proof that $\phi$ solves \eqref{eq:RDE} in the sense of \eqref{eq:RDE_Euler} if and only if $\phi$ is a controlled rough path with derivatives $f^\tau(\phi)$, see \cite[Prop.~3.8]{Hairer_Kelly_15_BRPs}.
Then, for a proof that $f(\phi)$ has the claimed derivatives and remainders, see \cite[Cor.~3.9]{BCMW22} or the more general statement in the setting of regularity structures in Section \ref{sec:elem_diff}.
\end{proof}

\begin{remark}
Since $\phi$ solves \eqref{eq:RDE} in the sense of \eqref{eq:RDE_Euler}, we have
\begin{equ}
\Big|\int_s^t f(\phi_u)\mrd \mbX_u - \sum_{0\leq |\tau|\leq N-1} f^{\tau} (\phi_s) \mbX^{[\tau]}_{s,t} \Big|
= o(|t-s|)
\end{equ}
from which we see that $\int_s^t f(\phi_u)\mrd \mbX_u$ is the usual rough integral in the sense of \cite{MaxControl,Feyel_DLP_06_sewing,Gubinelli_10_BRPs, FrizHairer20}.
The bound \eqref{eq:sewing_est} for $N=1$ is Young's estimate \eqref{eq:Young_est} (for $\theta=1$ therein).
See, for instance, \cite{Gubinelli_10_BRPs,Hairer_Kelly_15_BRPs, BCFP19,FrizHairer20,BCMW22, Chevyrev_25_RP}
for more background on RDEs.
\end{remark}

\begin{remark}\label{rem:zeta}
We do not need to be very careful with the regularity of $R^\tau$ -- our argument works and yields the same a priori bound if we replace $\gamma(N-|\tau|)$ in \eqref{eq:R_reg} by $\zeta - (|\tau|+1)\gamma + 1$ for any $\zeta \in (0,\gamma(N+1)-1]$
and correspondingly replace $(N+1)\gamma$ in \eqref{eq:sewing_est} by $1+\zeta$.
\end{remark}

Let us now fix $\eps>0$ and suppose
$\|\phi\|_\infty \leq \eps$.
By the same argument as above \eqref{eq:Young_weighted}, it suffices to consider the case that $|\phi_t|\in[\frac{\eps}{2},\eps]$
for all $t\in [-1,0]$.
It suffices therefore to consider the case that $\|D^\ell f\|_\infty<\infty$ for all $0\leq \ell\leq N$
and $\eta(\tau,\ell)=0$ in Assumption \ref{as:eta}.

Consider $\mathring\phi\colon [-1,0]\to\R^m$ solving \eqref{eq:Young_ODE} with $\mbX^\tau \equiv 0$ for all non-empty forests $|\tau|>0$ and with the same initial condition $\mathring\phi_{-1}=\phi_{-1}$.
Then $\|\mathring\phi\|_\infty\leq \eps$ and there exists $\delta>0$ such that $|\mathring\phi_0| < \eps-2\delta$.
Moreover
\begin{equ}
\phi_t-\mathring\phi_t = \phi_s-\mathring\phi_s - \int_s^t (\phi_u^{p} - \mathring\phi_u^{p})\mrd u
+ \int_s^t f(\phi_t)\mrd \mbX_t\;.
\end{equ}
Since $p>1$ and $\|\phi\|_\infty,\|\mathring\phi\|_\infty\leq \eps$,
we have $|\phi_u^{p} - \mathring\phi_u^{p}| \lesssim \eps^{p-1}|\phi_u-\mathring\phi_u|$
and therefore, provided that $\eps>0$ is sufficiently small,
\begin{equ}[eq:phi_diff_int]
\|\phi-\mathring\phi\|_\infty
\lesssim
\Big\|\int_{-1}^\cdot f(\phi_t)\mrd \mbX_t
\Big\|_\infty\;.
\end{equ}
To show local coercivity \eqref{eq:local_coercivity_abstract}, it thus suffices to show that $\|\int_{-1}^\cdot f(\phi_t)\mrd \mbX_t\| \to 0$ as $\|(f,\mbX)\|\to 0$.

To this end, for all $h\in [0,1]$ by \eqref{eq:sewing_est} and the triangle inequality applied to increments over a mesh of size $h$ as in \eqref{eq:triangle}, we obtain
\begin{equ}[eq:scale_rough]
\Big\|
\int_{-1}^\cdot f(\phi_t)\mrd \mbX_t
\Big\|_\infty
\lesssim \frac{1}{h}\Big(
W_h
+
V_h
\Big)
\end{equ}
where
\begin{equs}
W_h &= \max_{0\leq|\tau|\leq N-1} h^{\gamma (|\tau|+1)}\|f^{\tau}\|_\infty \|\mbX^{[\tau]}\|\;,
\\
V_h &= h^{\gamma(N+1)} \max_{0\leq|\tau|\leq N-1} \|R^\tau\|_{\gamma(N-|\tau|);\leq h} \|\mbX^{[\tau]}\|\;,
\end{equs}
and $R^\tau$ is defined by \eqref{eq:R_def} in Lemma \ref{lem:sewing}.
(For $h=0$, we interpret \eqref{eq:scale_rough} as the limit $h\downarrow 0$.)

Note that if $\max_{0\leq|\tau|\leq N-1} \|f^{\tau}\|_\infty \|\mbX^{[\tau]}\|=0$, then $\phi=\mathring\phi$ by \eqref{eq:RDE_Euler} and we are done.
To exclude this trivial case, we suppose henceforth that
\begin{equ}[eq:non_trivial]
\max_{0\leq|\tau|\leq N-1} \|f^{\tau}\|_\infty \|\mbX^{[\tau]}\| > 0\;.
\end{equ}

We next estimate $\|R^\tau\|_{\gamma(N-|\tau|);\leq h}$ for some fixed $0\leq|\tau|\leq N-1$.
By Taylor's theorem,
\begin{equ}[eq:f_expan]
f^\tau(\phi_t) = \sum_{\ell=0}^{N-|\tau|-1} \frac{(D^\ell f^\tau)}{\ell!} (\phi_s) (\phi_t - \phi_s)^\ell
+ O(\|D^{N-|\tau|}f^\tau\|_\infty |\phi_t-\phi_s|^{N-|\tau|})\;.
\end{equ}
Define the remainder
\begin{equ}
Q_{s,t} = \phi_t - \phi_s - \sum_{0\leq |\sigma|\leq N-1} f^{\sigma}(\phi_s) \mbX_{s,t}^{[\sigma]}\;.
\end{equ}
Then, by \eqref{eq:phi_integral}-\eqref{eq:sewing_est}, for $|t-s|\leq h$,
\begin{equs}[eq:Q_bound]
|Q_{s,t}| &\lesssim |t-s| + |t-s|^{(N+1)\gamma} h^{-(N+1)\gamma}V_h
\\
&\leq
|t-s|^{(N-|\tau|)\gamma}h^{-(N-|\tau|)\gamma}(h+V_h)
\;.
\end{equs}
Note also that
\begin{equ}[eq:f_X_bound]
|f^{\sigma}(\phi_s)\mbX_{s,t}^{[\sigma]}| \leq |t-s|^{(|\sigma|+1)\gamma}h^{-(|\sigma|+1)\gamma}W_h
\end{equ}
and thus
\begin{equ}[eq:phi_inc_bound]
|\phi_t-\phi_s|
\lesssim |t-s| + |t-s|^{\gamma}h^{-\gamma}W_h + |t-s|^{(N+1)\gamma} h^{-(N+1)\gamma}V_h\;.
\end{equ}

We now expand each factor $\phi_t-\phi_s = Q_{s,t} + \sum_{0\leq|\sigma|\leq N-1} f^{\sigma}(\phi_s) \mbX_{s,t}^{[\sigma]}$ in \eqref{eq:f_expan}
and equate the result with \eqref{eq:R_def}.
We see that the remainder
$R_{s,t}^\tau$ is made of a term of order
\begin{equ}[eq:1st_RP]
O(\|D^{N-|\tau|}f^\tau\|_\infty |\phi_t-\phi_s|^{N-|\tau|})
\end{equ}
plus a finite sum of 
terms which are multiples of
\begin{equ}[eq:R_second]
(D^\ell f^\tau)(\phi_s)Q_{s,t}^q \prod_{i=1}^jf^{\sigma_i}(\phi_s)\mbX_{s,t}^{[\sigma_i]}
\end{equ}
where $q+j=\ell$ for $1\leq \ell < N-|\tau|$
and such that
\begin{equ}
q (N-|\tau|) + \sum_{i=1}^j(|\sigma_i|+1) \geq N-|\tau|
\;.
\end{equ}
(The final bound is always fulfilled if $q\geq 1$.)
From \eqref{eq:Q_bound}-\eqref{eq:f_X_bound},
each term \eqref{eq:R_second} is bounded by
$\|D^\ell f^\tau\|_\infty(h+V_h+W_h)^\ell |t-s|^\theta h^{-\theta}$
with $\theta \geq (N-|\tau|)\gamma$.
Combining with \eqref{eq:phi_inc_bound} to control \eqref{eq:1st_RP},
we obtain
\begin{equ}
\|R^\tau\|_{(N-|\tau|)\gamma;\leq h}
\lesssim
\max_{1\leq \ell \leq N-|\tau|} \|D^\ell f^\tau\|_\infty (h+ W_{h} + V_h)^\ell
h^{-(N-|\tau|)\gamma}\;.
\end{equ}
Multiplying by $h^{(N+1)\gamma}\|\mbX^{[\tau]}\|$
and taking the $\max$ over $|\tau|\leq N-1$, we obtain
\begin{equ}[eq:Vh_bound]
V_h
\lesssim
\max_{0\leq |\tau|\leq N-1}
\max_{1\leq \ell \leq N-|\tau|}
\|D^\ell f^\tau\|_\infty \|\mbX^{[\tau]}\|
h^{(|\tau|+1)\gamma} (h+W_{h}+V_h)^\ell\;.
\end{equ}
Define
\begin{equ}
Z_h =
\max_{0\leq |\tau|\leq N-1}
\max_{1\leq \ell \leq N-|\tau|}
\|D^\ell f^\tau\|_\infty \|\mbX^{[\tau]}\|
h^{(|\tau|+1)\gamma} (h+W_{h})^{\ell-1}\;.
\end{equ}

\begin{lemma}\label{lem:cont}
For every $\kappa>0$ there exists $\eps_0>0$ such that, if $h\in [0,1]$ and $Z_h < \eps_0$, then
\begin{equ}
V_h \leq \kappa (h+W_h)\;.
\end{equ}
\end{lemma}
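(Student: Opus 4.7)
The plan is to reduce \eqref{eq:Vh_bound} to a scalar polynomial inequality in $u(h):=V_h/(h+W_h)$ and then to run a standard continuity bootstrap in $h$. Writing $(h+W_h+V_h)^\ell=(h+W_h)^\ell(1+u(h))^\ell$, pulling out one factor of $(h+W_h)$, and noting that the remaining $\|D^\ell f^\tau\|_\infty\|\mbX^{[\tau]}\|\,h^{(|\tau|+1)\gamma}(h+W_h)^{\ell-1}$ is bounded by $Z_h$ by definition, while $(1+u(h))^\ell\le(1+u(h))^N$ since $u(h)\ge 0$ and $\ell\le N$, the bound \eqref{eq:Vh_bound} takes the form
\begin{equ}
u(h) \;\le\; C_0\,Z_h\,(1+u(h))^N
\end{equ}
for some constant $C_0=C_0(N)$. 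Given $\kappa>0$, I would choose $\eps_0>0$ small enough that $C_0\,\eps_0\,(1+\kappa)^N \le \kappa/2$; this produces the key self-improvement $u(h)\le\kappa\Longrightarrow u(h)\le\kappa/2$ whenever $Z_h<\eps_0$.

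Next I would verify that $u\colon[0,1]\to[0,\infty)$, extended by $u(0):=0$, is continuous. Continuity of $W_h$ and positivity of $h+W_h$ on $(0,1]$ reduce this to continuity of $V_h=h^{(N+1)\gamma}\max_\tau\|R^\tau\|_{\gamma(N-|\tau|);\le h}\|\mbX^{[\tau]}\|$. The function $(s,t)\mapsto|R^\tau_{s,t}|/|t-s|^{\gamma(N-|\tau|)}$ is jointly continuous on $\{s<t\}$ (since $\phi$, $f^\tau$ and $\mbX^{[\sigma]}$ in \eqref{eq:R_def} are continuous); this continuity yields both right- and left-continuity of $h\mapsto\|R^\tau\|_{\gamma(N-|\tau|);\le h}$: right-continuity follows from monotonicity together with $\{0<t-s\le h\}=\bigcap_{h'>h}\{0<t-s\le h'\}$, and left-continuity from the fact that every boundary pair with $t-s=h$ is approachable from the interior $\{0<t-s<h\}$ (take $s_n=s+1/n$). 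Continuity at $h=0$ is automatic since $V_h\lesssim h^{(N+1)\gamma}\to 0$; moreover $N=\floor{1/\gamma}$ implies $(N+1)\gamma>1$, so $u(h)\le V_h/h\to 0$ as $h\downarrow 0$.

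Finally, fix $h\in[0,1]$ with $Z_h<\eps_0$; since $Z$ is non-decreasing, $Z_s<\eps_0$ for every $s\in[0,h]$. Define
\begin{equ}
H:=\sup\bigl\{s\in[0,h]:u(s')\le\kappa\text{ for all }s'\in[0,s]\bigr\}\,.
\end{equ}
Continuity of $u$ and $u(0)=0$ give $H>0$, and closedness of the defining condition yields $u(s)\le\kappa$ on $[0,H]$; the self-improvement from the first paragraph then gives $u(s)\le\kappa/2$ on $[0,H]$. If $H<h$, continuity of $u$ at $H$ extends $u\le 3\kappa/4<\kappa$ to $[H,H+\delta]$ for some $\delta>0$ with $H+\delta\le h$, contradicting the maximality of $H$. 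Hence $H=h$ and $u(h)\le\kappa/2\le\kappa$, i.e.\ $V_h\le\kappa(h+W_h)$. The technical heart is the continuity of $h\mapsto\|R^\tau\|_{\gamma(N-|\tau|);\le h}$---nontrivial because it is a supremum of a continuous function over an increasing family of sets, with joint continuity of $R^\tau$ used precisely to rule out left-jumps; once this is in hand, the rest is a routine polynomial-inequality continuity argument.
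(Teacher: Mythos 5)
Your proposal is correct and takes essentially the same route as the paper: show the ratio $u(h)=V_h/(h+W_h)$ is small for $h$ near $0$, derive a self-improvement $u(h)\le\kappa\Rightarrow u(h)\le\kappa/2$ from \eqref{eq:Vh_bound} once $Z_h<\eps_0$, and propagate by continuity. The only minor variation is the small-$h$ step, where you bound $u(h)\le V_h/h\lesssim h^{(N+1)\gamma-1}\to 0$, whereas the paper uses $W_h\gtrsim h^{N\gamma}$ from \eqref{eq:non_trivial}; both are fine.
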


\begin{proof}
It suffices to consider $\kappa\in(0,1]$.
By assumption \eqref{eq:non_trivial}, $W_{h} \gtrsim h^{N\gamma}$ and $V_h\lesssim h^{(N+1)\gamma}$,
where the proportionality constants can depend on $f,\mbX,\phi$.
It follows that, $V_h \leq \kappa (h+W_{h})$ for all $h$ sufficiently small.

On the other hand, if $V_h \leq \kappa(h+W_h)$, then by \eqref{eq:Vh_bound}
\begin{equ}
V_h \leq C Z_h( h+W_h )\;,
\end{equ}
where $C>0$ does not depend on $f,\mbX,\phi$,
and therefore, if $Z_h<\eps_0 \eqdef \kappa C^{-1}/2$, then
\begin{equ}
V_h \leq \kappa(h+W_h)/2\;.
\end{equ}
Since $V_h \leq \kappa(h+W_h)$ for $h$ sufficiently small and since
$h,W_h,V_h,Z_h$ are all continuous and increasing in $h$,
it follows by continuity that $V_h \leq \kappa(h+W_h)/2$ for all $h$ such that $Z_h < \eps_0$.
\end{proof}

Recall that we assume the exponents in Assumption \ref{as:eta} are $\eta(\tau,\ell)=0$.
Recall also the definition $\|(f,\mbX)\| = \|(f,\mbX)\|^{(1)} + \|(f,\mbX)\|^{(2)}$
where $\|(f,\mbX)\|^{(1)} = \max_{0\leq|\tau|\leq N-1}(\|f^\tau\|_{\infty}\|\mbX^{[\tau]}\|)^{\rho_\tau}$ with $\rho_\tau>0$ (see \eqref{eq:H_def}).
It follows immediately that smallness of $\|(f,\mbX)\|^{(1)}$ implies smallness of $\max_{0\leq |\tau|\leq N-1}\|f^\tau\|_\infty\|\mbX^{[\tau]}\|$.

Following the approach in the Young case in Section \ref{sec:sharp_Young}, we now choose the smallest $h \in [0,1]$ such that $h^{-1} W_h \leq \kappa$ for small $\kappa>0$
(for $h=0$, we interpret $h^{-1}W_h$ as a limit $h\downarrow0$);
such $h\in[0,1]$ exists provided $\max_{0\leq |\tau|\leq N-1}\|f^\tau\|_\infty \|\mbX^{[\tau]}\|\leq \kappa$.

Suppose first $h=0$.
In this case $h^{-1}V_h=0$ (understood as the limit $h\downarrow0$), and thus, by \eqref{eq:scale_rough}, $\|
\int_{-1}^\cdot f(\phi_t)\mrd \mbX_t
\|_\infty \lesssim \kappa$.
Hence, if $\kappa$ is small, then by \eqref{eq:phi_diff_int}, $\|\phi-\mathring\phi\|_\infty < \delta$, which implies $|\phi(0)|<\eps-\delta$.
This implies local coercivity in the form of Corollary \ref{cor:parameter_choice}.
Note that this handles $\gamma=1$ since we always have $h=0$ in this case.

Suppose now $h\in (0,1]$.
Assume for now that $Z_h<\eps_0$ 
for $\eps_0$ as in Lemma \ref{lem:cont}.
Then $V_h \leq \kappa(h+W_h)$ and thus $h^{-1}V_h \leq 2\kappa$.
Hence, by \eqref{eq:scale_rough},
$\|
\int_{-1}^\cdot f(\phi_t)\mrd \mbX_t
\|_\infty$
is small and we conclude in the same way as in the case $h=0$.
Therefore, to complete the verification of local coercivity, it remains to show that $Z_h \downarrow0$ as we take $\|(f,\mbX)\| \downarrow0$.

Recall the definition
\begin{equ}
\|(f,\mbX)\|^{(2)}
=
\max_{0\leq|\tau|\leq N-1}
\max_{1\leq \ell\leq N-|\tau|}
\max_{0\leq |\sigma|\leq N-1}
\Big(
\|D^\ell f^{\tau}\|_\infty
\|\mbX^{[\tau]}\|
h_\sigma^{(|\tau|+1)\gamma+\ell-1}
\Big)^{\rho_{\tau,\ell,\sigma}}
\end{equ}
where $\rho_{\tau,\ell,\sigma}>0$, $h_\sigma = H_\sigma^{\frac{1}{1-(|\sigma|+1)\gamma}}$,
and we recall $H_\sigma = \|f^\sigma\|_\infty \|\mbX^{[\sigma]}\|$.
Furthermore, in the case that $\gamma=\frac{1}{N}$, we exclude $|\sigma|=N-1$ from the $\max$ over $\sigma$.

Recall that we took the smallest $h \in [0,1]$ such that $h^{-1} W_{h}\leq \kappa$ for $\kappa>0$ small. Since we assume $h>0$, we must have $h^{-1}W_h = \kappa$ and thus
\begin{equ}
h = (\kappa^{-1}H_\sigma)^{\frac{1}{1-(|\sigma|+1)\gamma}} \lesssim h_\sigma
\end{equ}
for some $0\leq |\sigma|\leq N-1$; moreover, since $h>0$, if $\gamma=1/N<1$, then this necessarily holds for some $|\sigma|\leq N-2$ (the case $\gamma=N=1$ is handled above).
Consequently, for this $h\in (0,1]$,
\begin{equ}
Z_h \lesssim
\max_{0\leq|\tau|\leq N-1}\max_{1\leq \ell \leq N-|\tau|}
\|D^\ell f^\tau\|_\infty \|\mbX^{[\tau]}\|
h_\sigma^{(|\tau|+1)\gamma + \ell-1}\;,
\end{equ}
where the right-hand side is small whenever $\|(f,\mbX)\|^{(2)}$ is small. This concludes the verification of local coercivity as stated in Corollary \ref{cor:parameter_choice} and thus also the proof of Theorem \ref{thm:RP_bound}.

\section{Singular PDEs}\label{sec:singular}
\label{sec:SPDE}

In this section, we derive a priori estimates on
coercive PDEs in the framework of regularity structures \cite{Hairer14}.

\subsection{Setup and regularity structures}

\label{sec:diff_ops}

We use notation from Section \ref{sec:PDE_notation}.


\subsubsection{Functions}

For a multi-index $k=(k_1,\ldots,k_d)\in\N^d$, denote $|k|_\s = \sum_{i=1}^d k_i \s_i$.
Define
\begin{equ}
\N^d_{<\omega} = \{k\in\N^d\,:\,|k|_\s < \omega\}
\end{equ}
and analogously for $\N^d_{\leq \omega},\N^d_{=\omega}$ and so forth.

Let $(E,|\cdot|)$ be a finite-dimensional normed space, which will be the target space of our solution $\phi$.
We denote
\begin{equ}
\CJ = E^{\N^d_{<2}}\;,
\end{equ}
which we think of as
the space of jets of order $1$ of the solution $\phi$.
We write elements of $\CJ$ in bold font as $\bphi = (\nabla^k\phi)_{|k|_\s<2}$ or $\bphi = (\phi,\nabla \phi)$ with $\nabla \phi = (\nabla^k\phi)_{|k|_\s=1}\in E^{\N^d_{=1}}$.

Let $P \colon \CJ \to E$ be a polynomial which we assume takes the heuristic form
\begin{equ}
P(\bphi) = \phi^p + B(\nabla\phi, \phi^q)
\end{equ}
where $B$ is bilinear and, if $B\neq0$, then $2q+1=p$.
More precisely, we make the following

\begin{assumption}\label{as:P}
There exist $p\geq 2$ and a linear map $F\colon E^{\otimes p}\to E$ such that, either $P(\bphi) = F(\phi^{\otimes p})$, or $q = \frac{p-1}{2}$ is an integer and there exists a linear map $B\colon E^{\N^d_{=1}}\otimes E^{\otimes q}\to E$ such that
$P(\bphi) = F(\phi^{\otimes p}) + B(\nabla\phi \otimes \phi^{\otimes q})$.
We define
\begin{equ}
\alpha = 2/(p-1)\;.
\end{equ}  
\end{assumption}

\begin{remark}
Another way of stating the above assumption on $P$ is that there exists $\alpha>0$ such that,
if we set the degree of $\nabla^k \phi$ to $-\alpha - |k|_\s$,
then $P$ is homogenous of degree $-\alpha-2$,
i.e. the degree of every monomial $\prod_{k}\nabla^k\phi$ of $P$ is $\sum_{k} (-\alpha-|k|_\s) = -\alpha-2$.  
\end{remark}

\begin{remark}\label{rem:P_affine}
Similar to Remark~\ref{rem:drift}, it is not crucial for us that $P$ is a polynomial (although it does simplify the argument).
But in the case that $P$ is a polynomial, it \emph{is} crucial that the dependence of $P$ on $\nabla\phi$ is affine.
A monomial in $P$ of the form $(\nabla\phi)^r$ with $r\geq 2$
would require us to take $r(-\alpha-1) = -\alpha-2$, thus $\alpha\leq 0$, which is not allowed in our argument.
\end{remark}

Assumption \ref{as:P}, together with implicit function theorem and standard existence results to parabolic equations, e.g. \cite[Thm.~5.6]{Lieberman_96_Parabolic}, implies that, for all $\eps\ll1$ and continuous boundary data $\bar\phi \colon \d\Omega \to E$ with $\|\bar\phi\|_{\infty;\d\Omega}\leq \eps$,
there exists a unique $\mathring\phi\in\CC( \cl\Omega, E)$, smooth on $\Omega$, such that $\CL\mathring\phi = P(\mathring\phi)$ on $\Omega$ and $\mathring\phi\restriction_{\d\Omega} = \bar\phi$.
We now make the following additional assumption (which will only be used in Section \ref{sec:local_coer_SPDE})
that this solution exhibits coercivity.

\begin{assumption}[Coercivity]\label{as:coercive}
For all $0<\eps \ll 1$, there exists $\delta>0$ 
such that, if $\bar\phi$ and $\mathring\phi$ are as above, then $|\mathring\phi(0)| < \eps-\delta$.
\end{assumption}

\begin{remark}\label{rem:pos}
We can relax Assumption \ref{as:coercive} in certain cases.
For example, with $E=\R$, the polynomial $P(\bphi) = -\phi^p$ for even $p\geq 2$ does \emph{not} satisfy Assumption \ref{as:coercive}, but it does satisfy the same coercivity property for \emph{positive} $\bar\phi$.
In this case, a trivial modification of our argument, namely restricting the set of solutions in Section \ref{sec:setup_SPDE} to \emph{positive} functions,
yields a priori estimates for the equation (such $P$ appeared in \cite{Jin_Perkowski_25}, see Remark \ref{rem:JP}). 
\end{remark}

\begin{example}
Consider $E=\R^m$ with the Euclidean norm.
Then we can take $P(\bphi) = -|\phi|^{p-1}\phi$ or $P(\bphi) = (v_1,\ldots,v_m)$ where $v_i = -|\phi_i|^{p-1}\phi_i$ and $p\geq 3$ is odd.

For an example involving derivatives, consider a linear map $Q\colon E^{\N^d_{=1}} \to \mfo(E)$, where $\mfo(E)$ is the space of all
$A\in L(E,E)$ such that $\scal{Ax,x} = 0$ for all $x\in E$, where $\scal{\cdot,\cdot}$ is the Euclidean inner product.
Set
\begin{equ}
P(\bphi) = -|\phi|^{2}\phi + Q(\nabla\phi) \phi\;.
\end{equ}
Then $P$ satisfies the above assumptions.
Indeed, Assumption \ref{as:P} holds with $p=3$.
To verify Assumption \ref{as:coercive},
since $\Delta |\phi|^2 = 2\scal{\Delta \phi,\phi} + 2|\nabla\phi|^2$ and thus
$\CL |\phi|^2 = 2\scal{\CL \phi,\phi} - 2|\nabla\phi|^2$,
we have
\begin{equ}
\CL |\mathring\phi|^2
=
2\scal{-|\mathring\phi|^2\mathring\phi + Q(\nabla\mathring\phi)\mathring\phi,\mathring\phi}
-2|\nabla\mathring\phi|^2
=
- 2|\mathring\phi|^4
-2|\nabla\mathring\phi|^2
\leq
- 2|\mathring\phi|^4\;,
\end{equ}
from which the desired condition follows readily from the maximum principle.
\end{example}

\subsubsection{Full equation}

Recall the maps $T_{z,\lambda}\colon\R^d\to\R^d$ from \eqref{eq:T_lambda_def}. For $a>0$, denote
\begin{equ}
\Omega_a = T_{0,a}(\Omega) = (-a^2,0)\times (-a,a)^{d-1}\;,
\qquad
\d\Omega_a = T_{0,a}(\d\Omega)
\;.
\end{equ}
We now consider the equation, posed for $\phi\colon\Omega_a\to E$,
\begin{equ}[eq:SPDE]
\CL \phi = P (\bphi) + f(\bphi)\xi
\end{equ}
where $\bphi = (\nabla^k \phi)_{|k|_\s\leq 1} \colon \Omega_a\to\CJ$ is the jet of $\phi$.
Here $\xi$ is an $\R^n$-valued distribution in $\CC^\beta(\Omega_a)$ for some $\beta<0$ and $f \colon \CJ \to L(\R^n,E)$ is a sufficiently regular function.
We often write $f= (f_1,\ldots,f_n)$ where $f_i\colon \CJ \to E$ and thus
\begin{equ}[eq:f_decompose]
f(\bphi)\xi = \sum_{i=1}^n f_i(\bphi)\xi_i\;.
\end{equ}
We now make the following assumption throughout this section.

\begin{assumption}[Subcriticality]\label{as:subcrit}
The equation \eqref{eq:SPDE} is subcritical.
More specifically,
\begin{enumerate}
  \item if $f$ is constant, then we require $\beta>-\alpha-2$;
  \item if $f$ depends only on $\phi$, then we require $\beta>-2$;
  \item if $f$ depends on $\nabla \phi$, then we require $\beta>-1$.
\end{enumerate}
\end{assumption}

We will later make further growth assumptions on $f$, see Assumption \ref{as:f_poly}.

\subsubsection{Trees, elementary differentials, and models}

As written, the equation \eqref{eq:SPDE} is, in general, classically ill-posed unless $\beta$ is not too negative,
e.g. for constant $f=1$ and $P(\bphi) = -|\phi|^{p-1}\phi$, classical well-posedness requires $\beta>-2$, which is much more restrictive than our requirement $\beta>-2-\alpha = -2-\frac{2}{p-1}$.

The theory of regularity structures \cite{Hairer14} allows one to give meaning to solutions to \eqref{eq:SPDE} for all values of $\beta$ indicated above provided that $\xi$ is enhanced to a so-called \emph{model} over a \emph{regularity structure} determined by the equation \eqref{eq:SPDE}.
We work directly at the level of models, so the distribution $\xi$ will not appear again beyond the equation \eqref{eq:SPDE}.
We do not attempt to fully review the theory of regularity structures, but we review the important definitions and results.
See \cite{Hairer_15_intro_RS,FrizHairer20,Bailleul_Hoshino_20_guide,ESI_lectures} for overviews of the theory.

\begin{definition}[Trees]\label{def:trees}
A \emph{labelled tree}, or simply \emph{tree},
is a connected, acyclic, undirected graph $\tau$ with the following data:
\begin{itemize}
  \item $\tau$ has a distinguished vertex $\rho_\tau$ called the root,
  \item every vertex $v$ of $\tau$ carries a so-called \emph{polynomial} label $\mfn(v) \in\N^d$ and a \emph{noise} label $\Xi(v)\in\{0,1,\ldots, n\}$
  (recall that $n$ is the dimension of the target space of $\xi$),
  \item every edge $e$ carries a \emph{derivative} label $\mfe(e)\in\N^d$.
\end{itemize}
We can write every tree $\tau$ uniquely as
\begin{equ}[eq:tree]
\tau = X^k \Xi_j \prod_{i=1}^\ell \CI^{k_i}[\tau_i]
\end{equ}
where the root $\rho_\tau$ has polynomial label $\mfn(\rho_\tau)=k\in\N^d$, noise label $\Xi(\rho_\tau)= j$, and 
the factor $\prod_{i=1}^\ell \CI^{k_i}[\tau_i]$ means that  $\rho_\tau$ is incident with $\ell\geq 0$ edges with derivative labels $k_1,\ldots,k_\ell$ and where the $i$-th edge is also incident with the root of another tree $\tau_i$.
See Figure \ref{fig:tree}.
Whenever $k_i=0$, we simply write $\CI = \CI^{k_i}$.
We sometimes write $\Xi_0 = \bone$ and drop it from the expression \eqref{eq:tree}, suggestive of the fact that $\Xi_0$ represents the constant `$1$' function in front of $P$.

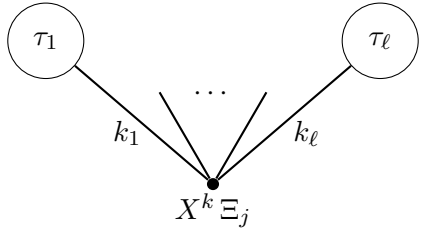
\begin{figure}
\begin{center}
\begin{tikzpicture}[
  grow            = up,
  level distance  = 19mm,
  sibling distance= 22mm,
  edge from parent/.style = {draw,thick},
  every node/.style       = {inner sep=1pt}
]

\node (rho) [fill=black,circle,inner sep=1.6pt] {}
  child { node[circle,draw,minimum size=1cm,inner sep=0pt] {$\tau_\ell$}
          edge from parent node[pos=.55,below,xshift=2mm] {$k_\ell$} }
  child[edge from parent/.style={draw=none}]
        { node[yshift=-20pt] (dots) {$\cdots$} }
  child { node[circle,draw,minimum size=1cm,inner sep=0pt] {$\tau_1$}
          edge from parent node[pos=.55,below,xshift=-1mm] {$k_1$} };

\node at ($(rho)+(0,-10pt)$) {$X^{k}\,\Xi_{j}$}; 

\foreach \angle in {60,120}                   
  \draw[thick] (rho) -- ++(\angle:40pt);

\end{tikzpicture}
\end{center}
\caption{Example of tree notation \eqref{eq:tree}}
\label{fig:tree}
\end{figure}
\end{definition}

\begin{remark}
Our trees are combinatorial, meaning that the order of edges incident to $\rho_\tau$ is not important, i.e. the product in \eqref{eq:tree} is commutative.
\end{remark}

We define the tree factorial $\tau!\in\N$, $\tau!\geq1$, inductively as follows.
Write
\begin{equ}
\tau = X^k \Xi_j \prod_{i=1}^r (\CI^{k_i}[\tau_i])^{m_i}
\end{equ}
where $m_i\geq0$ and $(k_1,\tau_1),\ldots,(k_r,\tau_r)$ are pairwise distinct.
We then define
\begin{equ}[eq:factorial]
\tau! = k! \prod_{i=1}^r m_i! (\tau_i!)^{m_i}
\end{equ}
where we recall $k! = k_1!\cdots k_d!$ for $k\in\N^d$
and an empty product is equal to $1$.
We equip the $\R$-linear span of trees with the inner product
\begin{equ}[eq:inner_product]
\scal{\tau,\sigma} = \delta_{\tau,\sigma} \tau!
\;.
\end{equ}

\begin{definition}\label{def:mfc}
For a tree $\tau$, we define the \emph{homogeneity} $|\tau|_\s\in\R$ as follows. Set
\begin{equ}
|\Xi_j|_\s = \beta 
\quad \text{ for } j=1,\ldots,n 
\;,
\qquad
|\bone|_\s = 0
\;,
\end{equ}
and then inductively
\begin{equ}
|X^k
\tau|_\s = | k |_\s + |\tau|_\s
\;,
\quad
|\tau \sigma|_\s = |\tau|_\s + |\sigma|_\s
\;,
\quad
|\CI^k (\tau)|_\s = |\tau|_\s + 2 - |k|_\s
\;.
\end{equ}
Define also the \emph{critical homogeneity} $\mfc(\tau)$ in the same way except $\mfc(\Xi_j) = -\alpha - 2<\beta$.
\end{definition}

We next follow the setup of \cite{Hairer14,BHZ19,BCCH21} specialised to our equation \eqref{eq:SPDE} and restrict our space of trees as follows.
Denote
\begin{equ}[eq:widehat_J]
\widehat\CJ = E^{\N^d}\;,
\end{equ}
which we think of as the space of jets of all order of $\phi$.
For a finite dimensional vector space $F$ and $k\geq 0$, let
$\CC^k(\widehat \CJ,F)$ denote the space of maps $g\colon \widehat \CJ\to F$ that depend on only finitely many components in $E^{\N^d}$ and are of class $\CC^k$.
Recalling that $\CJ=\N^{d}_{<2}$, note that any map $g\in \CC^k(\CJ , F)$ is well-defined as an element of $\CC^k(\widehat\CJ , F)$ by projecting to levels $|\ell|_\s<2$.

Define for $\ell\in\N^d$ and $g\in\CC^k(\widehat\CJ,F)$
the function $D^\ell g \in \CC^{k-1}(\widehat\CJ, L(E,F))$ as the derivative in the $\ell$-th variable,
i.e. $D^\ell g(\bphi)(h) = D(\bphi)(h_\ell)$
where $D g \colon \widehat\CJ \to L(\widehat\CJ,F)$ is the usual Fr{\'e}chet derivative and $h_\ell\in E^{\N^d}$ maps $\ell$ to $h$ and $k$ to $0$ for $k\neq \ell$.
For $\bk=\{k_1,\ldots,k_{|\bk|}\}\in\N^{\N^d}$ we set
\begin{equ}
D^\bk g = D^{k_1}\cdots D^{k_{|\bk|}}g\colon \CJ \to L(E^{\otimes|\bk|},F)
\end{equ}
whenever $g$ is sufficiently regular.
(Note that $D^\ell,D^k$ commute for $\ell,k\in\N^d$, so that $D^\bk$ is well-defined and compatible with our convention of treating $\bk$ as a multi-set.)

For $i\in[d]$ and $g\in\CC^k(\widehat\CJ,F)$ define
$\d^{i} g \in \CC^{k-1}( \widehat\CJ, F)$ by
\begin{equ}
\d^{i} g(\bphi) = Dg(\bphi)(\bphi_{\cdot+i})
=\sum_{\ell \in \N^d} (D^\ell g)(\bphi)(\bphi_{\ell+i})\;,
\end{equ}
where we use the shorthand $i$ for the singleton $\{i\} \in\N^d$.
We then define $\d^k g$ for any $k\in\N^d$ by composition.
($\d^k$ is well-defined since $\d^i,\d^j$ commute. But note that $\d^k,D^\ell$ do \emph{not} commute.)

We next define subsets $\mfR_j\subset \N^{\N^d}$ for $j\in\{0,1,\ldots, n\}$ (analogous to a `rule' in \cite{BHZ19}, see also \cite[Sec.~1.3]{ESI_lectures}), as follows.
We set 
\begin{equ}[eq:Rzero]
\mfR_0 = \{\bk\in \N^{\N^d}\, :\, D^\bk P \neq 0\} \;.
\end{equ}
Next, recalling Cases 1-3 in Assumptions \ref{as:subcrit}, we define $\mfR_j$ for $j>0$ as follows.
\begin{enumerate}
  \item If $f$ is constant, then $\mfR_j=\emptyset$.
  \item If $f$ depends only on $\phi$, then $\mfR_j = \{\bk\in\N^{\N^d} \,:\, \bk \text{ contains only } 0\}$.
  \item If $f$ depends on $\nabla\phi$, then $\mfR_j = \{\bk\in\N^{\N^d} \,:\, |k|_\s \leq 1 \text{ for all } k\in\bk\}$.
\end{enumerate}

\begin{definition}\label{def:mfT}
Consider a tree $\tau$ of the form \eqref{eq:tree}.
We say that $\tau$ is \emph{conforming}
if each $\tau_i$ is conforming and $\{k_1,\ldots,k_\ell\} \in \mfR_j$.
(The base case is $\tau = X^k \Xi_j$, which is conforming for all $k,j$.)
A \emph{leaf} of $\tau$ is a non-root vertex $v\neq\rho_\tau$ such that $v$ is incident with only one edge.
We call a leaf $v$ of $\tau$ \emph{polynomial} if it has noise label $\Xi(v)=0$.

\begin{itemize}
  \item To describe the right-hand side of the equation \eqref{eq:SPDE}, we define the set $\mfW$ consisting of all conforming trees $\tau$ that have no polynomial leaves.
Denote also
\begin{equ}
\mfW_{\leq\omega} = \{\tau\in\mfW \,:\, -2<|\tau|_\s\leq \omega\}
\end{equ}
and similarly for $\mfW_{< \omega}$.
Let $\CW_{\leq \omega}$ denote the $\R$-linear span of $\mfW_{\leq \omega}$, and similarly for $\CW_{<\omega}$.

\item 
To describe the solution $\phi$ and its derivatives, we define the set $\mfU$ consisting of all trees $\tau$ such that $|\tau|_\s>0$ and $\tau=\CI^{k}[\sigma]$ for some $k\in\N^d$ and $\sigma \in \mfW_{< 0}$.
Remark that $|\tau|_\s < 2$ for every $\tau\in\mfU$.

Consider further the set of polynomials $\bar\mfT = \{X^k\,:\, k\in\N^d\}$.
Let $\bar\mfT_{\leq \omega} = \{X^k\,:\, |k|_\s\leq\omega\}$ and denote by $\bar\CT_{\leq\omega}$ the span of $\bar\mfT_{\leq \omega}$, and likewise for $\bar\CT_{<\omega}$.
Define the linear space
\begin{equ}
\CU_{\leq 2} = \Span(\mfU) \oplus \bar\CT_{\leq 2}\;.
\end{equ}

\item 
Finally, to put all the relevant trees together, we define the set of trees
\begin{equ}
\mfT_{\leq 2} = \mfW_{< 0} \sqcup \mfU \sqcup \bar\mfT_{\leq 2}
\end{equ}
and the linear space
\begin{equ}
\CT_{\leq 2} = \Span(\mfT_{\leq 2}) = \CW_{< 0}\oplus \CU_{\leq 2}\;.
\end{equ}
\end{itemize}
All of the above linear spaces of trees are equipped with the inner product \eqref{eq:inner_product}.
\end{definition}

One should keep in mind that $\CW_{\leq0}$ is used to describe the right-hand side of the \emph{remainder} equation of \eqref{eq:SPDE}
and this is the reason that the trees $\tau\in\mfW$ with $|\tau|_\s\leq -2$ are excluded from $\mfW_{\leq\omega}$.
Remark that $\mfW_{\leq 0} \cap \bar\mfT_{\leq 2} = \{\bone\}$.

\begin{remark}
Not allowing polynomial leaves in $\mfW$ makes our regularity structure (see Definition \ref{def:model}) somewhat simpler.
The same condition is used in \cite{Hairer14},
but one could work without it as done in \cite{BHZ19,BCCH21}.
The trees in $\mfU$ will play a role in the structure group of our regularity structure.
\end{remark}

\begin{definition}[Elementary differentials]\label{def:Upsilons}
For conforming $\tau$ and $f$ sufficiently regular, define
\begin{equ}
\Upsilon^\tau \colon \widehat\CJ \to E
\end{equ}
inductively as follows.
Recalling \eqref{eq:f_decompose}, we denote for $j=1,\ldots,n$
\begin{equ}
\Upsilon^{\Xi_j} = f_j\;,\qquad
\Upsilon^{\bone} =\Upsilon^{\Xi_0} = P \;.
\end{equ}
Then for $\tau = X^k \Xi_j \prod_{i=1}^\ell \CI^{k_i}[\sigma_i]$ we define
\begin{equ}[eq:Upislon_induct]
\Upsilon^\tau(\bphi) = \{\d^k (D^{k_1}\cdots D^{k_\ell} \Upsilon^{\Xi_j})\}(\bphi)(\Upsilon^{\sigma_1}(\bphi),\ldots,\Upsilon^{\sigma_\ell}(\bphi))
\;.
\end{equ}
\end{definition}

\begin{remark}
To check that \eqref{eq:Upislon_induct}
make senses, note that $D^{k_1}\cdots D^{k_\ell} \Upsilon^{\Xi_j} \colon \widehat\CJ \to L(E^{\otimes \ell},E)$,
therefore $\{\d^k (D^{k_1}\cdots D^{k_\ell} \Upsilon^{\Xi_j})\}(\bphi)\in L(E^{\otimes \ell},E)$,
which we evaluate on $(\Upsilon^{\sigma_1}(\bphi),\ldots,\Upsilon^{\sigma_\ell}(\bphi))$ treated as an element of $E^{\otimes\ell}$.
Since the derivatives $D^{k_i}, D^{k_j}$ commute, the order of terms in the $\ell$-tensor $(\Upsilon^{\sigma_1}(\bphi),\ldots,\Upsilon^{\sigma_\ell}(\bphi))$ does not matter,
in agreement with our convention that the product in \eqref{eq:tree} is unordered.
\end{remark}

\begin{remark}\label{rem:dependence}
If $\tau$ is not conforming, then $\Upsilon^\tau=0$
Moreover, if $\tau$ is conforming and $\Upsilon^\tau$ depends on $\nabla^\ell\phi$, then $|\tau|_\s >|\ell|_\s-2$
by Lemma \ref{lem:tau_dependence} below.
In particular, $\Upsilon^\tau$ is a well-defined function
$
\Upsilon^\tau \colon \CJ \to E
$
for all conforming $\tau$ with $|\tau|_\s\leq0$,
and $\Upsilon^\tau$ is constant for all conforming $\tau$ with $|\tau|_\s\leq -2$.
\end{remark}

Throughout this section, similar to~\cite[Assumption 5.3]{Hairer14}, we make the following standard

\begin{assumption}\label{as:non_integer}
Consider $\tau\in\mfW_{\leq 0}$ such that $\tau\neq\bone$.
Then $|\tau|_\s$ is not an integer.
\end{assumption}

\begin{example}
Suppose $\beta$ is irrational. Then Assumption \ref{as:non_integer} is satisfied.
Indeed, $|\tau|_\s\notin\Z$ for any tree $\tau$ with at least one noise label $j\neq0$.
On the other hand, if $\tau \in\mfW$ with all noise labels $\Xi(v)=0$, then, due to Definition \ref{def:mfT}, $\tau=X^k$ for some $k\in\N^d$, which proves the claim.
\end{example}

The next lemma is an important consequence of subcriticality (Assumption \ref{as:subcrit}).

\begin{lemma}\label{lem:lowest_tree}
The set $\{\tau\,:\,\tau \text{ is conforming and } |\tau|_\s <\omega\}$ is finite for any $\omega\in\R$.
Moreover, 
if $\tau$ is conforming, then
$|\tau|_\s \geq \beta$ with equality if and only if $\tau=\Xi_j$ for some $j\neq0$.
\end{lemma}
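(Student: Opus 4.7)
The plan is to treat all three subcriticality cases uniformly by introducing a critical threshold: set $c = \alpha + 2$ in case 1, $c = 2$ in case 2, $c = 1$ in case 3, so that subcriticality (Assumption~\ref{as:subcrit}) reads exactly $\beta + c > 0$. Let $\bar\mfc(\tau)$ denote the homogeneity functional obtained from $|\tau|_\s$ by replacing $\beta$ with $-c$, which yields the identity $|\tau|_\s = \bar\mfc(\tau) + V_\Xi(\tau)(\beta + c)$ where $V_\Xi(\tau)$ counts vertices of $\tau$ with noise label $\neq 0$. The central claim I will prove by strong induction on $V(\tau)$ is $\bar\mfc(\tau) \geq -c$ for all conforming $\tau$.

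For the base case $V(\tau) = 1$, one has $\tau = X^k \Xi_j$ and $\bar\mfc(\tau) = |k|_\s - c\bone_{j \neq 0} \geq -c$, with equality iff $k = 0$ and $j \neq 0$. For the inductive step, decompose $\tau = X^k \Xi_j \prod_{i=1}^\ell \CI^{k_i}[\tau_i]$ with $\ell \geq 1$ and $\{k_1,\ldots,k_\ell\} \in \mfR_j$, set $|\mfe|_\s = \sum_i |k_i|_\s$, and apply the inductive hypothesis $\bar\mfc(\tau_i) \geq -c$ together with $|k|_\s \geq 0$. A direct computation gives
\begin{equ}
\bar\mfc(\tau) \geq -c\bone_{j \neq 0} + (2-c)\ell - |\mfe|_\s,
\end{equ}
so that $\bar\mfc(\tau) \geq -c$ reduces to the combinatorial inequality $|\mfe|_\s \leq (2-c)\ell + c\bone_{j=0}$. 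I will verify this case by case from the structure of $\mfR_j$: in case 3, $|k_i|_\s \leq 1$ gives $|\mfe|_\s \leq \ell$; in case 2, $j \neq 0$ forces $|\mfe|_\s = 0$ and $j = 0$ gives $|\mfe|_\s \leq 1 \leq 2$; in case 1, the sub-case $j \neq 0$ is vacuous because $\mfR_j = \emptyset$, while for $j = 0$ the two allowed configurations $(|\mfe|_\s = 0,\, \ell \leq p)$ and $(|\mfe|_\s = 1,\, \ell \leq (p+1)/2)$ both saturate the bound via the scaling identity $p\alpha = \alpha + 2$ from Assumption~\ref{as:P}.

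Both conclusions of the lemma will follow. For the lower bound: if $V_\Xi(\tau) = 0$ then $|\tau|_\s = \bar\mfc(\tau) \geq 0 > \beta$; if $V_\Xi(\tau) \geq 1$ then $|\tau|_\s - \beta = (\bar\mfc(\tau) + c) + (V_\Xi(\tau) - 1)(\beta + c) \geq 0$, with equality forcing $\bar\mfc(\tau) = -c$ and $V_\Xi(\tau) = 1$, which by inspection of the base case forces $\tau = \Xi_j$. For finiteness, the bound $\bar\mfc(\tau) \geq -c$ combined with $|\tau|_\s < \omega$ gives $V_\Xi(\tau) < (\omega + c)/(\beta + c)$, hence $V_\Xi(\tau)$ is bounded; the cruder pointwise bound $|\tau|_\s \geq V(\tau) - 1 + V_\Xi(\tau)\beta$ (obtained from $|k_i|_\s \leq 1$ per edge) then bounds $V(\tau)$, and since each $|\mfn(v)|_\s$ is controlled by $|\tau|_\s$ and noise labels lie in the finite set $\{0, 1, \ldots, n\}$, only finitely many conforming trees satisfy $|\tau|_\s < \omega$.

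The main technical obstacle is the verification of the combinatorial inequality in case 1, which is tight at both extremal configurations and hinges on the identity $p\alpha = \alpha + 2$---precisely the exact scale-invariance of $\CL\phi = P(\bphi)$ under $\phi \mapsto \lambda^\alpha \phi \circ T_{z,\lambda}$. This identity also justifies the choice of $c$: in each case, $-c$ is the noise regularity at which the equation becomes scale-critical, so the strictly positive excess $\beta + c$ quantifies exactly the subcritical margin available for the induction.
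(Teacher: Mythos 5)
Your proof is correct and constitutes a genuinely different route from the paper's, which simply delegates to the generic subcriticality arguments of \cite{Hairer14} (Lem.~8.10) and \cite{BHZ19} (Prop.~5.15). The idea of introducing a single critical threshold $c$ that unifies the three subcriticality cases of Assumption~\ref{as:subcrit}, defining the shifted homogeneity $\bar\mfc(\tau)$, and proving $\bar\mfc(\tau)\geq -c$ by induction is clean and specialized to this paper's rule structure; note in particular that your $\bar\mfc$ coincides (in Case~1) with the paper's critical homogeneity $\mfc(\tau)$ from Definition~\ref{def:mfc}, so you have implicitly rediscovered why that definition is natural. The combinatorial verification in Case~1 correctly reads off the two allowed configurations from Assumption~\ref{as:P}, and the identity $\alpha p = \alpha + 2$ indeed makes both tight, which is as you observe the scale-invariance of the drift. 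What your self-contained argument buys is transparency and uniformity across cases; what the references buy is generality (they handle arbitrary subcritical rules). Three small points you should tighten in a written-up version. First, when $V_\Xi(\tau)=0$ your decomposition $|\tau|_\s - \beta = (\bar\mfc(\tau)+c) + (V_\Xi(\tau)-1)(\beta+c)$ does not by itself give $|\tau|_\s > \beta$, since the second term is negative; you use the sharper bound $\bar\mfc(\tau)\geq 0$ for noise-free trees, which does hold (each edge contributes $2-|k_i|_\s\geq 1 >0$ and polynomial labels are nonnegative) but should be stated as a separate, easily verified part of the induction. Second, ``by inspection of the base case'' is not quite enough to conclude $\tau = \Xi_j$ from $\bar\mfc(\tau)=-c$ and $V_\Xi(\tau)=1$: one must also rule out $V(\tau)>1$, which follows by tracing the equality through the inductive step (it forces $|k|_\s=0$, all $|k_i|_\s\leq 1$ contributing $2-|k_i|_\s\geq 1>0$, and every $\bar\mfc(\tau_i)=-c$, which then forces $\ell=0$). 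Third, the finiteness argument should mention explicitly that edge labels are confined to $|\mfe(e)|_\s\leq 1$, hence take finitely many values; you rely on this bound earlier but it belongs in the finiteness conclusion as well.
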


\begin{proof}
This follows from the proof of \cite[Lem.~8.10]{Hairer14} (see also \cite[Prop.~5.15]{BHZ19}).
\end{proof}

\subsubsection{Growth assumptions on non-linearity}

As in the rough path setting, we need to make an assumption on the growth of $f$ and its derivatives.
However, while the assumptions on $f$ in Section \ref{sec:RPs} came only from scaling,
here we additionally need to use these assumptions to deal with weighted Schauder estimates.
Like in Section \ref{subsec:general_RPs}, we first make assumptions directly on the elementary differentials $\Upsilon$.
In Section \ref{sec:linear_cond_SPDE} we consider simple linear conditions on $f$ when it behaves like a polynomial.

For $k\in\N_{<2}^d$, let
\begin{equ}[eq:gamma_k]
\gamma_k = 1\wedge \inf\{|\CI^k[\tau]|_\s\,:\, \CI^k[\tau] \in \mfU\}
\in (0,1]\;.
\end{equ}

\begin{remark}
$\gamma_k$ is a lower bound on the optimal H\"older exponent of $\nabla^k \phi$, so $\gamma_0$ corresponds to $\gamma$ from Section \ref{sec:RPs}.
One has $\gamma_0 = \beta+2$ if $\beta\in(-2,-1)$ and $\gamma_0 = 1$
if $\beta\in(-1,0)$. 
In all cases, $\gamma_k=\gamma_{\ell}$ if $|k|_\s=|\ell|_s$.
\end{remark}

Define further
\begin{equ}[eq:zeta_def]
\zeta = \min\{|\tau|_\s - \floor{|\tau|_\s}\,:\, \tau\in\mfW \text{ such that } \floor{|\tau|_\s} \leq 0 \text{ and } \tau \neq \bone \}\;.
\end{equ}
Note that $\zeta\in (0,1)$ by Assumption \ref{as:non_integer} and Lemma \ref{lem:lowest_tree}.

\begin{definition}\label{def:B_tau}
For a multi-index $\bk\in\N^{\N^d_{<2}}$, denote $\gamma_\bk = \sum_{k\in\bk}\gamma_k$.
For $\tau\in\mfW_{\leq 0}$,
define
\begin{equs}
\mathring B^\tau &= \{\bk \in \N^{\N^d_{<2}} \, :\, \gamma_\bk < \zeta-|\tau|_\s \text{ and } D^\bk \Upsilon^\tau\neq 0\}
\;,
\\
B^\tau &=
\{\bk \in \N^{\N^d_{<2}} \, :\, [\bk = 0 \text{ or } \bk\setminus\{k\} \in \mathring B^\tau \text{ for all } k\in\bk] \text{ and } D^\bk \Upsilon^\tau\neq 0\}\;.
\end{equs}
\end{definition}



\begin{remark}\label{rem:zeta_optimal}
Our choice for $\zeta$ corresponds to $(N+1)\gamma-1$ in the rough paths case of Section \ref{sec:RP_proof} (see Remark \ref{rem:compare_RP}) and is, in general, the `largest possible' in the following sense:
if we replace $\zeta$ by $\bar\zeta\in (0,\zeta]$,
then, by considerations similar to the proof of Lemma \ref{lem:R_tau_F_bound},
the set of `solutions' to the PDE \eqref{eq:SPDE} in the sense of Definition \ref{def:solution_SPDE} does not change.
\end{remark}

Recall that we write $\bphi = (\phi,\nabla\phi)\in E\oplus E^{\N^d_{=1}}$.
For $\bk\in\N^{\N^d}$, denote
\begin{equ}
|\bk|_\s = \sum_{k\in\bk} |k|_\s\;.
\end{equ}
We make the following assumption (which will only be used in Sections \ref{sec:remainder_SPDE}-\ref{sec:local_coer_SPDE}).
Recall the notation $|x|_\eta$ from \eqref{eq:x_eta}.

\begin{assumption}\label{as:f_poly}
$f$ is sufficiently regular such that $D^\bk\Upsilon^\tau$ is a continuous function for all $\tau\in\mfW_{< 0}$ and $\bk\in B^\tau$.
Moreover, uniformly in $\bphi = (\phi,\nabla\phi)\in\CJ$,
\begin{equ}[eq:Upsilon_bound]
|D^\bk \Upsilon^\tau(\bphi)| \lesssim
|\phi|_{\eta(\tau,\bk)}
+ |\nabla\phi|_{\nabla\eta(\tau,\bk)} |\phi|_{\bar \eta(\tau,\bk)}
\;,
\end{equ}
where $\eta(\tau,\bk),\nabla\eta(\tau,\bk),\bar\eta(\tau,\bk)\geq 0$ satisfy
\begin{equ}[eq:scaling_assump1]
\Delta(\tau,\bk)\eqdef \alpha - \alpha|\bk| + |\tau|_\s+2-|\bk|_\s - \alpha\eta(\tau,\bk) > 0\;,
\end{equ}
\begin{equ}[eq:scaling_assump2]
\alpha \eta(\tau,\bk)
=
(\alpha+1)\nabla\eta(\tau,\bk) + \alpha\bar\eta(\tau,\bk)
\;,
\end{equ}
\begin{equ}[eq:Schauder_assump]
|\tau|_\s +2 - |\bk|_\s > \nabla\eta(\tau,\bk)
\;.
\end{equ}
We let $\trinorm{D^\bk \Upsilon^\tau}$ denote the optimal implicit constant in \eqref{eq:Upsilon_bound} (which depends on $\eta(\tau,\bk)$ etc.).
\end{assumption}

A few remarks are in order.
The inequality \eqref{eq:scaling_assump1} is needed to ensure that our `drivers' shrink under scaling maps $R_{z,\lambda}$ introduced in \eqref{eq:scaling_driver_SPDE} below.
The identity \eqref{eq:scaling_assump2} implies that the two terms on the right-hand side of \eqref{eq:Upsilon_bound} scale in the same way, see Lemma \ref{lem:scaling_f}.
The final condition \eqref{eq:Schauder_assump} is required to close a weighted Schauder estimate in the proof of Lemma \ref{lem:R_tau_F_bound}.

\begin{remark}[PDE vs. ODE]\label{rem:RP_assump}
When translating the notation of this section to rough paths (see Remark \ref{rem:compare_RP}), a forest $\tau$ in Section \ref{sec:RPs} corresponds here to a tree $\tilde\tau$ with $|\tilde\tau|_\s = |\tau|\gamma + \gamma - 1$
and $\eta(\tau,\bk)$ here plays the role of $\eta(\tau,\ell)$ from Assumption \ref{as:eta}.
Since we only consider $|\bk|_\s=0$ in the rough path case, assumption \eqref{eq:Schauder_assump} is automatically satisfied.
Assumption \eqref{eq:scaling_assump1} becomes equivalent to $\Delta(\tau,\ell)>0$ in the notation of \eqref{eq:Delta_def}.
This assumption is simpler but more restrictive than Assumption~\ref{as:Delta} (except when $f$ behaves like a polynomial, in the sense of Section \ref{sec:linear_condition_RP} with $q=-1$, in which case \eqref{eq:scaling_assump1} is equivalent to Assumption~\ref{as:Delta}).
A reason for this gap is that we handle Schauder estimates by integration against the (truncated) heat kernel, which is a highly non-local operation and we could not find a way to extend the localisation argument in Sections \ref{sec:sharp_Young} and \ref{sec:RPs} to the setting here.
Furthermore, in contrast to Assumption \ref{as:eta}, we do not allow $\eta(\tau,\bk) < 0$,
thus we cannot leverage decay of $D^\bk\Upsilon^\tau$ at infinity (see Remark \ref{rem:eta_neg}).
We find it an interesting problem to determine if these gaps between the two settings can be closed.
\end{remark}


\begin{example}\label{ex:only_nabla}
Suppose $f(\bphi) = f(\nabla\phi)$ depends only on $\nabla\phi$.
If $\beta\in (-1/2,0)$, in which case \eqref{eq:SPDE} is classically well-posed,
then $\mfW_{<0} = \{\Xi_j\,:\,1\leq j\leq n\}$ and $\mathring B^{\Xi_j}=\{0\}$,
and it follows that $f$ satisfies Assumption \ref{as:f_poly} provided there exists $1\leq \theta < \frac{\alpha+2+\beta}{\alpha+1}$ such that for all $k\in\N^d_{= 1}$
\begin{equ}
|f(\nabla\phi)| \lesssim |\nabla\phi|_{\theta}
\;,\qquad
|D^k f(\nabla\phi)| \lesssim |\nabla\phi|_{\theta - 1}\;.
\end{equ}
But if $\beta\in (-1,-1/2)$, then $\gamma_k = 1+\beta < 1/2$ for $|k|_\s=1$ and therefore $\mathring B^{\Xi_j}$ can contain every singleton $\{k\}$ with $|k|_\s=1$.
It follows that Assumption \ref{as:f_poly} can only be satisfied if $D^\bk f = 0$ for all $\bk = \{k_1,k_2\}$ for which $|\bk|=|\bk|_\s=2$.
Therefore, in the case $\beta\in (-1,-1/2)$, $f$ must have affine dependence on $\nabla\phi$.
\end{example}

The next lemma implies that, if $f$ is constant, then Assumption \ref{as:f_poly} is satisfied.

\begin{lemma}\label{lem:Upsilon_poly}
Consider conforming $\tau$ and $\bk\in\N^{\N^d_{<2}}$.
Suppose either $f$ is constant or $\tau=\bone$
and define $\eta(\tau,\bk),\bar\eta(\tau,\bk)\in\R$ by
\begin{equ}[eq:eta_f_const]
\alpha\eta(\tau,\bk) = 2+\alpha + \mfc(\tau) - |\bk|\alpha - |\bk|_\s\;,
\qquad
\alpha+1+\alpha\bar\eta(\tau,\bk) = 
\alpha \eta(\tau,\bk)\;.
\end{equ}
\begin{enumerate}
\item If $\eta(\tau,\bk)<0$, then $D^\bk\Upsilon^\tau = 0$.
\item If $\bar\eta(\tau,\bk)<0$, then $D^\bk\Upsilon^\tau$ does not depend on $\nabla \phi$.
\item In all cases, $D^\bk \Upsilon^{\tau}$ has affine dependence on $\nabla\phi$ and
\begin{equ}[eq:Dk_Upsilon_tau]
|D^\bk \Upsilon^{\tau}(\bphi)| \lesssim
|\phi|^{\eta(\tau,\bk)}
+ \delta_{0,|\bk|_\s}|\phi|^{\bar \eta(\tau,\bk)} |\nabla\phi|
\;,
\end{equ}
where the final term is absent if $D^\bk \Upsilon^{\tau}$ does not depend on $\nabla\phi$. 
\end{enumerate}
\end{lemma}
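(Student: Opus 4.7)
My plan is to prove the lemma by structural induction on $\tau$, handling separately the two hypotheses of the lemma. When $\tau = \bone$, everything reduces to explicit differentiation of $P = F(\phi^{\otimes p}) + B(\nabla\phi \otimes \phi^{\otimes q})$: since $P$ is polynomial in $\phi$ and affine in $\nabla\phi$, a derivative $D^\bk P$ vanishes whenever any index in $\bk$ has $|k|_\s \geq 2$, or two or more indices have $|k|_\s = 1$. In the remaining cases $D^\bk P$ is either a polynomial in $\phi$ of degree $p - |\bk|$ plus a polynomial of degree $q - |\bk|$ times $\nabla\phi$ (when $|\bk|_\s = 0$), or a polynomial in $\phi$ of degree $q - (|\bk| - 1)$ (when $|\bk|_\s = 1$). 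Using $q = (p - 1)/2 = 1/\alpha$, these polynomial degrees match $\eta(\bone, \bk)$ and $\bar\eta(\bone, \bk)$ from \eqref{eq:eta_f_const}, and the conditions $\eta<0$ or $\bar\eta<0$ translate into the vanishing claims (1) and (2).

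For $f$ constant, I would induct on the number of vertices of $\tau$. The base case $\tau = X^k\Xi_j$ either reduces to the previous case (when $j = 0$) or gives $\Upsilon^{\Xi_j} = f_j$ constant (when $j > 0$), for which (1)--(3) are immediate. For the inductive step on $\tau = X^k\Xi_j\prod_{i=1}^\ell\CI^{k_i}[\tau_i]$ with $\ell \geq 1$, if $j > 0$ then $D^{k_1}\cdots D^{k_\ell}f_j = 0$, hence $\Upsilon^\tau \equiv 0$ (and one checks $\eta(\tau, \bk) < 0$, consistent with (1)). Otherwise $j = 0$ and
\begin{equ}
\Upsilon^\tau(\bphi) = \d^k\bigl(D^{k_1}\cdots D^{k_\ell} P\bigr)(\bphi)\bigl(\Upsilon^{\tau_1}(\bphi), \ldots, \Upsilon^{\tau_\ell}(\bphi)\bigr),
\end{equ}
where by induction each $\Upsilon^{\tau_i}$ is affine in $\nabla\phi$ with growth controlled by $|\phi|^{\eta(\tau_i, 0)} + |\phi|^{\bar\eta(\tau_i, 0)} |\nabla\phi|$. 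A priori this composition could be polynomial in $\nabla\phi$ of degree higher than $1$; the reduction to affine is forced by the homogeneity analysis below.

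Under the rescaling $(\phi, \nabla\phi) \mapsto (\lambda^\alpha \phi, \lambda^{\alpha+1}\nabla\phi)$, $\Upsilon^\tau$ is homogeneous of degree $\alpha + 2 + \mfc(\tau)$ (straightforward induction), and hence $D^\bk\Upsilon^\tau$ is homogeneous of degree $\alpha + 2 + \mfc(\tau) - |\bk|\alpha - |\bk|_\s$. A monomial $|\phi|^a |\nabla\phi|^b$ has scaling degree $a\alpha + b(\alpha + 1)$; for $b \geq 2$ this is at least $2(\alpha + 1)$, forcing $a\alpha \leq \mfc(\tau) - \alpha$, which is impossible as soon as $\mfc(\tau) < \alpha$. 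The strict inequality $\mfc(\tau) < \alpha$ holds in both relevant regimes: subcriticality $\beta > -\alpha - 2$ combined with the identity $|\tau|_\s - \mfc(\tau) = k(\beta + \alpha + 2)$, where $k$ is the number of noise leaves of $\tau$, yields $\mfc(\tau) < 0$ for any $\tau \in \mfW_{<0}$ (which necessarily has $k \geq 1$); and $\mfc(\bone) = 0 < \alpha$. This rules out the $b\geq 2$ monomials and establishes the affine dependence on $\nabla\phi$. The bound \eqref{eq:Dk_Upsilon_tau} and the definitions of $\eta, \bar\eta$ then follow by reading off the unique polynomial degrees in $\phi$ allowed by homogeneity for the two surviving monomial types.

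The main obstacle is establishing the strict inequality $\mfc(\tau) < \alpha$, on which the whole affine reduction hinges; this is exactly a consequence of subcriticality. A secondary subtlety is the application of $\d^k$ for $|k|_\s \geq 1$, which formally introduces dependence on higher jets $\nabla^\ell\phi$ with $|\ell|_\s > 1$; these dependencies are absent by the homogeneity bound recalled in Remark~\ref{rem:dependence}, so $\Upsilon^\tau$ is genuinely a function on $\CJ$ in the regime where the lemma is applied.
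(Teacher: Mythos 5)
Your argument is correct and is, in essence, the same argument the paper uses: establish that $\Upsilon^\tau$ is a polynomial that is homogeneous of degree $2+\alpha+\mfc(\tau)$ under the scaling $\nabla^k\phi\mapsto\lambda^{-\alpha-|k|_\s}\nabla^k\phi$, then read off the permissible monomials by pure degree-counting. The paper obtains the homogeneity in one stroke by citing Lemma~\ref{lem:crit_scaling} together with $\Upsilon^\tau_\lambda=\Upsilon^\tau$ (valid since $f$ is constant or $\tau=\bone$); you reprove it by structural induction, which is more verbose but achieves the same thing. The explicit $\tau=\bone$ computation you give (matching $\eta(\bone,\bk)=p-|\bk|$, $\bar\eta(\bone,\bk)=q-|\bk|$ for $|\bk|_\s=0$, etc.) is consistent with \eqref{eq:eta_f_const} and is a clean sanity check.

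The one point worth stressing is that you—correctly—prove the key inequality $\mfc(\tau)<\alpha$ (equivalently $2(\alpha+1)>2+\alpha+\mfc(\tau)$) only for $\tau\in\mfW_{<0}\cup\{\bone\}$, not for all conforming $\tau$ as the hypotheses of Lemma~\ref{lem:Upsilon_poly} might at first suggest. The paper's proof invokes this inequality (``since $2(\alpha+1)>2+\alpha+\mfc(\tau)$'') without justification, and in fact it fails for conforming trees outside $\mfW_{\leq 0}$: take $\tau=\Xi_0\CI[\Xi_0]$, which is conforming (it has a polynomial leaf, so $\tau\notin\mfW$) with $\mfc(\tau)=2\geq\alpha$ whenever $p\geq 2$; if $B\neq 0$ then $\Upsilon^\tau = D^0 P(\bphi)\bigl(P(\bphi)\bigr)$ contains the term $q\,B\bigl(\nabla\phi\otimes\phi^{\otimes q-1}\otimes B(\nabla\phi\otimes\phi^{\otimes q})\bigr)$, which is genuinely quadratic in $\nabla\phi$, so conclusion~(3) is false for that tree. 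The lemma is only ever applied to $\tau\in\mfW_{\leq 0}$ (through the verification of Assumption~\ref{as:f_poly}), so this is a harmless imprecision in the statement, but your explicit restriction is the right scope and supplies the missing step. Note also that the ``secondary subtlety'' you flag about $\d^k$ introducing higher jets is resolved for exactly the same reason: by Remark~\ref{rem:dependence}, $\Upsilon^\tau$ is genuinely a function on $\CJ$ precisely when $|\tau|_\s\leq 0$, which is again the $\mfW_{\leq 0}$ regime.
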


\begin{proof}
Since either $f$ is constant or $\tau=\bone$ by assumption, $\Upsilon^\tau$ is a polynomial
and $\Upsilon^\tau_\lambda = \Upsilon^\tau$.
Then, by Lemma \ref{lem:crit_scaling}, $\Upsilon^\tau \circ Q_\lambda = \lambda^{-2-\alpha-\mfc(\tau)}\Upsilon^\tau$,
thus $\Upsilon^\tau$ is homogenous of degree $2+\alpha+\mfc(\tau)\geq 0$, where we count $\nabla^k\phi$ as having degree $\alpha+|k|_\s$.
It is then clear that, if $\eta(\tau,\bk)<0$, then $D^\bk\Upsilon^\tau = 0$, while if $\bar\eta(\tau,\bk)<0$, then $D^\bk\Upsilon^\tau$ does not depend on $\nabla\phi$.

Suppose that $\Upsilon^\tau$ does not depend on $\nabla\phi$.
Then $D^\bk\Upsilon^\tau=0$ if $|\bk|_\s>0$ and otherwise
\begin{equ}
|D^\bk\Upsilon^\tau(\bphi)| \lesssim |\phi|^{\frac{2+\alpha+\mfc(\tau)-|\bk|\alpha}{\alpha}}
= |\phi|^{\eta(\tau,\bk)}
\;,
\end{equ}
proving \eqref{eq:Dk_Upsilon_tau} in this case.
Suppose now that $\Upsilon^\tau$ depends on $\nabla\phi$.
Then $D^\bk\Upsilon^\tau=0$ if $|\bk|_\s > 1$ since $2(\alpha+1) > 2+\alpha+\mfc(\tau)$.
It follows that $D^\bk\Upsilon^\tau$ has affine dependence on $\nabla\phi$ and
\begin{equation*}
|D^\bk\Upsilon^\tau(\bphi)| \lesssim |\phi|^{\frac{2+\alpha+\mfc(\tau)-|\bk|\alpha - |\bk|_\s}{\alpha}}
+
\delta_{0,|\bk|_\s}
|\phi|^{\frac{1+\mfc(\tau)-|\bk|\alpha }{\alpha}}|\nabla\phi|
=
|\phi|^{\eta(\tau,\bk)}
+
\delta_{0,|\bk|_\s}
|\phi|^{\bar\eta(\tau,\bk)} |\nabla\phi|
\;.\qedhere
\end{equation*}
\end{proof}

The following lemma gives several important cases when \eqref{eq:Schauder_assump} is satisfied with $\nabla\eta(\tau,\bk)=0$.

\begin{lemma}\label{lem:tau_dependence}
Consider a conforming tree $\tau$ and $\bk\in\N^{\N^d}$ such that $|\bk|\geq 1$ and $D^\bk\Upsilon^\tau\neq0$.
Assume that either that $|\bk|=1$ or $\beta\leq-1$.
Then
\begin{equ}[eq:tau_depend]
|\tau|_\s > |\bk|_\s-2
\;.
\end{equ}
\end{lemma}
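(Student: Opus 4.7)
The plan is to prove the inequality by induction on the number of vertices of $\tau$, exploiting the recursive structure of $\Upsilon^\tau$ from Definition \ref{def:Upsilons}. The strategy is to expand $\Upsilon^\tau$ as a sum of monomials in the derivatives of $\Upsilon^{\Xi_j}$ and in the jet components $\nabla^{n}\phi$ produced by the chain-rule operator $\d^k$, and then analyse how the derivative $D^\bk$ acts on each such monomial via Leibniz.

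For the base case $\tau = \Xi_j$, the lemma reduces to reading off the admissible derivatives of $f_j$ or $P$. When $j = 0$ (so $\Upsilon^\bone = P$), Assumption \ref{as:P} ensures that $P$ is affine in $\nabla\phi$, so $D^\bk P \neq 0$ forces at most one entry of $\bk$ to have $|\cdot|_\s = 1$, whence $|\bk|_\s \leq 1 < 2 = |\bone|_\s + 2$. When $j > 0$, the set $\mfR_j$ from \eqref{eq:Rzero} constrains $\bk$ according to Assumption \ref{as:subcrit}: Case 1 is vacuous since $f$ is constant; Case 2 forces every entry of $\bk$ to be $0$, giving $|\bk|_\s = 0 < \beta + 2$; in Case 3 entries satisfy $|\cdot|_\s \leq 1$, and since $\beta > -1$ the hypothesis forces $|\bk| = 1$, yielding $|\bk|_\s \leq 1 < \beta + 2$.

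For the inductive step, write $\tau = X^k \Xi_j \prod_{i=1}^\ell \CI^{k_i}[\sigma_i]$ with $\bk_\cdot = \{k_1,\ldots,k_\ell\}$ and expand $\d^k$ via the chain rule so that $\Upsilon^\tau$ becomes a sum of terms of the form
\[
D^{\bm}D^{\bk_\cdot}\Upsilon^{\Xi_j}(\bphi)\,\bigl(\Upsilon^{\sigma_1}(\bphi),\ldots,\Upsilon^{\sigma_\ell}(\bphi)\bigr)\prod_{r=1}^{p}\nabla^{n_r}\phi\,,
\]
satisfying the weight identity $\sum_r |n_r|_\s = |k|_\s + |\bm|_\s$ (one spatial derivative per factor produced by an application of a component $\d^{e_i}$). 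Applying $D^\bk$ via Leibniz, a non-zero surviving term corresponds to a disjoint partition $\bk = \bq \sqcup T \sqcup \bigsqcup_i \bk_i$, where $\bq$ augments the derivatives on $\Upsilon^{\Xi_j}$, each $q \in T$ is consumed by matching $q = n_{r(q)}$ under an injection $T \hookrightarrow \{1,\dots,p\}$, and each $\bk_i$ differentiates $\Upsilon^{\sigma_i}(\bphi)$. Using $|T|_\s \leq \sum_r |n_r|_\s = |k|_\s + |\bm|_\s$ this yields the key bound
\[
|\bk|_\s \;\leq\; |k|_\s + |\bm \sqcup \bq|_\s + \sum_{i=1}^\ell |\bk_i|_\s\,.
\]

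To close the induction, I will apply the inductive hypothesis to each $\sigma_i$ with $|\bk_i| \geq 1$; the hypothesis ``$|\bk|=1$ or $\beta \leq -1$'' propagates (in the first case at most one $\bk_i$ is non-empty and has cardinality one, in the second the assumption on $\beta$ is inherited), so I obtain $|\bk_i|_\s < |\sigma_i|_\s + 2$. Combined with $|\CI^{k_i}[\sigma_i]|_\s = |\sigma_i|_\s + 2 - |k_i|_\s > 0$ from $\CI^{k_i}[\sigma_i] \in \mfU$, the sum $\sum_i(|\sigma_i|_\s + 2 - |k_i|_\s - |\bk_i|_\s)$ is strictly positive. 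The conformity constraint $\bm \sqcup \bq \sqcup \bk_\cdot \in \mfR_j$ then controls the remaining $|\bm \sqcup \bq|_\s$ contribution (at most $1 - |\bk_\cdot|_\s$ when $j = 0$, equal to $0$ in Case 2, bounded by its cardinality in Case 3), and combining these with the subcriticality lower bound on $\beta$ from Assumption \ref{as:subcrit} yields $|\tau|_\s + 2 - |\bk|_\s > 0$. The main obstacle I anticipate is the tight Case 3 branch ($j \neq 0$, $\beta \in (-1,0)$, $|\bk|=1$, some $|k_i|_\s = 1$), where the strict positivity $|\CI^{k_i}[\sigma_i]|_\s > 0$ inherited from $\mfU$ must carry the argument with very little slack, so the bookkeeping of the partition $\bk = \bq \sqcup T \sqcup \bigsqcup_i \bk_i$ needs to be handled carefully.
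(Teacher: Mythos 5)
The approach you propose—induction on the number of vertices, with an explicit Leibniz bookkeeping of how $D^\bk$ distributes over the monomials produced by $\d^k$—is genuinely different from the paper's proof. The paper instead deduces the bound in two lines from Lemma~\ref{lem:tau_lower} (namely $|\tau|_\s + |\bk|(\beta+2) - |\bk|_\s \geq \beta$, with strict inequality since $|\bk|\geq 1$): when $|\bk|=1$ this is immediate; when $\beta\leq -2$ it follows from $|\bk|(2+\beta)\leq 2+\beta$; and when $\beta\in(-2,-1]$ it follows by letting $\beta\downarrow -2$ and using monotonicity of $|\tau|_\s$ in $\beta$. Lemma~\ref{lem:tau_lower} itself is proved by grafting $|\bk|$ extra noise leaves onto $\tau$ via the pre-Lie morphism (Lemma~\ref{lem:Upsilon_morph}), producing a conforming tree whose homogeneity is controlled by subcriticality via Lemma~\ref{lem:lowest_tree}. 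That route lets the subcriticality hypothesis do all the work at once, avoiding the intricate case analysis you attempt.

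Your argument, however, has a genuine gap. You invoke ``$|\CI^{k_i}[\sigma_i]|_\s = |\sigma_i|_\s + 2 - |k_i|_\s > 0$ from $\CI^{k_i}[\sigma_i]\in\mfU$'', but conformity of $\tau$ does not place the branches $\CI^{k_i}[\sigma_i]$ in $\mfU$. Membership in $\mfU$ requires $\sigma_i\in\mfW_{<0}$ (in particular $|\sigma_i|_\s<0$ and no polynomial leaves) \emph{and} $|\CI^{k_i}[\sigma_i]|_\s>0$; conformity imposes neither. Concretely, in Case~2 of Assumption~\ref{as:subcrit} with $\beta\in(-2,-1)$ and $p$ odd, take $\tau = \Xi_0\,\CI^{e_i}[\Xi_1]$ with $|e_i|_\s=1$: this is conforming (since $D^{e_i}P\neq 0$ when $B\neq 0$), one has $D^{\{0,0\}}\Upsilon^\tau\neq 0$ for generic $f_1,P$, yet $|\CI^{e_i}[\Xi_1]|_\s = \beta+1 < 0$, so the branch is not in $\mfU$ and the term $|\sigma_1|_\s+2-|k_1|_\s-|\bk_1|_\s$ in your sum is \emph{negative}, not positive. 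Even for branches where you do get to apply the inductive hypothesis, the conclusion $|\bk_i|_\s < |\sigma_i|_\s+2$ does not imply $|\sigma_i|_\s+2-|k_i|_\s-|\bk_i|_\s>0$, because subtracting $|k_i|_\s\in\{0,1\}$ can push the quantity non-positive. The lemma is still true in this example (the $|\Xi_0|_\s=0$ contribution supplies the missing $+2$), but the strict positivity of your branch sum fails, and the final step ``combining these with the subcriticality lower bound on $\beta$ yields $|\tau|_\s+2-|\bk|_\s>0$'' is too vague to verify—it is exactly this combination, across all cases of $j$, $\beta$, and the partition of $\bk$, where the argument needs to be carried out and where the current write-up breaks down.
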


\begin{proof}
The case $|\bk|=1$ follows immediately from Lemma \ref{lem:tau_lower}.
Suppose now that $\beta\leq -2$.
Then, by Lemma \ref{lem:tau_lower},
\begin{equ}
|\tau|_\s > |\bk|_\s - |\bk|(2+\beta)+\beta
\geq |\bk|_\s-2
\end{equ}
where the final bound follows from $|\bk|\geq 1$ and $2+\beta\leq 0$.
Suppose now that $\beta\in (-2,-1]$.
Because the set $\mfW$ is independent of the choice of $\beta\in (-2,1]$,
it suffices to consider $\beta=\eps-2$ for $\eps>0$ small.
Then, by Lemma \ref{lem:tau_lower},
\begin{equ}
|\tau|_\s > |\bk|_\s - |\bk|(2+\beta)+\beta
= |\bk|_\s - |\bk|\eps + \eps - 2\;,
\end{equ}
and thus $\lim_{\beta\downarrow-2}|\tau|_\s \geq |\bk|_\s-2$.
Because $|\tau|_\s$ is increasing in $\beta$, we obtain $|\tau|_\s > |\bk|_\s-2$.
\end{proof}

\begin{remark}
For $\beta>-1$ and $|\bk|>1$, \eqref{eq:tau_depend} may fail.
E.g. take $E=\R$, $d=2,$ $n=1$, and $f(\bphi) = \sin(\nabla\phi)$ so that $D^\bk f\neq 0$ for all $\bk\in\N^{\N^d_{<2}}$ with $|\bk|=|\bk|_\s$. Taking $|\bk|_\s$ large, we can then make the right-hand side of \eqref{eq:tau_depend} arbitrarily positive with $|\tau|_\s=\beta$ fixed.
\end{remark}

\subsubsection{Models and notion of solution}
\label{sec:models}

Denote by $\CH^+$ the free commutative algebra generated by $X^i$ and trees $\CI^{k}[\sigma]$ in $\mfU$ (see Definition \ref{def:mfT}).
We let $\bone^+\in\CH^+$ denote the empty product, which is the unit under multiplication.
Recall from \cite[Sec.~8.1]{Hairer14} that there exists a linear map  $\Delta\colon \CT_{\leq 2} \to \CH^+\otimes \CT_{\leq 2}$ characterised by
\begin{equ}
\Delta \Xi_j = \bone^+\otimes \Xi_j
\;,\qquad \Delta X^i = X^i\otimes\bone + \bone^+\otimes X^i
\;,
\qquad
\Delta (\tau\sigma) = (\Delta\tau)(\Delta \sigma)
\end{equ}
for $j=0,\ldots,n$ and $i\in[d]$, and
\begin{equ}[eq:Delta_induct]
\Delta\CI^k[\sigma] = (\id\otimes \CI^k)\Delta\sigma + \sum_{|\ell|_\s < |\CI^k[\sigma]|_\s} \CI^{k+\ell}[\sigma]\otimes \frac{X^{\ell}}{\ell!}\;.
\end{equ}
(Note that we swap the order of tensors compared to \cite{Hairer14,BHZ19}.)

\begin{remark}
It is possible that $\tau\sigma\in\mfT_{\leq 2}$ while $\tau,
\sigma$ are not in $\mfT_{\leq 2}$. Therefore, the right-hand side of the inductive step $\Delta(\tau\sigma)=(\Delta\tau)(\Delta\sigma)$ should be understood as one term rather than as separate elements $\Delta\tau,\Delta\sigma\in\CH^+\otimes\CT_{\leq2}$.
This ensures that $\Delta$ is well-defined because every term in the product $(\Delta\tau)(\Delta\sigma)$, upon applying the induction \eqref{eq:Delta_induct}, is in $\CH^+\otimes\mfT_{\leq2}$.
\end{remark}

Every character $g\colon \CH^+ \to \R$ (i.e. a multiplicative linear functional for which $g(\bone^+)=1$) defines an invertible linear map $\Gamma\colon\CT_{\leq 2}\to \CT_{\leq 2}$
by
\[
\Gamma\tau = (g\otimes \id)\Delta\tau\;.
\]
Note that $g$ can be recovered from $\Gamma$ by the identities
\begin{equ}[eq:g_Gamma]
g(\CI^k\sigma) = \scal{\Gamma\CI^k\sigma,\bone}
\;,
\qquad
g(X^i)=\scal{\Gamma X^i,\bone}
\end{equ}
for every $\CI^k\sigma\in\mfU$ and $i\in[d]$.

We let $\CG$ denote the space of linear maps $\Gamma$ defined in this way by a character $g\colon \CH^+ \to \R$.
For all $\Gamma\in\CG$, the map $\id-\Gamma$ is triangular in the sense that, for all $\tau\in\mfT_{\leq 2}$,
\begin{equ}
\tau - \Gamma\tau \in \Span \{\sigma\in\mfT_{\leq 2}\,:\, |\sigma|_\s < |\tau|_\s\}
\;.
\end{equ}

\begin{definition}[Regularity structure and models]\label{def:model}
Let $\CA = \{|\tau|_\s \,:\, \tau\in\mfT_{\leq 2}\}\subset (-2,2]$.
The triplet $(\CT_{\leq 2},\CG,\CA)$ is the \emph{regularity structure} associated to the equation \eqref{eq:SPDE}.
Recall that $\CT_{\leq 2}$ is equipped with the inner product $\scal{\cdot,\cdot}$ as in \eqref{eq:inner_product}, for which $\mfT_{\leq 2}$ is an orthogonal basis.

Consider an open subset $\mfK\subset\R^d$.
Recall the notation $\mfK^1$ from \eqref{eq:fattening}.
Let $r = \floor{1-\min \CA} \in \{1,2\}$.
A \emph{model} on $\mfK$ is a pair $Z=(\Pi,\Gamma)$ where $\Pi=\{\Pi_x\}_{x\in\mfK}$ is a family of maps $\Pi_x\in L(\CT_{\leq2},\CD'(\mfK^1))$ and $\Gamma=\{\Gamma_{xy}\}_{x,y\in\mfK}$ with $\Gamma_{xy} \in \CG$
such that
\begin{enumerate}[label=(\alph*)]
\item\label{pt:alg} $\Gamma_{xy}\Gamma_{yz} = \Gamma_{xz}$ and $\Pi_y = \Pi_x\Gamma_{xy}$,
\item\label{pt:bounds} for all $\tau\in\mfT_{\leq2}$ and $\CI^k[\sigma]\in\mfU$
\begin{equ}[eq:model_size_def]
\|\Pi\|_{\tau;\mfK} \eqdef \sup_{x\in\mfK} \|\Pi_x\tau\|_{|\tau|_\s;x;1;r} <\infty \;,
\qquad
\|\Gamma\|_{\CI^k[\sigma];\mfK} \eqdef \sup_{x\neq y\in\mfK}\frac{\scal{\Gamma_{xy}\CI^k[\sigma],\bone}}{|x-y|_\s^{|\CI^k[\sigma]|_\s}}
<
\infty
\;,
\end{equ}
where we recall the semi-norm $\|\cdot\|_{\omega;x;h;r}$ from \eqref{eq:Cbeta-def}, and
\item\label{pt:poly_model} $(\Pi_x X^k\tau)(y) = (y-x)^k(\Pi_x\tau)(y)$, $\Pi_x\bone\equiv 1$, and $\Gamma_{xy} X^k = (X-(x-y))^k$.
\end{enumerate}
Let us fix a kernel $K\colon\R^d\setminus\{0\}\to\R$, supported on $B_0(1)\setminus\{0\}$, that agrees with the heat kernel on $B_0(1/2)\setminus\{0\}$ and satisfies Assumptions~5.1 and~5.4 of \cite{Hairer14}.
Such a kernel exists by \cite[Lem.~5.5]{Hairer14}.
A model is called \emph{adapted to $K$} if it satisfies the conditions of~\cite[Def.~5.9]{Hairer14}, namely, for all $x\in\mfK$ and $\CI^\ell\tau\in\mfT_{\leq 2}$,
\begin{equ}[eq:adapted]
\Pi_x \CI^\ell \tau = D^\ell (K*\Pi_x \tau - \Pi_x \CJ(x)\tau)
\end{equ}
where the equality is understood in the sense of distributions on $\CD'(\mfK)$ and
where
\begin{equ}[eq:CJ_def]
\CJ(x)\tau = \sum_{|k|_\s<|\tau|_\s+2} \frac{X^k}{k!} (D^k K*\Pi_x\tau)(x)
\in \bar\CT_{\leq 2}
\;.
\end{equ}
\end{definition}

Our notion of model is essentially the same as in \cite{Hairer14}, the main difference being that we consider a local definition on the set $\mfK$
and $\|\Pi\|_{\tau;\mfK}$ depends only on $\Pi_x\tau$ tested against functions supported in the \emph{backwards} parabolic ball around $x\in\mfK$.

The linear map $\CI \colon\CW_{<0}\to \CU_{\leq 2}$ sending $\tau$ to $\CI[\tau]$ is an abstract integration map of order 2 in the sense of \cite[Def.~5.7]{Hairer14}.

\begin{remark}
While we do not assume that $\Pi_x$ is in $\CD'(\R^d)$,
$K*\Pi_x \tau$ and $\Pi_x \CJ(x)\tau$ still makes sense as distributions in $\CD'(\mfK)$ for every $x\in\mfK$.
Indeed, we have a decomposition
\begin{equ}[eq:K_n_def]
K = \sum_{n\geq 0} K_n
\end{equ} 
where $K_n$ is supported on $B_0(2^{-n})$ and satisfies the assumptions of \cite[Sec.~5]{Hairer14}.
Then for every $N\geq 1$, $K_{<N} = \sum_{n<N} K_n$ is smooth and supported in $B_0(1)$,
hence $K_{<N}*\Pi_x\tau$ is a well-defined smooth function on $\mfK$ since we assume $\Pi_x\in\CD'(\mfK^1)$.
On the other hand, let $K_{\geq N} = K-K_{<N}$ and $\CJ_{\geq N}$ be defined as in \eqref{eq:CJ_def} with $K$ replaced by $K_{\geq N}$.
Then, for $\psi\in\CC^\infty_c(\mfK)$, it is straightforward to check that there exists $N$ sufficiently large
such that $\scal{ K_{\geq N}*\Pi_x\tau,\psi}$ and $\scal{\Pi_x \CJ_{\geq N}(x)\tau,\psi}$ are both well-defined by using the knowledge of $\Pi_x$ as an element of $\CD'(\mfK)$ and the bound $\|\Pi\|_{\tau;\mfK} < \infty$.
\end{remark}


In the sequel, we write $\scal{\tau\otimes Y,\sigma} = \scal{\tau,\sigma}Y\in E$ for $\tau, \sigma\in\CT_{\leq 2}$ and $Y\in E$.

\begin{definition}
Consider a model $Z=(\Pi,\Gamma)$ on an open set $\mfK\subset\R^d$ and $\omega\in\R$.
An $E$-valued \emph{modelled distribution} of order $\omega$ on $\mfK$ is a function $F\colon\mfK \to \CT_{\leq 2}\otimes E$ such that $\scal{F_x,\sigma}=0$ for $|\sigma|_\s \geq \omega$ and, uniformly in $x,y\in\mfK$, $\tau\in\mfT_{\leq 2}$, and $|\tau|_\s < \omega$,
\begin{equ}[eq:model_dist_def]
|\scal{F_x,\tau}| \lesssim 1
\;,
\qquad
|\scal{F_x - \Gamma_{xy} F_y,\tau}| \lesssim |x-y|_\s^{\omega-|\tau|_\s}
\;.
\end{equ}
We denote by $\CD^\omega(\mfK,E)$ the space of all such functions.
\end{definition}

In the theory of regularity structures, one typically lifts the PDE \eqref{eq:SPDE} to a so-called `abstract equation' for modelled distributions, which one solves via a fixed point argument.
Since we are interested in a priori estimates rather than local well-posedness, we use the following very general notion of `solution' inspired by Davie's solution to RDEs \eqref{eq:RDE_Euler}.
We note that a similar notion appeared independently in the multi-index approach to regularity structures \cite{BOS_25_Phi4}.

We are interested in modelled distributions $\Phi\colon \mfK\to \CU_{\leq 2}\otimes E$ of the form
\begin{equ}[eq:bphi_def]
\Phi_x = \bphi_x +
\tilde\bphi_x
\;,
\quad
\bphi_x = \phi_x\bone + \sum_{0<|k|_\s\leq 2} \nabla^k\phi_x \frac{X^k}{k!}
\;,
\quad
\tilde\bphi_x = \sum_{|\tau|_\s<0} \Upsilon^\tau(\bphi_x) \frac{\CI[\tau]}{\tau!}\;,
\end{equ}
where we recall $\tau!$ from \eqref{eq:factorial}.
Here and below, summation over $k$ means $k\in\N^d$ and
summation over $\tau$ means $\tau\in\mfT_{\leq 2}$ subject to the given constraints.
We also use the shorthand $Y\tau = \tau\otimes Y$ for $Y\in E$ and $\tau\in\CT_{\leq 2}$.
Furthermore, $\nabla^k\phi\colon \mfK\to E$ is a function that is not necessarily a derivative of $\phi$
and we identify $\bar\CT_{< 2}\otimes E$ with $\CJ = E^{\N^d_{<2}}$ via $\nabla^k\phi \frac{X^k}{k!} \mapsto \nabla^k \phi$ to makes sense of $\Upsilon^\tau(\bphi)$.
Recall that $\Upsilon^\tau\colon\CJ \to E$ is well-defined by Remark \ref{rem:dependence}.

Such modelled distributions in \cite{BCCH21} are called \emph{coherent}.
It was observed therein that solutions to the `abstract' version of \eqref{eq:SPDE} are necessarily coherent.

Note that $\Phi_x$ is clearly determined by $\bphi_x$.
Moreover, by Remark \ref{rem:dependence}, if $\Upsilon^\tau$ depends on $\nabla^k\phi$, then $|\CI[\tau]|_\s > |k|_\s$.
It follows that, for an open set $\mfK$, given $\phi\colon \mfK \to E$ and $\omega > 2$, there exists at most one function $\nabla^k\phi\colon\mfK\to E$ for every $|k|_\s \leq 2$ such that
\begin{equ}
|\phi_x - \scal{\bone,\Gamma_{xy} \Phi_y}| \lesssim |x-y|_\s^{\omega}
\;,
\end{equ}
where we write $\Phi$ for the function in \eqref{eq:bphi_def}.
That is, $\nabla^k\phi$, whenever it exists, is uniquely determined by $\phi$ (the coefficient of $\bone$) once the model $Z$ is given.
In the sequel, we let $\nabla^k \phi$ denote these functions
and let $\Phi$ be defined as in \eqref{eq:bphi_def}.

\begin{definition}[Solution]\label{def:solution_SPDE}
Consider $0<a\leq1$ and a model $Z=(\Pi,\Gamma)$ on $\Omega_a$.
We say that $\phi\in \CC(\Omega_a, E)$ solves the remainder of \eqref{eq:SPDE} for $Z$ if $\Phi$ is in $\CD^{2+\zeta}(\Omega_a,E)$ and
\begin{equ}[eq:sol_def]
\sup_{x\in\Omega_a}
\Big\|\CL \phi - \sum_{\tau \in \mfW_{\leq0}} \Upsilon^\tau(\bphi_x)
\frac{1}{\tau!}
\Pi_x \tau
\Big\|_{\zeta;x;\frac12\dist{x}_a;r}
< \infty
\;,
\end{equ}
where we recall $\zeta\in(0,1)$ from \eqref{eq:zeta_def}, $r$ from Definition \ref{def:model}, and the localised semi-norms \eqref{eq:Cbeta-def},
and we denote
\begin{equ}
\dist{x}_a = \inf_{y\in\d\Omega_a}|x-y|_\s
\;.
\end{equ}
\end{definition}

Readers familiar with regularity structures, will realise that \eqref{eq:sol_def} is essentially the statement that $\CL\phi$ is the reconstruction of $\sum_{\tau \in \mfW_{\leq0}} \Upsilon^\tau(\bphi_x)
\frac{1}{\tau!}
\tau$ on $\Omega_a$ (see also \eqref{eq:local_reconst} below).
As mentioned earlier, we restrict to $|\tau|_\s>-2$ because we are looking at the \emph{remainder} of the equation \eqref{eq:SPDE}, while the `full solution' is given by $\phi_x + \sum_{\tau\in\mfW\,:\,|\tau|_\s \leq -2}\Upsilon^\tau
\frac{1}{\tau!}
\Pi_x \tau$.
The remainder $\phi$ is the same quantity that is bounded in \cite{MoinatWeber20,CMW23,GH21}. 
(For every $|\tau|_\s\leq-2$, one has $\Gamma\tau=\tau$ for all $\Gamma\in\CG$ thus $\Pi_x\tau = \Pi_y\tau$, and, according to Remark \ref{rem:dependence}, $\Upsilon^\tau$ is constant.)

\begin{remark}[Comparison to rough paths]\label{rem:compare_RP}
When comparing to Section \ref{sec:RPs}, $\mbX^{[\tau_1\cdots\tau_n]_i}_{s,t}$ should be thought as the integral over $[s,t]$ of $\Pi_s (\Xi_i\CI[\tau_1]\cdots\CI[\tau_n])$.
It is therefore natural to identify $\tau = \tau_1\cdots\tau_n$ with the vector of trees $\tilde\tau = (\Xi_i\CI[\tau_1]\cdots\CI[\tau_n])_{i=1,\ldots,n}$,
e.g. $\bone$ from Section \ref{sec:RPs} becomes $(\Xi_1,\ldots,\Xi_n)$.
Then, supposing for simplicity that $n=1$, because the homogeneity of the noise is $\beta = \gamma-1$ and $\CI$ is now $1$-regularising,
one has $|\tilde\tau|_\s = (|\tau|+1)(\gamma-1) + |\tau| = |\tau|\gamma + \gamma - 1$,
so the exponent $(N-|\tau|)\gamma$ from \eqref{eq:R_reg} corresponds to $\zeta - |\tilde\tau|_\s$ in \eqref{eq:model_dist_def}.  
\end{remark}

\begin{remark}
Similar to Remark \ref{rem:zeta}, we could work with any $\bar\zeta\in (0,\zeta]$ and obtain the same final result.
Our choice of $\zeta$ in \eqref{eq:zeta_def} is the largest possible in the sense of Remark \ref{rem:zeta_optimal}.
\end{remark}

\subsection{Main result}
\label{sec:main_SPDE}

We can now state the main result of this section.
Recall that Assumptions \ref{as:P}, \ref{as:coercive}, \ref{as:subcrit}, \ref{as:non_integer}, \ref{as:f_poly} are in place throughout this section.

Consider a function $f\colon\CJ\to L(\R^n,E)$ satisfying Assumption \ref{as:f_poly}.
Consider $0<a\leq 1$ and an adapted model $Z=(\Pi,\Gamma)$ on $\Omega_a$. For $z\in\Omega_a$ and $0<\lambda\leq \dist{z}_a$ define
\begin{equ}[eq:Zf_local_norms]
\disc{(f,Z)}_{z,\lambda} = 
\max_{\tau\in\mfW_{<0}}
\max_{\bk \in B^\tau}
\Big(
\trinorm{D^\bk \Upsilon^\tau}\|\Pi\|_{\tau;B_{z}(\lambda)}
\Big)^{\rho(\tau,\bk)}
+
\max_{\CI^k[\sigma] \in \mfU}
\Big(
\trinorm{\Upsilon^\sigma}\|\Gamma\|_{\CI^k[\sigma];B_{z}(\lambda)}
\Big)^{\rho(\sigma,0)}
\end{equ}
where we recall \eqref{eq:model_size_def}, $\trinorm{D^\bk \Upsilon^\tau}$ is taken from Assumption \ref{as:f_poly},
and we define
\begin{equ}[eq:rho_def]
\rho(\tau,\bk)
= \frac{\alpha}{\Delta(\tau,\bk)}
\end{equ}
where $\Delta(\tau,\bk) >0$ is defined in \eqref{eq:scaling_assump1} of Assumption \ref{as:f_poly}.

\begin{theorem}\label{thm:SPDE}
Let  $\phi\in\CC(\cl\Omega_a, E)$ solve the remainder of \eqref{eq:SPDE} in $\Omega_a$ in the sense of Definition \ref{def:solution_SPDE}.
Then there exists $C>0$, not depending on $a,Z,f,\phi$, such that, for all $z\in\Omega_a$,
\begin{equ}
|\phi(z)| \leq C\max\{ \disc{(f,Z)}_{z, \lambda_z} , \dist{z}_a^{-\alpha}\}\;,
\end{equ}
where $\lambda_z = (C|\phi_z|^{-1/\alpha}) \wedge (\frac1{2}\dist{z}_a)$.
\end{theorem}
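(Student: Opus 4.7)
The plan is to cast the problem in the abstract scaling framework of Section \ref{sec:abstract} and apply Corollary \ref{cor:parameter_choice}, following exactly the template set up in Section \ref{sec:classical-revist}. I would take the domain $\Sigma = \cl\Omega_a$ with the parabolic distance, boundary $\d\Omega_a$, $\Sigma_{z,\lambda} = \cl\Omega$ with distinguished point $o_{z,\lambda}=0$, and $T_{z,\lambda}$ as in \eqref{eq:T_lambda_def}. The drivers are pairs $(f,Z)$ with local seminorm $\disc{\cdot}_{z,\lambda}$ as in \eqref{eq:Zf_local_norms} and global norm defined by the same expression with $B_z(\lambda)$ replaced by $\Omega_a$. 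The solution set $\bbS_{(f,Z)}$ consists of all $\phi\in\CC(\cl\Omega_a,E)$ satisfying Definition \ref{def:solution_SPDE}. Granting the structural axioms and the local coercivity condition, Corollary \ref{cor:parameter_choice}, applied with a suitable universal $\nu$, yields exactly the bound in Theorem \ref{thm:SPDE}.

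The monotonicity property \eqref{eq:balls} is immediate since $\disc{\cdot}_{z,\lambda}$ is defined as a supremum over $B_z(\lambda)$. For the scaling relation \eqref{eq:M_norms} and the closure \eqref{eq:sol_rescale} of solutions, I introduce the rescaling $R_{z,\lambda}$ that sends $f$ to $f_\lambda$ (acting on the jet variable $\bphi = (\phi,\nabla\phi)$ by $\phi\mapsto\lambda^{-\alpha}\phi$, $\nabla\phi\mapsto\lambda^{-\alpha-1}\nabla\phi$) and sends the model $(\Pi,\Gamma)$ to $(\bar\Pi,\bar\Gamma)$ with $\bar\Pi_x\tau = \lambda^{|\tau|_\s}(\Pi_{T_{z,\lambda}(x)}\tau)\circ T_{z,\lambda}$ and the matching transformation of $\Gamma$. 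A direct calculation using an analogue of Lemma \ref{lem:f_weighted_scaling} applied to the bound \eqref{eq:Upsilon_bound}, together with the scaling of $\|\Pi\|_{\tau;\cdot}$ inherited from \eqref{eq:scale_pointwise} and of $\|\Gamma\|_{\CI^k[\sigma];\cdot}$, shows that the exponents $\rho(\tau,\bk)=\alpha/\Delta(\tau,\bk)$ are tuned precisely so that $\|R_{z,\lambda}(f,Z)\|_{z,\lambda}\leq \lambda^\alpha\disc{(f,Z)}_{z,\lambda}$; this is exactly the role of conditions \eqref{eq:scaling_assump1}--\eqref{eq:scaling_assump2}. Closure of solutions amounts to checking that the residual \eqref{eq:sol_def} is covariant under $\phi\mapsto\psi=\lambda^\alpha\phi\circ T_{z,\lambda}$: the equality $\CL\psi = \lambda^{\alpha+2}(\CL\phi)\circ T_{z,\lambda}$, the critical-scaling identity $\Upsilon^\tau(\boldsymbol\psi_x) = \lambda^{\alpha+2+\mfc(\tau)}\Upsilon^\tau(\bphi_{T_{z,\lambda}(x)})$, and the rescaling of $\Pi_x\tau$ against the non-anticipative test function assemble into the analogous residual bound for $(\psi,\bar Z)$; coherence of $\Phi$ is preserved because $R_{z,\lambda}$ acts compatibly on the jet and on the lifted components.

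The substantive step, and the expected main obstacle, is verifying local coercivity: there exist $\eps,\kappa,r>0$, independent of $a,Z,f,\phi$, such that whenever $\|(f,Z)\|_{z,\lambda}<r$ and $\|\phi\|_{\infty;\Omega_a}<\eps$, one has $|\phi(0)|<\eps-\kappa$. The argument proceeds along the lines of Lemma \ref{lem:coercive_classical}: let $\mathring\phi\in\CC(\cl\Omega,E)$ solve $\CL\mathring\phi = P(\mathring\bphi)$ with $\mathring\phi\restriction_{\d\Omega}=\phi\restriction_{\d\Omega}$ (using standard parabolic theory for small boundary data), and invoke Assumption \ref{as:coercive} to get $|\mathring\phi(0)|<\eps-2\kappa$. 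It then suffices to show $\|\phi-\mathring\phi\|_{\infty;\Omega}<\kappa$. Writing $\phi-\mathring\phi = K*g + Y$ with $K$ a truncated heat kernel and $g$ encoding $P(\bphi)-P(\mathring\bphi)$ plus the reconstructed ``noise'' $\sum_\tau \Upsilon^\tau(\bphi_x)\Pi_x\tau/\tau!$, the local reconstruction lemma (Lemma \ref{lem:loc_reconst}) and the multi-scale Schauder estimate (Lemma \ref{lem:integration}) from the paper's refinements convert the modelled-distribution bound into an $L^\infty$ bound on $K*g$ and, via the smooth part $U$ of the kernel decomposition, on the boundary contributions. Here the growth condition \eqref{eq:Upsilon_bound} is used together with \eqref{eq:Schauder_assump} to close a weighted Schauder estimate, and the assumption $\bk\in B^\tau$ in \eqref{eq:Zf_local_norms} ensures that all relevant elementary differentials appear. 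A continuity/bootstrap step in $\|\phi-\mathring\phi\|_{\infty;\Omega}$, identical in structure to \eqref{eq:Ug}--\eqref{eq:Kg} in the classical case, absorbs the superlinear terms coming from $P(\bphi)-P(\mathring\bphi)$ and concludes the proof. Apart from the weighted nature of the estimates and the need for uniformity in the kernel truncation, this argument is a direct analogue of the classical coercivity lemma.
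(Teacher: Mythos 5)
Your high-level plan — cast the theorem into Corollary \ref{cor:parameter_choice} by specifying domains, drivers, a rescaling map and a solution set, then prove local coercivity by comparison with the zero-noise solution — is indeed the paper's strategy. However, two technical choices in your setup are incorrect in a way that would prevent the argument from closing.

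First, the scaling map on models is wrong. You propose $\bar\Pi_x\tau = \lambda^{|\tau|_\s}(\Pi_{T_{z,\lambda}(x)}\tau)\circ T_{z,\lambda}$, i.e.\ a rescaling by the analytic homogeneity $|\tau|_\s$. The correct exponent is the \emph{critical} one: $(R_{z,\lambda}\Pi)_x\tau = \lambda^{-\mfc(\tau)}(\Pi_{T_{z,\lambda}(x)}\tau)\circ T_{z,\lambda}$, where $\mfc$ is the critical homogeneity from Definition \ref{def:mfc} that assigns $\Xi_j\mapsto -\alpha-2$ rather than $\beta$. This is not a cosmetic distinction: the equation's scaling invariance is $\phi\mapsto\lambda^\alpha\phi\circ T$, $\xi\mapsto\lambda^{\alpha+2}\xi\circ T$, so the noise trees must carry a factor $\lambda^{\alpha+2}=\lambda^{-\mfc(\Xi_j)}$, not $\lambda^{-\beta}$. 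With the critical scaling, the coherent expansion $\sum_\tau \Upsilon^\tau(\bphi_x)\Pi_x\tau/\tau!$ transforms precisely as $\CL\phi$ does (the factor $\lambda^{\alpha+2+\mfc(\tau)}$ from your own display for $\Upsilon$ must cancel against $\lambda^{-\mfc(\tau)}$ from $\Pi$ to leave $\lambda^{\alpha+2}$), and the model norm picks up $\lambda^{|\tau|_\s-\mfc(\tau)}$, which is exactly what makes the exponents $\Delta(\tau,\bk)$ in \eqref{eq:scaling_assump1} produce $\|R_{z,\lambda}(f,Z)\|_{z,\lambda}\leq\lambda^\alpha\disc{(f,Z)}_{z,\lambda}$. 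With $\lambda^{|\tau|_\s}$ (or even the natural-looking $\lambda^{-|\tau|_\s}$) the verification of both \eqref{eq:sol_rescale} and \eqref{eq:M_norms} fails.

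Second, you take $\Sigma_{z,\lambda}=\cl\Omega$ and a ``global'' driver norm obtained by replacing $B_z(\lambda)$ by $\Omega_a$. Both are off. The paper takes $\Sigma_{z,\lambda}=\cl\Omega_b$ with $b$ small to be chosen, and $\drivers_{z,\lambda}$ consists of models on $\Omega_b$ adapted to the \emph{rescaled} truncated kernel $K^\lambda$ whose support has radius $1/\lambda$. Correspondingly, the norm $\|(f,Z)\|_{z,\lambda}$ uses $\|\Pi\|_{\tau;\lambda}$, which probes $\Pi_x\tau$ on non-anticipative test functions up to scale $1/\lambda$, not on $\Omega_b$ alone. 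Both features matter: the smallness of $b$ supplies the factors $b^{\eta+2}$ that close the weighted Schauder bootstrap in Lemmas \ref{lem:R_tau_F_bound}–\ref{lem:VW_bound}, while the dependence on scales up to $1/\lambda$ is exactly what is needed for the truncation-uniform Schauder estimate of Lemma \ref{lem:integration}. Your proposal neither introduces $b$ nor tracks the kernel truncation, so the coercivity argument you sketch cannot be made uniform in $\lambda$. The idea of comparing to $\mathring\phi$ and bootstrapping is right, but without these two ingredients the estimates do not close.
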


\begin{remark}
$\disc{(f,Z)}_{z,\lambda_z} $ depends on $\Gamma_{xy}$ only for $x,y\in B_z(\lambda_z)$ and on $\Pi_x$ evaluated on non-anticipative test functions centred at $x\in B_z(\lambda_z)$ but at the \emph{global} scale $1$.
The local dependence on $\Gamma$ is natural (cf. Section \ref{sec:RPs} for rough paths) but
the global dependence on $\Pi$ we believe is an artefact of the way we handle Schauder estimates in our proof (Lemma \ref{lem:integration}).
\end{remark}

\subsection{Examples: linear conditions on \texorpdfstring{$f$}{f}}
\label{sec:linear_cond_SPDE}

In this subsection, we verify Assumption~\ref{as:f_poly} in the case that $f$ behaves like a polynomial and show that the corresponding exponents in $\disc{(f,Z)}_{z,\lambda}$
take a simple form.

For $\tau\in\mfW_{<0}$, let $L(\tau)\geq 1$ denote the number of instances of $\Xi_j$ with $j\in\{1,\ldots,n\}$ in $\tau$ (i.e. the number of `noises').
Define the homogenous model norm
\begin{equ}
\trinorm{Z}_{z,\lambda} = \max_{\tau\in\mfW_{<0}} \|\Pi\|_{\tau;B_z(\lambda)}^{1/L(\tau)}
+
\max_{\CI^k[\sigma]\in\mfU} \|\Gamma\|_{\CI^k[\sigma];B_z(\lambda)}^{1/L(\sigma)}\;.
\end{equ}

Consider a tree $\tau$ of the form \eqref{eq:tree}.
For vertices $v,w$ of $\tau$, we write $v\preceq w$ if $v$ is on the unique path from $w$ to the root $\rho_\tau$
and write $v\prec w$ if in addition $v\neq w$.
We call a tree $\sigma$ a \emph{branch} of $\tau$ if $\sigma$ has a root $\rho_\sigma$, which is a vertex of $\tau$, and contains all vertices $v$ of $\tau$ such that $\rho_\sigma \preceq v$;
all edges and labels of $\sigma$ are inherited from $\tau$.
In other words, $\sigma$ is a branch of $\tau$ if either $\sigma=\tau$ or if $\sigma$ is a branch of $\tau_1,\ldots,$ or $\tau_\ell$.

\begin{lemma}\label{lem:branches}
For any conforming tree $\tau$ and a branch $\sigma$ of $\tau$, one has $|\tau|_\s\geq|\sigma|_\s$ with equality if and only if $\sigma=\tau$.
\end{lemma}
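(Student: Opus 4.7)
The plan is to induct on the number of vertices of $\tau$, the base case being a tree of one vertex (a leaf $X^k\Xi_j$) for which the only branch is $\tau$ itself so equality holds trivially. For the inductive step, write
\[
\tau = X^k\,\Xi_j\,\prod_{i=1}^\ell \CI^{k_i}[\tau_i]\;,
\]
and note that any branch $\sigma\neq\tau$ is necessarily a branch of one of the subtrees $\tau_i$ (the root of $\sigma$ is a non-root vertex of $\tau$, lying in exactly one $\tau_i$). By the induction hypothesis applied to $\tau_i$, $|\sigma|_\s\leq|\tau_i|_\s$ with equality iff $\sigma=\tau_i$, so it suffices to show the strict inequality $|\tau|_\s>|\tau_i|_\s$ for every $i\in[\ell]$.

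From the inductive definition of homogeneity in Definition~\ref{def:mfc},
\[
|\tau|_\s - |\tau_i|_\s = |k|_\s + |\Xi_j|_\s + (2-|k_i|_\s) + \sum_{i'\neq i}\bigl(|\tau_{i'}|_\s + 2 - |k_{i'}|_\s\bigr)\;,
\]
where $|\Xi_j|_\s=\beta$ if $j>0$ and $0$ otherwise. The key inputs are Lemma~\ref{lem:lowest_tree}, which gives $|\tau_{i'}|_\s\geq\beta$ for each conforming $\tau_{i'}$, together with the conforming constraint $\{k_1,\ldots,k_\ell\}\in\mfR_j$, which bounds the derivative labels $|k_{i'}|_\s$ and, in the case $j=0$, bounds the number of edge-labels of scale $1$.

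Given these inputs I split into the three cases dictated by the form of $f$. If $f$ depends on $\nabla\phi$ then $|k_{i'}|_\s\leq 1$ for all $i'$ and subcriticality (Assumption~\ref{as:subcrit}) gives $\beta>-1$, yielding $|\tau|_\s-|\tau_i|_\s\geq 1+(\ell-1)(\beta+1)>0$ (and similarly when $j>0$, one obtains $\ell(\beta+1)>0$). If $f$ depends only on $\phi$, then for $j>0$ all $k_{i'}=0$ and $\beta>-2$ give a contribution of $\ell(\beta+2)>0$; for $j=0$ one uses that $P$ is affine in $\nabla\phi$. The delicate case is $f$ constant with $\beta$ close to the subcritical threshold $-2-\alpha$: here I expect the main obstacle, and the plan is to exploit the affine dependence of $P$ on $\nabla\phi$ via the rule $\mfR_0$, which forces $\sum_{i''=1}^{\ell}|k_{i''}|_\s\leq 1$, together with the bound $\ell\leq p$ coming from $D^{\bk}P\neq 0$. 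This yields
\[
|\tau|_\s-|\tau_i|_\s \geq 2\ell - 1 + (\ell-1)\beta\;,
\]
which by elementary algebra is strictly positive for all $\ell\leq p$ under $\beta>-2-\alpha=-2-\tfrac{2}{p-1}$, matching subcriticality exactly and closing the induction.
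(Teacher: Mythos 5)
Your inductive plan is a sound and genuinely different route from the paper's one-line surgery argument (the paper replaces the branch $\sigma$ by a new leaf $\Xi_1$, observes the result is still conforming, and reads the inequality off directly from Lemma~\ref{lem:lowest_tree}). However, there is a concrete arithmetic gap in your ``$f$ constant'' case: the inequality
$|\tau|_\s-|\tau_i|_\s \geq 2\ell - 1 + (\ell-1)\beta$
is \emph{not} strictly positive for all $\ell\leq p$ under $\beta>-2-\frac{2}{p-1}$. Indeed $2\ell - 1 + (\ell-1)\beta > 0$ forces $\beta > -2-\frac{1}{\ell-1}$, which for $\ell$ close to $p$ is strictly stronger than $\beta>-2-\frac{2}{p-1}$; take $p=3$, $\beta=-3+\epsilon$, $\ell=3$ to see the expression become $-1+2\epsilon<0$.

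The fix is to couple the bounds on $\ell$ and $\sum_{i''}|k_{i''}|_\s$, which cannot both be extremal simultaneously: either all $k_{i''}=0$ (then $\ell\leq p$ but $\sum|k_{i''}|_\s=0$, giving the stronger estimate $|\tau|_\s-|\tau_i|_\s\geq 2\ell+(\ell-1)\beta>0$), or exactly one $|k_{i''}|_\s=1$ (then the form of $P$ in Assumption~\ref{as:P} forces $\ell\leq q+1=\frac{p+1}{2}$, and $1+(\ell-1)(\beta+2)>0$ follows). Cleanest is to use the scale-homogeneity of $P$ directly: since $P$ is homogeneous of degree $-\alpha-2$ when $\nabla^k\phi$ has degree $-\alpha-|k|_\s$, any $\bk$ with $D^\bk P\neq 0$ satisfies $\alpha(\ell-1)+\sum_{i''}|k_{i''}|_\s\leq 2$, which feeds into your estimate as $|\tau|_\s-|\tau_i|_\s>2-\alpha(\ell-1)-\sum_{i''}|k_{i''}|_\s\geq 0$ in a single stroke. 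With this correction, the induction closes, but note that the paper's argument avoids all this case analysis by deferring it to the single application of Lemma~\ref{lem:lowest_tree} already proved.
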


\begin{proof}
Replacing $\sigma$ by a single vertex with noise label $\Xi_1$ keeps the tree conforming, from which, by Lemma \ref{lem:lowest_tree}, it follows that $|\tau|_\s -|\sigma|_\s +\beta \geq \beta$ with equality if and only if $\tau=\sigma$.
\end{proof}

In what follows, we let $f,Z,\phi,\lambda_z$ be as in Theorem \ref{thm:SPDE}.

\subsubsection{\texorpdfstring{$f$}{f} constant}

\begin{corollary}
Suppose that $f$ is constant (which is always the case if $\beta<-2$).
Then
\begin{equ}
|\phi(z)| \lesssim \max\{\trinorm{Z}_{z,\lambda_z}^{\frac{\alpha}{\beta + \alpha +2}}, \dist{x}_a^{-\alpha}\}
\;.
\end{equ}
\end{corollary}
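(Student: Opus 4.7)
The plan is to invoke Theorem \ref{thm:SPDE} and reduce the local driver norm $\disc{(f,Z)}_{z,\lambda_z}$ to a single power of $\trinorm{Z}_{z,\lambda_z}$, exploiting the fact that, when $f$ is constant, every tree $\tau\in\mfW_{<0}$ generates the same homogeneous exponent.

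First, I would verify that Assumption \ref{as:f_poly} holds. Lemma \ref{lem:Upsilon_poly} furnishes the pointwise bound \eqref{eq:Upsilon_bound} with
\[
\alpha\eta(\tau,\bk) = 2+\alpha+\mfc(\tau)-|\bk|\alpha-|\bk|_\s,
\qquad \nabla\eta(\tau,\bk)\in\{0,1\},
\qquad \alpha\bar\eta(\tau,\bk) = \alpha\eta(\tau,\bk) - (\alpha+1)\nabla\eta(\tau,\bk),
\]
so \eqref{eq:scaling_assump2} is immediate. The Schauder condition \eqref{eq:Schauder_assump} only needs checking when $\nabla\eta(\tau,\bk)=1$; then Lemma \ref{lem:Upsilon_poly} forces $|\bk|_\s\leq 1$ (the $\nabla\phi$-term is affine and has $\delta_{0,|\bk|_\s}$), and Lemma \ref{lem:tau_dependence} gives $|\tau|_\s>|\bk|_\s-2\geq -1$, i.e. $|\tau|_\s+2-|\bk|_\s>1=\nabla\eta(\tau,\bk)$.

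The key observation is a direct computation of $\Delta(\tau,\bk)$ from \eqref{eq:scaling_assump1}: substituting the expression for $\eta(\tau,\bk)$ produces
\[
\Delta(\tau,\bk) = |\tau|_\s - \mfc(\tau) = L(\tau)(\alpha+\beta+2),
\]
where the second equality uses that $|\cdot|_\s$ and $\mfc(\cdot)$ from Definition \ref{def:mfc} differ only in the weight attached to noise vertices, namely $\beta$ versus $-\alpha-2$. Since $f$ is constant, $\mfR_j=\emptyset$ for $j>0$, so every noise vertex is a leaf; combined with the absence of polynomial leaves in $\mfW$, this gives $L(\tau)\geq 1$ for all $\tau\in\mfW_{<0}$. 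Subcriticality ($\beta>-\alpha-2$) now yields $\Delta(\tau,\bk)>0$, verifying \eqref{eq:scaling_assump1}, and the exponent in \eqref{eq:rho_def} collapses to
\[
\rho(\tau,\bk) = \frac{\alpha}{L(\tau)(\alpha+\beta+2)}, \qquad L(\tau)\,\rho(\tau,\bk) = \frac{\alpha}{\alpha+\beta+2}.
\]

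Finally, because $P$ and $f$ are fixed, each $\trinorm{D^\bk\Upsilon^\tau}$ is a finite constant (with only finitely many relevant $\tau$ by Lemma \ref{lem:lowest_tree}). Using the definition of $\trinorm{Z}_{z,\lambda}$, I would then bound $\|\Pi\|_{\tau;B_z(\lambda)}^{\rho(\tau,\bk)}\leq \trinorm{Z}_{z,\lambda}^{L(\tau)\rho(\tau,\bk)} = \trinorm{Z}_{z,\lambda}^{\alpha/(\alpha+\beta+2)}$, and similarly for the $\Gamma$-contribution in \eqref{eq:Zf_local_norms}. Combining these bounds gives $\disc{(f,Z)}_{z,\lambda_z}\lesssim \trinorm{Z}_{z,\lambda_z}^{\alpha/(\alpha+\beta+2)}$, and substituting into Theorem \ref{thm:SPDE} yields the claim. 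I anticipate no serious obstacle; the main bookkeeping is the verification of \eqref{eq:Schauder_assump} for every admissible $(\tau,\bk)$, which is handled by Lemma \ref{lem:tau_dependence} as above.
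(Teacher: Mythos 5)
Your proposal takes the same route as the paper: verify Assumption \ref{as:f_poly} via Lemma \ref{lem:Upsilon_poly}, compute $\Delta(\tau,\bk) = |\tau|_\s - \mfc(\tau) = L(\tau)(\alpha+\beta+2)$, and conclude from Theorem \ref{thm:SPDE}. The central computation and the reduction of $\disc{(f,Z)}_{z,\lambda}$ to $\trinorm{Z}_{z,\lambda}^{\alpha/(\alpha+\beta+2)}$ are correct.

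However, your verification of \eqref{eq:Schauder_assump} has a gap as written. When $\nabla\eta(\tau,\bk)=1$, the $\delta_{0,|\bk|_\s}$ factor in \eqref{eq:Dk_Upsilon_tau} forces $|\bk|_\s=0$ (not merely $\leq 1$), so your chain $|\tau|_\s>|\bk|_\s-2\geq -1$ does not parse: here $|\bk|_\s-2=-2$, and Lemma \ref{lem:tau_dependence} applied to $\bk$ itself gives only $|\tau|_\s>-2$, which is too weak since the Schauder condition requires $|\tau|_\s>-1$. You also claim the condition "only needs checking when $\nabla\eta=1$", but this overlooks the case $\nabla\eta(\tau,\bk)=0$, $|\bk|_\s=1$ (which occurs when $\Upsilon^\tau$ depends on $\nabla\phi$ but $D^\bk$ removes that dependence), where one again needs $|\tau|_\s>-1$. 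The correct mechanism is that, in both problematic cases, $D^\ell\Upsilon^\tau\neq 0$ for a \emph{singleton} $\ell$ with $|\ell|_\s=1$ (either an $\ell\in\bk$ or the $\ell$ witnessing the residual $\nabla\phi$-dependence), and one then applies Lemma \ref{lem:tau_dependence} to $\{\ell\}$, whose hypothesis $|\{\ell\}|=1$ holds automatically---crucial, because you are \emph{not} entitled to $\beta\leq-1$ in the constant-$f$ regime, so the other branch of that lemma is unavailable. This yields $|\tau|_\s>|\ell|_\s-2=-1$ in both cases, closing the gap; after that fix, the rest of your argument goes through.
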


\begin{proof}
Assumption \ref{as:f_poly} is satisfied by Lemma \ref{lem:Upsilon_poly} with $\eta,\bar\eta$ given by \eqref{eq:eta_f_const} (except we take $\bar\eta(\tau,\bk) = 0$ if $D^\bk\Upsilon^\tau$ does not depend on $\nabla\phi$).
Then, for any $|\tau|_\s\in\mfW_{<0}$ and $\bk$,
\begin{equ}
\Delta(\tau,\bk) = |\tau|_\s - \mfc(\tau) = L(\tau)(\beta + \alpha +2)\;.
\end{equ}
The proof follows from Theorem \ref{thm:SPDE}.
\end{proof}

\begin{remark}
In the case that $p=3$ and $P$ does not depend on $\nabla\phi$, this estimate essentially agrees with \cite{MoinatWeber20,CMW23}.
\end{remark}

\subsubsection{\texorpdfstring{$f$}{f} depends only on \texorpdfstring{$\phi$}{phi}}

Let us write $f(\bphi) = f(\phi,\nabla\phi)$ and denote by $D_1 f$ and $D_2 f$ the partial derivatives with respect to $\phi$ and $\nabla\phi$ respectively.

Suppose now that $f$ is non-constant and depends only on $\phi$, i.e. $D_2 f =0$.
Since $f$ is non-constant, we have $\beta \in (-2,0)$.
Let $N=\floor{2/(2+\beta)}$
and suppose that
\begin{equ}
\disc{f}_{\eta} = \max_{l=0,\ldots,N}\sup_{\bphi\in\CJ}\frac{|D_1^l f(\bphi)|}{|\phi|_{\eta-l}}
< \infty
\;,
\end{equ}
where $\eta \geq 0$ and satisfies
\begin{equ}[eq:q_eta]
\alpha (\eta-1) < \beta+2\;.
\end{equ}
Moreover, if $f$ is not a polynomial, we assume that $\eta\geq N$.

\begin{corollary}\label{cor:f_non_const}
Under the above assumptions,
\begin{equ}[eq:f_non_const_bound]
|\phi(z)| \lesssim \max\{ (\disc{f}_{\eta} \trinorm{Z}_{z,\lambda_z})^{\frac{\alpha}{\beta + 2 + \alpha -\alpha \eta}}, \dist{x}_a^{-\alpha}\}
\;.
\end{equ}
\end{corollary}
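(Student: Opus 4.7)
The plan is to verify Assumption \ref{as:f_poly} with explicit exponents and then apply Theorem \ref{thm:SPDE}, showing that the resulting weights in $\disc{(f,Z)}_{z,\lambda}$ all collapse to a single power. The key observation is that since $f = f(\phi)$ has no $\nabla\phi$ dependence, Case 2 of $\mfR_j$ ($j \neq 0$) forces every edge in a conforming tree $\tau$ incident to a noise vertex to carry derivative label $0$. Consequently, in the recursive formula of Definition \ref{def:Upsilons}, each noise vertex contributes a factor of the form $D_1^{\ell} f_j(\phi)$ for some $\ell \geq 0$, with exactly $L(\tau)$ such factors appearing in $\Upsilon^\tau$; the only source of $\nabla\phi$ dependence in $\Upsilon^\tau$ is through the affine dependence of $P$ at its internal ($\Xi_0$) vertices.

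Mimicking Lemma \ref{lem:Upsilon_poly} while accounting for the extra scaling of $f$, for each $\tau \in \mfW_{<0}$ and $\bk \in B^\tau$ I would set
\begin{equ}
\alpha \eta(\tau,\bk) = 2 + \alpha + \mfc(\tau) + \alpha \eta L(\tau) - |\bk|\alpha - |\bk|_\s\;,
\end{equ}
and define $\nabla\eta(\tau,\bk)$, $\bar\eta(\tau,\bk)$ so that they match the maximal $\nabla\phi$-degree monomial of $D^\bk \Upsilon^\tau$ and satisfy the scaling identity \eqref{eq:scaling_assump2}. By induction on $\tau$, using the scale invariance $P(Q_\lambda \bphi) = \lambda^{-2-\alpha} P(\bphi)$, the chain rule, and the pointwise bound $|D_1^l f(\phi)| \lesssim \disc{f}_\eta |\phi|_{\eta - l}$, I would then establish
\begin{equ}
|D^\bk \Upsilon^\tau(\bphi)| \lesssim \disc{f}_\eta^{L(\tau)} \big(|\phi|_{\eta(\tau,\bk)} + |\nabla\phi|_{\nabla\eta(\tau,\bk)} |\phi|_{\bar\eta(\tau,\bk)}\big)\;,
\end{equ}
where intermediate $|\nabla\phi|$ powers are controlled by log-convexity since all of them scale identically. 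In particular, $\trinorm{D^\bk \Upsilon^\tau} \lesssim \disc{f}_\eta^{L(\tau)}$.

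A short computation using $\mfc(\tau) = |\tau|_\s - L(\tau)(\beta + \alpha + 2)$ then gives
\begin{equ}
\Delta(\tau,\bk) = \alpha - \alpha|\bk| + |\tau|_\s + 2 - |\bk|_\s - \alpha\eta(\tau,\bk) = L(\tau)(\beta + \alpha + 2 - \alpha \eta)\;,
\end{equ}
which is strictly positive by hypothesis \eqref{eq:q_eta} together with the fact that $L(\tau) \geq 1$ for $\tau \in \mfW_{<0}$ (a conforming tree with $|\tau|_\s < 0$ must contain at least one noise vertex since both $P$ and the integration kernel raise homogeneity). The Schauder condition \eqref{eq:Schauder_assump} reduces to bounding $\nabla\eta(\tau,\bk)$ by the number of $P$-vertices of $\tau$ whose children all attach via $\CI^0$ edges, which is controlled by $|\tau|_\s + 2 - |\bk|_\s$ via Lemma \ref{lem:tau_dependence} combined with the affine dependence of $P$ on $\nabla\phi$ from Assumption \ref{as:P}.

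Finally, applying Theorem \ref{thm:SPDE} and using $\|\Pi\|_{\tau;B_z(\lambda)} \leq \trinorm{Z}_{z,\lambda}^{L(\tau)}$ together with $\rho(\tau,\bk) = \alpha / \Delta(\tau,\bk)$, each term in $\disc{(f,Z)}_{z,\lambda_z}$ is dominated by
\begin{equ}
\big(\disc{f}_\eta^{L(\tau)} \trinorm{Z}_{z,\lambda_z}^{L(\tau)}\big)^{\alpha/(L(\tau)(\beta+\alpha+2-\alpha\eta))} = \big(\disc{f}_\eta\, \trinorm{Z}_{z,\lambda_z}\big)^{\alpha/(\beta+\alpha+2-\alpha\eta)}\;,
\end{equ}
independently of $\tau$ and $\bk$; the same computation handles the $\Gamma$-term (with $L(\sigma)$ replacing $L(\tau)$). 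This yields \eqref{eq:f_non_const_bound}. I expect the main obstacle to be the induction step for the elementary differentials: carefully tracking how $\nabla\phi$ powers arise from $P$-subtrees across arbitrary conforming trees in $\mfW_{<0}$, and verifying the Schauder bound on $\nabla\eta(\tau,\bk)$ uniformly, is considerably more delicate than the rough-path analogue in Section \ref{sec:RPs} where no Schauder estimate intervenes (cf.\ Remark \ref{rem:RP_assump}).
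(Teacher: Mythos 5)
Your proposal follows the same route as the paper: verify Assumption~\ref{as:f_poly} with explicit exponents, observe that $\Delta(\tau,\bk)=L(\tau)(\beta+2+\alpha-\alpha\eta)$, and apply Theorem~\ref{thm:SPDE} to see the per-tree exponents collapse. Your ansatz $\alpha\eta(\tau,\bk)=2+\alpha+\mfc(\tau)+\alpha\eta L(\tau)-|\bk|\alpha-|\bk|_\s$, expressed through $\mfc(\tau)$, reproduces exactly the paper's two-case formulas after substituting $|\tau|_\s=(L-1)(\beta+2)+\beta$ or $|\tau|_\s=L(\beta+2)-1$, and is a marginally more unified way of seeing the computation. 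The reason you need $L(\tau)\geq 1$ is also correct.

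There are, however, a few gaps of which you should be aware. First, you never verify the non-negativity requirements $\eta(\tau,\bk)\geq 0$ and $\bar\eta(\tau,\bk)\geq 0$ imposed in Assumption~\ref{as:f_poly}; this is precisely where the hypothesis ``$\eta\geq N$ if $f$ is not a polynomial'' enters, and the check depends on bounding $|\bk|\leq N-1$ and $L(\tau)\leq N$. Second, and more substantively, your treatment of $\nabla\eta(\tau,\bk)$ is off: since $f$ depends only on $\phi$, the structural Lemma~\ref{lem:noise_zero} shows that any $\tau\in\mfW_{<0}$ has at most one $\Xi_0$ vertex and that vertex has exactly one outgoing edge with derivative weight one, so the $\nabla\phi$-dependence of $P$ is already differentiated away in $\Upsilon^\tau$. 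Consequently $\Upsilon^\tau$ is a function of $\phi$ alone, $\nabla\eta(\tau,\bk)=0$ throughout, and the Schauder condition~\eqref{eq:Schauder_assump} reduces to the trivially true $|\tau|_\s+2>0$. Your remark about counting ``$P$-vertices whose children all attach via $\CI^0$ edges'' describes trees that do not exist in $\mfW_{<0}$ (such a tree would have a branch of positive homogeneity), so that part of the argument is vacuous, and the appeal to ``log-convexity'' to absorb multiple $|\nabla\phi|$ factors is never needed. Third, the reduction from $\bk\in B^\tau$ to ``$\bk$ contains only $0$ with $|\bk|\leq N-1$'' requires a small argument (that $D^\bk\Upsilon^\tau=0$ whenever $|\bk|_\s>0$, again via Lemma~\ref{lem:noise_zero}, plus a direct comparison of $\gamma_\bk$ with $\zeta-|\tau|_\s$ in both regimes $\beta\in(-1,0)$ and $\beta\in(-2,-1)$), which you assert but do not establish.
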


\begin{remark}
Remark that, if we substitute $2+\beta\rightsquigarrow \gamma$, then condition \eqref{eq:q_eta} matches condition \eqref{eq:RP_linear_assump} with $q=-1$, and the exponent $\frac{\alpha}{\beta + 2 + \alpha -\alpha \eta}$ in \eqref{eq:f_non_const_bound} matches that in \eqref{eq:final_RP_bound}.
\end{remark}

\begin{remark}\label{rem:JP}
\cite[Thm.~4.1]{Jin_Perkowski_25} considers the case above with $p=2$, $f(\bphi)=\phi$, and $\phi$ a positive solution\footnote{Although
Theorem \ref{thm:SPDE}, as written, does not apply in this setting, see Remark \ref{rem:pos} for a trivial fix.} to
$\CL \phi = -\phi^2 +\phi\xi$, where $\xi \in \CC^{\beta}$ with $\beta=-1-\eps$ for $\eps>0$ small.
Taking $\eta=1$, we recover their result restricted to the unit box.\footnote{
\cite[Thm~4.1]{Jin_Perkowski_25} states an a priori estimate that is \emph{uniform} in boxes of diameter $\geq 1$.
Such a bound is useful when the driver is small and the box is large.
While our result does not directly imply this, our method to prove local coercivity by comparing to the `zero-noise' solution can be used to prove such a uniform bound.
Since we find this argument a separate task,
we prefer to avoid further details.}
\end{remark}

For the proof of Corollary \ref{cor:f_non_const}, we require an extra lemma.
For a tree $\tau$ and a vertex $v$ of $\tau$, we call an edge $\{v,w\}$ \emph{outgoing} if $v\prec w$.

\begin{lemma}\label{lem:noise_zero}
Consider $\tau\in\mfW_{<0}$. Then there is at most one vertex $v$ of $\tau$ with noise label $\Xi(v)=0$. Moreover, $v$ has precisely one outgoing edge $e$ for which $|\mfe(e)|_\s=1$.
\end{lemma}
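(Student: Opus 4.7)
The plan is to prove the lemma by a counting argument on the homogeneity of $\tau$. First I extract structural constraints on edge labels from the rules $\mfR_j$. Since $f$ depends only on $\phi$, the rule $\mfR_j$ for $j \in \{1,\dots,n\}$ only contains multi-sets all of whose elements are $0$, so every outgoing edge from a vertex with noise label in $\{1,\dots,n\}$ carries derivative label $0$. By Assumption~\ref{as:P}, $P$ depends on $\nabla\phi$ at most affinely, so $\mfR_0 = \{\bk : D^\bk P \neq 0\}$ consists of multi-sets in which every element equals $0$, or in which exactly one element $k$ has $|k|_\s = 1$ and the rest equal $0$. Consequently every edge $e$ satisfies $|\mfe(e)|_\s \in \{0,1\}$, and if $N_e$ denotes the total number of edges with $|\mfe(e)|_\s = 1$, then $N_e \leq V_0$, where $V_0$ is the number of vertices with noise label $0$.

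Next, I unfold Definition~\ref{def:mfc} inductively. Writing $V_1$ for the number of vertices with noise label in $\{1,\dots,n\}$ and $E = V_0 + V_1 - 1$ for the number of edges, one obtains the identity
\[
|\tau|_\s = \sum_v |\mfn(v)|_\s + \beta V_1 + 2E - \sum_e |\mfe(e)|_\s = \sum_v |\mfn(v)|_\s + 2V_0 + (\beta+2)V_1 - 2 - N_e.
\]
A tree $\tau \in \mfW_{<0}$ with more than one vertex must have every leaf labelled by some $\Xi_j$ with $j \geq 1$ (no polynomial leaves in $\mfW$), so $V_1 \geq 1$; the single-vertex case forces $\tau = \Xi_j$ with $V_0 = 0$ and both assertions hold trivially. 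Using $|\tau|_\s < 0$, $\sum_v |\mfn(v)|_\s \geq 0$, and $N_e \leq V_0$, I obtain $V_0 + (\beta+2)V_1 < 2$. Since $V_1 \geq 1$ and $\beta > -2$ (Case~2 of Assumption~\ref{as:subcrit}), this forces $V_0 < -\beta < 2$, hence $V_0 \leq 1$.

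Suppose now $V_0 = 1$ and let $v$ be the unique vertex with $\Xi(v) = 0$. The same identity combined with $|\tau|_\s < 0$ and $\sum_v |\mfn(v)|_\s \geq 0$ yields $N_e > (\beta+2)V_1 > 0$, so $N_e \geq 1$; together with $N_e \leq V_0 = 1$, this forces $N_e = 1$. Since every edge of weight $1$ must be outgoing from a $\Xi = 0$ vertex, it must be outgoing from the unique such vertex $v$, proving the second assertion. The entire argument reduces to the homogeneity identity together with the two structural bounds $|\mfe(e)|_\s \in \{0,1\}$ and $N_e \leq V_0$; the only delicate point is the combinatorial analysis of $\mfR_0$ under Assumption~\ref{as:P}, and I do not anticipate a genuine obstacle once that bookkeeping is in place.
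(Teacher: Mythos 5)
Your proof is correct, and it takes a genuinely different route from the paper's.

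The paper's argument runs through the branch machinery: for a $\Xi_0$-vertex $v$ it examines the branch $\sigma_v$ rooted at $v$, invokes Lemma~\ref{lem:branches} (itself a consequence of Lemma~\ref{lem:lowest_tree}, i.e.\ the subcriticality argument from Hairer's Lemma~8.10) to show that $\sigma_v$ cannot have positive homogeneity inside a tree of negative homogeneity, and rules out a second $\Xi_0$-vertex $w$ by replacing both $\sigma_v,\sigma_w$ with $\Xi_1$ and again appealing to Lemma~\ref{lem:lowest_tree}. Your approach instead unrolls Definition~\ref{def:mfc} into the explicit identity $|\tau|_\s = \sum_v |\mfn(v)|_\s + 2V_0 + (\beta+2)V_1 - 2 - N_e$, extracts the structural bounds $|\mfe(e)|_\s\in\{0,1\}$ and $N_e\le V_0$ directly from the rules $\mfR_j$ and the affine dependence of $P$ on $\nabla\phi$, and then closes the argument by pure inequality bookkeeping with $|\tau|_\s<0$, $V_1\ge 1$, $\beta>-2$. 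What your version buys is that it is elementary and self-contained: it never needs the branch lemma or the subcriticality bound, only the raw definition of homogeneity and the shape of the rule set. What it gives up is generality and conceptual economy: the branch argument works uniformly across all the cases of Assumption~\ref{as:subcrit} with essentially no bookkeeping, whereas your counting identity is tailored to the specific regime ``$f$ depends only on $\phi$, $P$ affine in $\nabla\phi$,'' and would need to be redone from scratch if the rules changed. Both are valid; yours is a nice sanity check that makes the structural constraints very explicit.
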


\begin{proof}
Consider a vertex $v$ with $\Xi(v)=0$. Then $v$ can have at most one outgoing edge $e$ with $|\mfe(e)|_\s=1$ since $P$ is affine in $\nabla\phi$ and by the choice of $\mfR_0$ in
\eqref{eq:Rzero}.

Suppose now that $\mfe(e)=0$ for all outgoing edges $e$ of $v$.
Let us write $\sigma_v$ for the unique branch with root $v$.
Then, since $\beta>-2$,
we have $|\sigma_v|_\s>0$,
which contradicts $|\tau|_\s <0$ by Lemma \ref{lem:branches}.
Therefore $v$ has precisely one outgoing edge with $|\mfe(e)|_\s=1$.

Now suppose there is another vertex $w\neq v$ with $\Xi(w)=0$.
Note that $|\sigma_v|_\s > 1+\beta > -1$.
If $w\prec v$, then $|\sigma_w|_\s > 0$ because $|\sigma_v|_\s>-1$, which contradicts $|\tau|_\s<0$ by Lemma \ref{lem:branches}.
Likewise if $v\prec w$.
Suppose now that $v\not\prec w$ and $w\not\prec v$.
We can replace the branches $\sigma_v,\sigma_w$ by single vertices $\Xi_1$ and the resulting tree will be conforming.
Therefore, by Lemma \ref{lem:lowest_tree}, $|\tau|_\s - |\sigma_v|_\s - |\sigma_w|_\s + 2\beta > \beta$,
which implies
\begin{equ}
|\tau|_\s > |\sigma_v|_\s + |\sigma_w|_\s -\beta > 2+\beta > 0
\end{equ}
where we used $|\sigma_v|_\s,|\sigma_w|_\s > 1+\beta$ in the final bound.
This again contradicts $|\tau|_\s<0$.
\end{proof}

\begin{proof}[Proof of Corollary \ref{cor:f_non_const}]

Consider a tree $\tau\in\mfW_{<0}$.
We proceed to verify Assumption \ref{as:f_poly}.

We first claim that, if $\bk \in \mathring B^\tau$, then $\bk=\{0,\ldots,0\}$ contains only $0$ with $|\bk|\leq N-1$; in particular, if $\beta\in(-1,0)$, thus $N=1$, then $\bk = \emptyset$.
Indeed, if $|\bk|_\s > 0$, then $D^\bk\Upsilon^\tau = 0$ by Lemma \ref{lem:noise_zero} and the assumption that $P$ is affine in $\nabla\phi$ and that $f$ does not depend on $\nabla\phi$.
It thus suffices to consider the case that $\bk$ contains only $0$.

Suppose $|\bk|\geq N$.
If $\beta \in (-1,0)$, then $\tau=\Xi_j$ for $j>0$, and $\gamma_0=1$, $\zeta \leq \beta+1$, hence
\begin{equ}
\gamma_{\bk} \geq 1 \geq \zeta-\beta\;,
\end{equ}
thus $\bk\notin \mathring B^\tau$.
If $\beta \in (-2,-1)$, then $\gamma_0 = 2+\beta$.
Note that $N$ is the smallest integer such that
$N(2+\beta)+\beta>0$.
Since there exists $\sigma\in\mfW$ with $|\sigma|_\s = N(2+\beta)+\beta\in (0,1)$, it follows that $\zeta\leq N(2+\beta)+\beta$.
Therefore 
\begin{equ}
\gamma_{\bk} \geq N(2+\beta) \geq \zeta-\beta\;,
\end{equ}
so again $\bk\notin \mathring B^\tau$.
This proves the claim.

To verify Assumption \ref{as:f_poly}, suppose first that $\tau$ has no noise label $\Xi_0$.
Using the shorthand $L=L(\tau)$, we have $|\tau|_\s = (L-1)(\beta+2)+\beta$.
Moreover, for $\bk\in B^\tau$, by the above claim and definition of $\disc{f}_\eta$,
\begin{equ}[eq:DUpsilon1]
|D^{\bk} \Upsilon^\tau(\bphi)| \lesssim \disc{f}_\eta^L |\phi|_{\eta L - (L-1+|\bk|)}
\end{equ}
Set $\nabla\eta(\tau,\bk)=0$ and $\eta(\tau,\bk) = \eta L - (L-1+|\bk|)$.
By $D^\bk\Upsilon^\tau\neq0$ and $|\bk|< N$ and the assumption that $\eta\geq N$ if $f$ is not a polynomial, one has $\eta(\tau,\bk)\geq 0$.
Then condition \eqref{eq:scaling_assump1} is equivalent to
\begin{equ}
\Delta(\tau,\bk) = \alpha(1-|\bk|) + (L-1)(\beta+2)+\beta + 2 - \alpha (\eta L - (L-1+|\bk|))
=
L(\beta+2-
\alpha \eta + \alpha)
>0\;,
\end{equ}
which holds by \eqref{eq:q_eta}.

By Lemma \ref{lem:noise_zero}, it remains to consider the case that $\tau$ has one vertex $v$ with $\Xi(v)=0$ and with one outgoing edge for which $|\mfe(e)|_\s=1$.
Recall by Assumption \ref{as:P} that this can only happen if $p$ is odd and, for any $|k|_\s=1$, $D^k P(\bphi)$
is a homogenous polynomial in $\phi$ of degree $\frac{p-1}{2} = 1/\alpha$.
In this case, $|\tau|_\s = L(\beta+2) - 1$.
Then, for $\bk\in B^\tau$,
\begin{equ}[eq:DUpsilon2]
|D^\bk\Upsilon^\tau(\bphi)| \lesssim \disc{f}_\eta^L |\phi|_{\eta L - (L - 1) + 1/\alpha - |\bk|}
\;.
\end{equ}
Set $\nabla\eta(\tau,\bk)=0$ and $\eta(\tau,\bk) = \eta L - (L - 1) + 1/\alpha-|\bk|$.
Then again $\eta(\tau,\bk)\geq0$ and condition \eqref{eq:scaling_assump1} is equivalent to
\begin{equ}
\Delta(\tau,\bk) = \alpha(1-|\bk|) + L(\beta+2) - 1 + 2 - \alpha (\eta L - (L - 1) + 1/\alpha-|\bk|)
=
L(\beta+2-
\alpha \eta + \alpha)
>0\;,
\end{equ}
which is the same condition as earlier and holds by \eqref{eq:q_eta}.
The bounds \eqref{eq:DUpsilon1}-\eqref{eq:DUpsilon2} imply that,
with the above choices for $\eta(\tau,\bk),\nabla\eta(\tau,\bk)$,
\begin{equ}
\trinorm{D^\bk\Upsilon^\tau} \lesssim \disc{f}_{\eta}^{L}\;.
\end{equ}
Finally, note that condition \eqref{eq:Schauder_assump} is equivalent to $|\tau|_\s + 2 > 0$, which clearly holds due to $|\tau|_\s\geq \beta$ by Lemma \ref{lem:lowest_tree}.
Therefore Assumption \ref{as:f_poly} holds and
Theorem \ref{thm:SPDE} yields the result.
\end{proof}

\begin{remark}\label{rem:general_cond}
Suppose we instead have the more general bound, similar to Section \ref{sec:linear_condition_RP},
\begin{equ}
\disc{f}_{\eta,q} = \sup_{l,\bphi}\frac{|D_1^l f(\bphi)|}{|\phi|_{\eta + l q}}
< \infty
\end{equ}
for some $q\in\R$.
Then, in the first case in the above proof, instead of \eqref{eq:DUpsilon1} one would have
\begin{equ}
|D^\bk \Upsilon^\tau (\bphi)| \lesssim \disc{f}_{\eta,q}^{L} |\phi|_{\eta L + (L-1+|\bk|)q }
\;.
\end{equ}
It is then natural to set $\nabla\eta(\tau,\bk)=0$ and $\eta(\tau,\bk) = \eta L + (L-1+|\bk|)q$ for which
\begin{equ}
\Delta(\tau,\bk)
=L(\beta+2-
\alpha \eta - \alpha q)
-
\alpha|\bk|(q+1)\;,
\end{equ}
and Assumption \ref{as:f_poly} is satisfied if this expression is positive for all $|\bk|,L\leq N$.
Similar considerations apply to the second case in the proof of Corollary \ref{cor:f_non_const}.
However, $\Delta(\tau,\bk)$ would no longer be a multiple of $L$ for $q\neq-1$, so it appears the best bound that Theorem \ref{thm:SPDE} provides in this case, in contrast to Section \ref{sec:linear_condition_RP}, is not simply a power of $\disc{f}_{\eta,q}\trinorm{Z}$.
\end{remark}

\subsubsection{\texorpdfstring{$f$}{f} depends on \texorpdfstring{$\nabla\phi$}{D phi}}

Suppose now that $f$ depends on $\nabla\phi$, which implies $\beta \in (-1,0)$.
Writing $f(\bphi) = f(\phi,\nabla\phi)$, we denote by $D_1 f$ and $D_2 f$ the partial derivatives with respect to $\phi$ and $\nabla\phi$ respectively.
By Example \ref{ex:only_nabla}, unless the equation is classically well-posed ($\beta>-\frac12$), the dependence of $f$ on $\nabla\phi$ must be affine for Assumption \ref{as:f_poly} to hold,
i.e. $D_2^2f=0$,
which we now assume.

Denoting $\theta = \frac{\alpha+1}{\alpha}$, suppose that
\begin{equ}
\disc{f}_{\eta} = \max_{l=0,1}\sup_{\bphi\in\CJ}\frac{|D_1^l f(\bphi)|}{|\phi|_{\eta - l} + |\nabla\phi|_1 |\phi|_{\eta -\theta -l}} +
\max_{l=0,1}\sup_{\bphi\in\CJ}\frac{|D_2 D_1^l f(\bphi)|}{|\phi|_{\eta -\theta -l}}
< \infty
\;,
\end{equ}
where $\eta$ satisfies \eqref{eq:q_eta} and $\eta \geq \theta$.
If $f$ is not a polynomial, we further assume that $\eta\geq \theta+1$.

\begin{corollary}
Under the above assumptions,
\begin{equ}
|\phi(z)| \lesssim \max\{ (\disc{f}_{\eta} \trinorm{Z}_{z,\lambda_z})^{\frac{\alpha}{\beta + 2 + \alpha -\alpha \eta}}, \dist{x}_a^{-\alpha}\}
\;.
\end{equ}
\end{corollary}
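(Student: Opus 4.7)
The proof follows the template of Corollary~\ref{cor:f_non_const}: verify Assumption~\ref{as:f_poly} and apply Theorem~\ref{thm:SPDE}. The new feature is that $\Upsilon^\tau$ may now genuinely depend on $\nabla\phi$, so the $|\nabla\phi|_{\nabla\eta}$ term in \eqref{eq:Upsilon_bound} must be retained.

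First I would classify the trees in $\mfW_{<0}$. Using that $f$ and $P$ are both affine in $\nabla\phi$, every vertex carries at most one outgoing edge of weight~$1$; combined with Lemma~\ref{lem:noise_zero} this yields the bookkeeping identity
\begin{equ}
|\tau|_\s = M + L(\beta+1) + V_0 + L_0 - 2,
\end{equ}
where $L$ is the number of noise vertices with $j\neq 0$, $V_0\in\{0,1\}$ counts $\Xi_0$-vertices, $M$ is the total polynomial weight, and $L_0\geq 1$ is the number of noise leaves (the lower bound holding because $\Xi_0$ cannot be a leaf). Since $\beta>-1$ makes $L(\beta+1)>0$, $|\tau|_\s<0$ forces $M=V_0=0$ and $L_0=1$, so every $\tau\in\mfW_{<0}$ is a pure path $\Xi_{j_1}\CI^{k_1}[\Xi_{j_2}\CI^{k_2}[\cdots\Xi_{j_L}]]$ with all $|k_i|_\s=1$ and $|\tau|_\s=L(\beta+1)-1$. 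A short induction along \eqref{eq:Upislon_induct} then shows that $\Upsilon^\tau(\bphi)$ is the natural composition of the operators $D_2 f_{j_1}(\bphi),\dots,D_2 f_{j_{L-1}}(\bphi)$ applied to $f_{j_L}(\bphi)$; since each $D_2 f_{j_i}$ depends only on $\phi$ by affineness, the only $\nabla\phi$-dependence comes from the leaf, giving
\begin{equ}
|\Upsilon^\tau(\bphi)|\lesssim \disc{f}_\eta^{L}\bigl(|\phi|^{L\eta-(L-1)\theta}+|\nabla\phi|\,|\phi|^{L\eta-L\theta}\bigr).
\end{equ}

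Next I would read off the exponents in Assumption~\ref{as:f_poly}. Using $\gamma_0=1$, $\gamma_k=\beta+1$ for $|k|_\s=1$ and $\zeta\leq\beta+1$, one checks that $B^\tau\subseteq\{\emptyset,\{0\},\{k\}:|k|_\s=1\}$, so $|\bk|\leq 1$ and only the four quantities controlled by $\disc{f}_\eta$ ever appear. For $|\bk|_\s=0$ I set $\nabla\eta(\tau,\bk)=1$, $\eta(\tau,\bk)=L\eta-(L-1)\theta-|\bk|$ and $\bar\eta(\tau,\bk)=\eta(\tau,\bk)-\theta$, and for $|\bk|_\s=1$ I set $\nabla\eta(\tau,\bk)=0$, $\eta(\tau,\bk)=\bar\eta(\tau,\bk)=L\eta-L\theta$. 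The scaling identity \eqref{eq:scaling_assump2} then reduces to $\alpha\theta=\alpha+1$; a direct computation gives $\Delta(\tau,\bk)=L(\beta+2+\alpha-\alpha\eta)$, positive by \eqref{eq:q_eta}; the Schauder condition \eqref{eq:Schauder_assump} reduces to $|\tau|_\s>-1$, valid since $|\tau|_\s\geq\beta>-1$ by Lemma~\ref{lem:lowest_tree}; and $\bar\eta\geq 0$ follows from $\eta\geq\theta$, the strengthened hypothesis $\eta\geq\theta+1$ for non-polynomial $f$ covering precisely the edge case $\bk=\{0\}\in B^{\Xi_j}$ at $L=1$. Since $\trinorm{D^\bk\Upsilon^\tau}\lesssim\disc{f}_\eta^{L}$ and the definition of $\trinorm{Z}_{z,\lambda}$ yields $\|\Pi\|_{\tau;B_z(\lambda)}\lesssim\trinorm{Z}_{z,\lambda}^{L}$ and $\|\Gamma\|_{\CI^k[\sigma];B_z(\lambda)}\lesssim\trinorm{Z}_{z,\lambda}^{L(\sigma)}$, every term in $\disc{(f,Z)}_{z,\lambda}$ is bounded by $(\disc{f}_\eta\trinorm{Z}_{z,\lambda})^{L\alpha/\Delta(\tau,\bk)}=(\disc{f}_\eta\trinorm{Z}_{z,\lambda})^{\alpha/(\beta+2+\alpha-\alpha\eta)}$, and Theorem~\ref{thm:SPDE} finishes the proof. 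The main obstacle is the classification step, which simultaneously exploits $\beta>-1$, the affineness of $f,P$ and Lemma~\ref{lem:noise_zero}; once the path structure is in place, the remaining verifications are routine scaling bookkeeping.
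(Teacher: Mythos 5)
Your overall architecture is the same as the paper's: classify the trees in $\mfW_{<0}$, choose the exponents $\eta(\tau,\bk),\nabla\eta(\tau,\bk),\bar\eta(\tau,\bk)$ explicitly, check conditions \eqref{eq:scaling_assump1}--\eqref{eq:Schauder_assump}, observe that every exponent $\rho(\tau,\bk)$ collapses to $\alpha/(\beta+2+\alpha-\alpha\eta)$, and invoke Theorem~\ref{thm:SPDE}. The choices of exponents you make are identical to the paper's, and your verification of \eqref{eq:scaling_assump1}, \eqref{eq:Schauder_assump}, and $\bar\eta\geq 0$ is correct.

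The gap is in the classification step. Your ``bookkeeping identity'' $|\tau|_\s = M + L(\beta+1) + V_0 + L_0 - 2$ is wrong: the number of leaves $L_0$ does not enter the homogeneity at all. A conforming tree with $n = L+V_0$ vertices has $n-1$ edges; if $E_0$ of these have weight $0$ (and the rest weight $1$, the maximum allowed by the rules $\mfR_j$), the correct identity is $|\tau|_\s = L\beta + M + 2E_0 + (n-1-E_0) = L(\beta+1) + M + V_0 + E_0 - 1$. For the star $\Xi_1\CI^{k_1}[\Xi_1]\CI^{k_2}[\Xi_1]$ with $|k_1|_\s=|k_2|_\s=1$, one has $|\tau|_\s = 3\beta+2$, while your formula gives $3\beta+3$. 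Moreover, your formula cannot force $L_0=1$: $L_0$ is not constrained by the homogeneity. What actually establishes the path structure is the combination of two facts: (i) the corrected identity with $\beta+1>0$ forces $M=V_0=E_0=0$ whenever $|\tau|_\s<0$; and (ii) affineness of $f$ and $P$ in $\nabla\phi$ forces $\Upsilon^\tau=0$ unless every vertex has at most one outgoing (necessarily weight-$1$) edge. The paper routes (i) through Lemma~\ref{lem:branches} rather than a global count: since $\beta>-1$, any $\Xi_0$-vertex or weight-$0$ edge would create a branch $\sigma$ with $|\sigma|_\s>0$, contradicting $|\tau|_\s<0$. This is cleaner because it is local and avoids counting leaves at all; you should replace your formula by this branch argument (and note explicitly that you are classifying trees with $\Upsilon^\tau\neq 0$, not all of $\mfW_{<0}$). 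Once the path structure is in place, the rest of your proof is correct and matches the paper.
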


\begin{proof}
Consider a tree $\tau\in\mfW_{<0}$.
By Lemma \ref{lem:branches},
all noise labels of $\tau$ must be $\Xi_j$ for $j>0$ and
all derivative labels  must satisfy $|\mfe(e)|_\s=1$ (a noise label $\Xi_0$ or edge $e$ with $\mfe(e)=0$ implies the existence of a branch $\sigma$ with $|\sigma|_\s>0$ because $\beta>-1$).
Since we suppose $f$ is affine in $\nabla\phi$, we have $\Upsilon^\tau \neq 0$ only if $\tau = \Xi_{j_1}\CI^{k_1}[\cdots \CI^{k_{L-1}}[\Xi_{j_{L}}]]$ with $|k_i|_\s=1$ and $j_i\geq 1$ and
where we denote $L=L(\tau)$
i.e. every vertex has degree at most $2$.

Furthermore, one has $\gamma_0 = 1$ and $\zeta\leq\gamma_{k}=1+\beta$ for $|k|_\s=1$.
Therefore $\mathring B^\tau$ consist only of $\bk$ such that $|k|_\s=1$ for all $k\in\bk$.
Since $D_2^2 \Upsilon^\tau=0$, one has $\bk\in B^\tau$ only if
$|\bk|\leq 1$ and $|\bk|_\s\leq 1$.

Note that $|\tau|_\s = (L-1)(\beta+1)+\beta$.
Then for $\bk\in B^\tau$ with $|\bk|=|\bk|_\s = 1$, one has
\begin{equ}[eq:DkUpsilon1]
|D^\bk \Upsilon^\tau (\bphi)| \lesssim \disc{f}_{\eta}^{L} |\phi|_{(\eta -\theta)L}\;.
\end{equ}
Set $\nabla\eta(\tau,\bk)=0$ and $\eta(\tau,\bk) = (\eta -\theta) L$.
Then $\eta(\tau,\bk)\geq 0$ and condition \eqref{eq:scaling_assump1} is equivalent to
\begin{equ}
\Delta(\tau,\bk) =
(L-1)(\beta+1)+\beta + 1
-
(\alpha \eta - \alpha - 1)L
=
L(\beta+2+\alpha - \alpha \eta) > 0\;,
\end{equ}
which holds by the assumption \eqref{eq:q_eta}.

For $\bk\in B^\tau$ with $|\bk|\leq 1$ and $|\bk|_\s=0$, one has
\begin{equ}[eq:DkUpsilon2]
|D^\bk \Upsilon^\tau (\bphi)| \lesssim \disc{f}_{\eta}^{L} |\phi|_{(\eta -\theta)(L-1)}(|\phi|_{\eta -|\bk|} + |\nabla\phi|_1 |\phi|_{\eta -\theta-|\bk|})
\end{equ}
Set $\nabla\eta(\tau,\bk)=1$ and $\eta(\tau,\bk) = (\eta -\theta)(L-1) + \eta -|\bk|$.
Then $\bar\eta(\tau,\bk)=\eta(\tau,\bk)-\theta\geq 0$ due to $D^\bk\Upsilon^\tau\neq0$ and the assumption that $\eta\geq\theta+1$ if $f$ is not a polynomial,
and condition \eqref{eq:scaling_assump1} is equivalent to
\begin{equ}
\Delta(\tau,\bk) = (L-1)(\beta+1)+\beta + 2 +\alpha -\alpha |\bk| - (\alpha \eta - \alpha-1)(L-1) - \alpha \eta + \alpha|\bk|
=
L(\beta + 2 + \alpha - \alpha \eta)
>0\;,
\end{equ}
which is the same condition as earlier and holds by \eqref{eq:q_eta}.
The bounds \eqref{eq:DkUpsilon1}-\eqref{eq:DkUpsilon2} imply that,
with the above choices for $\eta(\tau,\bk),\nabla\eta(\tau,\bk)$,
\begin{equ}
\trinorm{D^\bk\Upsilon^\tau} \lesssim \disc{f}_{\eta}^{L}\;.
\end{equ}
Note also that \eqref{eq:Schauder_assump} holds for the above choices because $|\tau|_\s>-1$.
Therefore Assumption \ref{as:f_poly} holds and
Theorem \ref{thm:SPDE} yields the result.
\end{proof}

\begin{remark}
Similar to Remark \ref{rem:general_cond},
under a natural generalisation of $\disc{f}_{\eta}$ in the spirit of Section \ref{sec:linear_condition_RP},
the best bound that Theorem \ref{thm:SPDE} provides appears not to be a power of $\disc{f}_{\eta,q}\trinorm{Z}$.
\end{remark}

\subsection{Setup for proof}
\label{sec:setup_SPDE}

In the rest of the article, we prove Theorem \ref{thm:SPDE}.
As usual, we apply  Corollary \ref{cor:parameter_choice}, so we put ourselves in the setting of Section \ref{sec:abstract}.
We suppose
Assumptions \ref{as:P}, \ref{as:coercive}, \ref{as:subcrit}, \ref{as:non_integer}, \ref{as:f_poly} are in place in the rest of the paper, but remark that Assumption \ref{as:f_poly} is used only in Sections \ref{sec:remainder_SPDE}-\ref{sec:local_coer_SPDE}
and Assumption \ref{as:coercive} is used only in Section \ref{sec:local_coer_SPDE}.

\textbf{Domains.}
Our basic domain is $\Sigma=\cl\Omega_a$ for $a\in (0,1]$ with parabolic boundary $\d\Omega_a$, metric $(x,y)\mapsto |x-y|_\s$, and distance to boundary $\dist{z}_a$.

Our rescaled domains are $\Sigma_{z,\lambda}=\cl \Omega_b$ where $0<b\ll1$ will be chosen later ($b$ and $a$ are not related in any way).
For $z\in\Omega_a$ and $0<\lambda\leq \dist{z}_a$,
the injection $T_{z,\lambda} \colon \Omega_b\to\Omega_a$ is the map from \eqref{eq:T_lambda_def},
which clearly satisfies \eqref{eq:diameter} since $|z-T_{z,\lambda}x|_\s \leq \lambda b \leq \lambda$ for all $x\in\Omega_b$.

\textbf{Drivers.}
We let $\drivers$ denote the set of pairs $(f,Z)$ where $Z$ is a model on $\Omega_a$ adapted to $K$ (Definition \ref{def:model}) and $f\colon \CJ\to L(\R^n,E)$ satisfies Assumption \ref{as:f_poly}.
We make the same definition for $\drivers_{z,\lambda}$ except that $Z$ is a model on $\Omega_b$
and we require it to be adapted to the rescaled kernel
\begin{equ}[eq:K_lambda_def]
K^\lambda (z) = \lambda^{|\s|-2} K(\lambda\cdot_\s z)
\end{equ}
(i.e. Definition \ref{def:model} holds for $Z=(\Pi,\Gamma)$ but with $K$ replaced by $K^\lambda$.)

\begin{remark}
In Sections \ref{sec:first_applications}-\ref{sec:RPs},
we apply Theorem \ref{thm:a_priori_abstract} with $\drivers_{z,\lambda}=\drivers$.
Here, in contrast, $\drivers_{z,\lambda} \neq \drivers$ in general.
Moreover, while $\drivers_{z,\lambda}$ does not depend on $z$, it \emph{does} depend on $\lambda$.
\end{remark}

Our rescaling maps $R_{z,\lambda} \colon\drivers\to \drivers_{z,\lambda}$ are defined as follows.
For $\lambda>0$, define the map
\begin{equ}[eq:rescale_SPDE]
Q_\lambda \colon \CJ\to\CJ\;,
\qquad
\nabla^k (Q_\lambda \bphi) = \lambda^{-\alpha - |k|_\s}\nabla^k \phi
\;,
\end{equ}
which is a linear bijection.
For $f\colon \CJ \to L(\R^n,E)$, denote
\begin{equ}
f_\lambda = f\circ Q_\lambda \colon \CJ \to L(\R^n,E)\;.
\end{equ}
Recall the critical scaling $\mfc(\tau)$ from Definition \ref{def:mfc}.
Define the linear bijection
\begin{equ}
\mfC_\lambda\colon\CT_{\leq 2}\to\CT_{\leq 2}
\;,
\qquad
\mfC_\lambda\tau = \lambda^{-\mfc(\tau)}\tau
\end{equ}
for all $\tau\in\mfT_{\leq2}$.
For a model $Z = (\Pi,\Gamma)$ on $\Omega_a$, we define for $x,y\in\Omega_b$,
\begin{equs}
(R_{z,\lambda}\Pi)_x \tau
&=
\big(\Pi_{T_{z,\lambda}(x)} \mfC_\lambda \tau \big)\circ T_{z,\lambda}\;,
\label{eq:RPi_def}
\\
(R_{z,\lambda}\Gamma)_{x,y}
&=
\mfC_\lambda^{-1}
\Gamma_{T_{z,\lambda} (x), T_{z,\lambda}(y)}
\mfC_\lambda
\;,
\label{eq:RGamma_def}
\end{equs}
and then $R_{z,\lambda}Z = (R_{z,\lambda}\Pi,R_{z,\lambda}\Gamma)$.
We show in Lemma \ref{lem:model_rescale} that $R_{z,\lambda}Z$ is a model on $\Omega_b$ adapted to $K^\lambda$.
We then define
\begin{equ}[eq:scaling_driver_SPDE]
R_{z,\lambda}\colon \drivers \to \drivers_{z,\lambda}
\;,
\qquad
R_{z,\lambda}(f,Z) = (f_\lambda,R_{z,\lambda}Z)\;.
\end{equ}

To introduce our `norms', we define for $\lambda>0$, $\tau\in\mfW_{<0}$,
and a model $Z = (\Pi,\Gamma)$ on $\Omega_b$, 
\begin{equ}[eq:Pi_tau_lambda]
\|\Pi\|_{\tau;\lambda} = \sup_{x\in\Omega_b}
\|\Pi_x\tau\|_{|\tau|_\s ; x ; 1/\lambda ; r}
\;.
\end{equ}
Note that for $\lambda\ll 1$, $\|\Pi\|_{\tau;\lambda}$ depends on $\Pi_x$ evaluated on non-anticipative test functions with support of diameter order $1/\lambda\gg 1$.
Then define for $z\in \Omega_a$, $0<\lambda\leq \dist{z}$, and $(f,Z)\in\drivers_{z,\lambda}$,
\begin{equ}[eq:fZ_norms]
\|(f,Z)\|_{z,\lambda}
=
\max_{|\tau|_\s < 0}
\max_{\bk \in B^\tau}
\Big(
\trinorm{D^\bk \Upsilon^\tau}\|\Pi\|_{\tau;\lambda}
\Big)^{\rho(\tau,\bk)}
+
\max_{\CI^k[\sigma] \in \mfU}
\Big(
\trinorm{\Upsilon^\sigma}\|\Gamma\|_{\CI^k[\sigma];\Omega_b}
\Big)^{\rho(\sigma,0)}
\end{equ}
where $\rho(\tau,\bk)$ is defined in \eqref{eq:rho_def}.
We define the local norms $\disc{(f,Z)}_{z,\lambda}$ by \eqref{eq:Zf_local_norms}.
Note that \eqref{eq:balls} clearly holds. We show that \eqref{eq:M_norms} holds in Lemma \ref{lem:model_scaling}.

\textbf{Solutions.} For a driver $M=(f,Z) \in \drivers$, the set of solutions $\bbS_M$ is defined as $\phi\in \CC(\cl\Omega_a,E)$ that solve the remainder equation of \eqref{eq:SPDE} on $\Omega_a$ as in Definition \ref{def:solution_SPDE}.
Likewise, for $M=(f,Z)\in\drivers_{z,\lambda}$, the set
$\bbS_M$ consists of $\phi\in \CC(\cl\Omega_b,E)$
that solve the remainder equation of \eqref{eq:SPDE} on $\Omega_b$.
Closure under rescaling \eqref{eq:sol_rescale} is verified in Lemma \ref{lem:sol_scale}.

\textbf{Local coercivity.} This is the hardest part of the proof and is verified in Lemma \ref{lem:local_coer_SPDE}.

\subsection{Scaling}

Let $z\in\Omega_a$, $0<\lambda\leq\dist{z}$, $(f,Z)\in\drivers$,
and $b\in (0,1]$.

\begin{lemma}\label{lem:model_rescale} 
$R_{z,\lambda}Z$ is a model on $\Omega_b$ that is adapted to $K^\lambda$ from \eqref{eq:K_lambda_def}.
Moreover
\begin{equ}[eq:Gamma_scale_bound]
\|R_{z,\lambda}\Gamma\|_{\CI^k[\sigma];\Omega_b} = \lambda^{|\sigma|_\s - \mfc(\sigma)}\|\Gamma\|_{\CI^k[\sigma];B_z(\lambda b)}
\;,
\end{equ}
\begin{equ}[eq:Pi_scale_bound]
\|R_{z,\lambda} \Pi\|_{\tau;\lambda} = \lambda^{|\tau|_\s - \mfc(\tau)} \|\Pi\|_{\tau;B_z(\lambda b)}
\;.
\end{equ}
\end{lemma}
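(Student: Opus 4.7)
The plan is to verify, in order, the three parts (a), (b), (c) of Definition~\ref{def:model} for $R_{z,\lambda}Z$ on $\Omega_b$, then the adapted property \eqref{eq:adapted} with respect to $K^\lambda$, and along the way to extract the two scaling identities.

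First, the algebraic identities (a) are essentially tautological: since $\mfC_\lambda$ is an invertible linear map, the conjugation in \eqref{eq:RGamma_def} immediately gives $(R_{z,\lambda}\Gamma)_{xy}(R_{z,\lambda}\Gamma)_{yw} = \mfC_\lambda^{-1}\Gamma_{T_{z,\lambda}x,T_{z,\lambda}y}\Gamma_{T_{z,\lambda}y,T_{z,\lambda}w}\mfC_\lambda = (R_{z,\lambda}\Gamma)_{xw}$, and likewise the factor $\mfC_\lambda\mfC_\lambda^{-1}$ in $(R_{z,\lambda}\Pi)_x(R_{z,\lambda}\Gamma)_{xy}\tau$ collapses and yields $(R_{z,\lambda}\Pi)_y\tau$. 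For (c), I would use $\mfc(X^k)=|k|_\s$, so $\mfC_\lambda X^k = \lambda^{-|k|_\s}X^k$, and observe that $T_{z,\lambda}x - T_{z,\lambda}y = \lambda\cdot_\s(x-y)$ contributes the exactly compensating factor $\lambda^{|k|_\s}$; similarly $\Pi_\cdot\bone\equiv 1$ is preserved by composition with $T_{z,\lambda}$.

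Next, the norm identities \eqref{eq:Gamma_scale_bound}--\eqref{eq:Pi_scale_bound} follow from direct computation. For $\Gamma$, using $\mfc(\CI^k[\sigma])=\mfc(\sigma)+2-|k|_\s$ and $\mfC_\lambda^{-1}\bone=\bone$, one gets
\begin{equ}
\scal{(R_{z,\lambda}\Gamma)_{xy}\CI^k[\sigma],\bone} = \lambda^{|k|_\s-2-\mfc(\sigma)}\,\scal{\Gamma_{T_{z,\lambda}x,T_{z,\lambda}y}\CI^k[\sigma],\bone}\;,
\end{equ}
and dividing by $|x-y|_\s^{|\CI^k[\sigma]|_\s} = \lambda^{-|\CI^k[\sigma]|_\s}|T_{z,\lambda}x-T_{z,\lambda}y|_\s^{|\CI^k[\sigma]|_\s}$ produces precisely the exponent $|\sigma|_\s-\mfc(\sigma)$. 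For $\Pi$, I would test $(R_{z,\lambda}\Pi)_x\tau = \lambda^{-\mfc(\tau)}(\Pi_{T_{z,\lambda}x}\tau)\circ T_{z,\lambda}$ against a non-anticipative $\psi^\delta_x$; the scaling identity \eqref{eq:scale_pointwise} rewrites this as $\lambda^{-\mfc(\tau)}\scal{\Pi_{T_{z,\lambda}x}\tau,\psi^{\lambda\delta}_{T_{z,\lambda}x}}$, and the substitution $\delta\mapsto\lambda\delta$ in the supremum (which also changes the range $\delta<1/\lambda$ to $\lambda\delta<1$) yields \eqref{eq:Pi_scale_bound}. These supply the finiteness required by property (b).

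The main obstacle I expect is the adapted property. Starting from the identity $\Pi_{T_{z,\lambda}x}\CI^\ell\tau = D^\ell g$ with $g = K*\Pi_{T_{z,\lambda}x}\tau - \Pi_{T_{z,\lambda}x}\CJ(T_{z,\lambda}x)\tau$, I would pull back by $T_{z,\lambda}$ and track $\lambda$-exponents using the three key rescaling identities
\begin{equs}
(D^\ell g)\circ T_{z,\lambda} &= \lambda^{-|\ell|_\s}D^\ell(g\circ T_{z,\lambda})\;,\\
(K*g)\circ T_{z,\lambda} &= \lambda^2\,K^\lambda * (g\circ T_{z,\lambda})\;,\\
(D^k K)(\lambda\cdot_\s w) &= \lambda^{2-|\s|-|k|_\s}\,D^k K^\lambda(w)\;,
\end{equs}
the second of which follows from the definition \eqref{eq:K_lambda_def} by change of variables $w=T_{z,\lambda}v$, and the third likewise. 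Combined with the polynomial compatibility already shown (which handles the $X^k$ factor of each term in $\CJ(T_{z,\lambda}x)\tau$), all accumulated $\lambda$ powers telescope exactly: the identity $-\mfc(\CI^\ell\tau)-|\ell|_\s + 2 + \mfc(\tau) = 0$ finishes the cancellation on the convolution term, and the $k$-dependent powers cancel term-by-term in the Taylor polynomial, so that the image of $\CJ(T_{z,\lambda}x)\tau$ under pullback and $\mfC_\lambda^{-1}$ is precisely $(R_{z,\lambda}\Pi)_x \CJ^\lambda(x)\tau$ (with the same summation range, since $|\tau|_\s$ is unchanged). This gives $(R_{z,\lambda}\Pi)_x\CI^\ell\tau = D^\ell\bigl(K^\lambda*(R_{z,\lambda}\Pi)_x\tau - (R_{z,\lambda}\Pi)_x\CJ^\lambda(x)\tau\bigr)$, which is adaptation with respect to $K^\lambda$; one final minor point to check is that $T_{z,\lambda}((\Omega_b)^1)$ lies in $(\Omega_a)^1$, so that pullback is well-defined as a distribution, which holds for $b$ small using $\lambda\leq\dist{z}_a$.
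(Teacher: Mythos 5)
Your proposal is correct and follows essentially the same route as the paper: compute the scaling of $\scal{\cdot,\bone}$ for $\Gamma$ and of test-function evaluations via \eqref{eq:scale_pointwise} for $\Pi$, then verify adaptedness by pulling back the defining identity \eqref{eq:adapted} through $T_{z,\lambda}$ and tracking $\lambda$-exponents using the three rescaling identities (the paper presents this by doing $\ell=0$ first and then appealing to $\Pi_x\CI^\ell\tau = D^\ell\Pi_x\CI\tau$, which is only a cosmetic reorganisation of your direct computation). One small inaccuracy in your closing remark: the containment $T_{z,\lambda}((\Omega_b)^1)\subset(\Omega_a)^1$ does not require $b$ small; it follows already from $\lambda\le\dist{z}_a\le 1$ together with $b\le 1$, since $T_{z,\lambda}(B_x(1)) = B_{T_{z,\lambda}(x)}(\lambda)\subset B_{T_{z,\lambda}(x)}(1)$.
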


\begin{proof}
We use the shorthands $R = R_{z,\lambda}$ and $T = T_{z,\lambda}$.
We first show \eqref{eq:Gamma_scale_bound}-\eqref{eq:Pi_scale_bound} and \ref{pt:alg}-\ref{pt:poly_model} of Definition~\ref{def:model}.
The algebraic identities
\begin{equ}
(R \Gamma)_{x,y}(R \Gamma)_{y,z} = (R \Gamma)_{x,z}\;,
\qquad
(R \Pi)_x (R\Gamma)_{x,y} = (R\Pi)_y
\end{equ}
follow easily from the definitions, which verifies Definition~\ref{def:model}\ref{pt:alg}.

For the analytic bounds of Definition~\ref{def:model}\ref{pt:bounds}, we have for $\CI^k[\sigma]\in\mfU$ and $x,y\in\Omega_b$,
\begin{equ}
|\scal{(R\Gamma)_{x,y}\CI^k [\sigma],\bone}|
=
\lambda^{-\mfc(\CI^k \sigma)} \scal{\Gamma_{T(x),T(y)}\CI^k [\sigma],\bone}
\end{equ}
and since $|T(x)-T(y)|_\s = \lambda |x-y|_\s$ and $|\CI^k \sigma|_\s-\mfc(\CI^k\sigma) = |\sigma|_\s - \mfc(\sigma)$,
we obtain \eqref{eq:Gamma_scale_bound}.
Furthermore, by \eqref{eq:scale_pointwise},
\begin{equ}
\|(R\Pi)_x \tau \|_{|\tau|_\s;x;1/\lambda;r} = \lambda^{|\tau|_\s - \mfc(\tau)} \|\Pi_{T(x)}\tau\|_{|\tau|_\s;T(x);1;r}
\end{equ}
which proves \eqref{eq:Pi_scale_bound} and verifies Definition~\ref{def:model}\ref{pt:bounds}.
To prove that $R Z$ is a model, it remains to verify Definition~\ref{def:model}\ref{pt:poly_model}.
To this end,
\begin{equ}[eq:RPi_X]
(R\Pi)_x X^k\tau
=
\lambda^{-|k|_\s}
\{\Pi_{T(x)} X^k \mfC_\lambda \tau\}\circ T
=
\lambda^{-|k|_\s} (T(\cdot)-T(x))^k (R\Pi)_x\tau
=
(\cdot-x)^k (R\Pi)_x\tau
\;,
\end{equ}
which verifies the first identity in Definition~\ref{def:model}\ref{pt:poly_model}.
The second identity follows similarly.

Finally, to show adaptedness of $RZ$ to $K^\lambda$, we need to verify \eqref{eq:adapted}.
For $\ell=0$ in \eqref{eq:adapted},
\begin{equ}[eq:adapted_1]
(R\Pi)_x \CI\tau = \lambda^{-\mfc(\tau)-2} (\Pi_{T(x)} \CI\tau )\circ T
=
\lambda^{-\mfc(\tau)-2}
\{
K * \Pi_{T(x)}\tau
-
\Pi_{T(x)}\CJ(T(x))\tau
\}
\circ T
\;,
\end{equ}
where we used the definition of $R$ in the first equality and the adaptedness of $Z$ in the second equality.
Note that, by definition of $R$ and of $K^\lambda$ in \eqref{eq:K_lambda_def},
\begin{equ}[eq:KPi_Tx]
\lambda^{-\mfc(\tau)-2}
\{K * \Pi_{T(x)}\tau\}\circ T
=
K^\lambda * (R\Pi)_x\tau
\;.
\end{equ}
Moreover,
\begin{equs}[eq:RCJ]
\lambda^{-\mfc(\tau)-2}
\{
\Pi_{T(x)}\CJ(T(x))\tau
\}
\circ T
&=
\lambda^{-\mfc(\tau)-2} \sum_{|k|_\s<|\tau|_\s+2}
\frac{\lambda^{|k|_\s} (\cdot-x)^k}{k!}
\{D^k K * \Pi_{T(x)}\tau\}(T(x))
\\
&=
\sum_{|k|_\s<|\tau|_\s+2} \frac{\Pi_x X^k}{k!} 
\{D^k K^\lambda *(R\Pi)_x\tau\}(x)
\\
&= (R\Pi)_x (R\CJ)(x)\tau
\end{equs}
where $R\CJ$ is defined as in \eqref{eq:CJ_def} with $\Pi$ replaced by $R\Pi$ and $K$ replaced by $K^\lambda$, and
where for the first equality we used \eqref{eq:RPi_X} with $\tau=\bone$ (so that $\Pi_x\bone=1$),
while for the second equality we used that $\lambda^{|k|_\s} \{D^k K * \Pi_{T(x)}\tau\}(T(x)) = D^k (\{K * \Pi_{T(x)}\tau\} \circ T)(x)$
followed by \eqref{eq:KPi_Tx}.
Substituting \eqref{eq:KPi_Tx}-\eqref{eq:RCJ} into \eqref{eq:adapted_1} verifies \eqref{eq:adapted} for $R\Pi$ and $K^\lambda$ and $\ell=0$.
The case $\ell\neq0$ follows from
$\Pi_x\CI^\ell \tau = D^\ell \Pi_x\CI\tau$ and thus $(R\Pi)_x \CI^\ell\tau = \lambda^{-\mfc(\tau)-2+|\ell|_\s} (D^\ell \Pi_{T(x)} \CI\tau )\circ T=D^\ell (R\Pi)_x \CI\tau$.
\end{proof}

We let $\Upsilon^\tau_\lambda$ be the elementary differentials built from $f_\lambda$.
Recall the map $Q_\lambda$ from \eqref{eq:rescale_SPDE}.

\begin{lemma}\label{lem:crit_scaling}
$\Upsilon^\tau \circ Q_\lambda = \lambda^{-2-\alpha-\mfc(\tau)} \Upsilon_\lambda^\tau$
for all $\lambda>0$ and conforming trees $\tau$.
\end{lemma}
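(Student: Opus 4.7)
The plan is to prove the identity by induction on the number of edges of $\tau$, after establishing two intertwining formulas between $Q_\lambda$ and the derivative operators $D^k$ and $\d^k$.

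\textbf{Base cases.} If $\tau=\Xi_j$ for some $j\in[n]$, then $\Upsilon^{\Xi_j}_\lambda=f_j\circ Q_\lambda=\Upsilon^{\Xi_j}\circ Q_\lambda$ by the definition $f_\lambda=f\circ Q_\lambda$, and since $\mfc(\Xi_j)=-\alpha-2$, the target exponent $\lambda^{-2-\alpha-\mfc(\Xi_j)}=1$, matching the claim. If $\tau=\bone=\Xi_0$, then $\Upsilon^\bone=P=\Upsilon^\bone_\lambda$; the identities $\alpha p=\alpha+2$ and, when the $B$ term is present, $\alpha+1+\alpha q=\alpha+2$ (both consequences of $\alpha=2/(p-1)$) give, via Assumption~\ref{as:P}, $P\circ Q_\lambda=\lambda^{-\alpha-2}P$, which matches $\lambda^{-2-\alpha-\mfc(\bone)}=\lambda^{-\alpha-2}$ since $\mfc(\bone)=0$.

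\textbf{Intertwining with $Q_\lambda$.} The inductive step uses the identities, valid for any sufficiently smooth $g\colon\widehat\CJ\to F$ and $k\in\N^d$,
\[
D^k(g\circ Q_\lambda) = \lambda^{-\alpha-|k|_\s}(D^k g)\circ Q_\lambda\;,\qquad
\d^k(g\circ Q_\lambda) = \lambda^{|k|_\s}(\d^k g)\circ Q_\lambda\;.
\]
The first follows directly from the chain rule applied to $D^k g(\bphi)(h)=\tfrac{d}{d\eps}|_{\eps=0}g(\bphi+\eps h_k)$, using $(Q_\lambda h_k)_\ell=\lambda^{-\alpha-|k|_\s}\delta_{\ell,k}h$. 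For the second, substituting $\bphi_{\ell+i}=\lambda^{\alpha+|\ell+i|_\s}(Q_\lambda\bphi)_{\ell+i}$ into $\d^i g(\bphi)=\sum_\ell(D^\ell g)(\bphi)(\bphi_{\ell+i})$ and applying the first identity produces cancellations leaving a net factor $\lambda^{|i|_\s}$; iteration yields the general case.

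\textbf{Inductive step.} For $\tau=X^k\Xi_j\prod_{i=1}^\ell\CI^{k_i}[\sigma_i]$ with $\ell\geq 1$ (or $\ell=0$ and $k\neq 0$), expand $\Upsilon^\tau(Q_\lambda\bphi)$ via \eqref{eq:Upislon_induct}, apply the induction hypothesis to each $\Upsilon^{\sigma_i}(Q_\lambda\bphi)$, and use multilinearity of $\d^k(D^{k_1}\cdots D^{k_\ell}\Upsilon^{\Xi_j})$ in its $\ell$ arguments to pull out $\prod_i\lambda^{-2-\alpha-\mfc(\sigma_i)}$. The intertwining identities then commute $Q_\lambda$ past $\d^k D^{k_1}\cdots D^{k_\ell}$, producing an extra factor $\lambda^{\ell\alpha+\sum_i|k_i|_\s-|k|_\s}$ and replacing $\Upsilon^{\Xi_j}$ by $\Upsilon^{\Xi_j}\circ Q_\lambda$. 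For $j>0$, $\Upsilon^{\Xi_j}\circ Q_\lambda=\Upsilon^{\Xi_j}_\lambda$ directly, while for $j=0$ the base case supplies an additional factor $\lambda^{-2-\alpha}$. Summing the resulting exponents and comparing with $\mfc(\tau)=|k|_\s+\mfc(\Xi_j)+\sum_i(\mfc(\sigma_i)+2-|k_i|_\s)$ shows, in both cases, that the total exponent equals $-2-\alpha-\mfc(\tau)$.

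The only delicate point is the asymmetric treatment of $\Xi_0=\bone$ versus $\Xi_j$ for $j>0$: the former inherits its scaling from Assumption~\ref{as:P} on $P$ while the latter from the definition of $f_\lambda$. The convention $\mfc(\Xi_j)=-\alpha-2$ for $j>0$ but $\mfc(\bone)=0$ is precisely what is needed to absorb this asymmetry into the single exponent $-2-\alpha-\mfc(\tau)$.
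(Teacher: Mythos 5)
Your proof is correct and takes essentially the same route as the paper's: induction on the tree structure, base cases for $\bone$ and $\Xi_j$ ($j>0$), and exponent bookkeeping via the intertwining of $D^k$ and $\d^k$ with $Q_\lambda$, absorbing the $\Xi_0$/$\Xi_j$ asymmetry through the convention $\mfc(\bone)=0$, $\mfc(\Xi_j)=-\alpha-2$. The only organizational difference is that you isolate the two intertwining identities as a standalone step and then treat the full form $X^k\Xi_j\prod_i\CI^{k_i}[\sigma_i]$ in a single inductive stroke, whereas the paper peels off polynomial labels $X^i$ one at a time before treating $\Xi_j\prod_i\CI^{k_i}[\sigma_i]$; the underlying computations are identical.
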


Note that $-2-\alpha-\mfc(\tau)$ can be both negative or positive.
It is negative if $f$ is constant because $-2-\alpha= \mfc(\Xi_1)$ and every tree satisfies $\mfc(\tau)\geq \mfc(\Xi_1)$.
But for non-constant $f$, there are conforming trees for which $\mfc(\tau)<\mfc(\Xi_1)$.

\begin{proof}
We proceed by induction. For the base cases, one has $\Upsilon^\bone(Q_\lambda \bphi) = P(Q_\lambda\bphi) = \lambda^{- \alpha p} P(\bphi) = \lambda^{-\alpha-2}\Upsilon^{\bone}(\bphi)$ where we used $-\alpha-2 = -\alpha p$.
Moreover, for $i=1,\ldots,n$, by definition $\Upsilon^{\Xi_i}(Q_\lambda \bphi) = (f_\lambda)_i(\bphi) = \Upsilon_\lambda^{\Xi_i}(\bphi)$, which proves the claim since $\mfc(\Xi_i) = -\alpha-2$.

Suppose the claim is true for $\tau$. Then, treating $i\in[d]$ canonically as an element of $\N^d$,
\begin{equs}
(\Upsilon^{X^{i}\tau}\circ Q_\lambda)(\bphi)
&= \sum_{\ell\in\N^d} \lambda^{-\alpha-|\ell|_s-|i|_\s} \bphi_{\ell+i}
(D^\ell \Upsilon^\tau)(Q_\lambda \bphi)
=\sum_{\ell\in\N^d} \lambda^{-|i|_\s} \bphi_{\ell+i}
D^\ell (\Upsilon^\tau \circ Q_\lambda)(\bphi)
\\
&=
\sum_{\ell\in\N^d} \lambda^{-|i|_\s - 2-\alpha-\mfc(\tau)} \bphi_{\ell+i}
D^\ell \Upsilon_\lambda^\tau(\bphi) =
\lambda^{- 2-\alpha-\mfc(X^{i}\tau)} \Upsilon^{X^{i}\tau}_\lambda(\bphi)\;,
\end{equs}
where we used the induction hypothesis in the third equality.
So it suffices to consider $\tau = \Xi_j \prod_{i=1}^\ell \CI^{k_i}[\sigma_i]$.
Note that
\begin{equ}[eq:mfc_relation]
\mfc(\tau) = \mfc(\Xi_j) + \sum_i \mfc(\sigma_i) + 2\ell - \sum_i |k_i|_\s\;.
\end{equ}
Then, if $j \in \{1,\ldots,n\}$,
\begin{equs}
\Upsilon^\tau \circ Q_\lambda(\bphi)
&= \Big[ \Big(\prod_{i} D^{k_i}\Big) \Upsilon^{\Xi_j} \Big](Q_\lambda\bphi)(\Upsilon^{\sigma_1}(Q_\lambda\bphi),\ldots, \Upsilon^{\sigma_\ell}(Q_\lambda\bphi))
\\
&=\lambda^{-\sum_i \mfc(\sigma_i) - 2\ell-\alpha\ell}
\Big[\Big(\prod_{i} D^{k_i}\Big) \Upsilon^{\Xi_j}\Big](Q_\lambda\bphi)(\Upsilon_\lambda^{\sigma_1}(\bphi),\ldots, \Upsilon_\lambda^{\sigma_\ell}(\bphi))
\\
&=\lambda^{-\sum_i \mfc(\sigma_i) - 2\ell-\alpha\ell + \sum_i |k_i|_\s + \alpha\ell}
\Big[\Big(\prod_{i} D^{k_i}\Big) (f_j\circ Q_\lambda)\Big](\bphi)(\Upsilon_\lambda^{\sigma_1}(\bphi),\ldots, \Upsilon_\lambda^{\sigma_\ell}(\bphi))
\\
&= \lambda^{-\mfc(\tau) + \mfc(\Xi_j)} \Upsilon_\lambda^\tau(\bphi)
=\lambda^{-\mfc(\tau) -\alpha-2} \Upsilon_\lambda^\tau(\bphi)
\end{equs}
where we used the induction hypothesis in the second equality and \eqref{eq:mfc_relation} in the fourth equality.
For $j=0$, the same equalities hold except $f_j$ is replaced by $P$ and we use that $P\circ Q_\lambda=\lambda^{-\alpha p}P$ to conclude that the final line becomes $\lambda^{-\mfc(\tau) - \alpha p} \Upsilon_\lambda^\tau(\bphi)
=\lambda^{-\mfc(\tau) -\alpha-2} \Upsilon_\lambda^\tau(\bphi)$.
\end{proof}

The following lemma gives a scaling bound analogous to \eqref{eq:D_ell_infty}.

\begin{lemma}\label{lem:scaling_f}
Under Assumption \ref{as:f_poly},
for all $\tau\in\mfW_{\leq0}$ and $\bk\in B^\tau$,
\begin{equ}
\trinorm{D^\bk \Upsilon_\lambda^\tau}
\leq
\lambda^{\alpha+2 + \mfc(\tau) - |\bk|\alpha - |\bk|_\s - \alpha\eta(\tau,\bk)}
\trinorm{D^\bk \Upsilon^\tau}
\;.
\end{equ}
\end{lemma}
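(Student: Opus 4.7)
The plan is to combine Lemma \ref{lem:crit_scaling} with a direct chain rule computation for the linear map $Q_\lambda$, then apply the pointwise growth bound from Assumption \ref{as:f_poly} at the rescaled argument. First, Lemma \ref{lem:crit_scaling} gives $\Upsilon^\tau_\lambda = \lambda^{\alpha+2+\mfc(\tau)}\, \Upsilon^\tau\circ Q_\lambda$. Since $Q_\lambda$ is linear and diagonal with eigenvalue $\lambda^{-\alpha-|k|_\s}$ on the $k$-th component of $\widehat\CJ$, the chain rule gives $D^k(g\circ Q_\lambda) = \lambda^{-\alpha-|k|_\s}(D^k g)\circ Q_\lambda$. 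Iterating over the multi-set $\bk$ yields
\begin{equ}
D^\bk \Upsilon^\tau_\lambda \;=\; \lambda^{\alpha+2+\mfc(\tau) - |\bk|\alpha - |\bk|_\s}\, (D^\bk \Upsilon^\tau)\circ Q_\lambda\;,
\end{equ}
which already accounts for all the scaling exponents of the claim except the contribution $-\alpha\eta(\tau,\bk)$.

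To extract the remaining factor, I would evaluate $(D^\bk \Upsilon^\tau)(Q_\lambda\bphi)$ using the bound \eqref{eq:Upsilon_bound} from Assumption \ref{as:f_poly}. Writing $Q_\lambda\bphi = (\lambda^{-\alpha}\phi,\lambda^{-\alpha-1}\nabla\phi)$, this gives
\begin{equ}
|(D^\bk\Upsilon^\tau)(Q_\lambda\bphi)| \;\lesssim\; |\lambda^{-\alpha}\phi|_{\eta(\tau,\bk)} + |\lambda^{-\alpha-1}\nabla\phi|_{\nabla\eta(\tau,\bk)}\,|\lambda^{-\alpha}\phi|_{\bar\eta(\tau,\bk)}
\end{equ}
with implicit constant $\trinorm{D^\bk \Upsilon^\tau}$. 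Since $\lambda\leq\dist{z}_a\leq 1$ we have $\lambda^{-\alpha}\geq 1$ and $\lambda^{-\alpha-1}\geq 1$, and all three exponents $\eta(\tau,\bk),\nabla\eta(\tau,\bk),\bar\eta(\tau,\bk)$ are non-negative by Assumption \ref{as:f_poly}, so the elementary inequality $|r x|_\eta \leq r^\eta|x|_\eta$ (valid for $r\geq 1$ and $\eta\geq 0$, immediate from the definition \eqref{eq:x_eta}) applies to each factor.

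Pulling out the $\lambda$-powers, the first term is bounded by $\lambda^{-\alpha\eta(\tau,\bk)}|\phi|_{\eta(\tau,\bk)}$, while the second term is bounded by $\lambda^{-(\alpha+1)\nabla\eta(\tau,\bk) - \alpha\bar\eta(\tau,\bk)}|\nabla\phi|_{\nabla\eta(\tau,\bk)}|\phi|_{\bar\eta(\tau,\bk)}$. Here the relation \eqref{eq:scaling_assump2} of Assumption \ref{as:f_poly} is exactly the statement that both $\lambda$-exponents coincide and equal $-\alpha\eta(\tau,\bk)$, so the two contributions scale in the same way and combine cleanly into $\lambda^{-\alpha\eta(\tau,\bk)}\bigl(|\phi|_{\eta(\tau,\bk)} + |\nabla\phi|_{\nabla\eta(\tau,\bk)}|\phi|_{\bar\eta(\tau,\bk)}\bigr)$. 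Substituting back into the chain-rule identity and taking the supremum over $\bphi\in\CJ$ in the definition of $\trinorm{\cdot}$ yields the claimed bound.

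The only non-routine point is the compatibility step between the different $\lambda$-powers on the two terms in \eqref{eq:Upsilon_bound}; this is the reason identity \eqref{eq:scaling_assump2} was imposed in Assumption \ref{as:f_poly}, and once it is invoked the proof is purely mechanical. No use of the subcriticality condition \eqref{eq:scaling_assump1} or the Schauder condition \eqref{eq:Schauder_assump} is required here — these enter only later when one uses Lemma \ref{lem:scaling_f} to verify \eqref{eq:M_norms} for the norms $\|(f,Z)\|_{z,\lambda}$.
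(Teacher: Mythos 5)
Your proof is correct and follows exactly the paper's argument: apply Lemma \ref{lem:crit_scaling}, use the chain rule for the diagonal linear map $Q_\lambda$ to obtain $D^\bk\Upsilon_\lambda^\tau = \lambda^{\alpha+2+\mfc(\tau)-|\bk|\alpha-|\bk|_\s}(D^\bk\Upsilon^\tau)\circ Q_\lambda$, then invoke the growth bound \eqref{eq:Upsilon_bound}, the elementary inequality $|rx|_\eta\leq r^\eta|x|_\eta$ for $r\geq 1$, and the exponent-matching identity \eqref{eq:scaling_assump2}. The paper compresses the final step into a single sentence, but the content is identical to what you spell out.
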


\begin{proof}
By Lemma \ref{lem:crit_scaling}, for $\lambda\in (0,1]$,
\begin{equ}
D^\bk \Upsilon_\lambda^\tau
=
\lambda^{\alpha+2 + \mfc(\tau)} D^\bk (\Upsilon^\tau\circ Q_\lambda)
=
\lambda^{\alpha+2 + \mfc(\tau) - |\bk|\alpha - |\bk|_\s} (D^\bk \Upsilon^\tau) \circ Q_\lambda\;.
\end{equ}
We conclude by \eqref{eq:Upsilon_bound} and \eqref{eq:scaling_assump2} of Assumption \ref{as:f_poly} and that $|rx|_\eta \leq r^\eta|x|_\eta$ for $r\geq 1$.
\end{proof}

\begin{lemma}\label{lem:model_scaling}
$\|R_{z,\lambda}(f,Z)\|_{z,\lambda} \leq \lambda^\alpha \disc{(f,Z)}_{z,\lambda}$.
\end{lemma}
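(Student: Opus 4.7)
The plan is to bound each of the two terms in the definition \eqref{eq:fZ_norms} of $\|R_{z,\lambda}(f,Z)\|_{z,\lambda}$ separately by combining Lemma \ref{lem:scaling_f} (which controls how $\trinorm{D^\bk \Upsilon^\tau}$ scales under $f \mapsto f_\lambda$) with Lemma \ref{lem:model_rescale} (which controls how $\|\Pi\|_{\tau;\lambda}$ and $\|\Gamma\|_{\CI^k[\sigma];\Omega_b}$ scale under $Z \mapsto R_{z,\lambda}Z$). The key observation is that the exponents combine to produce exactly $\Delta(\tau,\bk)$, so raising to the power $\rho(\tau,\bk) = \alpha/\Delta(\tau,\bk)$ yields the required factor $\lambda^\alpha$.

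Concretely, for the $\Pi$-term, fix $\tau \in \mfW_{<0}$ and $\bk \in B^\tau$. By Lemma \ref{lem:scaling_f} applied to $f_\lambda$ and by \eqref{eq:Pi_scale_bound},
\begin{equ}
\trinorm{D^\bk \Upsilon^\tau_\lambda} \,\|R_{z,\lambda}\Pi\|_{\tau;\lambda}
\leq
\lambda^{\alpha + 2 + \mfc(\tau) - |\bk|\alpha - |\bk|_\s - \alpha\eta(\tau,\bk)} \cdot \lambda^{|\tau|_\s - \mfc(\tau)} \trinorm{D^\bk \Upsilon^\tau}\,\|\Pi\|_{\tau;B_z(\lambda b)}\;.
\end{equ}
The exponent on $\lambda$ telescopes to $\alpha + 2 + |\tau|_\s - |\bk|\alpha - |\bk|_\s - \alpha\eta(\tau,\bk) = \Delta(\tau,\bk)$ by \eqref{eq:scaling_assump1}. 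Raising both sides to $\rho(\tau,\bk)$ and using $B_z(\lambda b) \subset B_z(\lambda)$ (since $b \leq 1$) gives
\begin{equ}
\big(\trinorm{D^\bk \Upsilon^\tau_\lambda}\,\|R_{z,\lambda}\Pi\|_{\tau;\lambda}\big)^{\rho(\tau,\bk)}
\leq \lambda^\alpha \big(\trinorm{D^\bk \Upsilon^\tau}\,\|\Pi\|_{\tau;B_z(\lambda)}\big)^{\rho(\tau,\bk)}\;.
\end{equ}
Taking the maximum over $\tau, \bk$ bounds the first term of $\|R_{z,\lambda}(f,Z)\|_{z,\lambda}$ by $\lambda^\alpha$ times the first term of $\disc{(f,Z)}_{z,\lambda}$.

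For the $\Gamma$-term, fix $\CI^k[\sigma] \in \mfU$. By Lemma \ref{lem:scaling_f} with empty multi-index $\bk = \emptyset$ and by \eqref{eq:Gamma_scale_bound},
\begin{equ}
\trinorm{\Upsilon^\sigma_\lambda} \,\|R_{z,\lambda}\Gamma\|_{\CI^k[\sigma];\Omega_b}
\leq
\lambda^{\alpha + 2 + \mfc(\sigma) - \alpha\eta(\sigma,0)} \cdot \lambda^{|\sigma|_\s - \mfc(\sigma)} \trinorm{\Upsilon^\sigma}\,\|\Gamma\|_{\CI^k[\sigma];B_z(\lambda b)}\;,
\end{equ}
and again the exponent telescopes to $\Delta(\sigma,0)$. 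Raising to $\rho(\sigma,0)$ and using monotonicity in the radius produces $\lambda^\alpha$ times the corresponding term in $\disc{(f,Z)}_{z,\lambda}$. Summing (in fact, taking the maximum on each side of the $+$ in \eqref{eq:fZ_norms}) concludes the proof. There is no genuine obstacle here, the lemma is essentially a bookkeeping consequence of the scaling identities already established in Lemmas \ref{lem:scaling_f} and \ref{lem:model_rescale} combined with the definition \eqref{eq:scaling_assump1} of $\Delta(\tau,\bk)$.
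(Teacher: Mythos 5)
Your proof is correct and is precisely the expansion the paper intends: combine Lemma \ref{lem:model_rescale} with Lemma \ref{lem:scaling_f}, observe that the exponents telescope to $\Delta(\tau,\bk)$, and use $b\leq 1$ to shrink $B_z(\lambda b)$ into $B_z(\lambda)$. The paper states exactly this (in one sentence), so your write-up is just a more detailed version of the same argument.
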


\begin{proof}
This follows by combining Lemmas \ref{lem:model_rescale} and \ref{lem:scaling_f}, definitions \eqref{eq:Zf_local_norms} and \eqref{eq:fZ_norms}, and the fact that $b\leq 1$.
\end{proof}

\begin{lemma}\label{lem:sol_scale}
Suppose $\phi\in\bbS_M$. Recall $R_{z,\lambda}\phi = \lambda^{\alpha}\phi\circ T_{z,\lambda}$.
Then $R_{z,\lambda}\phi \in \bbS_{R_{z,\lambda} M}$.
\end{lemma}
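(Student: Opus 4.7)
Write $\psi = R_{z,\lambda}\phi$, $T = T_{z,\lambda}$, $R = R_{z,\lambda}$, and let $\Psi$ be the coherent modelled distribution built from $\psi$ and the rescaled model $RZ$ according to \eqref{eq:bphi_def}, using the elementary differentials $\Upsilon_\lambda^\tau$ of $f_\lambda$. By Lemma \ref{lem:model_rescale}, $RZ$ is an adapted model on $\Omega_b$, so it suffices to check the two requirements of Definition \ref{def:solution_SPDE}: (i) $\Psi \in \CD^{2+\zeta}(\Omega_b,E)$, and (ii) the reconstruction-type bound \eqref{eq:sol_def} for $\psi$. The plan is to reduce both statements, via the scaling relations, to the corresponding statements for $\phi$ and $\Phi$, which hold by assumption. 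The main technical obstacle is purely bookkeeping: showing that all the $\lambda$-powers from $\Upsilon_\lambda^\tau$, $(R\Pi)_x\tau$, $\mfC_\lambda^{-1}$, and the norm rescaling \eqref{eq:scale_pointwise} combine consistently.

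The central algebraic identity is
\begin{equ}
\Psi_x = \lambda^{\alpha} \mfC_\lambda^{-1} \Phi_{T(x)}\;,
\end{equ}
which follows by checking each component. Indeed, the derivatives $\nabla^k\psi(x) = \lambda^{\alpha+|k|_\s}\nabla^k\phi(T(x))$ give $\bpsi_x = Q_\lambda^{-1}\bphi_{T(x)}$, and then Lemma \ref{lem:crit_scaling} yields
\begin{equ}
\Upsilon_\lambda^\tau(\bpsi_x) = \lambda^{\alpha+2+\mfc(\tau)}\Upsilon^\tau(\bphi_{T(x)})\;.
\end{equ}
Since $\mfc(\CI[\tau]) = 2 + \mfc(\tau)$ and $\mfc(X^k)=|k|_\s$, applying $\lambda^{\alpha}\mfC_\lambda^{-1}$ to $\Phi_{T(x)}$ reproduces the coefficients of $\Psi_x$ on both the polynomial and the $\CI[\tau]$ parts.

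For requirement (i), combining the identity above with \eqref{eq:RGamma_def} gives
\begin{equ}
\Psi_x - (R\Gamma)_{xy}\Psi_y = \lambda^{\alpha}\mfC_\lambda^{-1}\big(\Phi_{T(x)} - \Gamma_{T(x),T(y)}\Phi_{T(y)}\big)\;,
\end{equ}
so pairing against $\tau$ and using $|T(x)-T(y)|_\s = \lambda|x-y|_\s$ together with $\Phi\in\CD^{2+\zeta}(\Omega_a,E)$ yields
\begin{equ}
|\scal{\Psi_x - (R\Gamma)_{xy}\Psi_y,\tau}| \lesssim \lambda^{\alpha+\mfc(\tau) + 2 + \zeta - |\tau|_\s}|x-y|_\s^{2+\zeta-|\tau|_\s}\;,
\end{equ}
and similarly $|\scal{\Psi_x,\tau}| \lesssim \lambda^{\alpha+\mfc(\tau)}|\scal{\Phi_{T(x)},\tau}| \lesssim 1$. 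This proves $\Psi\in\CD^{2+\zeta}(\Omega_b,E)$.

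For requirement (ii), combining the identity for $\Upsilon_\lambda^\tau(\bpsi_x)$ with $(R\Pi)_x\tau = \lambda^{-\mfc(\tau)}(\Pi_{T(x)}\tau)\circ T$ from \eqref{eq:RPi_def} cancels $\lambda^{\mfc(\tau)}$, yielding
\begin{equ}
\CL\psi - \sum_{\tau\in\mfW_{\leq 0}}\Upsilon_\lambda^\tau(\bpsi_\cdot)\frac{1}{\tau!}(R\Pi)_\cdot \tau = \lambda^{\alpha+2}\Big(\CL\phi - \sum_{\tau\in\mfW_{\leq0}}\Upsilon^\tau(\bphi_{T(\cdot)})\frac{1}{\tau!}\Pi_{T(\cdot)}\tau\Big)\circ T\;,
\end{equ}
where we also used $\CL\psi(x) = \lambda^{\alpha+2}(\CL\phi)(T(x))$. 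Applying \eqref{eq:scale_pointwise} with $\beta=\zeta$ then gives, for $x\in\Omega_b$,
\begin{equ}
\Big\|\CL\psi - \sum \Upsilon_\lambda^\tau(\bpsi_\cdot)\tfrac{1}{\tau!}(R\Pi)_\cdot\tau\Big\|_{\zeta;x;\frac12\dist{x}_b;r} = \lambda^{\alpha+2+\zeta}\Big\|\CL\phi-\sum\cdots\Big\|_{\zeta;T(x);\frac\lambda2\dist{x}_b;r}\;.
\end{equ}
Since $|T(x)-z|_\s \leq \lambda b$ and $\lambda\leq\dist{z}_a$, one has $\dist{T(x)}_a\geq\lambda(1-b)$, so choosing $b$ small enough ensures $\frac\lambda2\dist{x}_b\leq\frac12\dist{T(x)}_a$. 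By monotonicity of $\|\cdot\|_{\zeta;\,\cdot\,;h;r}$ in $h$, taking the supremum over $x\in\Omega_b$ bounds the left-hand side by $\lambda^{\alpha+2+\zeta}\sup_{y\in\Omega_a}\|\CL\phi-\sum\cdots\|_{\zeta;y;\frac12\dist{y}_a;r}$, which is finite by $\phi\in\bbS_M$. This verifies \eqref{eq:sol_def} for $\psi$ and completes the proof.
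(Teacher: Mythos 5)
Your proof is correct and follows essentially the same route as the paper's: define $\Psi_x = \lambda^{\alpha}\mfC_\lambda^{-1}\Phi_{T(x)}$ (which is the same as the paper's $Q_\lambda^{-1}\Phi\circ T$, since $\lambda^\alpha\mfC_\lambda^{-1}\tau = \lambda^{\alpha+\mfc(\tau)}\tau = Q_\lambda^{-1}\tau$), use Lemma~\ref{lem:crit_scaling} to match coefficients and verify coherence, then intertwine the modelled-distribution and remainder bounds with the rescaled model norms. One small remark: at the very end you require $b$ small to guarantee $\frac\lambda2\dist{x}_b\leq\frac12\dist{T(x)}_a$, whereas the paper asserts this follows just from $\lambda\leq\dist{z}_a$ and $b\leq1$; your more cautious version is safe (and indeed $b$ is taken small throughout the paper anyway), since for $b$ close to $1$ the paper's claimed inequality is not obvious.
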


\begin{proof}
Denote again $T = T_{z,\lambda},R = R_{z,\lambda}$.
We extend the bijection $Q_\lambda$ from \eqref{eq:rescale_SPDE}
to $\CT_{\leq 2} \to \CT_{\leq 2}$ by $Q_\lambda\tau = \lambda^{-\alpha-\mfc(\tau)}\tau$
and correspondingly to $\CT_{\leq 2}\otimes E \to \CT_{\leq 2}\otimes E$ by scaling the left tensor.

Define $\Psi \colon\Omega_b\to \CU_{\leq 2}\otimes E$
by
\begin{equ}
\Psi = Q_\lambda^{-1} \Phi\circ T
\;,
\end{equ}
where $\Phi$ is defined as above Definition \ref{def:solution_SPDE}.
Note that the jet $\bpsi = Q_\lambda^{-1}\bphi\circ T$ is the polynomial component of $\Psi$ (up to degree $1$)
and that $\psi = \nabla^0\psi = \lambda^{\alpha}\phi\circ T = R_{z,\lambda}\phi$.
To see that $\Psi$ is coherent in the sense of \eqref{eq:bphi_def} for $\bpsi$ and $R(f,Z)$,
we apply Lemma \ref{lem:crit_scaling} to obtain
\begin{equ}[eq:Upsilon_bpsi]
\Upsilon^\tau_\lambda(\bpsi) = \lambda^{2+\alpha+\mfc(\tau)} \Upsilon^\tau(\bphi\circ T)
\;.
\end{equ}

To show that $\Psi$ is a modelled distribution, for $\sigma\in\mfT_{\leq 2}$,
\begin{equs}
\scal{\sigma,\Psi_x - (R\Gamma)_{xy}\Psi_y}
&=
\Big\langle \lambda^{\alpha+\mfc(\sigma)} \sigma, \Phi_{T(x)}
-
\sum_{|k|_\s\leq 2} \nabla^k\phi_{T(y)} \Gamma_{T(x),T(y)} \frac{X^k}{k!}
\\
&\qquad\qquad-
\sum_{-2<|\tau|_\s<0} \Upsilon^\tau(\bphi(T(y))) \Gamma_{T(x), T(y)}\frac{\CI[\tau]}{\tau!}
\Big\rangle
\\
&=
\scal{\lambda^{\alpha+\mfc(\alpha)}\sigma , \Phi_{T(x)} - \Gamma_{T(x),T(y)} \Phi_{T(y)}}
\end{equs}
where we used in the first equality the definition of $R\Gamma$ in \eqref{eq:RGamma_def} and $Q_\lambda$.
The final quantity is of order $\lambda^{2+\zeta-|\sigma|_\s+\alpha+\mfc(\alpha)}|x-y|_\s^{2+\zeta-|\sigma|_\s}$ since we assumed $\Phi$ is in $\CD^{2+\zeta}(\Omega_a,E)$,
therefore $\Psi$ is in $\CD^{2+\zeta}(\Omega_b,E)$ as desired.

Finally, we need to verify \eqref{eq:sol_def} for $\bpsi$ and $R\Pi$.
Observe that
\begin{equ}[eq:CL_psi]
\CL \psi = \lambda^{\alpha+2} (\CL \phi)\circ T\;.
\end{equ}
On the other hand, for $x\in\Omega_b$,
\begin{equ}[eq:Upsilon_lambda]
\sum_{\tau\in\mfW_{\leq0}} \Upsilon_\lambda^\tau(\bpsi_x)
\frac{1}{\tau!} (R\Pi)_x\tau
=
\sum_{\tau\in\mfW_{\leq0}} \lambda^{2+\alpha} \frac{1}{\tau!}\Upsilon^\tau(\bphi_{T(x)})
\{\Pi_{T(x)}\tau\}\circ T
\end{equ}
where we used the definition of $R\Pi$ in \eqref{eq:RPi_def} and \eqref{eq:Upsilon_bpsi}.
It follows that
\begin{equs}{}
&\Big\|\CL \psi - \sum_{\tau\in\mfW_{\leq0}} \Upsilon_\lambda^\tau(\bpsi_x)
\frac{1}{\tau!} (R\Pi)_x\tau
\Big\|_{\zeta;x;\frac12\dist{x}_b;r}
\\
&\qquad\qquad=
\lambda^{\alpha+2+\zeta} \Big\|\CL \phi - \sum_{\tau\in\mfW_{\leq0}} \Upsilon^\tau(\bphi_{T(x)}) \frac{1}{\tau!} (\Pi_{T(x)} \tau)
\Big\|_{\zeta;T(x);\lambda\frac12\dist{x}_b;r}
<
\infty\;,
\end{equs}
where we used \eqref{eq:CL_psi}-\eqref{eq:Upsilon_lambda} and \eqref{eq:scale_pointwise} for the equality and \eqref{eq:sol_def} and the fact that $\lambda\frac12\dist{x}_b \leq \frac12\dist{T(x)}_a$ due to $\lambda\leq \dist{z}_a$ and $b\leq 1$ for the final bound.
\end{proof}

\subsection{Structure of elementary differentials}
\label{sec:elem_diff}

We now turn to the proof of local coercivity, which takes up the rest of the paper.
The main result of this subsection is Lemma \ref{lem:tree_graft}, which gives a Taylor expansion formula for elementary differentials, similar to~\eqref{eq:R_def} for rough paths.

Recall the inner product \eqref{eq:inner_product}.
An equivalent way to define the inner product (and thus $\tau!$) is via the recursion
\begin{equ}[eq:in_prod_induct]
\bscal{X^k \Xi_j \prod_{i=1}^\ell \CI^{k_i}[\tau_i],
X^{\bar k} \Xi_{\bar j} \prod_{i=1}^\ell \CI^{\bar k_i}[\bar \tau_i]
}
=
\delta_{j,\bar j}
\delta_{k,\bar k} k! \sum_{\pi \in S_\ell} \prod_{i=1}^\ell \delta_{k_i,\bar k_{\pi(i)}}\scal{\tau_i,\bar \tau_{\pi(i)}}
\;,
\end{equ}
where $S_\ell$ is the permutation group on $[\ell]$, and with $\scal{\tau,\bar\tau}=0$ if $\tau,\bar\tau$ have different number of edges incident with their roots.



\begin{lemma}\label{lem:tree_graft}
Let $\Gamma\in \CG$,
and $\tau\in\mfW_{\leq 0}$.
Then
\begin{equs}[eq:tree_graft]{}
&\sum_{\sigma\in\mfW_{\leq 0}}
\frac{1}{\sigma!}
\Upsilon^\sigma\scal{\tau,\Gamma \sigma}
\\
&\quad
=
\sum_{a\in\N^d}\
\sum_{l\geq 0}
\sum_{|\bk|=l}
\sum_{(\sigma_1,\ldots,\sigma_{l})}
\frac{(\d^a D^{\bk} \Upsilon^\tau)}{ a! \bk!}
\big(
\Upsilon^{\sigma_1},\ldots,\Upsilon^{\sigma_{l}}
\big)
\Big\{
\prod_{i=1}^l
\frac{1}{\sigma_i!}
\scal{\Gamma\CI^{k_i}[\sigma_i],\bone}
\Big\}
\scal{\Gamma X,\bone}^a
\end{equs}
where, on the right-hand side, the third sum is over all multi-indexes $\bk = \{k_1,\ldots,k_{l}\}\in \N^{\N^d_{<2}}$
of length $l\geq0$
and the fourth sum is over all $l$-tuples of trees $(\sigma_1,\ldots,\sigma_{l})$ in $\mfW_{<0}$
such that $\CI^{k_i} [\sigma_i]\in\mfU$ and $|\tau|_\s + \sum_{i=1}^{l} |\CI^{k_i} [\sigma_i]|_\s + |a|_\s \leq 0$.
In particular,
\begin{equs}[eq:tree_graft_explicit]{}
\sum_{\sigma\in\mfW_{\leq 0}}
\frac{1}{\sigma!}
\Upsilon^\sigma \scal{\tau,\Gamma \sigma}
=
\sum_{l,b\geq 0}
\sum_{|\bk|=l}
\sum_{|\bm|=b}
\sum_{\substack{(\sigma_1,\ldots,\sigma_l)\\
(a_{1},\ldots,a_b)}}
&
\frac{D^{\bk}D^\bm \Upsilon^\tau}{\bk!\bm!}
\big(
\Upsilon^{\sigma_1},\ldots,\Upsilon^{\sigma_{l}},
\nabla^{m_{1}+a_{1}},\ldots,
\nabla^{m_{b}+a_{b}}
\big)
\\
&\times
\Big\{
\prod_{i=1}^l
\frac{1}{\sigma_i!}
\scal{\Gamma\CI^{k_i}\sigma_i,\bone}
\Big\}
\Big\{
\prod_{i=1}^b \frac{1}{a_i!}\scal{\Gamma X,\bone}^{a_i}
\Big\}
\end{equs}
where $\bm$ runs over all multi-indexes $\bm=\{m_1,\ldots,m_b\}\in\N^{\N^d_{<2}}$ of length $b$, $(a_1,\ldots,a_b)$ runs over all $b$-tuples with $a_i\in\N^d$ and $a_i\neq0$,
and where we again restrict the sum to $|\tau|_\s + \sum_{i=1}^{l} |\CI^{k_i} [\sigma_i]|_\s + \sum_{i=1}^b|a_i|_\s \leq 0$.
Above, we write $\nabla^k$ for the projection map $\nabla^k\colon \bphi \mapsto \nabla^k\phi$.
\end{lemma}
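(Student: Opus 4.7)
The plan is to prove \eqref{eq:tree_graft} first and then derive \eqref{eq:tree_graft_explicit} from it by expanding $\d^a$. The underlying content of \eqref{eq:tree_graft} is the Hopf-algebraic identity describing how the elementary differentials $\Upsilon^\tau$ intertwine with the character $g\colon\CH^+\to\R$ associated to $\Gamma=(g\otimes\id)\Delta$. Identities of this flavour are implicit in the branched rough path and regularity structures literature; compare for example the discussion preceding Proposition~4.15 in \cite{BCCH21} and \cite[Cor.~3.9]{BCMW22}.

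My strategy for \eqref{eq:tree_graft} is to induct on the number of vertices of $\tau$. In the base case $\tau=X^k\Xi_j$, direct computation using Definition~\ref{def:Upsilons} of $\Upsilon^{\Xi_j}$ and the action of $\Gamma$ on $X^k\Xi_j$ given by \eqref{eq:g_Gamma} and the definition of $\Delta$ reduces the identity to the chain-rule expansion of $\d^a D^\bk \Upsilon^{\Xi_j}$. For the inductive step, write $\tau=X^k\Xi_j\prod_{i=1}^\ell \CI^{k_i}[\tau_i]$ and use multiplicativity of $\Gamma$ together with the recursion \eqref{eq:Delta_induct} for $\Delta\CI^{k_i}[\tau_i]$ to expand $\Gamma\sigma$ for any $\sigma\in\mfW_{\leq 0}$. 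The key algebraic observation is that $\scal{\tau,\Gamma\sigma}$ is non-zero only when $\sigma$ arises from $\tau$ by (i) extending the polynomial root decoration $X^k$ by absorbing $\scal{\Gamma X,\bone}$ factors, (ii) replacing each $\CI^{k_i}[\tau_i]$ by some $\CI^{k_i}[\bar\tau_i]$ with $\scal{\tau_i,\Gamma\bar\tau_i}\neq 0$, and (iii) grafting extra edges $\CI^{k'_j}[\sigma'_j]$ at the root, contributing factors $\scal{\Gamma\CI^{k'_j}[\sigma'_j],\bone}$. Applying the induction hypothesis to each branch and invoking the recursive definition \eqref{eq:Upislon_induct}---which says that grafting a subtree $\CI^{k'}[\sigma']$ onto the root corresponds to applying $D^{k'}$ to $\Upsilon^\tau$ and evaluating the new slot on $\Upsilon^{\sigma'}$---assembles the right-hand side of \eqref{eq:tree_graft}.

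Passing from \eqref{eq:tree_graft} to \eqref{eq:tree_graft_explicit} is a routine expansion of $\d^a$ using the definition $\d^i g(\bphi)=\sum_m D^m g(\bphi)(\bphi_{m+i})$ applied to $D^\bk \Upsilon^\tau$. Iterating Leibniz produces sums over multi-indices $\bm$ of further $D^\bm$-derivatives evaluated on polynomial directions $\nabla^{m_j+a_j}$; the multinomial expansion $\scal{\Gamma X,\bone}^a/a! = \sum \prod_j \scal{\Gamma X,\bone}^{a_j}/a_j!$ then matches the combinatorial factors appearing on the right-hand side of \eqref{eq:tree_graft_explicit}. The main obstacle in both steps is the careful bookkeeping of the tree factorials $\sigma!$ and the symmetry factors arising from the unordered product in the tree notation \eqref{eq:tree}, which have to be matched against $\bk!$, $\bm!$, $a!$ on the right-hand sides. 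Since our trees are combinatorial, this bookkeeping reduces to the symmetrisation identity \eqref{eq:in_prod_induct}: dividing by $\sigma!$ exactly cancels the overcounting introduced by permutations of identical branches, converting ordered grafting operations into the unordered sums that appear in \eqref{eq:tree_graft} and \eqref{eq:tree_graft_explicit}.
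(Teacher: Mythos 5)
Your plan splits into the same two halves as the paper's: first establish \eqref{eq:tree_graft}, then expand $\d^a$ via \eqref{eq:F_jet} to pass to \eqref{eq:tree_graft_explicit}. The second half matches the paper's argument. For the first half, however, you take a genuinely different route — a direct induction on the number of vertices of $\tau$ — whereas the paper avoids a direct induction entirely. The paper introduces the grafting maps $M$ and $M^+$, proves that $\Upsilon$ intertwines $M$ with the differential operators $\d^a D^{\bk}$ (Lemma~\ref{lem:Upsilon_morph}), and proves that $M$ is dual to the coproduct $\Delta$ (Lemma~\ref{lem:M_Delta}); \eqref{eq:tree_graft} then drops out by transposing $\Gamma$. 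This machinery is designed precisely to systematize the combinatorics that your induction has to track by hand.

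Your sketch is morally sound — the identification of which $\sigma$ contribute to $\scal{\tau,\Gamma\sigma}$ (root-polynomial extension, inductive recursion on each $\CI^{k_i}[\tau_i]$, extra grafting at the root) is the right picture, and the remark that root-grafting corresponds to $D^{k'}$ acting on the outermost layer of $\Upsilon^\tau$ is correct. But the proposal leaves the core difficulty unaddressed: when you apply the induction hypothesis to each branch $\tau_i$ and then try to reassemble, the derivatives $\d^{a_i}$ and $D^{\bk_i}$ coming out of the branches have to recombine with the root-level operations into a single $\d^a D^\bk$ acting on $\Upsilon^\tau$. This involves a non-trivial interaction between the chain rule (derivatives hitting inner arguments $\Upsilon^{\sigma_i}$) and the Leibniz rule (derivatives hitting the outer $D^\ell \Upsilon^{\Xi_j}$), exactly mirroring the non-derivation correction terms in the inductive definitions \eqref{eq:M_trees} and \eqref{eq:M_poly}. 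You acknowledge the tree-factorial and symmetry-factor bookkeeping as the ``main obstacle'' but do not resolve it; this is the substance of the lemma, not a peripheral detail. If you want to run the direct induction, you effectively have to reprove the content of Lemmas~\ref{lem:Upsilon_morph} and \ref{lem:M_Delta} in disguise — at which point the paper's structured presentation is the cleaner path.
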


\begin{remark}\label{rem:bk_bm}
Recall the notation from Definition \ref{def:B_tau}.
For every $\bk,\bm$ in \eqref{eq:tree_graft_explicit},
$\gamma_{\bm} \leq |\bm| = b \leq \sum_{i=1}^b |a_i|_\s$,
and, by definition of $\gamma_k$ in \eqref{eq:gamma_k},
$\gamma_{\bk} \leq \sum_{i=1}^l |\CI^{k_i} [\sigma_i]|_\s$.
Therefore $\gamma_{\bk+\bm} \leq -|\tau|_\s$
and thus either $\bk+\bm\in\mathring B^\tau$ or $D^{\bk}D^\bm\Upsilon^\tau=0$.
In particular, the right-hand side of \eqref{eq:tree_graft_explicit} is well-defined for $f$ satisfying the regularity condition in Assumption \ref{as:f_poly}
(similar considerations apply to the right-hand side of \eqref{eq:tree_graft}).

To simplify the proof below, we assume $f$ is smooth so that $\Upsilon^\tau$ is well-defined for any tree $\tau$,
but by continuity, the final equalities \eqref{eq:tree_graft}-\eqref{eq:tree_graft_explicit} hold for any $f$ satisfying Assumption \ref{as:f_poly}.
\end{remark}

\begin{proof}[Proof of $\eqref{eq:tree_graft}\Rightarrow\eqref{eq:tree_graft_explicit}$]
Observe that, for any $\bt\in\R^d$ and smooth function $F$ of $\bphi$,
\begin{equ}[eq:F_jet]
\sum_{a\in\N^d} \frac{\bt^a}{a!} \d^a F(\bphi)
=
\sum_{\bm\in\N^{\N^d}} \frac{D^{\bm}F}{\bm!}(\bphi)
\Big(
\sum_{a\neq 0} \frac{\bt^a}{a!}\d^a \bphi
\Big)^{\bm}
\end{equ}
as an equality between formal power series,
where $\d^a\bphi$ is the jet defined by $[\d^a(\bphi)]_j = \nabla^{a+j}\phi$.
For $m\in\N^d$, note that $(\sum_{a\neq 0} \frac{\bt^a}{a!}\d^a \bphi)^{m} = \sum_{a\neq 0} \frac{\bt^a}{a!}\nabla^{m+a}\phi$.
We then obtain \eqref{eq:tree_graft_explicit} by applying \eqref{eq:F_jet} to a fixed $l\geq0,|\bk|=l,\sigma_1,\ldots,\sigma_l$ in \eqref{eq:tree_graft} and taking $\bt = \scal{\Gamma X,\bone}$
and $F(\bphi) = D^\bk \Upsilon^\tau(\bphi)(\Upsilon^{\sigma_1},\ldots,\Upsilon^{\sigma_l})$
(where we treat $\Upsilon^{\sigma_1},\ldots,\Upsilon^{\sigma_l}$ as fixed vectors not depending on $\bphi$).
The fact that only multi-indexes $\bm\in\N^{\N^d_{<2}}$ remain in the sum is due to the fact that $D^{m}\Upsilon^\tau = 0$ for all $|m|_\s\geq 2$ and $\tau\in\mfW_{\leq0}$ by Lemma \ref{lem:tau_dependence}.
\end{proof}

To conclude the proof of Lemma \ref{lem:tree_graft}, it remains to prove \eqref{eq:tree_graft}.
To this end, it is convenient to introduce several algebraic structures stemming from pre-Lie algebras.
The starting point is the observation from \cite{BCCH21} that $\tau\mapsto\Upsilon^\tau$ is a morphism for (multi-)pre-Lie algebra structures on trees and non-linearities (see also \cite{BCFP19} for the case of rough paths).
The key point is to introduce maps $M$ and $M^+$ below that generalise the pre-Lie products from \cite{BCCH21} and interact well with the map $\Upsilon$.

\begin{definition}
Let $\mfV$ denote the set of trees (in the sense of Definition \ref{def:trees}) and $\CV$ denote the vector space spanned by $\mfV$.
Let $\CH$ denote the free commutative algebra generated by the trees $X^i$ with $i\in[d]$ and by trees of the form $\CI^{k}\sigma$ for $\sigma \in\mfV$ and $k\in\N^d$.
We denote the `empty' product by $\bone^+$ which is the unit under multiplication.
A basis for $\CH$ is given by the set of trees of the form $X^a\CI^{k_1}[\sigma_1]\cdots\CI^{k_r}[\sigma_r]$.
We also let $\CH^*$ denote the space of series of trees in $\CH$.
We identify $\CH^*$ with the dual space of $\CH$ via the inner product $\scal{\cdot,\cdot}$.
\end{definition}

For trees $\sigma,\tau$, define $\sigma \curvearrowright_k \tau$ as the sum of trees obtained by grafting $\sigma$ onto the vertices of $\tau$ with a new edge such that, if the grafting happens at vertex $v$, then we sum over the trees obtained by reducing the polynomial label at $v$ to $\mfn(v)-q$ and setting the derivative label of the new grafted edge to $k-q$ for all $q \leq \mfn(v)$ and then multiplying the resulting tree by $\binom{k}{q}$ -- see \cite[Remark~4.12]{BCCH21} or \cite[Eq.~(4.26)]{ESI_lectures} for a precise formula.
In addition, for $i\in[d]$, define the raising operator $\uparrow_i \tau$ as the sum of trees obtained by increasing the degree of every polynomial label by $i$.

We now define several maps that are related to the dual of the map $\Delta$ from Section \ref{sec:models}. These maps are closely related to those in \cite{Bruned_Manchon_23_deform}; see also \cite{Bruned_Katsetsiadis_23_post_Lie} for a post-Lie algebra perspective.
For completeness, we give all necessary definitions.

We define a bilinear map $M^+\colon\CH^*\times\CH^*\to\CH^*$ as follows.
First, we set
\begin{equ}
M^+(\tau , \bone^+) = 0\;,
\qquad
M^+(\bone^+ , \tau) = \tau
\;,\qquad
M^+(X^j , X^i) = 0
\;,
\qquad
M^+(\CI^k[\sigma] , X^i) = 0\;,
\end{equ}
\begin{equ}
M^+(\CI^k[\sigma], \CI^\ell[\tau]) = \CI^\ell[\sigma \curvearrowright_k \tau]\;,
\qquad
M^+(X^i, \CI^\ell[\tau] ) =\: \CI^\ell[\uparrow_i\tau]\;.
\end{equ}
We then extend $M^+(X^i, \cdot)$ and $M^+(\CI^k[\sigma],\cdot)$
as derivations on $\CH$ for the commutative product in $X^a\CI^{k_1}[\sigma_1]\cdots \CI^{k_r}[\sigma_r]$.
We then define inductively, for $\sigma = \CI^{k_2}[\sigma_2]\cdots \CI^{k_r}[\sigma_r]$,
\begin{equ}
M^+(\CI^{k_1}[\sigma_1]\sigma, \tau)
= M^+(\CI^{k_1}[\sigma_1], M^+(\sigma,\tau))
- M^+(M^+(\CI^{k_1}[\sigma_1],\sigma),\tau)\;,
\end{equ}
which one can interpret as a sum of all possible graftings of the trees $\sigma_1,\ldots,\sigma_k$ using the edges $\CI^{k_1}, \ldots, \CI^{k_r}$ onto $\tau$ (changing polynomial and derivative labels as earlier).
It is easy to verify that $M^+(\CI^{k_1}[\sigma_1]\cdots \CI^{k_r}[\sigma_r], \tau)$ is independent of the order of terms in $\CI^{k_1}[\sigma_1]\cdots \CI^{k_r}[\sigma_r]$.

Finally, for $\sigma = \CI^{k_1}[\sigma_1]\cdots \CI^{k_r}[\sigma_r]$ and $a\neq0$, we take any $i\in a$ and define inductively
\begin{equ}
M^+(X^a\sigma, \tau)
= M^+(X^i , M^+(X^{a-i} \sigma,\tau))
- M^+(M^+(X^i,X^{a-i}\sigma),\tau)\;,
\end{equ}
which one can interpret as raising the polynomial labels in $\tau$ by $a$ \emph{after} grafting $\sigma_1,\ldots,\sigma_k$ with the corresponding edges.
Again, it is easy to verify that $M^+(X^a\sigma, \tau)$ is independent of the choice of $i\in a$ used in the inductive definition.
These definitions extend canonically to the space of series $M\colon \CH^*\times\CH^*\to\CH^*$ since the operations are locally finite.

We further define a bilinear map $M\colon \CH^* \times\CV\to \CV$ as follows.
First, we set
\begin{equ}[eq:M_base]
M(\bone^+,\tau) = \tau\;,
\qquad
M(\CI^k[\sigma],\tau) = \sigma \curvearrowright_k \tau
\;,\qquad
M(X^i,\tau) =\; \uparrow_i\tau\;.
\end{equ}

Then we define inductively for $\sigma = \CI^{k_2}[\sigma_2]\cdots \CI^{k_r}[\sigma_r]$
\begin{equ}[eq:M_trees]
M(\CI^{k_1}[\sigma_1]\sigma, \tau)
= 
M(\CI^{k_1}[\sigma_1], M(\sigma,\tau))
- M(M^+(\CI^{k_1}[\sigma_1],\sigma),\tau)
\end{equ}
and, for $\sigma = \CI^{k_1}[\sigma_1]\cdots \CI^{k_r}[\sigma_r]$ and $a\neq 0$ and any $i\in a$,
\begin{equ}[eq:M_poly]
M(X^a\sigma, \tau)
= M(X^i , M(X^{a-i} \sigma,\tau))
- M(M^+(X^i,X^{a-i}\sigma),\tau)\;.
\end{equ}
As earlier,
$M(X^a \CI^k_1[\sigma_1]\cdots\CI^{k_r}[\sigma_r],\tau)$
can be written as a weighted sum of trees formed by grafting $\sigma_1,\ldots,\sigma_r$ onto $\tau$ using edges $\CI^{k_1},\ldots\CI^{k_r}$ 
and then raising the polynomial labels of the vertices of $\tau$ by $a$.
These operations are locally finite so extend to the space of series as before.

\begin{lemma}\label{lem:Upsilon_morph}
For $\tau\in\mfV$ and $X^a \CI^{k_1}\sigma_1\cdots \CI^{k_r}\sigma_r \in \CH$,
\begin{equ}[eq:pre_Lie_mor]
\Upsilon^{M(X^a \CI^{k_1}\sigma_1\cdots \CI^{k_r}\sigma_r,\tau)} = (\d^a D^{k_1}\cdots D^{k_r} \Upsilon^{\tau})(\Upsilon^{\sigma_1},\ldots,\Upsilon^{\sigma_r})
\;.
\end{equ}
\end{lemma}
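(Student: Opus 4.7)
The plan is to prove \eqref{eq:pre_Lie_mor} by induction on the complexity $r + |a|_\s$ of $h = X^a\CI^{k_1}[\sigma_1]\cdots\CI^{k_r}[\sigma_r]$, following the recursive definitions \eqref{eq:M_base}--\eqref{eq:M_poly}. I may assume $f$ is smooth so that $\Upsilon^\tau$ is well-defined for every tree $\tau$; the general case under Assumption~\ref{as:f_poly} follows by continuity, since Remark~\ref{rem:bk_bm} guarantees that each derivative of $\Upsilon^\tau$ appearing in the final formula makes sense.

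First I would dispatch the three base cases ($r+|a|_\s\leq 1$) corresponding to \eqref{eq:M_base}. The case $h=\bone^+$ is trivial. The case $h=X^i$ reduces to the identity $\Upsilon^{\uparrow_i \tau}=\d^i\Upsilon^\tau$, proved by a secondary induction on $\tau$: writing $\tau=X^m\Xi_j\prod_{i=1}^\ell\CI^{k_i}[\tau_i]$ and applying \eqref{eq:Upislon_induct}, the derivation property of $\d^i$ distributes the raising across the root's polynomial factor and each argument $\Upsilon^{\tau_i}$, exactly mirroring the action of $\uparrow_i$ on $\tau$. The case $h=\CI^k[\sigma]$ reduces to $\Upsilon^{\sigma\curvearrowright_k\tau}=(D^k\Upsilon^\tau)(\Upsilon^\sigma)$, proved by induction on $\tau$ via \eqref{eq:Upislon_induct}: Leibniz expansion of $D^k$ produces terms where $D^k$ either hits a subtree argument $\Upsilon^{\tau_i}$ (recursive grafting into $\tau_i$, handled by the inductive hypothesis) or the head factor $\d^m(D^{k_1}\cdots D^{k_\ell}\Upsilon^{\Xi_j})$, in which case $D^k$ must be commuted past $\d^m$. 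The resulting commutator expansion must match, term by term, the combinatorics of grafting $\sigma$ at the root $\rho_\tau$ as encoded in $\curvearrowright_k$, including its binomial weights and simultaneous reduction of polynomial and edge labels.

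For the inductive step at complexity $r + |a|_\s \geq 2$, I would split into two sub-cases. If $a\neq 0$, pick $i\in a$ and apply \eqref{eq:M_poly} to write
\begin{equ}
M(h,\tau) = M(X^i, M(h',\tau)) - M(M^+(X^i, h'), \tau),
\end{equ}
where $h' = X^{a-i}\CI^{k_1}[\sigma_1]\cdots\CI^{k_r}[\sigma_r]$. Applying $\Upsilon$, the first summand evaluates, via the base case $X^i$ and the inductive hypothesis on $h'$, to $\d^i[(\d^{a-i}D^{k_1}\cdots D^{k_r}\Upsilon^\tau)(\Upsilon^{\sigma_1},\ldots,\Upsilon^{\sigma_r})]$. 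The Leibniz rule for $\d^i$ (combined with $\d^i\d^{a-i}=\d^a$) produces the desired expression plus cross terms $\sum_{j=1}^r(\d^{a-i}D^{k_1}\cdots D^{k_r}\Upsilon^\tau)(\ldots,\d^i\Upsilon^{\sigma_j},\ldots)$. Since $M^+(X^i,X^{a-i})=0$ and $M^+(X^i,\cdot)$ is a derivation, $M^+(X^i,h') = X^{a-i}\sum_j\CI^{k_1}[\sigma_1]\cdots\CI^{k_j}[\uparrow_i\sigma_j]\cdots\CI^{k_r}[\sigma_r]$, and combining the inductive hypothesis with the base case $X^i$ (applied to $\uparrow_i\sigma_j$) shows that $\Upsilon^{M(M^+(X^i,h'),\tau)}$ reproduces exactly the cross terms, which then cancel upon subtraction. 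If instead $a=0$ and $r\geq 2$, the analogous argument using \eqref{eq:M_trees} and the base case $\CI^k[\sigma]$ applies: Leibniz expansion of $D^{k_1}$ through the multilinear form produces the target $(D^{k_1}\cdots D^{k_r}\Upsilon^\tau)(\Upsilon^{\sigma_1},\ldots,\Upsilon^{\sigma_r})$ plus cross terms $\sum_{j\geq 2}(D^{k_2}\cdots D^{k_r}\Upsilon^\tau)(\ldots,(D^{k_1}\Upsilon^{\sigma_j})(\Upsilon^{\sigma_1}),\ldots)$, which cancel $\Upsilon^{M(M^+(\CI^{k_1}[\sigma_1],h'),\tau)}$ via the derivation property of $M^+$ and the identification $\Upsilon^{\sigma_1\curvearrowright_{k_1}\sigma_j}=(D^{k_1}\Upsilon^{\sigma_j})(\Upsilon^{\sigma_1})$ from the base case.

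The main obstacle I anticipate is the combinatorial bookkeeping in the base case $h=\CI^k[\sigma]$: one must verify that the binomial weights and simultaneous label-reductions in the definition of $\sigma\curvearrowright_k\tau$ precisely encode the commutator identity between $D^k$ and the iterated spatial derivative $\d^m$ acting on the head factor of $\Upsilon^\tau$. Once this combinatorial identification is in place, the inductive steps reduce to clean applications of the product rule, with the $M^+$-corrections engineered precisely to cancel the cross terms that Leibniz produces at each stage.
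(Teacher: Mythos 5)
Your proposal is correct and follows essentially the same inductive strategy as the paper's proof: apply the recursions \eqref{eq:M_trees}/\eqref{eq:M_poly}, use the Leibniz/chain rule, and let the $M^+$-corrections cancel the resulting cross terms. The only difference is that the paper simply cites the two base identities $\Upsilon^{\sigma\curvearrowright_k\tau}=(D^k\Upsilon^\tau)(\Upsilon^\sigma)$ and $\Upsilon^{\uparrow_i\tau}=\d^i\Upsilon^\tau$ from \cite[Cor.~4.15]{BCCH21} and \cite[Eqs.~(4.29)--(4.30)]{ESI_lectures}, rather than re-deriving them by a secondary induction as you propose --- your route is sound but adds substantial combinatorial bookkeeping that the citation makes unnecessary.
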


\begin{proof}
Recall the pre-Lie morphism property of $\Upsilon$, see, e.g.~\cite[Cor.~4.15]{BCCH21} or~\cite[Eq.~(4.29)]{ESI_lectures},
\begin{equ}[eq:Upsilon_graft_morph]
\Upsilon^{ \sigma \curvearrowright_k \tau} = (D^k\Upsilon^\tau)(\Upsilon^\sigma)\;.
\end{equ}
Moreover, by \cite[Cor.~4.15]{BCCH21} or \cite[Eq.~(4.30)]{ESI_lectures},
\begin{equ}[eq:Upsilon_raise_morph]
\Upsilon^{\uparrow_i \tau} = \d^i \Upsilon^\tau\;.
\end{equ}
Then \eqref{eq:pre_Lie_mor} for $r=0$, and for $r=1$ and $a=0$, follows from these identities.

Proceeding by induction on $r$ first, it follows from the definition of $M$ that
\begin{equ}
\Upsilon^{M(\CI^{k_1}\sigma_1\cdots \CI^{k_r}\sigma_r,\tau)}
=
\Upsilon^{M(\CI^{k_1}\sigma_r,M(\CI^{k_2}\sigma_1\cdots\CI^{k_{r}}\sigma_{r-1},\tau))}
-
\Upsilon^{M(M^+(\CI^{k_1}\sigma_1, \CI^{k_2}\sigma_1\cdots\CI^{k_{r}}\sigma_{r}),\tau)}
\end{equ}
The first term on the right-hand side is, by the induction hypothesis and \eqref{eq:Upsilon_graft_morph}, equal to
\begin{equ}
D^{k_1} \{(D^{k_2}\cdots D^{k_{r}}\Upsilon^{\tau})(\Upsilon^{\sigma_2},\cdots,\Upsilon^{\sigma_{r}})\}(\Upsilon^{\sigma_1})
\;,
\end{equ}
while the second term, by the derivation property of $M^+(\CI^{k_1}\sigma_1,\cdot)$ and the induction hypothesis, is equal to
\begin{equ}
-\sum_{i=2}^{r}
D^{k_2}\cdots D^{k_{r}} \Upsilon^{\tau}(\Upsilon^{\sigma_2},\ldots, D^{k_1}\Upsilon^{\sigma_i}(\Upsilon^{\sigma_1}),\ldots,\Upsilon^{\sigma_{r}})
\;.
\end{equ}
Combining the two terms and applying the chain rule, it follows that
\begin{equ}
\Upsilon^{M(\CI^{k_1}\sigma_1\cdots \CI^{k_r}\sigma_r,\tau)}
=
(D^{k_1}\cdots D^{k_r} \Upsilon^{\tau})(\Upsilon^{\sigma_1},\ldots,\Upsilon^{\sigma_r})
\end{equ}
as desired.
Now we proceed by induction on $a$ and write, for some $i\in a$,
\begin{equ}
\Upsilon^{M(X^a \CI^{k_1}\sigma_1\cdots \CI^{k_r}\sigma_r,\tau)}
=
\Upsilon^{M(X^i,M(X^{a-i}\CI^{k_1}\sigma_1\cdots\CI^{k_{r}}\sigma_{r},\tau))}
-
\Upsilon^{M(M^+(X^i,X^{a-i}\CI^{k_1}\sigma_1\cdots\CI^{k_{r}}\sigma_{r}),\tau)}
\;.
\end{equ}
The first term, by the induction hypothesis and \eqref{eq:Upsilon_raise_morph}, is equal to
\begin{equ}
\d^i\{(\d^{a-i} D^{k_1}\cdots D^{k_r}\Upsilon^{\tau})(\Upsilon^{\sigma_1},\ldots,\Upsilon^{\sigma_k})\}\;,
\end{equ}
while the second term, by the induction hypothesis and the derivation property of $M^+(X^i,\cdot)$, is equal to
\begin{equ}
-\sum_{j=1}^r (\d^{a-i} D^{k_1}\cdots D^{k_r}\Upsilon^{\tau})( \Upsilon^{\sigma_1},\ldots,\d^i\Upsilon^{\sigma_j},\ldots,\Upsilon^{\sigma_r})\;.
\end{equ}
Again by the chain rule (which implies $\d^i$ acts as a derivation),
we obtain \eqref{eq:pre_Lie_mor}.
\end{proof}

Recall the subalgebra $\CH^+\subset\CH$ and the linear map  $\Delta\colon \CT_{\leq 2} \to \CH^+\otimes \CT_{\leq 2}$ from Section \ref{sec:models}.
Let $\CT^*_{\leq 2}$ be a copy of $\CT_{\leq 2}$, which we identify with the dual of $\CT_{\leq 2}$ via the inner product $\scal{\cdot,\cdot}$.
The following result is similar to \cite[Prop.~3.17]{Bruned_Manchon_23_deform}.
For completeness, we give a direct proof.

\begin{lemma}\label{lem:M_Delta}
For trees $\sigma\in\CH^+$, $\tau\in\CT_{\leq 2}^*$, and $\eta\in\CT_{\leq 2}$,
\begin{equ}[eq:M_Delta]
\scal{M(\sigma,\tau),\eta} = \scal{\sigma\otimes\tau,\Delta\eta}
\;.
\end{equ}
\end{lemma}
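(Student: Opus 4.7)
The plan is to prove Lemma \ref{lem:M_Delta} by a double induction: on the structure of $\sigma \in \CH^+$, and within each case of $\sigma$ by induction on the size of $\eta$. Throughout, I exploit the fact that, once $\sigma$ is expanded as $X^a \CI^{k_1}[\sigma_1]\cdots\CI^{k_r}[\sigma_r]$, the recursions \eqref{eq:M_trees}--\eqref{eq:M_poly} defining $M$ are tailored to mirror, on the dual side, the recursion \eqref{eq:Delta_induct} that defines $\Delta$.

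First, I would establish the three base cases. For $\sigma = \bone^+$: the claim reduces to $\scal{\tau,\eta} = \scal{\bone^+\otimes\tau,\Delta\eta}$, which follows by induction on $\eta$ from the definition of $\Delta$. Indeed, on $\Xi_j$ and $X^i$ and on products and on $\CI^k$, the $\bone^+$-component of the first tensor factor of $\Delta\eta$ equals $\eta$ itself. For $\sigma = X^i$: since $M(X^i,\tau) = \uparrow_i \tau$, I must show $\scal{\uparrow_i\tau,\eta} = \scal{X^i\otimes\tau,\Delta\eta}$. Induction on $\eta$ using the formulas $\Delta X^j = X^j\otimes \bone + \bone^+ \otimes X^j$ and $\Delta(\eta_1\eta_2)=(\Delta\eta_1)(\Delta\eta_2)$ shows that the $X^i$-component of the first slot of $\Delta\eta$ records precisely ``one polynomial label at some vertex of $\eta$ has been decreased by $i$,'' which is dual to the raising operator $\uparrow_i$. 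For $\sigma = \CI^k[\sigma']$: since $M(\CI^k[\sigma'],\tau) = \sigma'\curvearrowright_k\tau$, I must show $\scal{\sigma'\curvearrowright_k\tau,\eta} = \scal{\CI^k[\sigma']\otimes\tau,\Delta\eta}$. Again by induction on $\eta$, using \eqref{eq:Delta_induct}, the $\CI^{k+\ell}[\sigma']$-terms in the first slot of $\Delta\eta$ encode grafting $\sigma'$ via an edge of label $k+\ell$ with a simultaneous decrease of the polynomial label at the grafting vertex by $\ell$, which is precisely the weighted sum defining $\sigma'\curvearrowright_k$.

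For the inductive step on $\sigma$, write $\sigma = A\cdot B$ where $A$ is a single generator ($X^i$ or $\CI^{k}[\sigma']$). By \eqref{eq:M_trees}--\eqref{eq:M_poly},
\[
M(AB,\tau) = M(A, M(B,\tau)) - M(M^+(A,B),\tau).
\]
Applying the induction hypothesis to each term on the right (valid since each of $A$, $B$, and $M^+(A,B)$ is strictly simpler than $AB$ in an appropriate sense), the desired identity $\scal{M(AB,\tau),\eta} = \scal{AB\otimes\tau,\Delta\eta}$ reduces to the purely algebraic identity
\[
\scal{A\otimes M(B,\tau),\Delta\eta} - \scal{M^+(A,B)\otimes\tau,\Delta\eta} = \scal{AB\otimes\tau,\Delta\eta}.
\]
This in turn follows from the dual identity
\begin{equ}\label{eq:Mplus_Delta_plan}
\scal{M^+(A,B),C} = \scal{A\otimes B, \Delta^+ C}
\end{equ}
for all $C\in\CH^+$, where $\Delta^+\colon \CH^+\to \CH^+\otimes\CH^+$ is the natural coproduct on $\CH^+$ (extending $\Delta$ through the identifications $\Delta^+\CI^k[\sigma] = (\id\otimes\CI^k)\Delta\sigma + \sum \CI^{k+\ell}[\sigma]\otimes X^\ell/\ell!$ and $\Delta^+X^i = X^i\otimes\bone^+ + \bone^+\otimes X^i$, extended as an algebra morphism). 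The identity \eqref{eq:Mplus_Delta_plan} itself is established by exactly the same induction as above, but entirely inside $\CH^+$.

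The main obstacle is managing the commutator-like structure in \eqref{eq:M_trees}--\eqref{eq:M_poly}: the correction term $M(M^+(A,B),\tau)$ precisely subtracts the contribution in which $A$ acts on $B$ rather than directly on $\tau$, and this cancellation must exactly match the cross term coming from the fact that $\Delta$ is multiplicative on $\CH^+\otimes\CT_{\leq 2}$ but $A$ and $B$ can each contribute independently to either factor of the tensor product. Once the duality \eqref{eq:Mplus_Delta_plan} is in place, the matching becomes a direct bookkeeping exercise, but formulating the induction so that $M^+$ and $M$ can be treated in parallel (with the base cases of $M^+$ also checked by inspection on $X^i$ and $\CI^k[\sigma]$) is the technical heart of the argument.
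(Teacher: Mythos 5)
Your overall strategy — base cases by duality of grafting/raising with $\Delta$, inductive step on $\sigma=AB$ via coassociativity of $\Delta$ and a dual identity for $M^+$ against $\Delta^+$ — is exactly the approach the paper takes. However, the key algebraic identity \eqref{eq:Mplus_Delta_plan} is stated incorrectly, and the error is not cosmetic: it makes the inductive step collapse to a trivial (and wrong) conclusion.

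You write
\[
\scal{M^+(A,B),C} = \scal{A\otimes B, \Delta^+ C}\,,
\]
i.e.\ $(A\otimes B)\Delta^+ = M^+(A,B)$ as a linear functional on $\CH^+$. But for a single generator $A$ (either $\CI^{k_1}[\sigma_1]$ or $X^i$) and $B$ a product of generators, the correct identity is
\[
(A\otimes B)\Delta^+ = M^+(A,B) + AB\,.
\]
The extra term $AB$ comes from the "no-extraction" part of $\Delta^+ C$ when $C$ has the form $A\cdot B'$ with $B'$ of the same shape as $B$: the multiplicative coproduct $\Delta^+$ may send the factor $A$ to the left slot without modifying it. By contrast, $M^+(A,B)$ (as defined in the paper, with $M^+(A,\cdot)$ a derivation for each generator $A$) records \emph{only} the contributions where $\sigma_1$ (or the raising $\uparrow_i$) is pushed \emph{inside} one of the factors of $B$; it never includes the bare product $AB$.

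This matters because, tracing through your inductive step with coassociativity $(\id\otimes\Delta)\Delta = (\Delta^+\otimes\id)\Delta$ and the induction hypothesis for $B$, one gets
\[
\scal{A\otimes M(B,\tau),\Delta\eta}
=\scal{(A\otimes B)\Delta^+ \otimes\tau,\Delta\eta}\,.
\]
With your version of the identity this equals $\scal{M^+(A,B)\otimes\tau,\Delta\eta}$, so the difference you are computing vanishes and you cannot produce the term $\scal{AB\otimes\tau,\Delta\eta}$. With the corrected identity it equals $\scal{M^+(A,B)\otimes\tau,\Delta\eta}+\scal{AB\otimes\tau,\Delta\eta}$, and the argument closes. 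So the $+AB$ term in the dual identity is precisely what generates the target, and omitting it kills the proof. Once that is repaired, your proof essentially coincides with the paper's; the paper proves the corrected identity (its equation \eqref{eq:Delta_p_ind}) directly by splitting trees $C$ into the two relevant shapes, and then runs exactly the chain of equalities you sketched.
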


\begin{proof}
It is not difficult to verify (see \cite[p.~939]{BCCH21} or \cite[Eq.~(4.37)-(4.38)]{ESI_lectures}) that
\begin{equ}[eq:graft_raise]
\scal{\CI^k[\sigma] \otimes \tau , \Delta\eta}= \scal{\sigma \curvearrowright_k \tau,\eta}
\;,\qquad
\scal{X^i\otimes \tau , \Delta\eta}= \scal{\uparrow_i\tau,\eta}
\end{equ}
for $i\in[d]$.
By the base case definition of $M$ \eqref{eq:M_base},
this proves \eqref{eq:M_Delta} for $\sigma=\CI^k[\sigma],X^i\in\CH^+$.

Define now a coproduct $\Delta^+\colon \CH^+ \to \CH^+\otimes\CH^+$ by
\begin{equ}
\Delta^+\bone^+ =\bone^+\otimes\bone^+\;,\qquad \Delta^+ X^i = X^i\otimes\bone^+ + \bone^+\otimes X^i
\end{equ}
for $i\in[d]$ and
\begin{equ}
\Delta^+\CI^k[\sigma] = (\id\otimes \CI^k)\Delta\sigma + \sum_{|\ell|_\s < |\CI^k[\sigma]|_\s} \CI^{k+\ell}[\sigma]\otimes \frac{X^{\ell}}{\ell!}\;,
\end{equ}
and extended multiplicatively to $\CH^+$.
Using the shorthand $(\sigma\otimes\bar\sigma)\Delta^+$ for the linear functional $\scal{\sigma\otimes\bar\sigma,\Delta^+\cdot}\colon\CH^+ \to \R$,
and denoting $\tilde\sigma = \CI^{k_2}[\sigma_2]\cdots \CI^{k_r}[\sigma_r]$,
we claim that
\begin{equ}[eq:Delta_p_ind]
(\CI^{k_1}[\sigma_1]\otimes \tilde\sigma)\Delta^+
=
M^+(\CI^{k_1}[\sigma_1], \tilde\sigma)
+\CI^{k_1}[\sigma_1]\tilde\sigma
\;.
\end{equ}
Indeed, for $\omega=\CI^{k_1}[\sigma_1]\tilde\sigma$,
\begin{equ}
\scal{\CI^{k_1}[\sigma_1]\otimes \tilde\sigma,\Delta^+\omega}
= \sigma_1! \beta_1 \scal{\tilde\sigma,\tilde\sigma}
= \scal{\CI^{k_1}[\sigma_1]\tilde\sigma,\omega}
\;,
\end{equ}
where $\beta_1$ is the number of times $\CI^{k_1}[\sigma_1]$ appears in $\CI^{k_1}[\sigma_1],\ldots, \CI^{k_r}[\sigma_r]$ and where in the final equality we used the definition of the tree factorial  and inner product in \eqref{eq:factorial}-\eqref{eq:inner_product}.

On the other hand, for $\omega$ of the form $\omega = \CI^{k_2}[\omega_2]\cdots\CI^{k_r}[\omega_r]$,
\begin{equs}{}
\scal{\CI^{k_1}[\sigma_1]\otimes\tilde\sigma,\Delta^+\omega}
&= \scal{\CI^{k_1}[\sigma_1]\otimes\tilde\sigma,(\id\otimes\CI^{k_2})[\Delta\omega_2]\cdots(\id\otimes\CI^{k_r})[\Delta\omega_r]}
\\
&=
\sum_{i=2}^r \scal{\CI^{k_1}[\sigma_1]\otimes\tilde\sigma,
(\id\otimes\CI^{k_i})[\Delta\omega_i]\prod_{j\neq i} (\bone^+ \otimes \CI^{k_j}[\omega_j])}
\\
&=
\sum_{i=2}^r \sum_{b=2}^r \scal{\CI^{k_b}[\sigma_1\curvearrowright_{k_1}\sigma_b] , \CI^{k_i}[\omega_i]}
\bscal{\prod_{\ell\neq b} \CI^{k_\ell}[\sigma_\ell],\prod_{j\neq i} \CI^{k_j}[\omega_j]}
\\
&=
\sum_{b=2}^r \scal{\CI^{k_b}[\sigma_1\curvearrowright_{k_1}\sigma_b] \prod_{\ell\neq b}\CI^{k_\ell}[\sigma_\ell],\omega}
=
\scal{M^+(\CI^{k_1}[\sigma_1],\tilde\sigma),\omega}
\end{equs}
where in the third equality we used the inductive definition of the inner product \eqref{eq:in_prod_induct} and $\scal{\CI^{k_1}[\sigma_1]\otimes\CI^{k_j}[\sigma_j],(\id\otimes\CI^{k_i})[\Delta\omega_i]} = \scal{\CI^{k_j}[\sigma_1\curvearrowright_{k_1}\sigma_j], \CI^{k_i}[\omega_i]}$,
and in the fourth equality we used again \eqref{eq:in_prod_induct}.
For $\omega$ not of the form above, $\scal{\CI^{k_1}[\sigma_1]\otimes \tilde\sigma,\Delta^+\omega}$=0.
This proves \eqref{eq:Delta_p_ind}.

Writing $\tilde\sigma = \CI^{k_2}[\sigma_2]\cdots \CI^{k_r}[\sigma_r]$, we now prove \eqref{eq:M_Delta} for $\sigma = \CI^{k_1}\tilde\sigma$ by induction on $r$.
As elements of the dual of $\CT_{\leq 2}$,
\begin{equs}[eq:M_chain]
(\CI^{k_1}[\sigma_1]\tilde\sigma \otimes \tau)\Delta
&=
(\{(\CI^{k_1}[\sigma_1]\otimes \tilde\sigma)\Delta^+ -
M^+(\CI^{k_1}[\sigma_1], \tilde\sigma)
\}\otimes \tau)\Delta
\\
&=
(\CI^{k_1}[\sigma_1]\otimes \tilde\sigma \otimes\tau )(\id\otimes\Delta) \Delta -
(M^+(\CI^{k_1}[\sigma_1], \tilde\sigma)
\otimes \tau)\Delta
\\
&=
\{\CI^{k_1}[\sigma_1]\otimes M(\tilde\sigma,\tau)\}\Delta -
M(M^+(\CI^{k_1}[\sigma_1], \tilde\sigma), \tau)
\\
&=
M(\CI^{k_1}[\sigma_1], M(\tilde\sigma,\tau))
-
M(M^+(\CI^{k_1}[\sigma_1], \tilde\sigma), \tau)
= M(\CI^{k_1}[\sigma_1] \tilde\sigma,\tau)
\end{equs}
where we used \eqref{eq:Delta_p_ind} in the first equality, the coassociativity property $(\id\otimes\Delta)\Delta = (\Delta^+\otimes\id)\Delta$ (see \cite[Thm.~8.16]{Hairer14}) in the second equality, induction in $r$ in the third equality (note for the second term that $M^+(\CI^{k_1}[\sigma_1], \tilde\sigma)$ is a linear combination of trees of the form $\CI^{k_2}[\eta_2]\cdots\CI^{k_r}[\eta_r]$ for which we can apply the induction hypothesis), \eqref{eq:graft_raise} and \eqref{eq:M_base} in the fourth equality, and \eqref{eq:M_trees} in the fifth equality.
This concludes the proof of \eqref{eq:M_Delta} for $\sigma=\CI^{k_1}[\sigma_1]\cdots \CI^{k_r}[\sigma_r]$.

The proof for $\sigma = X^a\tilde\sigma$ where $\tilde\sigma = \CI^{k_1}[\sigma_1]\cdots \CI^{k_r}[\sigma_r]$ and $a\neq0$ is similar.
Indeed, we have
\begin{equ}
(X^i\otimes X^{a-i}\tilde\sigma)\Delta^+
=
M^+(X^i, X^{a-i}\tilde\sigma)
+X^a\tilde\sigma
\;,
\end{equ}
the proof of which is similar to that of \eqref{eq:Delta_p_ind} except that we consider the two cases $\omega= X^a\tilde\sigma$ and $\omega = X^{a-i}\CI^{k_1}[\omega_1]\cdots \CI^{k_r}[\omega_r]$.
The proof of \eqref{eq:M_Delta} for $\sigma = X^a\CI^{k_1}[\sigma_1]\cdots \CI^{k_r}[\sigma_r]$ is then identical to \eqref{eq:M_chain} except we use induction in $a$ instead of $r$ and \eqref{eq:M_poly} instead of \eqref{eq:M_trees}.
\end{proof}

We are finally ready for the

\begin{proof}[Proof of \eqref{eq:tree_graft}]
Define the character $g\in\CH^+\to\R$ by \eqref{eq:g_Gamma}, so that $\Gamma = (g \otimes\id)\Delta$.
We treat $g\in\CH^*$ by $\scal{g,\sigma}=g(\sigma)$ if $\sigma\in\CH^+$ and $\scal{g,\sigma}=0$ if $\sigma$ is a tree that is not in $\CH^+$.
Let $\Gamma^*\colon \CT^*_{\leq 2} \to \CT_{\leq 2}^*$ be the dual map defined by the identity, for all $\sigma\in\CT_{\leq 2}$,
\begin{equ}
\scal{\Gamma^*\tau,\sigma} = \scal{\tau,\Gamma\sigma} = \scal{g\otimes\tau,\Delta\sigma}
\;.
\end{equ}
Then, by Lemma \ref{lem:M_Delta}, for $\sigma\in\CT_{\leq 2}$,
\begin{equ}
\scal{\Gamma^*\tau,\sigma} = \scal{M(g,\tau),\sigma}
\;.
\end{equ}
Let $\pi_{\leq0}\colon\CV\to\CV$ denote the linear map such that, for $\sigma\in\mfV$, $\pi_{\leq 0}\sigma = \sigma$ if $\sigma \in \mfW_{\leq 0}$ and $\pi_{\leq 0}\sigma = 0$ otherwise.
Recall from Remark \ref{rem:dependence} that $\Upsilon^\sigma=0$ if $\sigma$ is not conforming.
Moreover, if $\sigma$ is conforming and has a polynomial leaf, then $|\sigma|_\s>0$ by Lemma \ref{lem:branches}.
Recalling the definition of $\CT_{\leq 2}$ from Definition \ref{def:mfT},
it follows that $\Upsilon^{\pi_{\leq 0} \Gamma^*\tau}
=
\Upsilon^{\pi_{\leq 0} M(g,\tau)}$ and thus
\begin{equ}
\Upsilon^{\pi_{\leq 0} \Gamma^*\tau}
=
\sum_{\sigma} \Upsilon^{M(\sigma,\tau)} \frac{1}{\sigma!}\scal{g,X}^a\prod_{i=1}^l \scal{g,\CI^{k_i}\sigma_i} 
\end{equ}
where the sum is over all trees $\sigma\in\CH^+$ of the form $\sigma = X^a\prod_{i=1}^l \CI^{k_i}\sigma_i$
such that $|\sigma|_\s + |\tau|_\s\leq0$.
It follows from Lemma \ref{lem:Upsilon_morph} that
\begin{equ}
\Upsilon^{\pi_{\leq 0} \Gamma^*\tau}
=
\sum_{a\in\N^d}
\sum_{l\geq 0}
\frac{1}{a!l!}
\sum_{(\sigma_1,\ldots,\sigma_l)}
\sum_{(k_1,\ldots, k_l)}
(\d^a D^{k_1}\cdots D^{k_l} \Upsilon^\tau) (\Upsilon^{\sigma_1},\ldots,\Upsilon^{\sigma_l})
\scal{g, X}^a
\prod_{i=1}^{l}
\frac{1}{\sigma_i!}\scal{g,\CI^{k_i}\sigma_i}
\end{equ}
where the final two sums are over all tuples $(\sigma_1,\ldots,\sigma_l)$ and $(k_1,\ldots,k_l)$ such that
$\CI^{k_i}\sigma_i\in \mfU$ and
$|\tau|_\s + \sum_{i=1}^{l} |\CI^{k_i} \sigma_i|_\s + |a|_\s \leq 0$.
Above, we used \eqref{eq:pre_Lie_mor},
the definition of the tree factorial \eqref{eq:factorial},
and the fact that every tree
$\eta = X^a \prod_{i=1}^r (\CI^{q_i}[\eta_i])^{m_i}$
with distinct $(q_i,\eta_i)$ appears $\binom{l}{m_1,\ldots,m_r}$ times in the sum over the tuples $\sigma_j,k_j$.

We now obtain \eqref{eq:tree_graft} by the relation \eqref{eq:g_Gamma} and remarking that the left-hand side of \eqref{eq:tree_graft} is precisely $\Upsilon^{\pi_{\leq 0} \Gamma^*\tau}$ due to the definition of the inner product,
and by using the fact that every tuple $(k_1,\ldots,k_l)$ appears $\frac{l!}{\bk!}$ times in the sum where $\bk = \{k_1,\ldots,k_l\}\in\N^{\N^d}$.
\end{proof}

The following lemma is a useful consequence of \eqref{eq:pre_Lie_mor}.

\begin{lemma}\label{lem:tau_lower}
Consider $\bk\in \N^{\N^d}$ and a conforming tree $\tau$ such that $D^\bk \Upsilon^\tau\neq 0$.
Then
\begin{equ}[eq:tau_lower]
|\tau|_\s + |\bk| (\beta +2) - |\bk|_\s \geq \beta
\;,
\end{equ}
with equality if and only if $\bk = 0$ and $\tau=\Xi_j$ for some $j>0$, and where we recall $\beta = |\Xi_1|_\s$.
\end{lemma}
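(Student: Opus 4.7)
The statement amounts to a form of Lemma \ref{lem:lowest_tree} for a ``virtual'' tree obtained from $\tau$ by turning each derivative $k_i \in \bk$ into a grafted edge carrying a noise leaf. The plan is to make this virtualisation rigorous via the pre-Lie morphism \eqref{eq:pre_Lie_mor}, then invoke Lemma \ref{lem:lowest_tree}.

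\textbf{Base case $\bk = 0$.} By Remark \ref{rem:dependence}, $\Upsilon^\tau \neq 0$ forces $\tau$ to be conforming, and Lemma \ref{lem:lowest_tree} gives $|\tau|_\s \geq \beta$ with equality iff $\tau = \Xi_j$ for some $j > 0$. This is exactly the equality case of the claim.

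\textbf{Inductive step, $|\bk| = r \geq 1$.} Write $\bk = \{k_1,\dots,k_r\}$ and consider, for indices $j_1,\dots,j_r \in [n]$ to be chosen, the element $\omega = \CI^{k_1}\Xi_{j_1}\cdots \CI^{k_r}\Xi_{j_r} \in \CH^+$. By Lemma \ref{lem:Upsilon_morph},
\begin{equ}
\Upsilon^{M(\omega,\tau)} \;=\; D^{k_1}\cdots D^{k_r}\Upsilon^{\tau}(f_{j_1},\dots,f_{j_r})\,.
\end{equ}
The key claim is that the $j_i$'s can be chosen so that the right-hand side is nonzero as a function on $\CJ$. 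Granting this, since $M(\omega,\tau)$ is (by \eqref{eq:M_base}--\eqref{eq:M_poly}) a linear combination of trees obtained by grafting the $\Xi_{j_i}$ onto vertices of $\tau$ via edges $\CI^{k_i}$ with possible redistribution of polynomial labels, at least one such tree $\tilde\tau$ must be conforming and satisfy $\Upsilon^{\tilde\tau}\neq 0$ (otherwise $\Upsilon^{M(\omega,\tau)}$ would vanish). By construction every tree appearing in $M(\omega,\tau)$ has the same homogeneity
\begin{equ}
|\tilde\tau|_\s \;=\; |\tau|_\s + \sum_{i=1}^{r}\bigl(|\Xi_{j_i}|_\s + 2 - |k_i|_\s\bigr) \;=\; |\tau|_\s + r(\beta+2) - |\bk|_\s\,,
\end{equ}
so Lemma \ref{lem:lowest_tree} applied to $\tilde\tau$ yields $|\tilde\tau|_\s \geq \beta$. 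Moreover $\tilde\tau$ has at least $r+1 \geq 2$ vertices, so $\tilde\tau \neq \Xi_j$ and the inequality is strict, giving the claimed $|\tau|_\s + |\bk|(\beta+2) - |\bk|_\s > \beta$.

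\textbf{Main obstacle: realisability.} The delicate point is to produce $(j_1,\dots,j_r) \in [n]^{r}$ with $D^{\bk}\Upsilon^{\tau}(f_{j_1},\dots,f_{j_r}) \neq 0$. Here I would argue by an auxiliary induction on the tree-size of $\tau$, unrolling the recursion \eqref{eq:Upislon_induct}: applying Leibniz's rule distributes $\bk$ as $\bk_0 \sqcup \bk_1 \sqcup \cdots \sqcup \bk_\ell$ across the outer derivative $D^{k_1}\cdots D^{k_\ell}\Upsilon^{\Xi_{j_0}}$ and the inner factors $\Upsilon^{\tau_i}$; at least one non-zero summand exists because $D^{\bk}\Upsilon^{\tau} \neq 0$, and for each factor the inductive hypothesis (together with the fact that conformity of $\tau$ forces each $k_i$ to appear only at a vertex $v$ whose noise label is compatible with the rule $\mfR_{\Xi(v)}$) supplies a compatible choice of indices. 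Alternatively, one can bypass this combinatorial step by enlarging the alphabet of noises: treat $\xi$ formally as an element of a larger $\R^{n'}$ and extend $f$ so that its components span $E$ pointwise, then observe that the desired inequality depends only on the abstract tree $\tau$ and $\bk$, not on the choice of $f$.
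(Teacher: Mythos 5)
Your ``alternatively, one can bypass this combinatorial step by enlarging the alphabet of noises'' is exactly the paper's proof: the paper adjoins a single extra noise label $\Xi_{n+1}$ with a \emph{constant} coefficient $f_{n+1}\equiv v$, where $v\in E$ is chosen so that $D^\bk\Upsilon^\tau(\bphi)(v,\ldots,v)\neq0$, applies the pre-Lie morphism \eqref{eq:pre_Lie_mor} to get $\Upsilon^{M(\prod_{k\in\bk}\CI^k[\Xi_{n+1}],\tau)}\neq0$, deduces that at least one tree $\sigma$ in that sum is conforming, observes that every such $\sigma$ has the homogeneity $|\tau|_\s+|\bk|(\beta+2)-|\bk|_\s$, and concludes from Lemma \ref{lem:lowest_tree}. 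The structure of your proposal, the use of \eqref{eq:pre_Lie_mor}, the homogeneity bookkeeping, and the equality analysis (a grafted $\sigma$ has $\geq2$ vertices, hence strict inequality) all match.

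Your \emph{primary} route, however, has a genuine gap that you correctly sense but do not close. You try to stay inside the original alphabet by choosing $j_1,\dots,j_r\in[n]$ with $D^\bk\Upsilon^\tau(f_{j_1},\dots,f_{j_r})\neq0$, and to secure this by ``an auxiliary induction on the tree-size of $\tau$, unrolling the recursion \eqref{eq:Upislon_induct}''. This does not obviously work: $D^\bk\Upsilon^\tau(\bphi)$ is nonzero as an element of $L(E^{\otimes|\bk|},E)$, but there is no reason the tuple $(f_{j_1}(\bphi),\dots,f_{j_r}(\bphi))$ ranging over $[n]^r$ hits a point where it is nonzero --- e.g.\ if $n=1$ then every slot is forced to be the single vector $f_1(\bphi)$. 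Leibniz distributes $\bk$ across the factors of \eqref{eq:Upislon_induct}, but the non-vanishing of $D^\bk\Upsilon^\tau$ only tells you that the \emph{sum} of Leibniz terms is nonzero; picking compatible indices for each nonzero summand still leaves you needing the resulting $f_{j_i}(\bphi)$ directions to evaluate the multilinear form to something nonzero, which is not guaranteed. Enlarging the alphabet (your alternative) is precisely what removes this constraint, and that is what the paper does. One small refinement over your phrasing: the paper does not need $f$ to ``span $E$ pointwise''; adding one \emph{constant} noise component is enough, and this is clean because a constant noise adds no new rule constraints ($\mfR_{n+1}=\emptyset$) and does not affect subcriticality, so Lemma~\ref{lem:lowest_tree} applies unchanged to the extended structure.
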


\begin{proof}
Since $D^\bk \Upsilon^\tau\neq 0$, one has $D^\bk \Upsilon^\tau(\bphi)(v,\ldots,v)\neq0$ for some vector $v\in E$ and $\bphi\in\widehat\CJ$ (recall \eqref{eq:widehat_J}).
We include another noise label $\Xi_{n+1}$ with corresponding function $f_{n+1}\equiv v$.
Then $M(\prod_{k\in\bk}\CI^k[\Xi_{n+1}], \tau)$ is a sum of trees $\sigma$ with $|\sigma|_\s$ equal to the left-hand side of \eqref{eq:tau_lower}.
Moreover, by \eqref{eq:pre_Lie_mor},
we have $\Upsilon^\sigma \neq 0$ for one of these trees $\sigma$, and therefore $\sigma$ is conforming.
The bound \eqref{eq:tau_lower} therefore follows from Lemma \ref{lem:lowest_tree}.
Moreover, Lemma~\ref{lem:lowest_tree} implies that $|\sigma|_\s = \beta$ if and only if $\sigma = \Xi_j$ for some $j>0$, but this happens if and only if $\tau = \Xi_j$ and $\bk=0$.
\end{proof}


\subsection{Local reconstruction bounds}
\label{sec:local_reconst}

In this subsection, we introduce singular modelled distribution spaces and reconstruction bounds that we use later.
Recall the regularity structure $(\CT_{\leq 2},\CG,\CA)$ from Definition \ref{def:model}.
For $\mfr>-2$, a \emph{sector of regularity $\mfr$} is a graded subspace $\CV=\bigoplus_{a\in \CA}\CV_a$ such that $\CV_a \subset \Span\{\tau\in\mfT_{\leq 2}\,:\, |\tau|_\s=a\}$, $\CV_a = \{0\}$ for $a<\mfr$, and $\Gamma\CV\subset\CV$ for all $\Gamma\in\CG$.
We write $\CV_{<\omega} = \bigoplus_{a<\omega} \CV_a$.

\begin{notation}
In the rest of the section, we fix $0<b\leq 1$ and write $\dist{x} = \dist{x}_b$ for $x\in\Omega_b$. We let $\tau$ be a tree in $\mfT_{\leq 2}$.
\end{notation}

Consider $\omega>0$,\footnote{we
take $\omega=\zeta$ or $\omega=2+\zeta$ in later subsections}
$\eta\in\R$, and a sector $\CV$ of regularity $\mfr\leq 0$.
Fix a model $Z = (\Pi,\Gamma)$ on the open parabolic ball $\Omega_b$.
For a function $F\colon \Omega_b \to \CV_{<\omega}$ we write
\begin{equ}
F_x = \sum_{|\tau|_\s<\omega} F^\tau_x \tau\;.
\end{equ}
Define the (extended) norms\footnote{One should not confuse the norm $\|F^\tau\|_{\infty;\theta}$ in this section with $\|f\|_{\infty;\eta}$ from \eqref{eq:weight_norms}. The latter is used in Sections \ref{sec:first_applications}-\ref{sec:RPs}
and does not appear in this section.}
\begin{equ}[eq:F_sup_def]
\|F\|_{\omega,\eta} = \sup_{|\tau|_\s < \omega} \|F^\tau\|_{\infty;\eta-|\tau|_\s}
\;,
\quad
\text{where}
\;\;
\|F^\tau\|_{\infty;\theta} = \sup_{x\in\Omega_b} (1\wedge\dist{x}^{-\theta})|F^\tau_x|
\;,
\end{equ}
and semi-norm
\begin{equ}
|F|_{\omega,\eta} = \sup_{x\in\Omega_b}\sup_{|\tau|_\s<\omega}
|R^{\tau,F}|_{\tau,\omega,\eta}
\;,
\end{equ}
where we define the two parameter function $R^{\tau,F}\colon\Omega_b^2\to\R$ by
\begin{equ}
\sum_{|\tau|_\s < \omega} R^{\tau,F}_{x,y} \tau = F_y - \Gamma_{yx} F_x
\end{equ}
i.e. $R_{x,y}^{\tau,F} = \frac{1}{\tau!}\scal{F_y - \Gamma_{yx} F_x, \tau}$,
and the norm
\begin{equ}[eq:R_norms_def]
|R|_{\tau,\omega,\eta}
=
\sup_{x\in\Omega_b}
\dist{x}^{\omega-\eta}
|R|_{\omega-|\tau|_\s; B_{x}(\frac12\dist{x})}
\;,
\quad
\text{where}
\;\;
|R|_{\theta; \mfK}
=
\sup_{y,z\in \mfK} \frac{|R_{y,z}|}{|y-z|_\s^{\theta}}\;.
\end{equ}

Let $\CD^{\omega,\eta}(\Omega_b,\CV)$ denote the space of $F\colon\Omega_b\to\CV_{<\omega}$ for which $\|F\|_{\omega,\eta} + |F|_{\omega,\eta}<\infty$.
(This is essentially the space $\CD^{\omega,w}$ from \cite{GH19} with $w=(\eta,\eta,\eta)$.)
We write $\CD^{\omega,\eta}_\mfr(\Omega_b)$ for $\CD^{\omega,\eta}(\Omega_b,\CV)$ whenever $\CV$ is clear from the context.

It follows from the local reconstruction theorem \cite[Thm.~1.1]{Zorin_Kranich_23_Reconst} (see also \cite[Lem.~6.7]{Hairer14}
for a less precise but simpler statement)
that there is a unique distribution $\tilde\CR F\in\CD'(\Omega_b)$ such that, uniformly in $x\in\Omega_b$,
\begin{equ}[eq:local_reconst]
\|\tilde\CR F - \Pi_x F_x\|_{\omega;x;\leq\frac18\dist{x};r} \lesssim \max_{|\tau|_\s<\omega} |R^{\tau,F}|_{\omega-|\tau|_\s; B_{x}(\frac12\dist{x})} \|\Pi\|_{\tau;\Omega_b}
\end{equ}
where $r$ is from Definition \ref{def:model} and we recall the semi-norms \eqref{eq:Cbeta-def} and \eqref{eq:model_size_def}.
The bound \eqref{eq:local_reconst} is the analogue of \eqref{eq:sewing_est}.
The fact that $\|\Pi\|_{\tau;\Omega_b}$ appears on the right-hand side, and not $\|\Pi\|_{\tau;\mfK}$ for some enlargement $\mfK$ of $\Omega_b$ follows from the proof of \cite[Lem.~A.2]{CCHS_2D}.

\begin{remark}
We can localise $\Pi$ around $x$ as well, but this would not improve the final result.
\end{remark}
\begin{lemma}\label{lem:loc_reconst}
Let $\eta\leq \mfr\leq 0<\omega$ with $\eta\notin\Z$ and $\floor{1-\eta}\geq r$ and $F \in \CD^{\omega,\eta}_\mfr(\Omega_b)$.
Then there exists $\CR F\in\CD'(\R^d)$ such that $\supp(\CR F) \subset \cl \Omega_b$, $\scal{\CR F,\psi}=\scal{\tilde\CR F,\psi}$ for all $\psi\in\CC^\infty_c(\Omega_b)$,
and
\begin{equ}
\|\CR F\|_{\CC^{\eta}(\R^d)}
\lesssim
\max_{|\tau|_\s<\omega} \|\Pi\|_{\tau;\Omega_b} \{ |R^{\tau,F}|_{\tau,\omega,\eta}
+
\|F^\tau\|_{\infty;\eta - |\tau|_\s}
\}
\;,
\end{equ}
where the proportionality constant is uniform in $F,Z,b$.
\end{lemma}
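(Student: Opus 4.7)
The plan is to construct $\tilde\CR F\in\CD'(\Omega_b)$ via the local reconstruction \eqref{eq:local_reconst}, verify an intrinsic Hölder--Besov bound of order $\eta$ against test functions supported in $\Omega_b$, and then invoke the extension result of Appendix \ref{sec:extension} to produce $\CR F\in\CD'(\R^d)$ with $\supp\CR F\subset\cl\Omega_b$ and the claimed global norm.

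For the intrinsic regularity, fix $x\in\Omega_b$, $\lambda\in(0,\tfrac18\dist{x}]$, and $\psi\in\CP^r$, and decompose
\[
\scal{\tilde\CR F,\psi^\lambda_x} = \scal{\tilde\CR F - \Pi_x F_x,\psi^\lambda_x} + \sum_{|\tau|_\s<\omega} F^\tau_x\scal{\Pi_x\tau,\psi^\lambda_x}.
\]
The first term is controlled directly by \eqref{eq:local_reconst}, combined with the definition of $|R^{\tau,F}|_{\tau,\omega,\eta}$ in \eqref{eq:R_norms_def}, yielding
\[
|\scal{\tilde\CR F - \Pi_x F_x,\psi^\lambda_x}|
\lesssim
\lambda^\omega \max_\tau \|\Pi\|_{\tau;\Omega_b}\dist{x}^{\eta-\omega}|R^{\tau,F}|_{\tau,\omega,\eta}
\lesssim
\lambda^\eta \max_\tau\|\Pi\|_{\tau;\Omega_b}|R^{\tau,F}|_{\tau,\omega,\eta},
\]
where the second bound uses $\lambda\leq\dist{x}$ and $\omega>\eta$. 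For each polynomial term, $|\scal{\Pi_x\tau,\psi^\lambda_x}|\leq\lambda^{|\tau|_\s}\|\Pi\|_{\tau;\Omega_b}$, while the definition of $\|F^\tau\|_{\infty;\eta-|\tau|_\s}$ in \eqref{eq:F_sup_def} together with $|\tau|_\s\geq\mfr\geq\eta$ and $\dist{x}\leq 1$ gives $|F^\tau_x|\leq \dist{x}^{\eta-|\tau|_\s}\|F^\tau\|_{\infty;\eta-|\tau|_\s}$. Using $\lambda\leq\dist{x}$ once more,
\[
|F^\tau_x\scal{\Pi_x\tau,\psi^\lambda_x}|
\lesssim
\lambda^{|\tau|_\s}\dist{x}^{\eta-|\tau|_\s}\|\Pi\|_{\tau;\Omega_b}\|F^\tau\|_{\infty;\eta-|\tau|_\s}
\leq
\lambda^\eta\|\Pi\|_{\tau;\Omega_b}\|F^\tau\|_{\infty;\eta-|\tau|_\s}.
\]
Summing over $\tau$ yields $|\scal{\tilde\CR F,\psi^\lambda_x}|\lesssim\lambda^\eta\cdot(\mathrm{RHS})$ with $\mathrm{RHS}$ as in the statement of the lemma, valid for all $\lambda\leq\tfrac18\dist{x}$.

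To upgrade this pointwise bound into a genuine $\CC^\eta(\Omega_b)$-type bound on $\tilde\CR F$ tested against arbitrary $\psi\in\CC^\infty_c(\Omega_b)$, I would use a dyadic partition of unity subordinate to a Whitney-type cover of $\Omega_b$ by backwards parabolic balls of radius comparable to the distance to $\d\Omega_b$. Each localised piece is then controlled by the scale-$\dist{x}$ estimate just derived. With this intrinsic $\CC^\eta(\Omega_b)$ bound in hand, the extension theorem of Appendix \ref{sec:extension} directly produces $\CR F\in\CC^\eta(\R^d)$ with $\supp\CR F\subset\cl\Omega_b$, agreeing with $\tilde\CR F$ on $\CC^\infty_c(\Omega_b)$, and satisfying the claimed norm bound. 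The main technical obstacle is the partition-of-unity step: because the test functions in the definition of $\CC^\eta$ are required to be non-anticipative and $\Omega_b$ is a parabolic box with corners, one must choose the cover so that each localised piece remains compatible with the backwards parabolic scaling and so that the reduced test functions still lie in $\CP^r$---here the hypothesis $\floor{1-\eta}\geq r$ ensures that the test regularity needed for \eqref{eq:local_reconst} (via the model bound in Definition \ref{def:model}) matches that allowed by the target norm.
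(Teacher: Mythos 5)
Your computation through the pointwise bound
\[
|\scal{\tilde\CR F,\psi^\lambda_x}|\lesssim\lambda^\eta\max_{|\tau|_\s<\omega}\|\Pi\|_{\tau;\Omega_b}\bigl\{|R^{\tau,F}|_{\tau,\omega,\eta}+\|F^\tau\|_{\infty;\eta-|\tau|_\s}\bigr\},\qquad \lambda\leq\tfrac18\dist{x},
\]
is correct and essentially coincides with the paper's proof: it is the same decomposition of $\tilde\CR F$ into $(\tilde\CR F-\Pi_xF_x)+\Pi_xF_x$, the same use of \eqref{eq:local_reconst} and of the weighted sup norm, and the same interpolation trick (absorbing the gap between $\eta$ and $|\tau|_\s$ or $\omega$ into $\lambda/\dist{x}\leq1$). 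Up to the $\tfrac18$ versus $\tfrac12$ and passing to the sup over $x$ and $\lambda$, what you have derived \emph{is} the bound $\|\tilde\CR F\|_{\mathring\CC^\eta(\Omega_b)}\lesssim\textrm{RHS}$.

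The gap is the final step. You propose to ``upgrade'' this into a genuine $\CC^\eta(\Omega_b)$ bound against arbitrary $\psi\in\CC^\infty_c(\Omega_b)$ via a Whitney partition of unity, and only then apply the extension lemma. This is both unnecessary and unworkable. It is unnecessary because Lemma~\ref{lem:extension} takes as input precisely a $\widehat\CC^\eta(\mfB_b)$ bound, i.e.\ a bound restricted to test functions at scale $\delta<\tfrac12\disc{x}$; your pointwise estimate already gives this, modulo the observation that the hypothesis $\floor{1-\eta}\geq r$ lets one embed the non-anticipative $\mathring\CC^\eta$ control into $\widehat\CC^\eta$ (which is the only place the paper uses that hypothesis). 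And it is unworkable because no \emph{uniform} $\CC^\eta(\Omega_b)$ bound against arbitrary test functions holds: the coefficients $F^\tau_x$ are only controlled by $\dist{x}^{\eta-|\tau|_\s}$ and genuinely blow up as $x\to\d\Omega_b$ (since $\eta<|\tau|_\s$), so once your partition of unity produces pieces whose scale is not restricted relative to $\dist{x}$, the estimate degenerates. The whole purpose of the extension lemma in Appendix~\ref{sec:extension} is to perform exactly the Whitney-type aggregation you describe, exploiting $\eta\notin\Z$ and the specific form of the $\widehat\CC^\eta$ weight; trying to do it first and then invoke the lemma duplicates that work under a hypothesis (arbitrary test functions in $\Omega_b$) that is too strong. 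Replace your last paragraph with: ``the derived estimate is precisely a bound on $\|\tilde\CR F\|_{\mathring\CC^\eta(\Omega_b)}$; since $\floor{1-\eta}\geq r$, this controls $\|\tilde\CR F\|_{\widehat\CC^\eta(\Omega_b)}$, and Lemma~\ref{lem:extension} produces the desired $\CR F\in\CC^\eta(\R^d)$ supported in $\cl\Omega_b$.''
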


\begin{proof}
It follows from \eqref{eq:local_reconst} that
\begin{equ}[eq:recon_global]
\sup_{x\in\Omega_b} \dist{x}^{\omega-\eta} \|\tilde\CR F - \Pi_x F_x\|_{\omega;x;\leq\frac12\dist{x};r}
\lesssim \max_{|\tau|_\s < \omega} |R^{\tau,F}|_{\tau,\omega,\eta}
\|\Pi\|_{\tau;\Omega_b}\;.
\end{equ}
(Note $\leq\frac12\dist{x}$ instead of $\leq\frac18\dist{x}$ on the left-hand side, which is possible by decomposing any $\psi\in\CP^r$ as $\psi=\sum_{i=1}^{N}\psi^{1/4}_i$ where $\psi_i\in\CP^r$ for some finite $N$ depending only on $d$.)

For a distribution $\xi\in\CD'(\Omega_b)$, define
\begin{equ}
\|\xi\|_{\mathring\CC^\eta(\Omega_b)}
=
\sup_{x\in \Omega_b} \sup_{\psi\in\CP^r} \sup_{\delta\in (0,\frac12\dist{x})}
\delta^{-\eta}|\scal{\xi,\psi^\delta_x}|
=
\sup_{x\in \Omega_b} \|\xi\|_{\eta;x;\leq \frac12\dist{x};r}
\;.
\end{equ}
Note that, for $\eta\leq \theta$ and $h\in (0,1]$,
\begin{equ}
\|\xi\|_{\eta;x;\leq h;r} \leq h^{\theta-\eta}\|\xi\|_{\theta;x;\leq h;r}\;.
\end{equ}
Since $\eta\leq \mfr \leq |\tau|_\s$, it follows from \eqref{eq:recon_global} that, uniformly in $x\in\Omega_b$,
\begin{equ}
\|\tilde\CR F\|_{\eta;x;\leq\frac12\dist{x};r}
\lesssim
\dist{x}^{\omega - \eta + (\eta-\omega)} \max_{|\tau|_\s<\omega} |R^{\tau,F}|_{\tau,\omega,\eta}\|\Pi\|_{\tau;\Omega_b}
+
\max_{|\tau|_\s<\omega} \dist{x}^{|\tau|_s-\eta} |F^\tau_x| \, \|\Pi\|_{\tau;\Omega_b}
\;,
\end{equ}
and thus
\begin{equ}
\|\tilde\CR F\|_{\mathring\CC^{\eta}(\Omega_b)}
\lesssim
\max_{|\tau|_\s<\omega} \|\Pi\|_{\tau;\Omega_b} \{ |R^{\tau,F}|_{\tau,\omega,\eta}
+
\|F^\tau\|_{\infty;\eta - |\tau|_\s}
\}
\;.
\end{equ}
The proof follows from Lemma \ref{lem:extension} (this is where we use $\eta\notin\Z$) which implies the existence of a bounded extension operator $\CE\colon \mathring\CC^\eta(\Omega_b) \to \CC^\eta(\R^d)$
once we identify $\Omega_b$ with a shifted and dilated ball $\mfB(0,b)$ and note that $\mathring\CC^\eta$ embeds into $\widehat\CC^\eta$ defined in Appendix \ref{sec:extension}
since $\floor{1-\eta}\geq r$.
\end{proof}

\begin{remark}
The extension $\CR F = \CE(\tilde\CR F)$ is not unique in general.
\end{remark}

\subsection{Integration bounds}
\label{sec:integration}

Fix in this subsection $0<\lambda,b\leq 1$, $\mfr\in(-2,0]$, $\eta\leq \mfr$ as in Lemma \ref{lem:loc_reconst} and $\omega \in (0,1)$.
Consider a model $Z = (\Pi,\Gamma)$ on $\Omega_b$
that is adapted to the rescaled kernel $K^\lambda(z) = \lambda^{|\s|-2}K(\lambda\cdot_\s z)$ from \eqref{eq:K_lambda_def}.
Let $F\in\CD^{\omega,\eta}(\Omega_b,\CV)$,
where $\CV$ is a sector of regularity $\mfr$ on which the abstract integration map $\CI$ is well-defined.\footnote{$\CV$ is allowed to contain polynomials by setting $\CI[X^k]=0$, in agreement with Definition \ref{def:mfT}.
In later sections, we will always take $\CV = \CW_{\leq0}$.}
We define $\CK^\lambda F \in \CD^{\omega+2,\eta+2}_{0}(\Omega_b)$ as the integration operator from \cite[Prop.~6.16]{Hairer14} or \cite[Sec.~4.5]{GH19} for the kernel $K^\lambda$ and with input distribution $\CR F\in\CC^\eta(\R^d)$ from Lemma~\ref{lem:loc_reconst}.
Then, on $\Omega_b$,
\begin{equ}
\tilde \CR \CK^\lambda F = K^\lambda*\CR F\;.
\end{equ}
The following is our main integration bound.
The result is similar to others in the literature, e.g. \cite{Hairer14,GH19,BCZ24_Schauder},
but with the key remark that the bound is uniform in the parameter $\lambda$.

\begin{lemma}\label{lem:integration}
With notation as above, denote
\begin{equ}
\Theta = \max_{|\tau|_\s<\omega}
\|\Pi\|_{\tau;\lambda}
\{
|R^{\tau,F}|_{\tau,\omega,\eta}
+
\|F^\tau\|_{\infty;\eta-|\tau|_\s}
\}
\;,
\end{equ}
where we recall $\|\Pi\|_{\tau;\lambda}$ from \eqref{eq:Pi_tau_lambda}.
Then, uniformly in $\kappa \geq 0$ and $0<\lambda,b\leq 1$,
\begin{equ}[eq:K_F_translation_bound]
|\CK^\lambda F|_{\omega+2,\eta+2-\kappa}
\lesssim
b^{\kappa} \Theta
\end{equ}
and for $l\in\N^d$ with $|l|_\s < \omega+2$, uniformly in $x\in\Omega_b$ and $0<\lambda,b\leq 1$,
\begin{equ}[eq:K_F_sup_bound]
|\nabla^l (\CK^\lambda F)_x|
\lesssim
(\dist{x}^{\eta + 2-|l|_\s} +b^{\eta+2-|l|_\s})
\Theta
\;.
\end{equ}
\end{lemma}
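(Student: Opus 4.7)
The argument follows the standard strategy for Schauder estimates in regularity structures \cite[Prop.~6.16]{Hairer14}, \cite[Thm.~4.16]{GH19}, with two refinements: tracking the dependence on the rescaling parameter $\lambda$ of the kernel $K^\lambda$, and exploiting the compact support of $F$ in $\Omega_b$ to produce the factors $b^\kappa$ and $b^{\eta+2-|l|_\s}$.

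First, since $\lambda\leq 1$, we have $1/\lambda\geq 1$ and hence $\|\Pi\|_{\tau;\Omega_b}\leq \|\Pi\|_{\tau;\lambda}$, so Lemma~\ref{lem:loc_reconst} applies and yields an extension $\CR F\in\CC^\eta(\R^d)$ of $\tilde\CR F$ supported in $\cl\Omega_b$ with $\|\CR F\|_{\CC^\eta(\R^d)}\lesssim \Theta$. The modelled distribution $\CK^\lambda F\in\CD^{\omega+2,\eta+2}_0(\Omega_b)$ is then defined by the standard formula in terms of $\CI F_x$, a Taylor correction $\CJ^\lambda(x) F_x=\sum_{|k|_\s<|\tau|_\s+2}\frac{X^k}{k!}D^kK^\lambda*\Pi_x\tau\,(x)$, and a non-local piece $\CN^\lambda(x)F_x$ ensuring $\tilde\CR \CK^\lambda F=K^\lambda*\CR F$; in particular
\[
\nabla^l(\CK^\lambda F)_x = D^l(K^\lambda*\CR F)(x)-\sum_{|\tau|_\s+2>|l|_\s} D^l\CJ^\lambda(x)F_x^\tau\bone-(\text{remainder in }\CI\text{-direction})
\]
for $|l|_\s<\omega+2$.

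Next, decompose $K^\lambda=\sum_{n\geq 0}K^\lambda_n$ with $K^\lambda_n(z)=\lambda^{|\s|-2}K_n(\lambda\cdot_\s z)$ supported in $B_0(2^{-n}/\lambda)$; a direct computation shows that $D^k K^\lambda_n$ satisfies the standard kernel bound at scale $\mu_n:=2^{-n}/\lambda$. Thus the pieces with $\mu_n\leq 1$ behave as the usual singular scales of $K$, while those with $\mu_n\in(1,1/\lambda]$ are smooth at unit scale; it is precisely for the latter that the norm $\|\Pi\|_{\tau;\lambda}$ (which tests $\Pi_x\tau$ at scales up to $1/\lambda$) is needed to control convolutions $K^\lambda_n*(\CR F-\Pi_xF_x^\tau)$. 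With this in hand, the semi-norm bound \eqref{eq:K_F_translation_bound} follows from the standard decomposition of the remainder $R^{\tau,\CK^\lambda F}_{x,y}$ over the scales $\mu_n$: for $|x-y|_\s\ll\mu_n$ use a Taylor expansion of $K^\lambda_n$, and for $|x-y|_\s\gtrsim\mu_n$ test $\CR F-\Pi_xF_x$ against $D^lK^\lambda_n(x-\cdot)$ viewed as a test function at scale $\mu_n$. The $b^\kappa$ factor is obtained by interpolating the weight $\dist{x}^{\eta+2-|\tau|_\s}$ as $\dist{x}^{\eta+2-|\tau|_\s-\kappa}\cdot\dist{x}^\kappa\leq \dist{x}^{\eta+2-|\tau|_\s-\kappa}\cdot b^\kappa$ using $\dist{x}\leq b$.

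Finally, for the sup bound \eqref{eq:K_F_sup_bound} I would split $\nabla^l(\CK^\lambda F)_x$ into contributions from the three regimes $\mu_n\leq \dist{x}$, $\dist{x}<\mu_n\leq b$, and $\mu_n>b$. The first regime yields the classical Schauder contribution of order $\dist{x}^{\eta+2-|l|_\s}\Theta$ via the local reconstruction bound on scales $\leq\dist{x}$. The second telescopes to $b^{\eta+2-|l|_\s}\Theta$. The third is the delicate part and the main obstacle: the pieces $K^\lambda_n$ are smooth but supported at scales larger than $\diam\Omega_b$, so $D^lK^\lambda_n$ restricted against $\CR F$ must be rewritten as a pairing of $\Pi_x F_x^\tau$ with a non-anticipative test function of scale $\mu_n\leq 1/\lambda$ (after exploiting $\supp\CR F\subset\cl\Omega_b$), which is exactly what $\|\Pi\|_{\tau;\lambda}$ bounds, giving a uniform-in-$\lambda$ estimate that still contributes at most $b^{\eta+2-|l|_\s}\Theta$ since the effective support has diameter $\lesssim b$. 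Combining all three regimes yields \eqref{eq:K_F_sup_bound}. Once this scale bookkeeping is correctly set up, the remaining estimates reduce to the standard kernel bounds and reconstruction arguments.
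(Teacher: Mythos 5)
The proposal has the correct skeleton (dyadic decomposition of $K^\lambda$ into $K^\lambda_n$ supported at scales $\mu_n=2^{-n}/\lambda$, the reduction of the $b^\kappa$ factor in \eqref{eq:K_F_translation_bound} to the case $\kappa=0$ via $\dist{x}\leq b$, the use of $\|\Pi\|_{\tau;\lambda}$ to reach scales up to $1/\lambda$, and the three-regime split $\mu_n\leq\dist{x}$, $\dist{x}<\mu_n\lesssim b$, $\mu_n\gtrsim b$), and this is indeed the route the paper follows. However, your treatment of the far-field regime $\mu_n\gtrsim b$—which you correctly flag as the main obstacle—contains a genuine gap.

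You propose that in this regime "$D^lK^\lambda_n$ restricted against $\CR F$ must be rewritten as a pairing of $\Pi_xF_x^\tau$ with a non-anticipative test function of scale $\mu_n\leq 1/\lambda$." This is not a valid rewrite: $\CR F$ and $\Pi_xF_x$ are different distributions, and their difference (the reconstruction error) is only controlled by \eqref{eq:local_reconst} at scales $\lesssim\dist{x}$, which is useless here. In fact, in the paper's argument the $\Pi$-contributions at far-field scales do not cause the trouble; the delicate term is precisely $\scal{\CR F,D^lK^\lambda_n(x-\cdot)}$ for $\mu_n>2b$, and the key observation is Lemma~\ref{lem:xi_ball}: since $\supp\CR F\subset\cl\Omega_b$ has diameter $\sim b$, testing against a test function at scale $\mu_n\geq b$ yields the improved bound $\lesssim b^{\eta+|\s|}\mu_n^{-|\s|}\|\CR F\|_{\CC^\eta(\R^d)}$. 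The factor $\mu_n^{-|\s|}$, combined with the kernel normalization $\mu_n^{2-|l|_\s}$, produces a geometric series in $n$ whose exponent $2-|l|_\s-|\s|$ is strictly negative (here $|\s|\geq 3$ and $|l|_\s\leq 2$ since $\omega<1$), so the sum over $\mu_n>2b$ converges to $\lesssim b^{\eta+2-|l|_\s}\|\CR F\|_{\CC^\eta(\R^d)}\lesssim b^{\eta+2-|l|_\s}\Theta$ by Lemma~\ref{lem:loc_reconst}. Without this compact-support gain and the sign check on $2-|l|_\s-|\s|$, your sum over the far-field scales does not close. Note also that your assertion about "the effective support has diameter $\lesssim b$" is the right intuition, but the mechanism by which it produces a uniform-in-$\lambda$ bound is the compact-support lemma applied to $\CR F$ directly, not a rewrite through $\Pi_xF_x^\tau$.
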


\begin{proof}
The proof is a minor adaptation of \cite[Prop.~6.16]{Hairer14} or \cite[Lem.~4.12]{GH19},
so we only indicate the differences with \cite[Lem.~4.12]{GH19}.
We restrict to $\omega<1$ only because the polynomial sector of our regularity structure is spanned by $X^l$ with $|l|_\s\leq2$.

To prove \eqref{eq:K_F_translation_bound},
it suffices to consider $\kappa=0$ since $\dist{x}^{\eta-\omega}\leq b^\kappa\dist{x}^{\eta-\omega-\kappa}$ for all $\kappa\geq0$ and $x\in\Omega_b$.
Controlling $|\scal{(\CK^\lambda F)_y - \Gamma_{yz}(\CK^\lambda F)_z,\sigma}|$ for $|\sigma|_\s \notin \N$ is simple and is done in the same way as in the proof of \cite[Thm.~5.12]{Hairer14} (we use here Assumption \ref{as:non_integer}).
For $|\sigma|_\s\in\N$, so that $\sigma=X^l$ for some $l\in\N^d$, inspecting the proof of \cite[Lem.~4.12]{GH19}, we have uniformly in $x\in\Omega_b$
\begin{equs}[eq:K_Gamma]
\sup_{y,z\in B_x(\frac12\dist{x})}
\sup_{l<\omega+2}
\frac{|\scal{(\CK^\lambda F)_y - \Gamma_{yz}(\CK^\lambda F)_z,X^l}}{|y-z|_\s^{\omega+2-l}}
&\lesssim
\max_{|\tau|_\s < \omega} |R^{\tau,F}|_{\omega-|\tau|_\s; B_{x}(\frac12\dist{x})}
\|\Pi\|_{\tau;\Omega_b}
\\
&
+
\max_{|\tau|_\s < \omega} \dist{x}^{|\tau|_\s-\omega}\|F^\tau\|_{\infty;B_x(\frac12\dist{x})} \|\Pi\|_{\tau;\lambda}
\\
&+
\dist{x}^{\eta-\omega} \|\CR F\|_{\CC^\eta(\R^d)}
\;. 
\end{equs}
This bound arises from evaluating the various contributions of $K^\lambda_n$ in the decomposition $K^\lambda = \sum_{n\geq0}K_n^\lambda$
in the following way.
Here we recall $K=\sum_{n\geq0} K_n$ from \eqref{eq:K_n_def} and denote $K^\lambda_n(z) = \lambda^{|\s|-2}K_n(\lambda \cdot_\s z)$,
which is supported in $B_0(\lambda^{-1} 2^{-n})$ and satisfies
$\|D^k K^\lambda_n\|_\infty \lesssim (\lambda 2^n)^{|k|_\s+|\s|-2}$ for every $k\in\N^d$.

The first term on the right-hand side of \eqref{eq:K_Gamma} arises from those $n$ such that $\lambda^{-1} 2^{-n}\leq \frac12\dist{x}$, for which the situation is identical to the non-rescaled case and we can use the local reconstruction bound \eqref{eq:local_reconst}
and the argument in \cite{GH19} or \cite{Hairer14}.

The second term arises from bounding the contribution in \cite[Eq.~(4.38)]{GH19} of $\Pi_yF_y$ evaluated on $D^{k+l}K^\lambda_n$ for $\lambda^{-1}2^{-n} > \frac12\dist{x}$.
For this, we recall from \eqref{eq:Pi_tau_lambda} that $\|\Pi\|_{\tau;\lambda}$ bounds the evaluation of $\Pi_y\tau$ on non-anticipative test functions at scales up to $\lambda^{-1}$.

The third term arises in the same way as the second except we evaluate $\CR F$ on $D^{k+l}K^\lambda_n$.
For this, the contribution from $n$ such that $\lambda^{-1} 2^{-n} \leq 1$ is bounded by $\dist{x}^{\eta-\omega}\|\CR F\|_{\CC^\eta(\R^d)}$ in the same way as in \cite{GH19}
by exploiting that $K^\lambda_n$ is essentially a non-anticipative test function at scale $\delta=\lambda^{-1} 2^{-n}$ multiplied by $\delta^{2}$.
For $\lambda^{-1} 2^{-n} > 1$, recall that $\CR F$ is supported in $\cl\Omega_b$, therefore the contribution of each such term is of order
$\|\CR F\|_{\CC^\eta(\R^d)} (\lambda 2^n)^{|k+l|_\s + |\s|-2}$.
Recall here that $|k+l|_\s\geq \omega+2$ and $\omega>0$ and that this term has a prefactor $|y-z|_\s^{|k+l|_\s-\omega-2}$ in \cite[Eq.~(4.38)]{GH19} with $|y-z|\lesssim \dist{x}\leq b$.
In particular, the exponent of $\lambda 2^n$ is positive and the prefactor is bounded above by $1$, so the sum over $\lambda 2^n < 1$ gives a contribution of order
$\|\CR F\|_{\CC^\eta(\R^d)}$.

It follows from \eqref{eq:K_Gamma} that
\begin{equs}
|\CK^\lambda F|_{\omega+2,\eta+2}
&\lesssim
\max_{|\tau|_\s<\omega}
|R^{\tau,F}|_{\tau,\omega,\eta}
\|\Pi\|_{\tau;\Omega_b}
+
\sup_{x\in\Omega_b}
\max_{|\tau|_\s < \omega}
\dist{x}^{|\tau|_\s - \eta}
|F^\tau_x|\,
\|\Pi\|_{\tau;\lambda}
+
\|\CR F\|_{\CC^\eta(\R^d)}
\\
&\lesssim
\max_{|\tau|_\s<\omega}
\|\Pi\|_{\tau;\lambda}
\{
|R^{\tau,F}|_{\tau,\omega,\eta}
+
\|F^\tau\|_{\infty;\eta-|\tau|_\s}
\}
\end{equs}
where we used Lemma \ref{lem:loc_reconst} and the fact that $\eta\leq\mfr$ in the final bound.
This proves \eqref{eq:K_F_translation_bound}.

The bound \eqref{eq:K_F_sup_bound} follows in a similar manner.
Specifically, using that $\eta\leq\mfr$,
one can check that the contribution to $|\scal{\CK^\lambda F(x),X^l}$ for $|l|_\s < 2+\omega$ of every term in the proof of \cite[Lem.~4.12]{GH19} is of order
\begin{equ}[eq:small_scale]
\dist{x}^{\eta+2-|l|_\s} \Theta
\end{equ}
except for $\CR F (D^l K^\lambda_n(x-\cdot))$
appearing in \cite[Eq.~(4.36)]{GH19}
with $\lambda^{-1}2^{-n} > \frac12\dist{x}$.
To bound the contributions of these final terms,
we consider first the scales $\frac12\dist{x}<\lambda^{-1}2^{-n}\leq 2b$, for which we use
\begin{equ}
|\CR F (D^l K^\lambda_n(x-\cdot))|
\lesssim
(\lambda^{-1}2^{-n})^{\eta+2-|l|_\s } \|\CR F\|_{\CC^\eta(\R^d)}\;.
\end{equ}
Since $\eta+2-|l|_\s\notin\Z$ by assumption,
summing over the relevant $n$ gives a contribution of order
\begin{equ}[eq:intermediate_scale]
\|\CR F\|_{\CC^\eta(\R^d)}
(
\dist{x}^{\eta+2-|l|_\s} 
+
b^{\eta+2-|l|_\s}
)
\;.
\end{equ}
For the scales $2b<\lambda^{-1}2^{-n}$, since $\CR F$ is supported on $\cl\Omega_b$, it follows from Lemma \ref{lem:xi_ball} that
\begin{equ}
|\CR F (D^l K^\lambda_n(x-\cdot))|
\lesssim
(\lambda^{-1}2^{-n})^{2-|l|_\s -|\s|} b^{\eta +|\s|} \|\CR F\|_{\CC^\eta(\R^d)}\;.
\end{equ}
The exponent of $\lambda^{-1} 2^{-n}$ is strictly negative since $|\s|\geq 3$ (recall \eqref{eq:parab_scale}),
thus summing over $n$ such that $\lambda^{-1} 2^{-n} > 2b$ gives a contribution of order
$
b^{\eta +2 - |l|_\s}
\|\CR F\|_{\CC^\eta(\R^d)}
$.
Since $\|\CR F\|_{\CC^\eta(\R^d)} \lesssim \Theta$ by Lemma \ref{lem:loc_reconst}, and combining with \eqref{eq:small_scale}-\eqref{eq:intermediate_scale}, the bound \eqref{eq:K_F_sup_bound} follows.
\end{proof}

\begin{remark}
Bounding the contribution of $\CR F$ on large scales is the only part where we require the extension of $\tilde \CR F$.
We suspect that a cleaner treatment would be to use the Dirichlet Green's function of $\CL$ on $\Omega_b$ and leverage that it decays at the parabolic boundary,
but this would require different arguments than in \cite{Hairer14,GH19}.
\end{remark}

\begin{lemma}\label{lem:xi_ball}
Let $\eta\leq 0$ and $\xi\in\CC^\eta(\R^d)$ be supported on $\cl \mfB(0,b)$ (recall \eqref{eq:parabolic_ball}).
Then, uniformly in $x\in\R^d,$ $\psi\in\CB^r$, and $\delta\geq b$,
\begin{equ}
|\xi(\psi^\delta_x)| \lesssim b^{\eta+|\s|}\delta^{-|\s|}\|\xi\|_{\CC^\eta(\R^d)}
\;.
\end{equ}
\end{lemma}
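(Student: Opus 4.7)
The strategy is to localise the test function $\psi^\delta_x$ to the support of $\xi$, rewrite the localised object as a rescaled non-anticipative test function of scale $\sim b$ centred slightly in the future of $0$, and then invoke the definition of $\|\xi\|_{\CC^\eta(\R^d)}$. The main subtlety is that this norm is defined using only \emph{non-anticipative} test functions (elements of $\CP^r$), whereas $\psi\in\CB^r$ may be anticipative; this is what forces me to shift the centre of the rescaled test function strictly into the future and to enlarge the scale slightly.

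Concretely, I would fix a dimensional constant $c\geq\sqrt 2$ and a centre $x^*=(2b^2,0,\ldots,0)$, and verify from the definition of parabolic balls that $\cl\mfB(0,b)\subset B_{x^*}(cb)$. After reducing to the case $cb\leq 1$ by a finite partition of $\cl\mfB(0,b)$ (if $b$ is of order $1$, the bound is nearly trivial anyway), I would pick a smooth cutoff $\chi$ supported in $B_{x^*}(cb)$, identically $1$ on $\cl\mfB(0,b)$, with $|\nabla^k\chi|\lesssim b^{-|k|_\s}$ for $|k|_\s\leq r$. Since $\supp\xi\subset\cl\mfB(0,b)$, one has $\xi(\psi^\delta_x)=\xi(\chi\,\psi^\delta_x)$.

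Next, using Leibniz and the bounds $|\nabla^k\psi^\delta_x|\leq\delta^{-|\s|-|k|_\s}$ together with $\delta\geq b$ (which lets one absorb every factor $\delta^{-|k_2|_\s}$ into $b^{-|k_2|_\s}$), I would show
\[
\|\nabla^k(\chi\,\psi^\delta_x)\|_\infty\lesssim b^{-|k|_\s}\,\delta^{-|\s|}\;,\qquad |k|_\s\leq r\;.
\]
Rescaling via $\tilde\psi(z)=C^{-1}(cb)^{|\s|}(\chi\,\psi^\delta_x)(x^*+cb\cdot_\s z)$ and choosing $C=C_0\,b^{|\s|}\delta^{-|\s|}$ with $C_0$ a sufficiently large dimensional constant, the display above gives $\tilde\psi\in\CP^r$, so that $\chi\,\psi^\delta_x=C\,\tilde\psi^{cb}_{x^*}$ with $cb\in(0,1]$. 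Applying the definition of $\|\xi\|_{\CC^\eta(\R^d)}$ at centre $x^*$ and scale $cb$, together with $\eta\leq 0$ and $c$ a fixed constant, then yields
\[
|\xi(\psi^\delta_x)|\leq C\,(cb)^\eta\,\|\xi\|_{\CC^\eta(\R^d)}\lesssim b^{\eta+|\s|}\,\delta^{-|\s|}\,\|\xi\|_{\CC^\eta(\R^d)}\;.
\]

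The only non-routine point, and the main obstacle, is the geometric verification that a non-anticipative parabolic ball centred strictly in the future of $0$ can cover $\cl\mfB(0,b)$ with only a mild enlargement of scale; once this is in place, the rest is a standard product-rule and rescaling computation.
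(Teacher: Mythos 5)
Your proposal is correct and takes essentially the same approach as the paper: localise $\psi^\delta_x$ with a cutoff $\chi$ at scale $\sim b$ around the support of $\xi$, observe via Leibniz and $\delta\geq b$ that $\chi\psi^\delta_x$ is a multiple (of size $\sim b^{|\s|}\delta^{-|\s|}$) of a test function at scale $\sim b$, and apply the definition of $\|\xi\|_{\CC^\eta(\R^d)}$. The one place where you go beyond the paper's terse argument is the explicit handling of the non-anticipative constraint: since $\|\xi\|_{\CC^\eta(\R^d)}$ is defined by testing against $\CP^r$ (supported in the backwards parabolic ball $\Omega$), you shift the centre $x^*$ strictly into the future so that the full parabolic ball $\mfB(0,cb')$ is contained in a backwards ball $B_{x^*}(cb)$; the paper's phrase \emph{a multiple of a test function at scale $4b$} quietly invokes the same observation (any element of $\CB^r$ at scale $\lambda$ is a constant multiple of an element of $\CP^r$ at a comparable scale with a shifted centre), but does not spell it out. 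Your writeup is thus a more careful version of the same proof; the minor detour via a partition to reduce to $cb\leq 1$ is harmless and could be streamlined by noting $b\leq 1$ in the context where the lemma is invoked and absorbing the remaining constant.
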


\begin{proof}
Since $\xi$ has support in $\cl \mfB(0,b)$,
we can multiply $\psi^\delta_x$ with a suitable smooth cut-off function $\chi$, which is $1$ on $\mfB(0,2b)$ and $0$ outside $\mfB(0,4b)$, so that $\chi\psi^\delta_x$ is a multiple of a test function at scale $4b$ multiplied by $b^{|\s|}\delta^{-|\s|}$ and $\xi(\chi\psi^\delta_x) =\xi(\psi^\delta_x)$.
\end{proof}

A useful consequence of the proof of Lemma \ref{lem:integration} is the following

\begin{lemma}\label{lem:integration_K_xi}
Let $\eta < 0$ with $\eta\notin\Z$ and $\xi \colon \Omega_b \to \R$ be measurable such that $
\|\xi\|_{\infty;\eta} < \infty
$ where we recall \eqref{eq:F_sup_def}.
Let $\CE (\xi) \in \CC^\eta(\R^d)$ be its extension from Lemma \ref{lem:extension}.
Then, uniformly in $x\in\Omega_b$ and $|l|_\s\leq1$,
\begin{equ}
|D^l K^\lambda*\CE(\xi)(x)| \lesssim (\dist{x}^{\eta+2-|l|_\s} + b^{\eta+2-|l|_\s}) \|\xi\|_{\infty;\eta}\;.
\end{equ}
\end{lemma}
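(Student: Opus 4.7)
The plan is to mimic the scale-decomposition argument from the proof of Lemma~\ref{lem:integration}, taking advantage of the fact that $\xi$ here is a genuine function rather than an abstract distribution. First I would check that $\|\xi\|_{\mathring\CC^\eta(\Omega_b)}\lesssim\|\xi\|_{\infty;\eta}$: for $x\in\Omega_b$, $\psi\in\CP^r$, and $\delta\leq\tfrac12\dist{x}$, the non-anticipative test function $\psi^\delta_x$ is supported in $B_x(\delta)\subset\Omega_b$, where $\dist{y}\geq\tfrac12\dist{x}$, so $|\xi(y)|\leq\dist{y}^\eta\|\xi\|_{\infty;\eta}$ and hence $\delta^{-\eta}|\scal{\xi,\psi^\delta_x}|\lesssim(\delta/\dist{x})^{-\eta}\|\xi\|_{\infty;\eta}\lesssim\|\xi\|_{\infty;\eta}$. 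Because $\eta\notin\Z$, Lemma~\ref{lem:extension} produces $\CE(\xi)\in\CC^\eta(\R^d)$ supported in (the relevant shifted copy of) $\cl\Omega_b\subset\cl\mfB(0,b)$, with $\|\CE(\xi)\|_{\CC^\eta(\R^d)}\lesssim\|\xi\|_{\infty;\eta}$. I would then use the scale decomposition $K^\lambda=\sum_{n\geq0}K^\lambda_n$ in which $K^\lambda_n$ is supported in the backward parabolic ball of radius $\delta_n\eqdef\lambda^{-1}2^{-n}$ and satisfies $\|D^k K^\lambda_n\|_\infty\lesssim\delta_n^{2-|k|_\s-|\s|}$, and split the sum over $n$ into three regimes: small ($\delta_n\leq\tfrac12\dist{x}$), intermediate ($\tfrac12\dist{x}<\delta_n\leq b$), and large ($\delta_n>b$).

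For the small-scale regime the key idea is \emph{not} to route through the $\CC^\eta$-norm of $\CE(\xi)$, since that would produce the per-scale estimate $\delta_n^{\eta+2-|l|_\s}$, which fails to sum in $n$ when $\eta+2-|l|_\s\leq 0$. Instead, since $D^l K^\lambda_n(x-\cdot)$ is supported in $B_x(\delta_n)\subset\Omega_b$, I would use the direct pointwise bound $|\xi(y)|\lesssim\dist{x}^\eta\|\xi\|_{\infty;\eta}$ valid throughout that support, together with $\|D^l K^\lambda_n\|_\infty\lesssim\delta_n^{2-|l|_\s-|\s|}$ and $|\supp D^l K^\lambda_n|\lesssim\delta_n^{|\s|}$, to obtain the per-scale bound $|\CE(\xi)(D^l K^\lambda_n(x-\cdot))|\lesssim\delta_n^{2-|l|_\s}\dist{x}^\eta\|\xi\|_{\infty;\eta}$. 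Since $|l|_\s\leq 1<2$, the geometric sum over $\delta_n\leq\tfrac12\dist{x}$ is dominated by its top term and contributes $\lesssim\dist{x}^{\eta+2-|l|_\s}\|\xi\|_{\infty;\eta}$.

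For the remaining regimes I would observe that $D^l K^\lambda_n(x-\cdot)$ equals (up to a uniform constant) $\delta_n^{2-|l|_\s}\psi^{\delta_n}_x$ for some $\psi\in\CP^r$ (non-anticipative since the heat kernel is). In the intermediate regime, the $\CC^\eta$ bound on $\CE(\xi)$ gives per-scale contribution $\lesssim\delta_n^{\eta+2-|l|_\s}\|\xi\|_{\infty;\eta}$; since $\eta\notin\Z$ ensures $\eta+2-|l|_\s\neq 0$, the geometric sum over $\tfrac12\dist{x}<\delta_n\leq b$ is dominated by one of its endpoints and is bounded by $(\dist{x}^{\eta+2-|l|_\s}+b^{\eta+2-|l|_\s})\|\xi\|_{\infty;\eta}$. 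In the large regime, I would apply Lemma~\ref{lem:xi_ball} (which applies as $\supp\CE(\xi)\subset\cl\mfB(0,b)$ and $\delta_n\geq b$) to get $|\CE(\xi)(\psi^{\delta_n}_x)|\lesssim b^{\eta+|\s|}\delta_n^{-|\s|}\|\xi\|_{\infty;\eta}$, whence the per-scale bound $\delta_n^{2-|l|_\s-|\s|}b^{\eta+|\s|}\|\xi\|_{\infty;\eta}$; because $|\s|\geq 3$ and $|l|_\s\leq 1$ force $2-|l|_\s-|\s|<0$, the sum over $\delta_n>b$ is dominated by the smallest such scale and produces $\lesssim b^{\eta+2-|l|_\s}\|\xi\|_{\infty;\eta}$. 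Adding the three contributions yields the claim. The only delicate point is the switch to the function-valued bound on small scales, which bypasses the failure of the $\CC^\eta$-based estimate to sum when $\eta+2-|l|_\s\leq 0$.
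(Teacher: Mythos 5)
Your proposal is correct and follows essentially the same route as the paper's proof. The paper, too, handles the small scales $\lambda^{-1}2^{-n}\leq\tfrac12\dist{x}$ by the direct pointwise bound $|\xi(y)|\lesssim\dist{x}^\eta\|\xi\|_{\infty;\eta}$ combined with $\|D^l K^\lambda_n\|_\infty\lesssim(\lambda 2^n)^{|l|_\s+|\s|-2}$ (giving per-scale contribution $\dist{x}^\eta(\lambda 2^n)^{|l|_\s-2}$, summable since $|l|_\s<2$), and then dispatches the remaining scales $\lambda^{-1}2^{-n}>\tfrac12\dist{x}$ by referring back to the argument at the end of Lemma~\ref{lem:integration} — which is precisely your intermediate regime (using the $\CC^\eta(\R^d)$ norm of $\CE(\xi)$ and geometric summation, non-degenerate since $\eta\notin\Z$) and your large regime (using Lemma~\ref{lem:xi_ball} and $|\s|\geq 3$). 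Your concluding remark about why the pointwise bound is needed at small scales correctly identifies the point where the naive $\CC^\eta$-based estimate would fail to sum.
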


\begin{proof}
Remark that $\|\CE(\xi)\|_{\CC^\eta(\R^d)}\lesssim \|\xi\|_{\mathring\CC^\eta(\Omega_b)} \leq \|\xi\|_{\infty;\eta}$,
so indeed $\CE(\xi)$ is well-defined.

We decompose as earlier $K^\lambda = \sum_{n\geq 0}K^\lambda_n$
where $\|D^l K^\lambda_n\|_\infty \lesssim (\lambda 2^n)^{|l|_\s+|\s|-2}$.
Consider first the scales $\lambda^{-1} 2^{-n} \leq \frac12\dist{x}$
for which one has $|\scal{D^l K^\lambda_n(x-\cdot),\xi}| \lesssim \dist{x}^\eta (\lambda 2^n)^{|l|_\s-2}$.
The exponent of $\lambda 2^n$ is strictly negative, so summing over the relevant $n$ gives a contribution of order $\dist{x}^{\eta-|l|_\s+2}$.
For the remaining scales $\lambda^{-1}2^{-n} > \frac12\dist{x}$,
the contribution is of order $(\dist{x}^{\eta+2-|l|_\s} + b^{\eta+2-|l|_\s})\|\CE(\xi)\|_{\CC^\eta(\R^d)}$
by the same argument as at the end of the proof of Lemma \ref{lem:integration}.
\end{proof}

\subsection{Remainder bounds}
\label{sec:remainder_SPDE}

We now make a choice for $\eta$.
Let $\eta\in (-2,-1)$ such that, for all $\bk\in B^\tau$,
\begin{equs}
\eta
&< \min\CA =  \min\{|\tau|_\s\,:\, \tau\in\mfW_{\leq 0} \} 
\label{eq:eta_def_2}
\;,
\\
\eta
&\leq |\tau|_\s- |\bk|_\s - \nabla\eta(\tau,\bk) 
\label{eq:eta_def_3}
\;,
\end{equs}
where we recall $\nabla\eta(\tau,\bk)\geq0$ from Assumption \ref{as:f_poly}
(such $\eta$ exists by \eqref{eq:Schauder_assump} in Assumption~\ref{as:f_poly}).
Note that $\floor{1-\eta}=2\geq r$ as required in Lemma \ref{lem:loc_reconst}.


In this subsection, we fix $0<\lambda,b\leq 1$
and a model $(\Pi,\Gamma)$ on $\Omega_b$ adapted to $K^\lambda$.
Define
\begin{equ}
\|(f,\Gamma)\| = \max_{\CI^k \sigma \in \mfU}
\trinorm{\Upsilon^{\sigma}}\|\Gamma\|_{\CI^k\sigma;\Omega_b}
\;.
\end{equ}
Let $\phi$ be a solution to the remainder of \eqref{eq:SPDE} in the sense of Definition \ref{def:solution_SPDE}
such that $\|\phi\|_\infty \leq \eps$.
Define $F\colon\Omega_b\to\CW_{\leq 0}\otimes E$ by
\begin{equ}
F_x = \sum_{\tau \in \mfW_{\leq 0}} F^\tau_x\tau\;,
\quad
\text{where}
\;\;
F^\tau_x = \frac{1}{\tau!} \Upsilon^\tau(\bphi_x)\;,
\end{equ}
so that $\scal{F_x,\tau}=\Upsilon^\tau(\bphi_x)$,
and denote
\begin{equ}
R^\tau_{x,y} = R^{\tau,F}_{x,y} = \frac{1}{\tau!}\scal{F_y-\Gamma_{yx}F_x,\tau}
\;,
\end{equ}
so that $\sum_{\tau \in \mfW_{\leq0}} R^\tau_{x,y} \tau = F_y - \Gamma_{yx} F_x$.
Define
\begin{equ}
V = \max_{\tau \in \mfW_{\leq0}}
|R^\tau|_{\tau,\zeta,\eta} \|\Pi\|_{\tau;\lambda}
\;,
\qquad
W = \max_{\tau \in \mfW_{\leq0}} \|F^\tau\|_{\infty;\eta-|\tau|_\s} \|\Pi\|_{\tau;\lambda}
\;,
\end{equ}
where we recall $\zeta$ from \eqref{eq:zeta_def}.

\begin{remark}
For all $\tau \in \mfW_{<0}$, recalling $\Phi$ from \eqref{eq:bphi_def}, one has
\begin{equ}
\scal{F_x - \Gamma_{xy} F_y ,\tau} = \scal{\Phi_x - \Gamma_{xy}\Phi_y,\CI[\tau]}
\;.
\end{equ}
Moreover, $\phi$ and $\nabla\phi$ are $\zeta$-H\"older continuous since $\Phi \in \CD^{2+\zeta}(\Omega_b,E)$ by assumption.
Hence
\begin{equ}
|R^\bone_{x,y}| = |P(\bphi_y) - P(\bphi_x)| \lesssim |x-y|_\s^\zeta
\;.
\end{equ}
Therefore $F \in \CD^{\zeta}(\Omega_b,E)$ and
$V,W < \infty$.
\end{remark}

Recall the set of multi-indexes $B^\tau$ from Definition \ref{def:B_tau}.

\begin{lemma}\label{lem:R_tau_F_bound}
Suppose $\eps,V,W\leq 1$ and $b\leq 1/8$.
Define
\begin{equ}
\Lambda
=
\eps+b^{\eta+2}(V+W + \|(f,\Gamma)\|)
\;.
\end{equ}
Recall the quantities 
$\trinorm{D^\bk \Upsilon^\tau}$ from Assumption \ref{as:f_poly}.
Then, for $\tau\in\mfW_{\leq 0}$,
\begin{equ}[eq:F_estimates]
\|F^\tau\|_{\infty;\eta-|\tau|_\s}
\lesssim
\trinorm{\Upsilon^\tau}
\;,
\qquad
\|F^\bone\|_{\infty;\eta}
\leq
\|F^\bone\|_{\infty;-1}
\lesssim
\eps (\eps + b^{\eta+2}(V+W))
\end{equ}
and
\begin{equ}[eq:R_estimate]
|R^\tau|_{\tau,\zeta,\eta}
\lesssim
\max_{\bk \in B^\tau  \,:\,\bk\neq0}
\trinorm{D^\bk \Upsilon^\tau} \Lambda^{|\bk|}
\;.
\end{equ}
The proportionality constants above do not depend on $b,\lambda,\eps,f,Z,\phi$.
\end{lemma}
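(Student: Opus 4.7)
The plan is to combine Assumption~\ref{as:f_poly} with the integration bounds from Section~\ref{sec:integration} to pointwise control the jet $\bphi=(\phi,\nabla\phi)$, and then to use the grafting identity of Lemma~\ref{lem:tree_graft} to reduce \eqref{eq:R_estimate} to a Taylor-remainder estimate. First I would establish $|\phi_x|\leq\eps$ (by hypothesis) and
$|\nabla\phi_x|\lesssim (\eps+b^{\eta+2}(V+W))\dist{x}^{-1}+\dist{x}^{\eta+1}(V+W)$.
The solution condition \eqref{eq:sol_def} identifies $\CL\phi$ with the local reconstruction of $F$ from Lemma~\ref{lem:loc_reconst}, so $\phi-\tilde\CR\CK^\lambda F$ satisfies $\CL u=0$ on $\Omega_b$ modulo a regular error of size $V$, with boundary trace of absolute value at most $\eps+b^{\eta+2}(V+W)$ thanks to Lemma~\ref{lem:integration} at $|l|_\s=0$. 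The maximum principle together with interior gradient estimates for $\CL$, combined with Lemma~\ref{lem:integration} at $|l|_\s=1$ applied to $\CK^\lambda F$, yields the stated bound on $\nabla\phi$.

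Next, for the sup-norm bounds in \eqref{eq:F_estimates}, Assumption~\ref{as:f_poly} at $\bk=0$ gives $|F^\tau_x|\lesssim \trinorm{\Upsilon^\tau}\bigl(|\phi_x|_{\eta(\tau,0)}+|\nabla\phi_x|_{\nabla\eta(\tau,0)}|\phi_x|_{\bar\eta(\tau,0)}\bigr)$. Since $\eps,V,W\leq 1$ and $b\leq 1$, Step~1 gives $|\phi_x|\leq 1$ and $|\nabla\phi_x|\lesssim \dist{x}^{-1}$, so the $|\phi|$-weights are bounded and $|\nabla\phi_x|_{\nabla\eta(\tau,0)}\lesssim \dist{x}^{-\nabla\eta(\tau,0)}$; multiplying by $(1\wedge\dist{x}^{|\tau|_\s-\eta})$ and invoking \eqref{eq:eta_def_3} at $\bk=0$ (so $|\bk|_\s=0$) closes the first bound. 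For the refined bound on $F^\bone$, Assumption~\ref{as:P} gives $|P(\bphi_x)|\lesssim \eps^p+\eps^q|\nabla\phi_x|$ with $q=1/\alpha$ (the gradient term being absent unless $q$ is an integer); substituting the finer bound on $\nabla\phi$ and using $p\geq 2$, $q+1\geq 2$ when the gradient term is present, and $\dist{x}\leq b$, bounds each contribution by $\eps(\eps+b^{\eta+2}(V+W))$.

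For \eqref{eq:R_estimate}, the grafting identity \eqref{eq:tree_graft_explicit} expresses $\scal{\tau,\Gamma_{yx}F_x}$ as a truncated ``Taylor expansion'' of $\Upsilon^\tau$ around $\bphi_x$, in which the shift of the $\phi$-component is proxied by $\sum_i \Upsilon^{\sigma_i}(\bphi_x)\scal{\Gamma_{yx}\CI^{k_i}\sigma_i,\bone}/\sigma_i!$ and of the $\nabla\phi$-components by polynomials in $(y-x)$, retaining only terms of total homogeneity $\leq-|\tau|_\s$. The difference $\tau!R^\tau_{x,y}=\Upsilon^\tau(\bphi_y)-\scal{\tau,\Gamma_{yx}F_x}$ therefore splits into (i) the Taylor remainder at the cut-off, controlled by $D^\bk\Upsilon^\tau$ for some $\bk$ just past the retained set, which by the definition of $B^\tau$ lies in $B^\tau\setminus\{0\}$; and (ii) the coherence discrepancy between the true jet $\bphi_y$ and the polynomial part of $\Gamma_{yx}\Phi_x$, which is bounded using $\Phi\in\CD^{2+\zeta}(\Omega_b,E)$ together with the link between $R^{\tau,F}$ and the coherence of $\Phi$ via the integration map $\CK^\lambda$. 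The model bounds estimate each increment factor by $\Lambda$ to the first power: the gradient contributions $(y-x)^{a}\nabla^{m+a}\phi_x$ by $\eps$ or $\dist{x}^{\eta+2}(V+W)$ via Step~1, and each $\scal{\Gamma_{yx}\CI^k\sigma,\bone}\Upsilon^\sigma(\bphi_x)$ by $\|(f,\Gamma)\|\,|y-x|_\s^{|\CI^k\sigma|_\s}\lesssim b^{\eta+2}\|(f,\Gamma)\|$. Finally, Assumption~\ref{as:f_poly} gives $|D^\bk\Upsilon^\tau(\bphi_x)|\lesssim \trinorm{D^\bk\Upsilon^\tau}\dist{x}^{-\nabla\eta(\tau,\bk)}$, and after multiplying by the weight $\dist{x}^{\zeta-\eta}$ from \eqref{eq:R_norms_def}, condition \eqref{eq:eta_def_3} ensures a nonnegative remaining power of $\dist{x}$, producing \eqref{eq:R_estimate}. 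The hardest part will be the combinatorial bookkeeping in this last step: matching the homogeneity-based truncation of Lemma~\ref{lem:tree_graft} with a classical Taylor remainder, and distributing each source of size contributing to $\Lambda$ to the correct number of increment factors so that they combine into $\Lambda^{|\bk|}$; conditions \eqref{eq:Schauder_assump} and \eqref{eq:scaling_assump2} in Assumption~\ref{as:f_poly} are precisely what prevents the $\dist{x}^{-1}$ blow-up of $\nabla\phi$ near $\d\Omega_b$ from spoiling the weighted sup bounds.
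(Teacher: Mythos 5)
Your proposal follows essentially the same strategy as the paper's proof: decompose $\phi=K^\lambda*\CR F+Y$ via the adapted kernel, use Lemma~\ref{lem:integration} and classical weighted Schauder estimates for $Y$ to obtain the pointwise jet bounds, apply Assumption~\ref{as:f_poly} at $\bk=0$ together with the explicit form of $P$ for \eqref{eq:F_estimates}, and combine the generalized Taylor expansion with the grafting identity \eqref{eq:tree_graft_explicit} for \eqref{eq:R_estimate}. One small correction: $\CL Y=0$ holds \emph{exactly} on $\Omega_b$, not merely up to a $V$-sized error, because $\supp\CR F\subset\cl\Omega_b$ has diameter $\lesssim b\leq 1/8$ while $U^\lambda=\CL K^\lambda-\delta_0$ is supported outside $B_0(1/2)$, so $U^\lambda*\CR F\equiv 0$ on $\Omega_b$; this is precisely why the hypothesis $b\leq 1/8$ appears in the statement.
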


\begin{proof}
By Definition \ref{def:solution_SPDE} and \eqref{eq:local_reconst}, we have
$\CL \phi = \tilde \CR F$ in $\Omega_b$.
Moreover, $\CL K^\lambda = \delta_0 + U^\lambda$ where $U^\lambda$ is smooth, compactly supported, and vanishes on $B_0(\lambda^{-1}/2)$.
Define $Y\colon \Omega_b\to E$ by
\begin{equ}[eq:phi_decomp]
\phi = K^\lambda*\CR F + Y\;.
\end{equ}
Then, on $\Omega_b$,
\begin{equ}
\CL Y + U^\lambda*\CR F = 0
\;.
\end{equ}
Since $\CR F$ is supported on $\cl\Omega_b$, which has diameter $b\leq 1/8$,
and $U^\lambda$ is compactly supported outside $B_0(1/2)$, we have $U^\lambda*(\CR F) \equiv 0$ 
on $\Omega_b$ and thus $\CL Y = 0$.

Our conditions on $\eta$ (including \eqref{eq:eta_def_2}) imply that we are in the scope of the integration Lemma \ref{lem:integration}.
Therefore, by the integration bound \eqref{eq:K_F_sup_bound} with $l=0$,
since $\eta+2>0$,
\begin{equ}
\|K^\lambda*\CR F\|_{\infty;\Omega_b}
\lesssim
b^{\eta+2}
\max_{|\tau|_\s<\zeta}
\|\Pi\|_{\tau;\lambda}
\{
|R^{\tau}|_{\tau,\zeta,\eta}
+
\|F^\tau\|_{\infty;\eta-|\tau|_\s}
\}
\leq
b^{\eta+2}
(V+W)
\;.
\end{equ}
Therefore, since $\|\phi\|_{\infty;\Omega_b} \leq \eps$ by assumption,
\begin{equ}
\|Y\|_{\infty;\Omega_b}
\lesssim \eps +
b^{\eta+2}(V + W)
\;.
\end{equ}
Therefore, since $\CL Y=0$ in $\Omega_b$, it follows from classical weighted Schauder estimates \cite[Thm.~4.9,~p.59]{Lieberman_96_Parabolic} that, for any $|k|_\s\leq 2$ and $\kappa\in [0,1)$,
uniformly in $x\in\Omega_b$,
\begin{equ}[eq:Y_bound]
\|D^k Y\|_{\CC^\kappa;B_x(\frac12\dist{x})}
\lesssim
\dist{x}^{-|k|_\s-\kappa}(\eps + b^{\eta+2}(V+W))
\;.
\end{equ}
Moreover, one has the equality between modelled distributions in $\CD^{\zeta+2,\eta+2}_0(\Omega_b)$
\begin{equ}
\Phi = \CK^\lambda F + Y
\;,
\end{equ}
where we recall $\Phi$ from \eqref{eq:bphi_def} and lift $Y$ canonically to the polynomial regularity structure.

Next, for $|k|_\s\leq 2$, by the integration bound \eqref{eq:K_F_sup_bound}
and by \eqref{eq:Y_bound}, we have
\begin{equ}[eq:nabla_phi_bound]
|(\nabla^k \phi)_x|
\lesssim
\dist{x}^{-|k|_\s}
(\eps+b^{\eta+2}(V+W))
\lesssim
\dist{x}^{-|k|_\s}
\;,
\end{equ}
where the final bound follows from our assumption that $\eps,V,W,b\leq 1$.
It follows that
\begin{equ}[eq:Upsilon_bphi_bound]
|D^\bk\Upsilon^\tau(\bphi_x)|
\lesssim
\trinorm{D^\bk\Upsilon^\tau}(1+|\nabla\phi_x|^{\nabla\eta(\tau,\bk)})
\lesssim
\trinorm{D^\bk\Upsilon^\tau}
\dist{x}^{-\nabla\eta(\tau,\bk)}
\lesssim
\trinorm{D^\bk\Upsilon^\tau}
\dist{x}^{\eta-|\tau|_\s + |\bk|_\s}
\end{equ}
where we used \eqref{eq:Upsilon_bound} in
Assumption \ref{as:f_poly} and that $\eta(\tau,\bk),\bar\eta(\tau,\bk)\geq 0$ for the first bound,
\eqref{eq:nabla_phi_bound} and that $\nabla\eta(\tau,\bk)\geq0$ for the second bound,
and the condition \eqref{eq:eta_def_3} on $\eta$ for the third bound.

In particular,
\begin{equ}[eq:f_sigma_bound]
|\Upsilon^\tau(\bphi_x)|
\lesssim
\trinorm{\Upsilon^\tau}
\dist{x}^{\eta-|\tau|_\s}
\lesssim
\trinorm{\Upsilon^\tau}
b^{\eta+2}
\dist{x}^{-|\tau|_\s - 2}
\;.
\end{equ}
Moreover, for $\tau=\bone$, so that $\Upsilon^\bone = P$, using  Assumption \ref{as:P} on $P$ and that $|\phi_x|\leq\eps\leq1$, this can be improved to
\begin{equ}[eq:f1_bound]
|\Upsilon^\bone(\bphi_x)|
\lesssim
|\phi_x|^2+
|\phi_x| \,|\nabla\phi_x|
\lesssim
\dist{x}^{-1}
\eps (\eps+b^{\eta+2}(V+W))
\;.
\end{equ}
We obtain \eqref{eq:F_estimates} from \eqref{eq:f1_bound} and the first bound in \eqref{eq:f_sigma_bound}.

It remains to prove \eqref{eq:R_estimate}.
Fix $\tau\in\mfW_{\leq0}$.
Recall $\mathring B^\tau$ from Definition \ref{def:B_tau}.
Define further
\begin{equ}
\d\mathring B^\tau =
\{\bk \in \N^{\N^d_{<2}}\, :\, \bk\notin \mathring B^\tau \text{ and } \bk\setminus \{k\} \in  \mathring B^\tau \text{ for all } k\in\bk \text{ and } D^\bk\Upsilon^\tau\neq 0\}\;,
\end{equ}
so that $B^\tau = \mathring B^\tau \sqcup \d \mathring B^\tau$.
By the generalised Taylor formula \cite[Prop.~11.1]{Hairer14},\footnote{To apply
\cite[Prop.~11.1]{Hairer14}, we put an arbitrary total order $<$ on $\N^d_{<2}$ such that $k<\bar k$ implies $\gamma_k \leq \gamma_{\bar k}$,
so that our definition of $\d \mathring B^\tau$ coincides with that of \cite[Appendix~A]{Hairer14} modulo multi-indexes $\bk\in\N^{\N^d_{<2}}$ such that $D^\bk\Upsilon^\tau=0$, which can be ignored for our argument.}
\begin{equ}[eq:f_expan_RS]
\Upsilon^\tau(\bphi_y) = \sum_{\bk\in \mathring B^\tau} \frac{D^\bk \Upsilon^\tau}{\bk!}(\bphi_x) (\bphi_y - \bphi_x)^\bk
+
O\big(\max_{\bk\in\d \mathring B^\tau} \|D^{\bk}\Upsilon^\tau\|_{\infty;B_x(\frac12\dist{x})} (\bphi_y-\bphi_x)^{\bk}
\big)
\;,
\end{equ}
where we denote
\begin{equ}
\|D^{\bk}\Upsilon^\tau\|_{\infty;B_x(\frac12\dist{x})}
=
\sup_{z\in B_x(\frac12\dist{x})} |D^{\bk}\Upsilon^\tau(\bphi_z)|
\end{equ}
and for $\bk = \{k_1,\ldots,k_{|\bk|}\} \in \N^{\N^d_{<2}}$
and $\bphi = \sum_{|l|_\s\leq 2} \frac{1}{l!} (\nabla^l \phi) X^l \in \bar\CT_{\leq 2}\otimes E$,
we write $\bphi^{\bk} = (\nabla^{k_1}\phi , \ldots, \nabla^{k_{|\bk|}} \phi)$,
which we treat as a tensor in $E^{\otimes|\bk|}$.

For $|k|_\s<2$,
define the remainder
\begin{equ}
\nabla^k Q_{x,y} = \scal{\cD^k\bphi_y - \Gamma_{yx}\cD^k\bphi_x, \bone}
\end{equ}
where
\begin{equ}
\cD^k\colon
\CI[\CW_{< 0}]\oplus \bar\CT_{\leq 2} \to \CI^k[\CW_{< 0}]\oplus \bar\CT_{\leq 2-|k|_\s}
\end{equ}
is the linear map for which $\cD^k \CI[\tau] = \CI^k[\tau]$ and $\cD^k X^m = \frac{m!}{(m-k)!} X^{m-k}$ if $m_i\geq k_i$ for all $i\in[d]$ and $\cD^kX^m=0$ otherwise.
Then
\begin{equ}[eq:Q_def]
\nabla^k Q_{x,y}
= \nabla^k \phi_y - \nabla^k \phi_x
- \sum_{\CI^k[\sigma]\in\mfU} \frac{1}{\sigma!}\Upsilon^\sigma(\bphi_x) \scal{\Gamma_{yx} \CI^{k}[\sigma],\bone}
- \sum_{0<|l|_\s \leq \zeta + 2-|k|_\s} \frac{1}{l!}(\nabla^{k+l}\phi)_x (y-x)^{l}
\;.
\end{equ}

Recall that we fixed $\tau \in \mfW_{\leq 0}$.
Then
\begin{equ}[eq:R_tau]
R^\tau_{x,y}
=
\frac{1}{\tau!} \scal{F_y - \Gamma_{yx}F_x,\tau}
=
\frac{1}{\tau!}
\Big(
\Upsilon^\tau(\bphi_y)
- \sum_{\sigma\in\mfW_{\leq0}}\frac{1}{\sigma!}\Upsilon^\sigma(\bphi_x)\scal{\tau,\Gamma_{yx} \sigma}
\Big)\;.
\end{equ}
Recall that \eqref{eq:tree_graft_explicit} in Lemma~\ref{lem:tree_graft} gives another expression for the final sum.
Substituting into \eqref{eq:f_expan_RS} the expression for $\nabla^k\phi_y - \nabla^k\phi_x$ that we obtain by rearranging \eqref{eq:Q_def},
we see that the terms arising from \eqref{eq:tree_graft_explicit} cancel the lowest order terms of $\Upsilon^\tau(\bphi_y)$ in \eqref{eq:f_expan_RS}
(for this, recall by Remark \ref{rem:bk_bm} that $\bk+\bm\in \mathring B^{\tau}$ for all $\bk,\bm$ appearing in \eqref{eq:tree_graft_explicit}).
Therefore, for $y\in B_x(\frac12\dist{x})$, $R^\tau_{x,y}$ is made up of a term of order
\begin{equ}[eq:first_error]
\max_{\bk \in \d \mathring B^\tau} \|D^{\bk}\Upsilon^\tau\|_{\infty;B_x(\frac12\dist{x})}
|(\bphi_y - \bphi_x)^\bk|
\end{equ}
plus a finite sum of terms which are multiples of
\begin{equ}[eq:second_error]
D^\bk \Upsilon^\tau(\bphi_x)
\Big(
\prod_{1\leq i \leq q} \nabla^{k_i} Q_{x,y}
\Big)
\Big(
\prod_{q<i \leq j} \Upsilon^{\tau_i}(\bphi_x)\scal{\Gamma_{yx} \CI^{k_i}\tau_i,\bone}
\Big)
\Big(
\prod_{j<i\leq |\bk|}
(\nabla^{k_i+l_i}\phi)_x
(y-x)^{|l_i|_\s}
\Big)
\end{equ}
where $\bk = \{k_1,\ldots,k_{|\bk|}\} \in \mathring B^\tau$ with $\bk\neq0$, $0\leq q\leq j\leq |\bk|$, $\CI^{k_i}\tau_i\in\mfU$, and $l_i \in\N^d$ subject to $0<|l_i|_\s \leq 2-|k_i|_\s$ and the following conditions:
\begin{itemize}
  \item $q\geq 1$, or
  \item $q=0$ and $|\tau|_\s + \sum_{0 < i\leq j} |\CI^{k_i} \tau_i|_\s + \sum_{j<i\leq|\bk|}|l_i|_\s > 0$.
\end{itemize}
We proceed to bound \eqref{eq:first_error}-\eqref{eq:second_error}.

First, for $|k|_\s<2$, by the fact that $\cD^k$ commutes with $\Gamma_{yx}$, by the integration bound \eqref{eq:K_F_translation_bound} with $\kappa = \eta+2$, and by \eqref{eq:Y_bound}, we obtain for $y\in B_x(\frac12\dist{x})$
\begin{equ}[eq:Q_bound_SPDE]
|\nabla^k Q_{x,y}|
\lesssim
\dist{x}^{-\zeta-2} |x-y|_\s^{\zeta+2-|k|_\s}
(\eps+b^{\eta+2}(V+W))
\;.
\end{equ}
Then, by \eqref{eq:Q_def}, \eqref{eq:Q_bound_SPDE},
\eqref{eq:f_sigma_bound}, and \eqref{eq:nabla_phi_bound},
for $|k|_\s<2$,
\begin{equs}[eq:nabla_phi_diff]
|\nabla^k\phi_y - \nabla^k\phi_x|
&\lesssim
\dist{x}^{-\zeta-2}|x-y|^{\zeta+2-|k|_\s}
(\eps + b^{\eta+2}(V + W))
\\
&\qquad+
\max_{\CI^k[\sigma] \in \mfU} b^{\eta+2}\dist{x}^{-|\sigma|_\s - 2} |x-y|_\s^{|\sigma|_\s+2 - |k|_\s}
\trinorm{\Upsilon^\sigma}\|\Gamma\|_{\CI^k[\sigma]}
\\
&\qquad+
\max_{0<|l|_\s<\zeta+2 - |k|_\s}
\dist{x}^{-|k|_\s-|l|_\s}
|x-y|^{|l|_\s}
(\eps + b^{\eta+2}(V + W))
\\
&\lesssim
\big\{\eps+b^{\eta+2}(V+W + \|(f,\Gamma)\|)
\big\}
\dist{x}^{-|k|_\s-\gamma_k} |x-y|_\s^{\gamma_k}
\\
&=
\Lambda
\dist{x}^{-|k|_\s-\gamma_k} |x-y|_\s^{\gamma_k}
\end{equs}
where in the second bound we used that $\zeta+2-|k|_\s, |l|_\s \geq 1 \geq \gamma_k$ 
and that 
$|\sigma|_\s+2-|k|_\s = |\CI^k[\sigma]|_\s>0$ implies $|\CI^k[\sigma]|_\s\geq \gamma_k$ by definition of $\gamma_k$ from \eqref{eq:gamma_k}.

We can now bound the first remainder term \eqref{eq:first_error}.
By \eqref{eq:Upsilon_bphi_bound} and \eqref{eq:nabla_phi_diff},
for $\bk \in\d \mathring B^\tau$,
\begin{equs}[eq:DkUpsilon]
\|D^{\bk}\Upsilon^\tau\|_{\infty;B_x(\frac12\dist{x})}
|(\bphi_y - \bphi_x)^\bk|
&\lesssim
\trinorm{D^\bk \Upsilon^\tau}
\dist{x}^{\eta -|\tau|_\s + |\bk|_\s}
|(\bphi_y - \bphi_x)^\bk|
\\
&\lesssim
\trinorm{D^\bk \Upsilon^\tau}
\Lambda^{|\bk|}
\dist{x}^{\eta-|\tau|_\s-\gamma_\bk} |x-y|_\s^{\gamma_\bk}
\\
&\leq
\trinorm{D^\bk \Upsilon^\tau}
\Lambda^{|\bk|}
\dist{x}^{\eta-\zeta} |x-y|_\s^{\zeta-|\tau|_\s}
\end{equs}
where we used that $\gamma_\bk \geq \zeta-|\tau|_\s$ by definition of $\d \mathring B^\tau$.

Turning to the terms \eqref{eq:second_error}, suppose first that
$q\geq 1$.
Then, by \eqref{eq:Q_bound_SPDE}, \eqref{eq:f_sigma_bound}, and \eqref{eq:nabla_phi_bound},
\begin{equs}[eq:qgeq1]
\eqref{eq:second_error}
&\lesssim
|D^\bk \Upsilon^\tau (\bphi_x)|
\Big\{
\prod_{1\leq i\leq q}
\dist{x}^{-\zeta-2}
|x-y|_\s^{\zeta+2-|k_i|_\s}
(\eps+b^{\eta+2}(V+W))
\Big\}
\\
&\quad
\times
\Big\{
\prod_{q<i\leq j}
\trinorm{\Upsilon^{\tau_i}}
b^{\eta+2} \dist{x}^{-|\tau_i|_\s - 2}
\|\Gamma\|_{\CI^{k_i}[\tau_i]}
|x-y|^{|\tau_i|_\s + 2 - |k_i|_\s}_\s
\Big\}
\\
&\quad
\times
\Big\{
\prod_{j<i\leq|\bk|}
\dist{x}^{- |k_i|_\s-|l_i|_\s}
(\eps+b^{\eta+2}(V+W))
|y-x|^{|l_i|_\s}_\s
\Big\}
\;.
\end{equs}
By Lemma \ref{lem:tau_lower}, since $D^\bk \Upsilon^\tau\neq 0$, 
one has $|\tau|_\s -\beta + \sum_{k\in\bk} \{\beta +2 - |k_i|_\s\} \geq 0$,
so in particular $2-|k_1|_\s\geq -|\tau|_\s$ (recall that $\bk\neq0$).
Furthermore,
the exponents of $\dist{x}$ and $|x-y|_\s$ in the three parentheses in \eqref{eq:qgeq1} sum to $-|\bk|_\s$ and, since $q\geq 1$, the total exponent of $|x-y|_\s$ is at least $\zeta+2-|k_1|_\s \geq \zeta - |\tau|_s$.
Since $|x-y|_\s\leq \dist{x}$, it follows from \eqref{eq:Upsilon_bphi_bound} that
\begin{equs}[eq:second_error_bound]
\eqref{eq:second_error}
&\lesssim
\trinorm{D^\bk \Upsilon^\tau}
\dist{x}^{\eta - \zeta}
|x-y|_\s^{\zeta-|\tau|_\s}
(\eps + b^{\eta+2}(V+W))^{q+|\bk|-j}
(b^{\eta+2}\|(f,\Gamma)\|)^{j-q}
\\
&\leq
\dist{x}^{\eta - \zeta}
|x-y|_\s^{\zeta-|\tau|_\s}
\trinorm{D^\bk \Upsilon^\tau}
\Lambda^{|\bk|}
\;.
\end{equs}

Suppose now that $q=0$.
Since $D^\bk \Upsilon^\tau \neq 0$, by the same reasoning as in the proof of Lemma \ref{lem:tau_lower}, there exists a conforming tree $\sigma\in \mfW$ formed by grafting the family of trees $\{\CI^{k_i}[\tau_i]\}_{0<i\leq j}$
onto vertices of $\tau$,
and for which $|\sigma|_\s = |\tau|_\s + \sum_{0 < i\leq j} |\CI^{k_i} [\tau_i]|_\s$.
Therefore, by our choice of $\zeta\in(0,1)$ in \eqref{eq:zeta_def} and the facts that $|l_i|_\s\in\N\setminus\{0\}$ and $|\bk|\neq0$,
\begin{equ}[eq:lower_zeta]
|\tau|_\s + \sum_{0 < i\leq j} |\CI^{k_i} [\tau_i]|_\s + \sum_{j<i\leq|\bk|}|l_i|_\s > 0
\quad
\text{implies}
\quad
|\tau|_\s + \sum_{0 < i\leq j} |\CI^{k_i} [\tau_i]|_\s + \sum_{j<i\leq|\bk|}|l_i|_\s \geq \zeta
\;.
\end{equ}
Then we obtain the same bound \eqref{eq:second_error_bound} in a similar way as for the case $q\geq 1$. 

In conclusion,
$|R^\tau|_{\tau,\zeta,\eta}
\lesssim
\max_{\bk\in B^\tau \,:\,\bk\neq0}
\trinorm{D^\bk \Upsilon^\tau} \Lambda^{|\bk|}
$, which is precisely  \eqref{eq:R_estimate}.
\end{proof}

\begin{remark}[On optimality]\label{rem:optimality}
It is possible to bound every term in \eqref{eq:R_tau} directly, which requires fewer derivatives of $\Upsilon^\sigma$ but comes with lower powers of $|x-y|_\s$.
Interpolating with our current bounds yields an estimate on $R^\tau$ with a power of $D^\bk\Upsilon^\tau$ that is slightly lower for $|\bk|>0$,
and thus to slightly better exponents in the final result of Theorem \ref{thm:SPDE}.
However, the gain would vanish as the degree of the smallest positive tree approaches $0$ (corresponding to $\gamma \uparrow N+1$ for rough paths).
It is not clear if this strategy is optimal and, for sake of clarity,
we prefer to present only the simpler method above.
\end{remark}

Define now
\begin{equ}
V^* = \max_{\tau\in\mfW_{<0}}
|R^\tau|_{\tau,\zeta,\eta} \|\Pi\|_{\tau;\lambda}
\;,
\qquad
W^* = \max_{\tau\in\mfW_{<0}} \|F^\tau\|_{\infty;\eta-|\tau|_\s} \|\Pi\|_{\tau;\lambda}
\;,
\end{equ}
\begin{equ}
\|(f,\Pi)\| =
\max_{|\tau|_\s<0} \|\Pi\|_{\tau;\lambda} \trinorm{\Upsilon^\tau} \;,
\qquad
\trinorm{(f,\Pi)}
=
\max_{|\tau|_\s < 0}
\max_{\bk\in B^\tau\,:\,\bk\neq0}
\|\Pi\|_{\tau;\lambda}
\trinorm{D^\bk \Upsilon^\tau}
\;,
\end{equ}
\begin{equ}
\|(f,Z)\|
=
\|(f,\Gamma)\|
+
\|(f,\Pi)\|
+
\trinorm{(f,\Pi)}
\;.
\end{equ}

\begin{lemma}\label{lem:VW_bound}
There exists $\tilde\delta>0$ such that, if
$\|(f,Z)\|, b,\eps<\tilde\delta$,
then
\begin{equ}[eq:VW_bound]
V+W \leq
1/2
\;,
\end{equ}
\begin{equ}[eq:VW_star_bound]
V^* \leq
\trinorm{(f,\Pi)}
\;,
\qquad
W^* \lesssim \|(f,\Pi)\|\;,
\end{equ}
where the proportionality constants do not depend on $b,\lambda,\eps,f,Z,\phi$.
\end{lemma}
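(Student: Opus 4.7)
My plan is to deduce all four bounds directly from Lemma~\ref{lem:R_tau_F_bound}, with the only subtlety being that the hypothesis $V,W\leq 1$ of that lemma needs to be upgraded to the stronger conclusion $V+W\leq 1/2$; this will be resolved by a continuity-in-scale argument.

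\textbf{Converting Lemma~\ref{lem:R_tau_F_bound} into estimates on $V,W$.} First I would observe that $\|\Pi\|_{\bone;\lambda}\leq C$ (since $\Pi_x\bone\equiv 1$ and every test function in $\CP^r$ has $L^1$-norm uniformly bounded by the volume of a unit parabolic ball), and that $\trinorm{D^\bk P}$ is bounded by a constant depending only on $P$.  Splitting the $\max$ defining $V$ into $\tau=\bone$ and $\tau\in\mfW_{<0}$, the bound \eqref{eq:R_estimate} together with $|\bk|\geq 1$ (so that $\Lambda^{|\bk|}\leq \Lambda$ once $\Lambda\leq 1$) yields
\begin{equ}
V \lesssim (1+\trinorm{(f,\Pi)})\Lambda\;,
\end{equ}
while \eqref{eq:F_estimates} gives
\begin{equ}
W \lesssim \eps^2 + \eps\, b^{\eta+2}(V+W) + \|(f,\Pi)\|\;.
\end{equ}
Substituting $\Lambda=\eps+b^{\eta+2}(V+W+\|(f,\Gamma)\|)$ and using $\|(f,Z)\|,\eps,b<\tilde\delta$ and $b^{\eta+2}\leq 1$, I would reduce this to an inequality of the form
\begin{equ}
V+W \leq C\tilde\delta + Cb^{\eta+2}(V+W)\;,
\end{equ}
which rearranges to $V+W\leq 1/2$ provided $\tilde\delta$ (and hence $b^{\eta+2}$) is chosen small enough that $Cb^{\eta+2}\leq 1/2$ and $2C\tilde\delta\leq 1/2$.

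\textbf{The bootstrap.} The hardest part is that the preceding manipulation invoked Lemma~\ref{lem:R_tau_F_bound}, which presupposed $V,W\leq 1$.  I would resolve this by a standard continuity-in-scale argument: for $h\in(0,b]$ let $V(h),W(h)$ denote the quantities obtained by restricting the supremum in $V,W$ to $x\in\Omega_h$ (keeping the same model and $F$).  Since $\phi$ is a solution in the sense of Definition~\ref{def:solution_SPDE}, $F\in\CD^\zeta(\Omega_b,E)$ and in particular $F^\tau$ and $R^\tau_{x,y}$ are pointwise bounded on $\Omega_b$; since the weights $\dist{x}^{\zeta-\eta}$ and $\dist{x}^{|\tau|_\s-\eta}$ defining the two norms carry strictly positive exponents, $V(h)+W(h)$ is continuous in $h$ and tends to $0$ as $h\downarrow 0$.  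Setting $h^*=\sup\{h\leq b:V(h')+W(h')\leq 1/2\text{ for all }h'\leq h\}$ one has $h^*>0$; on $(0,h^*)$ the estimates of the previous paragraph apply and give the strict improvement $V(h)+W(h)\leq 1/4$, so continuity forces $h^*=b$ and delivers \eqref{eq:VW_bound}.

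\textbf{The starred bounds.} Since $V^*,W^*$ involve only $\tau\in\mfW_{<0}$, the contribution from $\tau=\bone$ is absent, so the splitting used for $V,W$ immediately yields $V^*\lesssim \trinorm{(f,\Pi)}\Lambda$ and $W^*\lesssim\|(f,\Pi)\|$ (the latter directly from \eqref{eq:F_estimates}).  With $V+W\leq 1/2$ now established, $\Lambda\leq \eps+b^{\eta+2}(1/2+\|(f,\Gamma)\|)\lesssim\tilde\delta$ can be made smaller than the reciprocal of the implicit constant by further shrinking $\tilde\delta$, which upgrades the $\lesssim$ in $V^*\lesssim\trinorm{(f,\Pi)}\Lambda$ to the sharp statement $V^*\leq\trinorm{(f,\Pi)}$ claimed in \eqref{eq:VW_star_bound}.
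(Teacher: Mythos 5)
Your proposal is correct and follows essentially the same route as the paper's proof: apply Lemma~\ref{lem:R_tau_F_bound} after isolating the $\tau=\bone$ term (so that $\|\Pi\|_{\bone;\lambda}$ and $\trinorm{D^\bk P}$ absorb into constants), and remove the a priori hypothesis $V,W\leq 1$ by a continuity argument in the domain radius (your $V(h),W(h)$ with the weight $\dist{x}_h$ are the paper's $V^{\bar b},W^{\bar b}$, and both use $\eta-|\tau|_\s<0$ to get vanishing as the radius shrinks). The only cosmetic difference is that you rearrange a single combined inequality $V+W\leq C\tilde\delta+Cb^{\eta+2}(V+W)$, whereas the paper bounds $V\leq 1/4$ and $W\leq 1/4$ separately before adding.
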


\begin{proof}
Suppose $b,\eps,\|(f,\Gamma)\|,\trinorm{(f,\Pi)}\leq \tilde\delta \ll 1$.
Suppose also first that $V+W \leq 1$.
Recall $\Lambda
=
\eps+b^{\eta+2}(V+W + \|(f,\Gamma)\|)$.
Multiplying \eqref{eq:R_estimate} by $\|\Pi\|_{\tau;\lambda}$ and taking the max over $\tau$, we obtain by Lemma \ref{lem:R_tau_F_bound} that, for $\tilde\delta>0$ sufficiently small,
\begin{equ}[eq:V_bound]
V \leq C \max_{\tau \in \mfW_{\leq0}}
\max_{\bk \in B^\tau \,:\,\bk\neq0}
\|\Pi\|_{\tau;\lambda}
\trinorm{D^\bk \Upsilon^\tau}
\Lambda^{|\bk|}
\leq 1/4
\;,
\end{equ}
where $C>0$ does not depend on $b,Z,f,\lambda,\eps,\phi$.
If in addition $\|(f,\Pi)\|\leq \tilde\delta$, then, by \eqref{eq:F_estimates},
\begin{equ}
W \leq
C\eps(\eps+ b^{\eta+2}(V+W))
+
C\|(f,\Pi)\|
\leq
1/4
\;.
\end{equ}
This proves \eqref{eq:VW_bound} under the assumption that $V+W\leq 1$.

On the other hand, remark that if $\phi$ is a solution for $b>0$, then it is also a solution for every $\bar b<b$.
Let $V^b,W^b$ be the corresponding quantities as functions of $b$.
Then $V^b,W^b$ are continuous and increasing in $b$ since the coefficients $\bphi_x$ are continuous in $x$
(to see this for $V^b$, we make use of the weight $\dist{x}^{\zeta-\eta}$ in the definition \eqref{eq:R_norms_def} of $|R|_{\tau,\zeta,\eta}$).
Moreover, since $\eta-|\tau|_\s<0$ for every $\tau\in\mfW_{\leq0}$ due to condition \eqref{eq:eta_def_2},
we have $V^b,W^b\to0$ as $b\to 0$.
Therefore, for all $b$ sufficiently small, possibly depending on $\phi$,
we can ensure that $V^b+W^b\leq 1/2$.
We thus obtain \eqref{eq:VW_bound} by continuity in $b$ without the prior assumption that $V+W\leq 1$.

The bound on $W^*$ follows directly from the first bound in \eqref{eq:F_estimates}.
To bound $V^*$, note that the first inequality in \eqref{eq:V_bound} holds with $V$ replaced by $V^*$ and where the first maximum is taken over $\tau \in\mfW_{<0}$.
The bound on $V^*$ then follows by taking $\tilde\delta$ small so that $\Lambda\ll 1$.
\end{proof}

\subsection{Local coercivity}
\label{sec:local_coer_SPDE}

We now verify the local coercivity condition of Corollary \ref{cor:parameter_choice}, which concludes the proof of Theorem \ref{thm:SPDE}.
Recall the notation from Section \ref{sec:setup_SPDE}.

\begin{lemma}[Local coercivity]\label{lem:local_coer_SPDE}
There exists $b,r,\eps,\delta>0$ sufficiently small such that,
if $M=(f,Z)\in \drivers_{z,\lambda}$ with $\|M\|_{z,\lambda} \leq r$ and $\phi\in\bbS_{M}$ with $\|\phi\|_{\infty;\Omega_b}\leq \eps$, then $|\phi(0)|<\eps-\delta$.
\end{lemma}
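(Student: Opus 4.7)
The plan is to mirror Lemma~\ref{lem:coercive_classical}. Let $\mathring\phi \in \CC(\cl\Omega_b, E)$ solve $\CL\mathring\phi = P(\mathring{\bphi})$ on $\Omega_b$ with matching boundary data $\mathring\phi|_{\d\Omega_b} = \phi|_{\d\Omega_b}$, where $\mathring{\bphi} = (\mathring\phi, \nabla\mathring\phi)$ denotes its jet. Such $\mathring\phi$ exists by Assumption~\ref{as:P} and standard parabolic regularity. After rescaling to the unit parabolic ball via $\mathring\psi(y) = b^\alpha\mathring\phi(b\cdot_\s y)$ (which preserves the equation by scale invariance of $P$) and invoking Assumption~\ref{as:coercive} with parameter $b^\alpha\eps$, we obtain $\delta_0 = \delta_0(b^\alpha\eps) > 0$ such that $|\mathring\phi(0)| < \eps - 2\delta$, where $\delta = \delta_0/(2b^\alpha)$ (fixing $b$ and $\eps$ first). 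It thus suffices to prove $\|\phi - \mathring\phi\|_{\infty;\Omega_b} < \delta$ when $r$ is sufficiently small.

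Split $F = F^\bone \bone + \tilde F$, where $\tilde F = \sum_{\tau \in \mfW_{<0}} F^\tau \tau$ takes values in the sector spanned by $\mfW_{<0}$ (closed under $\CG$ since every element has strictly negative homogeneity). By Definition~\ref{def:solution_SPDE}, $\CL\phi = P(\bphi) + \tilde\CR\tilde F$ on $\Omega_b$, hence
\begin{equation*}
  \CL(\phi - \mathring\phi) = \bigl(P(\bphi) - P(\mathring{\bphi})\bigr) + \tilde\CR \tilde F.
\end{equation*}
Following the classical proof, decompose $\phi - \mathring\phi = K^\lambda * g + Y$ with $g = \bone_{\Omega_b}\bigl(P(\bphi) - P(\mathring{\bphi})\bigr) + \CR\tilde F$, where $\CR\tilde F \in \CC^\eta(\R^d)$ is the extension from Lemma~\ref{lem:loc_reconst} supported in $\cl\Omega_b$, and $Y$ satisfies $\CL Y + U^\lambda * g = 0$ on $\Omega_b$. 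For $b \leq 1/8$ and $\lambda \in (0,1]$ the supports of $U^\lambda$ and $g$ are disjoint, so $\CL Y = 0$; since $\phi - \mathring\phi$ vanishes on $\d\Omega_b$, the parabolic maximum principle yields $\|\phi - \mathring\phi\|_{\infty;\Omega_b} \leq 2\|K^\lambda * g\|_{\infty;\cl\Omega_b}$.

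We bound $K^\lambda * g$ in two pieces. For the polynomial piece, Assumption~\ref{as:P} combined with $\|\phi\|_\infty, \|\mathring\phi\|_\infty \leq \eps$ and gradient bounds (from \eqref{eq:nabla_phi_bound} for $\phi$ and interior Schauder estimates for $\mathring\phi$) gives $|P(\bphi) - P(\mathring{\bphi})| \lesssim \eps^{\gamma_P}\bigl(|\phi - \mathring\phi| + |\nabla(\phi - \mathring\phi)|\bigr)$ for some $\gamma_P > 0$; controlling $\|\nabla(\phi - \mathring\phi)\|_\infty$ by combining the weighted Schauder estimate \eqref{eq:Y_bound} for $Y$ with \eqref{eq:K_F_sup_bound} at $|l|_\s = 1$ for $K^\lambda * \CR\tilde F$, this contribution is bounded by $C\eps^{\gamma_P}\|\phi - \mathring\phi\|_{\infty;\Omega_b}$ plus small noise terms. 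For the noise piece, Lemma~\ref{lem:VW_bound} yields $V^* \leq \trinorm{(f,\Pi)}$ and $W^* \lesssim \|(f,\Pi)\|$, both of which are controlled by small powers of $\|M\|_{z,\lambda}$ (their respective powers appear in \eqref{eq:fZ_norms}). Lemma~\ref{lem:loc_reconst} then gives $\|\CR\tilde F\|_{\CC^\eta(\R^d)} \lesssim V^* + W^*$, and a scale decomposition of $K^\lambda$ (as in Lemma~\ref{lem:integration_K_xi}, using that $\CR\tilde F$ is supported on a set of diameter $b$) bounds $\|K^\lambda * \CR\tilde F\|_{\infty;\cl\Omega_b} \lesssim b^{\eta+2}(V^* + W^*)$, uniformly in $\lambda \in (0,1]$. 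Combining, we obtain
\begin{equation*}
  \|\phi - \mathring\phi\|_{\infty;\Omega_b} \leq C\eps^{\gamma_P}\|\phi - \mathring\phi\|_{\infty;\Omega_b} + Cb^{\eta+2}(V^* + W^*),
\end{equation*}
so choosing $\eps$ small so $C\eps^{\gamma_P} \leq 1/2$ and then $r$ small so $Cb^{\eta+2}(V^* + W^*) < \delta/2$ yields $\|\phi - \mathring\phi\|_{\infty;\Omega_b} < \delta$.

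The main obstacle is controlling the gradient term in $P$: since $P$ may be affine (not constant) in $\nabla\phi$ by Assumption~\ref{as:P}, one must bound $\nabla(\phi - \mathring\phi)$ uniformly in the rescaling parameter $\lambda \in (0,1]$. This relies on the $\lambda$-independent integration bound \eqref{eq:K_F_sup_bound} of Lemma~\ref{lem:integration} combined with interior weighted Schauder estimates \eqref{eq:Y_bound} for the harmonic remainder $Y$, and is closed via a contraction argument exploiting the smallness of $\eps^{\gamma_P}$. A secondary delicate point is the ordering of the small parameters: $b$ and $\eps$ must be fixed first in order to produce $\delta$ via Assumption~\ref{as:coercive}, after which $r$ is taken small enough that the noise contribution is dominated by $\delta$.
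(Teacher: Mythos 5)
Your overall strategy matches the paper's proof: produce $\mathring\phi$ solving $\CL\mathring\phi = P(\mathring\bphi)$ with matching boundary data, invoke Assumption~\ref{as:coercive} after rescaling, decompose using $K^\lambda$ and a harmonic remainder, and close a contraction. However, there is a genuine gap in the norm in which you try to close the contraction.

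You write the fixed-point inequality as
\[
\|\phi - \mathring\phi\|_{\infty;\Omega_b} \leq C\eps^{\gamma_P}\|\phi - \mathring\phi\|_{\infty;\Omega_b} + Cb^{\eta+2}(V^*+W^*),
\]
and you claim to control $\|\nabla(\phi - \mathring\phi)\|_\infty$ via \eqref{eq:K_F_sup_bound} and \eqref{eq:Y_bound}. But both of those lemmas only give \emph{weighted} interior bounds, $|\nabla^k Y_x| \lesssim \dist{x}^{-|k|_\s}(\cdots)$ and $|\nabla^l(\CK^\lambda F)_x| \lesssim \dist{x}^{\eta+2-|l|_\s}(\cdots)$; the unweighted quantity $\|\nabla(\phi - \mathring\phi)\|_\infty$ is generically infinite, since neither $\nabla\phi$ nor $\nabla\mathring\phi$ is bounded up to the parabolic boundary. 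Relatedly, the pointwise bound you invoke, $|P(\bphi) - P(\mathring\bphi)| \lesssim \eps^{\gamma_P}(|\phi-\mathring\phi| + |\nabla(\phi-\mathring\phi)|)$, drops a factor of $\dist{x}^{-1}$ in the cross term $B(\nabla\mathring\phi \otimes (\phi^{\otimes q} - \mathring\phi^{\otimes q}))$, because $|\nabla\mathring\phi_x|\lesssim\eps\dist{x}^{-1}$. The paper gets around this by closing the contraction in the weighted norm $\|\bphi-\mathring\bphi\|_{\infty;<2} = \max_{|k|_\s<2}\sup_x\dist{x}^{|k|_\s}|\nabla^k(\phi-\mathring\phi)_x|$, so that the weights exactly cancel; see the bound $|P(\bphi_x) - P(\mathring\bphi_x)| \lesssim \dist{x}^{\eta}\|\bphi-\mathring\bphi\|_{\infty;<2}(\eps+\cdots)$ and the estimates \eqref{eq:K_P_diff}–\eqref{eq:Y_diff}. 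Without the weight, your inequality cannot close.

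A secondary gap: you set $g = \bone_{\Omega_b}(P(\bphi) - P(\mathring\bphi)) + \CR\tilde F$. The polynomial difference blows up like $\dist{x}^{-1}$ near $\d\Omega_b$, so $\bone_{\Omega_b}(P(\bphi) - P(\mathring\bphi))$ fails to be locally integrable (the integral of $\dist{x}^{-1}$ against a codimension-one boundary diverges logarithmically), and even if it were, multiplying by the sharp indicator does not give a distribution with finite $\CC^\eta$ norm for $\eta\in(-2,-1)$. The paper sidesteps this by applying the extension operator $\CE$ from Appendix~\ref{sec:extension} (via Lemma~\ref{lem:loc_reconst}) to $P(\bphi)\bone$ and $P(\mathring\bphi)\bone$ as modelled distributions, and then invoking Lemma~\ref{lem:integration_K_xi} which is tailored to handle exactly this kind of weighted, non-integrable density. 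You cite the right lemma at the end, but the decomposition as written does not use it correctly. These two issues are repairable (replace $\bone_{\Omega_b}$ by $\CE$ and replace $\|\cdot\|_\infty$ by $\|\cdot\|_{\infty;<2}$ throughout), but as stated the proposal does not prove the statement.
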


\begin{proof}
We use notation from Sections \ref{sec:local_reconst}-\ref{sec:remainder_SPDE}.
Let $\tilde\delta>0$ be as in Lemma~\ref{lem:VW_bound}.
We take $b,\eps,r\leq \tilde\delta$ sufficiently small and $M=(f,Z)\in\drivers_{z,\lambda}$ such that
$
\|M\|_{z,\lambda}
\leq r
$.
Let $\phi\in\bbS_{M}$ with $\|\phi\|_{\infty;\Omega_b}\leq\eps$.
Below, all implicit proportionality constants do not depend on $b,\eps,r,\lambda,M,\phi$.

By Assumption \ref{as:coercive} and a scaling argument, there exist $\delta>0$ and
$\mathring\phi\in\CC(\cl\Omega_{b},E)$ such that $\mathring\phi\restriction_{\d\Omega_{b}} = \phi\restriction_{\d\Omega_{b}}$,
$\CL\mathring\phi =P(\mathring\phi)$ on $\Omega_{b}$, and $|\mathring\phi(0)|< \eps-2\delta$.
Our goal is to estimate $\phi -\mathring\phi$.

We lift $\mathring\phi$ canonically to a modelled distribution $\mathring\bphi \colon\Omega_{b}\to \bar\CT_{\leq 2}\otimes E$ taking values in the polynomial regularity structure with $\scal{\mathring\bphi_x,X^k} = (\nabla^k\mathring\phi)_x$.
Denote
\begin{equ}
\|\mathring\bphi\|_{\infty;<2} = \max_{|k|_\s< 2} \sup_{x\in\Omega_{b}}\dist{x}^{|k|_\s} |\nabla^k\mathring\phi_x|
\;.
\end{equ}
Then, since $\|\mathring\phi\|_{\infty;\d\Omega_{b}}\leq\eps$ and by stability of the equation $\CL\mathring\phi = P(\mathring\phi)$ at the zero solution,
\begin{equ}[eq:Dk_phi]
\|\mathring\bphi\|_{\infty;<2}\lesssim \eps
\;.
\end{equ}
As in \eqref{eq:phi_decomp} define $Y,\mathring Y\in\CC(\cl\Omega_b,E)$ by
\begin{equ}[eq:Ys_def]
\phi = K^\lambda*\CR \{P(\bphi)\bone + \CR \tilde F\} + Y
\;,
\quad
\mathring\phi = K^\lambda* \CR \{P(\mathring\bphi)\bone\} + \mathring Y
\end{equ}
where we write $\tilde F = \sum_{|\tau|_\s < 0} F^\tau \tau$
and where $\CR$ is the same reconstruction operator as in \eqref{eq:phi_decomp}, i.e. the operator from Lemma \ref{lem:loc_reconst} for the domain $\Omega_{b}$ and for $\eta\in(-2,-1)$ from Section \ref{sec:remainder_SPDE}.

\textbf{NB.}
We apply Lemma \ref{lem:loc_reconst} with $\eta \in (-2,-1)$,
therefore $\CR \{P(\mathring\bphi)\bone\}$ is in $\CC^{\eta}(\R^d)$ but not necessarily in $\CC^{\omega}(\R^d)$ for any $\omega\geq -1$.

By the same argument as at the start of the proof of Lemma \ref{lem:R_tau_F_bound},
on $\Omega_{b}$
\begin{equ}
\CL Y = \CL \mathring Y = 0
\;.
\end{equ}
We furthermore have the equality between modelled distributions on $\Omega_{b}$
\begin{equ}
\Phi = \CK^\lambda (P(\bphi)\bone + \tilde F) + Y
\;,
\qquad
\mathring\bphi = \CK^\lambda (P(\mathring\bphi)\bone) + \mathring Y
\;,
\end{equ}
where we lift $Y,\mathring Y$ canonically to the polynomial regularity structure
and $\CK^\lambda$ is the integration map as in Section \ref{sec:integration} that uses $\CR$ as input.

In a similar way as in \eqref{eq:f1_bound}, using \eqref{eq:nabla_phi_bound} and \eqref{eq:Dk_phi},
we have
\begin{equs}
|P(\bphi_x) - P(\mathring\bphi_x)|
&\lesssim
\dist{x}^{-1}
\|\bphi - \mathring\bphi\|_{\infty;<2}
(\|\bphi\|_{\infty;<2}+\|\mathring\bphi\|_{\infty;<2})
\\
&\lesssim
\dist{x}^{-1}
\|\bphi - \mathring\bphi\|_{\infty;<2}
(\eps+b^{\eta+2}(V+W))
\\
&\lesssim 
\dist{x}^{\eta}\|\bphi - \mathring\bphi\|_{\infty;<2}
(\eps+b^{\eta+2}(V+W))
\;,
\end{equs}
where we used $\eta<-1$ in the final bound.
Since $\eta>-2$, we obtain by Lemma \ref{lem:integration_K_xi}
\begin{equ}[eq:K_P_diff]
\|K^\lambda* \CR (P(\bphi)\bone - P(\mathring\bphi)\bone)\|_{\infty;<2}
\lesssim
b^{\eta+2}\|\bphi - \mathring\bphi\|_{\infty;<2}
(\eps+b^{\eta+2}(V+W))
\;.
\end{equ}

Turning to $\tilde F$,
by the integration bound \eqref{eq:K_F_sup_bound},
\begin{equ}[eq:K_tilde_F]
|(\nabla^l \CK^\lambda \tilde F)_x|
\lesssim \dist{x}^{-|l|_\s} b^{\eta+2}(V^*+W^*)
\;.
\end{equ}

Turning to $Y$ and $\mathring Y$, since $\CL(Y-\mathring Y)
=0$ on $\Omega_{b}$,
it follows from weighted Schauder estimates \cite[Thm.~4.9,~p.59]{Lieberman_96_Parabolic} and the maximum principle that, for any $|k|_\s< 2$,
\begin{equ}[eq:DY_diff]
|\nabla^k (Y-\mathring Y)_x|
\lesssim \dist{x}^{-|k|_\s}
\|Y-\mathring Y\|_{\infty;\d\Omega_{b}}
\;.
\end{equ}
Furthermore, since $\phi=\mathring\phi$ on $\d\Omega_{b}$ by our choice of $\mathring\phi$,
\begin{equs}[eq:Y_diff_bound]
\|Y-\mathring Y\|_{\infty;\d\Omega_{b}}
&=
\big\|
K^\lambda* \CR \big\{P(\bphi)\bone  - P(\mathring\bphi)\bone + \tilde F\big\}
\big\|_{\infty;\d\Omega_{b}}
\\
&\lesssim
b^{\eta+2}
\big\{
\|\bphi - \mathring\bphi\|_{\infty;<2}(\eps+b^{\eta+2}(V+W))
+ (V^*+W^*)
\big\}
\end{equs}
where the final bound is due \eqref{eq:K_P_diff}-\eqref{eq:K_tilde_F}.
Treating $Y,\mathring Y$ as modelled distributions, we obtain from \eqref{eq:DY_diff}-\eqref{eq:Y_diff_bound}
\begin{equ}[eq:Y_diff]
\| Y-\mathring Y\|_{\infty;<2}
\lesssim
b^{\eta+2}
\big\{
\|\bphi - \mathring\bphi\|_{\infty;<2}(\eps+b^{\eta+2}(V+W))
+ (V^*+W^*)
\big\}
\;.
\end{equ}
Finally, combining \eqref{eq:K_P_diff}, \eqref{eq:K_tilde_F}, and \eqref{eq:Y_diff}, and using the definitions \eqref{eq:Ys_def}, we obtain
\begin{equ}
\|\bphi - \mathring\bphi\|_{\infty;<2}
\lesssim
b^{\eta+2}
\big\{
\|\bphi - \mathring\bphi\|_{\infty;<2}
(\eps+b^{\eta+2}(V+W))
+ (V^*+W^*)
\big\}
\;.
\end{equ}
By \eqref{eq:VW_bound} in Lemma \ref{lem:VW_bound}, we have $V+W \leq 1/2$. Taking $b$  sufficiently small, we conclude that
\begin{equ}
\|\bphi - \mathring\bphi\|_{\infty;<2}
\lesssim
b^{\eta+2}(V^*+W^*)\;,
\end{equ}
which, by \eqref{eq:VW_star_bound} in Lemma \ref{lem:VW_bound}, can be made arbitrarily small by taking $r\to 0$.
In particular, if $r>0$ is sufficiently small, then $|\phi(0)-\mathring\phi(0)|<\delta$ and thus $|\phi(0)|<\eps-\delta$.
\end{proof}

\begin{remark}\label{rem:eta_neg}
The reason we require $\eta(\tau,\bk),\bar\eta(\tau,\bk)$ to be non-negative in Assumption \ref{as:f_poly}
is that, if these exponents were negative, we could not obtain \eqref{eq:Upsilon_bphi_bound}.
It would be natural in this case to follow the argument in Section \ref{sec:Young_weighted} and consider the largest $b$ such that $|\phi|\in [\frac\eps2,\eps]$ on $\Omega_b$,
so the estimates in Section \ref{sec:remainder_SPDE} hold on $\Omega_b$.
However, because $b$ depends on $\phi$, and it is possible that $|\mathring\phi|\approx \eps$ on a large subset of $\d\Omega_b$,
the domain $\Omega_b$ may be too small to ensure $|\mathring\phi(0)|<\eps-\delta$ for $\delta>0$ \emph{uniform} in $\phi$,
so the proof of the final Lemma \ref{lem:local_coer_SPDE} would not go through.
For ODEs (Sections \ref{sec:Young_weighted} and \ref{sec:RPs}) this is not an issue because, by maximality of $b$ and the fact that the boundary of the domain is a single point, we have $|\mathring\phi|=\eps/2$ on the boundary, thus $\mathring\phi$ does not need `space' to come down.
\end{remark}

\appendix

\section{Extension of distributions}
\label{sec:extension}

Fix a scaling $\s = (\s_1,\ldots,\s_d)\in\N^d$ and follow the notation of Section \ref{sec:scaling}.
We use the shorthand $\mfB_b = \mfB(0,b)$.
Fix $b\in (0,1]$ and, for $x\in \mfB_b$, denote
\begin{equ}
\disc{x} = \inf_{y \in \bd(\mfB_b)} |x-y|_\s
\;,
\end{equ}
where $\bd(A) \subset \R^d$ is the topological boundary of a set $A\subset \R^d$.
For $\eta\leq 0$, let $\widehat\CC^\eta(\mfB_b)$ denote the Banach space of distributions $\xi\in\CD'(\mfB_b)$ for which
\begin{equ}
\disc{\xi}_{\widehat \CC^\eta(\mfB_b)} =
\sup_{\psi\in\CB^r} \sup_{x\in\mfB_b}
\sup_{0<\delta<\frac12\disc{x}}
\delta^{-\eta}|\scal{\xi,\psi^\delta_x}|
< \infty
\;,
\end{equ}
where $r= \floor{1-\eta}\geq 1$ and we recall $\CB^r$ from \eqref{eq:Br_def}.

We start with a simple lemma, the proof of which is clear.

\begin{lemma}\label{lem:equiv_norm}
Consider $C>0$.
An equivalent norm on $\CC^\eta(\R^d)$ is
\begin{equ}
\sup_{z \in \R^d} \sup_{\delta \in (0,1)} \sup_{\psi}
\delta^{-\eta}|\scal{\xi,\psi}|
\end{equ}
where the final $\sup$ is taken over all
\begin{equ}[eq:psi_condition]
\psi \in \CC^\infty_c(\mfB(z,\delta)) \quad
\text{such that, for all $|k|_{\s}\leq r$,}
\quad
\|D^k \psi\|_\infty\leq C \delta^{-|\s|-|k|_\s}
\;.
\end{equ}
The same statement holds for $\widehat\CC^\eta(\mfB_b)$ but with the constraints that $z\in\mfB_b$ and $\mfB(z,2\delta) \subset \mfB_b$.
\end{lemma}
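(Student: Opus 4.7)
The statement is essentially a rescaling identity between two families of test functions, so the plan is to show both inequalities by identifying the test functions satisfying \eqref{eq:psi_condition} as bounded multiples of rescaled elements of $\CB^r$, and vice versa.

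For the direction ``new norm $\lesssim$ original norm'', I would start from a test function $\psi$ satisfying \eqref{eq:psi_condition} at centre $z$ and scale $\delta$, and set $\tilde\psi(y)=C^{-1}\delta^{|\s|}\psi(z+\delta\cdot_\s y)$. The chain rule identity $D^k(\psi\circ T_{z,\delta})=\delta^{|k|_\s}(D^k\psi)\circ T_{z,\delta}$ combined with the bound in \eqref{eq:psi_condition} gives $\|D^k\tilde\psi\|_\infty\leq 1$ for $|k|_\s\leq r$, and the support condition is immediate from $\supp\psi\subset\mfB(z,\delta)$. Hence $\tilde\psi\in\CB^r$ and $\psi=C\tilde\psi^\delta_z$, so $|\scal{\xi,\psi}|\leq C\delta^\eta\|\xi\|_{\CC^\eta(\R^d)}$. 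Conversely, given any $\tilde\psi\in\CB^r$, the function $\psi=\min(C,1)\tilde\psi^\delta_z$ automatically satisfies \eqref{eq:psi_condition}, which yields the reverse comparison up to a multiplicative constant depending only on $C$.

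For the $\widehat\CC^\eta(\mfB_b)$ case, the same scaling argument applies, and I would additionally verify that the geometric constraints align. This reduces to the elementary fact that, for $z\in\mfB_b$, one has $\mfB(z,2\delta)\subset\mfB_b$ if and only if $2\delta\leq\disc{z}$, matching the domain in the definition of $\disc{\cdot}_{\widehat\CC^\eta(\mfB_b)}$ up to the immaterial distinction between strict and non-strict inequality in $\delta$.

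No step presents a genuine obstacle; the content is a routine scaling computation. The only point requiring care is the bookkeeping of the normalisation factor $C^{-1}\delta^{|\s|}$ in the definition of $\tilde\psi$, which must simultaneously absorb the derivative bound $C\delta^{-|\s|-|k|_\s}$ from \eqref{eq:psi_condition} and the factor $\delta^{|k|_\s}$ produced by differentiating $\psi\circ T_{z,\delta}$.
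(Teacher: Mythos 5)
Your proof is correct and it is the argument the paper has in mind: the paper dispenses with the proof entirely ("The proof is clear"), and your rescaling identification $\psi = C\,\tilde\psi^\delta_z$ with $\tilde\psi\in\CB^r$, together with the reverse rescaling using $\min(C,1)\tilde\psi^\delta_z$, is precisely the standard change-of-scale computation being invoked. Your check of the geometric constraint for $\widehat\CC^\eta(\mfB_b)$ (that $\mfB(z,2\delta)\subset\mfB_b$ matches $2\delta\leq\disc{z}$, up to the endpoint) is also the right observation.
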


The following extension lemma is the main result of this appendix.
Similar results can be found in the literature (see, e.g. \cite{Rychkov98_extension,Rychkov99_extension} for extensions whose support goes outside $\cl \mfB_b$),
but we prefer to give a self-contained proof which, we believe, is comparatively simple.

\begin{lemma}[Extension]\label{lem:extension}
Let $\eta<0$ with $\eta \notin \Z$.
There exists a bounded linear operator $\CE \colon \widehat\CC^\eta(\mfB_b)\to\CC^\eta(\R^d)$ such that
$\supp \CE (\xi) \subset \cl \mfB_b$ and
$\scal{\CE(\xi),\psi}=\scal{\xi,\psi}$ for all $\psi\in\CC^\infty_c(\mfB_b)$,
and such that the operator norm of $\CE$ does not depend on $b$.
\end{lemma}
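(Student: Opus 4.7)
The approach is to construct $\CE$ via a Whitney-type partition of unity of $\mfB_b$ corrected, near the boundary, by a parabolic Taylor subtraction.

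First I would build a standard parabolic Whitney decomposition $\{Q_j\}$ of $\mfB_b$: parabolic cubes with diameters $d_j \sim \disc{x_j}$ (where $x_j$ is the centre of $Q_j$), equipped with a smooth partition of unity $\{\rho_j\}$ satisfying $\rho_j \in \CC^\infty_c(Q_j^*)$ for a slight parabolic enlargement $Q_j^* \subset \mfB_b$, $\sum_j \rho_j = 1$ on $\mfB_b$, $|D^k \rho_j| \lesssim d_j^{-|k|_\s}$, and bounded overlap of $\{Q_j^*\}$. Fix an integer $N = N(\eta)$ with $N + 1 > -\eta - \min_i \s_i$, and for each $j$ choose a boundary point $b_j \in \bd \mfB_b$ nearest to $Q_j$; write $T^N_{b_j}\varphi$ for the parabolic Taylor polynomial of $\varphi$ at $b_j$ truncated at parabolic order $N$. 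Then define
\begin{equ}
\scal{\CE(\xi), \varphi} := \sum_j \scal{\xi, \rho_j(\varphi - T^N_{b_j}\varphi)}, \qquad \varphi \in \CC^\infty_c(\R^d),
\end{equ}
which is locally finite and well-defined since each $\rho_j(\varphi - T^N_{b_j}\varphi) \in \CC^\infty_c(Q_j^*) \subset \CC^\infty_c(\mfB_b)$.

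Support and extension are immediate. If $\supp \varphi \cap \cl \mfB_b = \emptyset$ then every $b_j \in \bd \mfB_b$ lies outside $\supp \varphi$, forcing $T^N_{b_j}\varphi \equiv 0$ while also $\rho_j \varphi \equiv 0$, so each summand vanishes; hence $\supp \CE(\xi) \subset \cl \mfB_b$. For $\varphi \in \CC^\infty_c(\mfB_b)$ the same reasoning gives $T^N_{b_j}\varphi \equiv 0$, whereupon $\sum_j \rho_j \varphi = \varphi$ on $\mfB_b$ and $\scal{\CE(\xi), \varphi} = \scal{\xi, \varphi}$; linearity of $\CE$ is clear.

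For the $\CC^\eta(\R^d)$ bound I would apply Lemma~\ref{lem:equiv_norm}: given $\varphi \in \CC^\infty_c(\mfB(z,\delta))$ with $\|D^k \varphi\|_\infty \leq \delta^{-|\s| - |k|_\s}$ for $|k|_\s \leq r$, one estimates $|\scal{\CE(\xi), \varphi}|$ term by term. A parabolic Taylor remainder bound gives $|D^\ell(\varphi - T^N_{b_j}\varphi)(y)| \lesssim (d_j/\delta)^{N+1-|\ell|_\s} \delta^{-|\s|-|\ell|_\s}$ on $Q_j^*$ for $|\ell|_\s \leq N$, and Leibniz combined with $|D^{k'}\rho_j| \lesssim d_j^{-|k'|_\s}$ yields, for every $|k|_\s \leq r$,
\begin{equ}
|D^k[\rho_j(\varphi - T^N_{b_j}\varphi)]| \lesssim d_j^{N+1-|k|_\s} \delta^{-|\s|-N-1},
\end{equ}
the key being that the exponent $N+1-|k|_\s$ is uniform across the Leibniz expansion thanks to $|k'|_\s + |k-k'|_\s = |k|_\s$. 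Consequently $\rho_j(\varphi - T^N_{b_j}\varphi)$ is $(d_j/\delta)^{|\s|+N+1}$ times a valid test function at scale $\sim d_j$ centred at a point with parabolic distance $\gtrsim d_j$ from $\bd\mfB_b$, so
\begin{equ}
|\scal{\xi, \rho_j(\varphi - T^N_{b_j}\varphi)}| \lesssim (d_j/\delta)^{|\s|+N+1} d_j^\eta \disc{\xi}_{\widehat\CC^\eta(\mfB_b)}.
\end{equ}
Grouping $Q_j$ by dyadic scale $d_j \sim 2^{-k}$ and using that at most $\lesssim (\delta 2^k)^{|\s|-\s_*}$ such cubes meet $\mfB(z,\delta)$ (where $\s_*$ is the smallest scaling weight among boundary faces nearest $z$), the dyadic sum
\begin{equ}
\sum_{2^{-k}\leq\delta} \delta^{-\s_* - N - 1}\, 2^{-k(\s_* + N + 1 + \eta)}
\end{equ}
converges by the choice of $N$, and the hypothesis $\eta \notin \Z$ prevents the exponent from vanishing; its total is $\lesssim \delta^\eta$, uniformly in $b$ by scale invariance of the Whitney decomposition.

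The main obstacle is the parabolic Leibniz calculation leading to the uniform exponent $N+1-|k|_\s$ above: the Taylor correction must improve \emph{every} derivative of order $\leq r$ by the same factor $(d_j/\delta)^{N+1-|k|_\s}$, which is precisely what makes the per-cube rescaling factor $(d_j/\delta)^{|\s|+N+1}$ independent of $k$ and hence lets the geometric sum close. Once that uniform bound is in hand, the construction is manifestly linear and the operator norm of $\CE$ is independent of $b$.
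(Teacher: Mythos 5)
Your approach --- a Whitney decomposition corrected by a Taylor polynomial at a nearest boundary point --- is the same skeleton as the paper's, but there is a genuine gap in the large-scale part of the dyadic sum. You only count cubes $Q_j$ that meet $\mfB(z,\delta)$, all of which have $d_j\lesssim\delta$. However, when $b_j\in\supp\varphi$ but $Q_j^*\cap\mfB(z,\delta)=\emptyset$ (which happens for cubes at scales $d_j\sim 2^{-k}$ with $\delta\lesssim 2^{-k}\leq 1$, about one per dyadic scale), the term $\rho_j(\varphi-T^N_{b_j}\varphi)=-\rho_j T^N_{b_j}\varphi$ is a nonzero polynomial times cut-off even though $\rho_j\varphi\equiv 0$. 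For such a cube, $-\rho_j T^N_{b_j}\varphi$ is a test function at scale $\sim d_j$ of size $\lesssim(d_j/\delta)^{|\s|+N}$, whence $|\scal{\xi,\rho_j T^N_{b_j}\varphi}|\lesssim(d_j/\delta)^{|\s|+N}d_j^\eta\disc{\xi}_{\widehat\CC^\eta(\mfB_b)}$. Summing this over $d_j\in[\delta,1]$ and demanding that the result be $\lesssim\delta^\eta$ forces $N+|\s|+\eta<0$, i.e.\ $N<-\eta-|\s|$. This is incompatible with your small-scale requirement $N+1>-\eta-\min_i\s_i$ as soon as $d\geq 2$ (then $|\s|\geq\min_i\s_i+1$ with strict inequality for $d\geq 2$), so the admissible window for $N$ is empty and the pointwise Taylor correction is not strong enough in the large-cube regime.

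The paper's proof closes this gap with two linked changes. First, the Taylor coefficients $D^k\psi(p_x)$ are \emph{averaged}, $\fint_{B_x}D^k\psi(p)\,\mrd p$, over a boundary patch $B_x$ of parabolic diameter $\sim\disc{x}$; when $\disc{x}\gg\delta$, only a $(\delta/\disc{x})^{|\s|-\s_x}$ fraction of $B_x$ meets $\supp\psi$, and this volume factor is precisely what your pointwise estimate is missing. Second, the Taylor order cutoff is $|k|_\s<r-\s_x$, depending on the scaling weight $\s_x$ of the boundary face through $p_x$ (so temporal faces lose two orders, spatial ones one). Together these reduce the big-cube contribution to $(\disc{x}/\delta)^{r-1}\disc{x}^\eta$, whose dyadic sum converges precisely because $r=\floor{1-\eta}$ together with $\eta\notin\Z$ gives both $r+\eta>0$ (needed for the small cubes, the analogue of your condition on $N$) and $r+\eta-1<0$ (needed for the big cubes). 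To repair your construction you would need to replace $T^N_{b_j}\varphi$ by an averaged, face-adapted Taylor polynomial along these lines; as written, the argument does not close for $d\geq 2$.
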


\begin{proof}
The key to the proof is a partition of unity $\{\phi_{x}\}_{x\in\Lambda}$ such that
\begin{enumerate}[label=(\alph*)]
\item \label{pt:sum}$\Lambda$ is a countable subset of $\mfB_b$ and $\sum_{x\in\Lambda} \phi_x = 1$ on $\mfB_b$,
\item\label{pt:support} $\phi_x \colon \mfB_b\to [0,1]$ with $\supp \phi_x$ contained in the open ball $\mfB(x,\frac12\disc{x})$,
\item\label{pt:partition_bound} $\|D^k\phi_x\|_\infty \lesssim \disc{x}^{-|k|_\s}$ uniformly in $|k|_\s \leq r$, $x\in\Lambda$, and $b\in (0,1]$, and 
\item\label{pt:cluster} for every $x\in\Lambda$, the number of $y\in\Lambda$ such that
$\mfB(x,\frac12\disc{x})\cap \mfB(y,\frac12\disc{y})\neq\emptyset$ is bounded 
from above by a constant $N\geq 1$ that is uniform in $x,y$.
\end{enumerate}
The existence of such a partition of unity follows in the same manner as in~\cite{Whitney_1934_extension} or \cite[Sec.~VI.1]{Stein70_singular}, see also \cite[Sec.~5.3.3]{Martin_2018_thesis} and \cite[Proof of Lem~A.2]{CCHS_2D}.


Note that $\bd\mfB_b$ is a box in which each side is parallel to a hyperplane $\{z\,:\,z_i=0\}$ for some $i\in[d]$.
For every $x\in\Lambda$, there exists a point $p_x\in\bd \mfB_b$ such that $|x-p_x|_\s \leq 2\disc{x}$ and such that $p_x$ is at least distance $\frac{1}{4}\disc{x}$ from the intersection of any two sides of $\mfB_b$.
Let $\d B_x$ be the side of $\cl\mfB_b$ containing $p_x$ and denote $\s_x = \s_i$ where $i\in [d]$ is such that $\d B_x$ is parallel to the hyperplane $\{z\,:\,z_i=0\}$.
Denote $B_x = \{y\in\bd\mfB_b\,:\, |y-p_x|_\s < \frac{1}{8}\disc{x}\}$.
Remark that $B_x \subset \d B_x$ by our choice of $p_x$.
We let $\fint_{B_x} = \frac{1}{|B_x|}\int_{B_x}$ where the integral $\int_{B_x}$ and volume $|B_x|$ are taken with respect to the $(d-1)$-dimensional Lebesgue measure on $B_x$ (so $|B_x|\asymp \disc{x}^{|\s|-\s_x}$).

Fix $\delta \in (0,1)$, $z\in\R^d$, and $\psi\in\CC^\infty_c(\mfB(z,\delta))$ as in \eqref{eq:psi_condition} with $C=1$.
For every $x\in\Lambda$ we define $\psi_x \in \CC^\infty_c(\mfB(x,\frac12\disc{x}))$ by
\begin{equ}
\psi_x(y) = \phi_x(y)
\Big\{
\psi(y) - \sum_{0\leq |k|_\s < r-\s_x}  \fint_{B_x} \frac{D^k\psi(p)}{k!}(y-p)^k
\mrd p
\Big\}
\eqdef
\phi_x(y) \T^{r-\s_x}_{x}(y)
\;.
\end{equ}
While $\supp \psi \subset \mfB(z,\delta)$, remark that $\supp \T^{r-\s_x}_{x}$ might not be contained in $\mfB(z,\delta)$.

Fix furthermore $\xi\in\widehat\CC^\eta(\mfB_b)$.
We claim that the sum
\begin{equ}[eq:extension_def]
\scal{\CE(\xi),\psi} \eqdef \sum_{x\in\Lambda}\scal{\xi,\psi_x}
\end{equ}
converges absolutely, agrees with $\scal{\xi,\psi}$ for $\psi\in\CC^\infty_c(\mfB_b)$, and satisfies the bound
\begin{equ}[eq:extension_bound]
|\scal{\CE(\xi),\psi}|
\lesssim
\delta^\eta
\|\xi\|_{\widehat\CC^\eta(\mfB_b)} \;.
\end{equ}
Clearly $\scal{\CE(\xi),\psi}=0$ whenever $\psi\in\CC^\infty_c(\R^d\setminus\cl\mfB_b)$ and thus $\supp \CE(\xi) \subset \cl\mfB_b$.
Therefore the proof follows from Lemma \ref{lem:equiv_norm} once the claim is shown.

To this end,
assume first $\psi \in \CC^\infty_c(\mfB_b)$.
Then the sum in \eqref{eq:extension_def} is finite by property \ref{pt:cluster}. 
Moreover, $D^k\psi(p)=0$ for all $p\in\bd \mfB_b$ and therefore, by \ref{pt:sum}, $\sum_{x\in\Lambda}\psi_x = \psi$ and $\scal{\CE(\xi),\psi} = \scal{\xi,\psi}$.
Furthermore, if $\mfB(z,2\delta)\subset\mfB_b$,
then the claimed bound \eqref{eq:extension_bound} follows from the equivalent norm on $\widehat\CC^\eta(\mfB_b)$ of Lemma \ref{lem:equiv_norm}.

Suppose now that $\mfB(z,2\delta)$ is not contained in $\mfB_b$ (and we do not necessarily assume that $\psi$ is supported in $\mfB_b$).
Consider first $\disc{x}\leq 8\delta$.
Let $l\in\N^d$ with $|l|_\s\leq r$.
By \eqref{eq:psi_condition}, if $|l|_\s < r-\s_x$ and $r-\s_x > 0$, then
\begin{equ}
\|D^l \T^{r-\s_x}_{x}\|_{\infty;\mfB(x,\frac12\disc{x})}
\lesssim
\delta^{-|\s|-r+\s_x}
\disc{x}^{r-\s_x-|l|_\s}
\;,
\end{equ}
while if $|l|_\s\geq r-\s_x$ (which is always the case if $r-\s_x \leq 0$), then $D^l \T^{r-\s_x}_{x}=D^l\psi$ and thus
\begin{equ}
\|D^l \T^{r-\s_x}_{x}\|_{\infty}
\lesssim
\delta^{-|\s|-|l|_\s}
\;.
\end{equ}
Then, for any multi-index $k\in\N^d$ with $|k|_\s\leq r$,
by \ref{pt:support}-\ref{pt:partition_bound},
\begin{equs}
\|D^k\psi_x\|_\infty
&\lesssim
\sum_{l\leq k,\,|l|_\s<r-\s_x}
\disc{x}^{-|k-l|_\s}
\delta^{-|\s|-r+\s_x}
\disc{x}^{r-\s_x-|l|_\s}
+
\sum_{l\leq k,\,|l|_\s\geq r-\s_x}
\disc{x}^{-|k-l|_\s}
\delta^{-|\s|-|l|_\s}
\\
&\lesssim
\begin{cases}
\disc{x}^{r-\s_x-|k|_\s}
\delta^{-|\s|-r+\s_x}
&\text{ if } r-\s_x > 0
\\
\disc{x}^{-|k|_\s}
\delta^{-|\s|}
&\text{ if } r-\s_x \leq 0
\;.
\end{cases}
\end{equs}
It follows that
\begin{equ}
\max_{|k|_\s\leq r}
\disc{x}^{|\s|+|k|_\s}\|D^k \psi_x\|_\infty
\lesssim
\begin{cases}
\disc{x}^{r-\s_x+|\s|}
\delta^{-|\s|-r+\s_x}
&\text{ if } r-\s_x > 0
\\
\disc{x}^{|\s|}
\delta^{-|\s|}
&\text{ if } r-\s_x \leq 0
\end{cases}
\end{equ}
and therefore,
since $\psi_x$ is supported in $\mfB(x,\frac12\disc{x})$,
it follows from Lemma \ref{lem:equiv_norm} that
\begin{equ}
|\scal{\xi, \psi_x}|
\lesssim
\disc{x}^{\eta + |\s| + (r-\s_x)\vee 0}
\delta^{-|\s| - (r-\s_x)\vee 0}
\|\xi\|_{\widehat\CC^\eta(\mfB_b)} 
\;.
\end{equ}
By \ref{pt:cluster},
uniformly in $n\geq0$,
there are $O(2^{n(|\s|-\s_i)}\delta^{|\s|-\s_i})$ points $x\in\Lambda$
in any ball of radius $\delta$ for the scaling $\s$
such that $\s_x = \s_i$ and $\disc{x} \in [2^{-n},2^{-n+1}]$.
Moreover, if $\disc{x}\leq 8\delta$ and $x$ is outside the ball $\mfB(z, C\delta)$ for $C>0$ sufficiently large, then $\psi$ vanishes on a neighbourhood of $B_x$ and therefore $\psi_x=0$.
It follows that
\begin{equ}[eq:extension_small]
\sum_{\disc{x}\leq 8 \delta}
|\scal{\xi, \psi_x}|
\lesssim
\max_{i\in[d]}
\sum_{2^{-n} \leq C\delta}
2^{n(|\s|-\s_i - \eta - |\s| - (r-\s_i)\vee 0)}\delta^{|\s|-\s_i -|\s|-(r-\s_i)\vee 0}
\|\xi\|_{\widehat\CC^\eta(\mfB_b)} 
\lesssim
\delta^{\eta}
\|\xi\|_{\widehat\CC^\eta(\mfB_b)} \;,
\end{equ}
where we use that the exponent of $2^n$ is strictly negative.

Consider now $\disc{x} > 8\delta$.
Note that $\psi$ vanishes on $\mfB(x,\frac12\disc{x})$ and thus, for $|l|_\s \geq r-\s_x$, we have $D^l \T^{r-\s_x}_{x} \equiv 0$ on $\mfB(x,\frac12\disc{x})$.
On the other hand, for $|l|_\s < r-\s_x$, by \eqref{eq:psi_condition} and the fact that $|B_x|^{-1}\lesssim \disc{x}^{-|\s|+\s_x}$ and the support of $\psi$ in $B_x$ has volume at most $\lesssim \delta^{|\s|-\s_x}$,
\begin{equ}
\|D^l \T^{r-\s_x}_{x}\|_{\infty;\mfB(x,\frac12\disc{x})}
\lesssim
\disc{x}^{-|\s|+\s_x}
\delta^{|\s|-\s_x}
\delta^{-|\s|-r+\s_x+1}
\disc{x}^{r-\s_x-|l|_\s-1}
=
\delta^{-r+1}
\disc{x}^{-|\s|+r-1-|l|_\s}
\;.
\end{equ}
Therefore, if $r-\s_x\leq 0$, then $\psi_x\equiv 0$,
and we are done by the previous bound \eqref{eq:extension_small}.
Suppose now that $r-\s_x>0$. Then, using in addition \ref{pt:support}-\ref{pt:partition_bound},
uniformly in $|k|_\s\leq r$,
\begin{equ}
\|D^k\psi_x\|_\infty
\lesssim
\sum_{l\leq k,\, |l|_\s<r-\s_x}
\disc{x}^{-|k-l|_\s}
\delta^{-r+1}
\disc{x}^{-|\s|+r-1-|l|_\s}
\lesssim
\disc{x}^{-|\s|+r-1-|k|_\s}
\delta^{-r+1}
\;.
\end{equ}
It follows that
\begin{equ}
\max_{|k|_\s\leq r}
\disc{x}^{|\s|+|k|_\s}\|D^k \psi_x\|_\infty
\lesssim
\disc{x}^{r-1}
\delta^{-r+1}
\;.
\end{equ}
Since $\psi_x$ is supported in $\mfB(x,\frac12\disc{x})$, we obtain from Lemma \ref{lem:equiv_norm} that
\begin{equ}
|\scal{\xi, \psi_x}|
\lesssim
\disc{x}^{\eta+r-1}
\delta^{-r+1}
\|\xi\|_{\widehat\CC^\eta(\mfB_b)}
\;.
\end{equ}
By \ref{pt:cluster}, uniformly in $n\geq 0$, there are $O(1)$ points $x\in\Lambda$ such that $\disc{x} > 8 \delta$, $\disc{x}\in [2^{-n},2^{-n+1}]$, and $B_x$ overlaps with $\mfB(z,\delta)$.
It follows that
\begin{equ}
\sum_{\disc{x}\geq 8\delta}
|\scal{\xi, \psi_x}|
\lesssim
\max_{i\in[d]}
\sum_{2^{-n} \geq \delta}
2^{n(- \eta - r + 1)}\delta^{-r+1}
\|\xi\|_{\widehat\CC^\eta(\mfB_b)} 
\lesssim
\delta^{\eta}
\|\xi\|_{\widehat\CC^\eta(\mfB_b)} \;,
\end{equ}
where we used $-\eta-r+1 > 0$ because we assumed $\eta\notin\Z$.
This proves \eqref{eq:extension_bound}
and the claim.
\end{proof}

\begin{remark}
Note that $\supp \xi\subset\supp\CE(\xi)$ and that the containment is strict in general.
It is not clear to us whether an extension exists that does not enlarge the support. 
\end{remark}

\subsubsection*{Acknowledgments}

I.~C. acknowledges support from the European Research Council via the grant SQGT 101116964.
M.~G. was partly supported by UK Research and Innovation via the grant ``StochFields'' EP/Z534328/1.

For the purpose of open access, the author has applied a CC BY public copyright licence to any author accepted manuscript arising from this submission.

\bibliographystyle{Martin}
\bibliography{refs}{}

\end{document}